\numberwithin{equation}{section}
\numberwithin{figure}{section}
\numberwithin{table}{section}
\setlist[enumerate,1]{leftmargin=1cm}
\newcommand{\cev}[1]{\accentset{\leftharpoonup}{#1}}
\DeclareRobustCommand{\cev}[1]{\accentset{\leftharpoonup}{#1}}
\theoremstyle{plain}
\newtheorem{theorem}{Theorem}[section]
\newtheorem{proposition}[theorem]{Proposition}
\newtheorem{corollary}[theorem]{Corollary}
\newtheorem{lemma}[theorem]{Lemma}
\newtheorem{conjecture}{Conjecture}
\theoremstyle{definition}
\newtheorem{definition}[theorem]{Definition}
\theoremstyle{remark}
\newtheorem{remark}[theorem]{Remark}
\DeclareMathSymbol{\minus}{\mathbin}{AMSa}{"39}	
\newcommand{\ff}{\mathbf{f}}
 \DeclareRobustCommand{\checkarg}{\@ifnextchar[{\@witharg}{}}
 \DeclareRobustCommand{\@witharg}[1][]{\ensuremath{\left(#1\right)}}
 \DeclareRobustCommand{\scaleGen}[1]{\@ifnextchar[{\@scalewithargs{#1}}{\odot^{}_{#1}}}
 \def\@scalewithargs#1[#2][#3]{#2 \odot^{}_{#1} #3}
 \DeclareRobustCommand{\blankBinOp}{\@ifnextchar[{\@blankbinopwithargs}{}}
 \def\@blankbinopwithargs[#1][#2]{#1 #2}
\def\IPspace{\mathcal{I}}
\def\dI{d_{\IPspace}}
\def\HIPspace{\IPspace_H}
\def\Exc{\mathcal{E}}
\def\mBxc{\nu_{\BESQ}}	
\def\mBxcA{\nu_{\BESQ}^{(-2\alpha)}}	
\def\cC{\mathcal{C}}	
\def\cCRI{\cC([0,\infty),\IPspace)}
\def\cCRIi{\cC([0,\infty),\IPspace_1)}
\def\cD{\mathcal{D}}
\def\mSxc{\nu_{\textnormal{stb}}}	
\def\mSxcA{\nu_{\textnormal{stb}}^{(\alpha)}}	
\def\mClade{\nu_{\textnormal{cld}}}	
\def\mCladeA{\nu_{\textnormal{cld}}^{(\alpha)}}	
\def\cT{\mathcal{T}}
\def\len{\textnormal{len}}			
\def\life{\zeta}					
\def\dis{\textnormal{dis}}			
\def\IPmag#1{\left\|\vphantom{I}#1\right\|}		
\def\skewer{\textsc{skewer}}		
\def\skewerP{\widebar{\skewer}}		
\def\cutoffL#1#2{\textsc{cutoff}^{\leq #1}_{#2}}
\def\cutoffG#1#2{\textsc{cutoff}^{\geq #1}_{#2}}
\def\cutoffLB#1#2{\textsc{cutoff}^{\leq #1}_{#2}}
\def\cutoffGB#1#2{\textsc{cutoff}^{\geq #1}_{#2}}
\def\Dirac#1{\delta\left( #1 \right)}
\def\DiracBig#1{\delta\big( #1 \big)}	
\def\reverse{\mathcal{R}}							
\def\reverseexc{\reverse_{\textnormal{spdl}}}		
\def\reverseH{\reverse_{\textnormal{cld}}}			
\def\scaleB{\scaleGen{\textnormal{spdl}}}		
\def\scaleH{\scaleGen{\textnormal{cld}}}		
\def\scaleI{\blankBinOp}	
\def\ShiftRestrict#1#2{#1\big|^{\from}_{#2}} 
\def\shiftrestrict#1#2{#1|^{\from}_{#2}}
\def\Restrict#1#2{#1\big|_{#2}}
\def\restrict#1#2{#1|_{#2}}
\def\Concat{ \mathop{ \raisebox{-2pt}{\Huge$\star$} } }
\def\ConcatIL{ \mbox{\huge $\star$} }
\def\concat{\star}
\def\bN{\mathbf{N}}			
\def\bF{\mathbf{F}}			
\def\bX{\mathbf{X}}			
\newcommand{\IPLT}{\mathscr{D}}
\def\bM{\mathbf{M}}			
\def\bff{\mathbf{f}}		
\newcommand{\td}[1]{\widetilde{#1}}
\newcommand{\wh}[1]{\widehat{#1}}
\def\whN{\widehat{\mathbf{N}}}
\def\whF{\widehat{\mathbf{F}}}
\def\whX{\widehat{\bX}}
\newcommand{\ol}[1]{\widebar{#1}}
\def\olN{\widebar{\mathbf{N}}}
\def\olF{\widebar{\mathbf{F}}}
\def\olX{\widebar{\bX}}
\def\oll{\widebar{\ell}}
\def\BR{\mathbb{R}}				
\def\BN{\mathbb{N}}				
\def\Leb{\textnormal{Leb}}		
\def\to{\rightarrow}
\def\downto{\downarrow}
\def\upto{\uparrow}
\def\from{\leftarrow}
\def\cf{\mathbf{1}}				
\def\Pr{\mathbf{P}}				
\def\BPr{\mathbb{P}}			
\def\bQ{\mathbf{Q}}				
\def\EV{\mathbf{E}}				
\def\cF{\mathcal{F}}			
\def\cA{\mathcal{A}}	
\def\distribfont#1{\texttt{\upshape #1}}
\def\ExpDist{\distribfont{Exponential}\checkarg}
\def\GammaDist{\distribfont{Gamma}\checkarg}
\def\InvGammaDist{\distribfont{InverseGamma}\checkarg}
\def\GeomDist{\distribfont{Geometric}\checkarg}
\def\BetaDist{\distribfont{Beta}\checkarg}
\def\PoiDir{\distribfont{PD}\checkarg}
\def\PoiDirAT{\PoiDir[\alpha,\theta]}
\def\PRM{\distribfont{PRM}\checkarg}
\def\Stable{\distribfont{Stable}\checkarg}
\def\BESQ{\distribfont{BESQ}\checkarg}
\newcommand{\StableA}{\distribfont{Stable}\ensuremath{(1\!+\!\alpha)}}
\newcommand{\BESQA}{\distribfont{BESQ}\ensuremath{(-2\alpha)}}
\def\PDIP{\distribfont{PDIP}\checkarg}
\newcommand{\vecc}[1]{\accentset{\rightharpoonup}{#1}}
\let\save@mathaccent\mathaccent
\newcommand*\if@single[3]{%
  \setbox0\hbox{${\mathaccent"0362{#1}}^H$}%
  \setbox2\hbox{${\mathaccent"0362{\kern0pt#1}}^H$}%
  \ifdim\ht0=\ht2 #3\else #2\fi
  }
\newcommand*\rel@kern[1]{\kern#1\dimexpr\macc@kerna}
\newcommand{\widebar}{}
\DeclareRobustCommand*\widebar[1]{\@ifnextchar^{\wide@bar{#1}{0}}{\wide@bar{#1}{1}}}
\newcommand*\wide@bar[2]{\if@single{#1}{\wide@bar@{#1}{#2}{1}}{\wide@bar@{#1}{#2}{2}}}
\newcommand*\wide@bar@[3]{%
  \begingroup
  \def\mathaccent##1##2{%
    \let\mathaccent\save@mathaccent
    \if#32 \let\macc@nucleus\first@char \fi
    \setbox\z@\hbox{$\macc@style{\macc@nucleus}_{}$}%
    \setbox\tw@\hbox{$\macc@style{\macc@nucleus}{}_{}$}%
    \dimen@\wd\tw@
    \advance\dimen@-\wd\z@
    \divide\dimen@ 3
    \@tempdima\wd\tw@
    \advance\@tempdima-\scriptspace
    \divide\@tempdima 10
    \advance\dimen@-\@tempdima
    \ifdim\dimen@>\z@ \dimen@0pt\fi
    \rel@kern{0.6}\kern-\dimen@
    \if#31
      \overline{\rel@kern{-0.6}\kern\dimen@\macc@nucleus\rel@kern{0.4}\kern\dimen@}%
      \advance\dimen@0.4\dimexpr\macc@kerna
      \let\final@kern#2%
      \ifdim\dimen@<\z@ \let\final@kern1\fi
      \if\final@kern1 \kern-\dimen@\fi
    \else
      \overline{\rel@kern{-0.6}\kern\dimen@#1}%
    \fi
  }%
  \macc@depth\@ne
  \let\math@bgroup\@empty \let\math@egroup\macc@set@skewchar
  \mathsurround\z@ \frozen@everymath{\mathgroup\macc@group\relax}%
  \macc@set@skewchar\relax
  \let\mathaccentV\macc@nested@a
  \if#31
    \macc@nested@a\relax111{#1}%
  \else
    \def\gobble@till@marker##1\endmarker{}%
    \futurelet\first@char\gobble@till@marker#1\endmarker
    \ifcat\noexpand\first@char A\else
      \def\first@char{}%
    \fi
    \macc@nested@a\relax111{\first@char}%
  \fi
  \endgroup
}
\newcommand{\bP}{\mathbb{P}}
\newcommand{\cE}{\mathcal{E}}
\newcommand{\cI}{\mathcal{I}}
\newcommand{\fN}{\mathbf{N}}
\newcommand{\fX}{\mathbf{X}}
\newcommand{\ed}{\mbox{$ \ \stackrel{d}{=}$ }}
\newcommand{\nbeta}{\beta}
\begin{document}

\begin{frontmatter}

\title{Diffusions on a space of interval partitions:\\ Poisson--Dirichlet stationary distributions\thanksref{T0}}

\runtitle{Poisson--Dirichlet interval partition diffusions}
\runauthor{Forman, Pal, Rizzolo and Winkel}

\thankstext{T0}{This research is partially supported by NSF grants {DMS-1204840, DMS-1308340, DMS-1612483, DMS-1855568}, UW-RRF grant A112251, EPSRC grant EP/K029797/1.}

\begin{aug}
  \author{\fnms{Noah} \snm{Forman}\thanksref{m4}\ead[label=e1]{noah.forman@gmail.com}},
  \author{\fnms{Soumik} \snm{Pal}\thanksref{m2}\ead[label=e2]{soumikpal@gmail.com}},
  \author{\fnms{Douglas} \snm{Rizzolo}\thanksref{m3}\ead[label=e3]{drizzolo@udel.edu}},\\ and 
  \author{\fnms{Matthias} \snm{Winkel}\thanksref{m1}\ead[label=e4]{winkel@stats.ox.ac.uk}}
  
  \affiliation{McMaster University\thanksmark{m4}, University of Washington\thanksmark{m2},\\ University of Delaware\thanksmark{m3}, University of Oxford\,\thanksmark{m1}}
  
  \address{Department of Mathematics \& Statistics\\ McMaster University\\ 1280 Main Street West\\ Hamilton, Ontario L8S 4K1\\ Canada\\
   \printead{e1}
  }
  
  \address{Department of Mathematics \\ University of Washington\\ Seattle WA 98195\\ USA\\
   \printead{e2}
  }
  
  \address{Department of Mathematical Sciences\\ University of Delaware\\ Newark DE 19716\\ USA\\
   \printead{e3}
  }
  
  \address{Department of Statistics\\ University of Oxford\\ 24--29 St Giles'\\ Oxford OX1 3LB\\ UK\\
   \printead{e4}
  }
\end{aug} 
  
 \begin{abstract}
  We introduce diffusions on a space of interval partitions of the unit interval that are stationary with the Poisson--Dirichlet laws with parameters $\left(\alpha,0\right)$ and $\left(\alpha,\alpha \right)$. The construction has two 
steps. The first is a general construction of interval partition processes obtained previously, by decorating the jumps of a L\'evy process with independent excursions. Here, we focus on the second step, which requires explicit
transition kernels and what we call pseudo-stationarity. This allows us to study processes obtained from the 
original construction via scaling and time-change. 
 In a sequel paper, we establish connections to diffusions on decreasing sequences introduced by Ethier and Kurtz (1981) and Petrov (2009). The latter diffusions are continuum limits of up-down Markov chains on Chinese restaurant processes. 
 Our construction is also a step towards resolving longstanding conjectures by Feng and Sun on measure-valued Poisson--Dirichlet diffusions, and by Aldous on a continuum-tree-valued diffusion.
 \end{abstract}

\begin{keyword}[class=MSC]
\kwd[Primary ]{60J25}
\kwd{60J60}
\kwd{60J80}
\kwd[; Secondary ]{60G18}
\kwd{60G52}
\kwd{60G55}
\end{keyword}

\begin{keyword}
\kwd{Interval partition}
\kwd{Chinese restaurant process}
\kwd{Aldous diffusion}
\kwd{Poisson--Dirichlet distribution}
\kwd{infinitely-many-neutral-alleles model}
\kwd{excursion theory}
\end{keyword}

\end{frontmatter}

\section{Introduction}
\label{sec:intro}

The two-parameter Poisson--Dirichlet distributions $(\PoiDirAT,\,\alpha\in [0,1),\,\theta>-\alpha)$ \cite{PitmYorPDAT} are a family of laws on the Kingman simplex: the set of non-increasing sequences of reals that sum to 1. This family rose to prominence in applications following the work of Ishwaran and James \cite{IshwJame01}, subsequently becoming a standard distribution used in non-parametric Bayesian clustering models \cite{BrodJordPitm13}.  The two-parameter family extends the one-parameter family of Kingman \cite{Kingman75}, $\PoiDir[\theta] := \PoiDir[0,\theta]$, which was originally studied as a model for allele frequencies.  

Diffusive models for the fluctuation of allele frequencies over time were considered by Ethier and Kurtz \cite{EthiKurt81}, who devised ``infinitely-many-neutral-alleles'' diffusions on the Kingman simplex with $\PoiDir[\theta]$ stationary distributions, for $\theta>0$.  Petrov \cite{Petrov09} extended these diffusions to the two-parameter setting.  The purpose of this paper is to initiate the development of an analogous family of diffusions for $\alpha\in (0,1), \theta\geq 0$ whose stationary distributions are the interval partitions obtained by ordering the components of a $\PoiDir[\alpha, \theta]$-distributed random variable in their unique random regenerative order \cite{GnedPitm05}.  We will refer to the left-to-right reversal of these interval partitions as $(\alpha,\theta)$-Poisson--Dirichlet Interval Partitions, whose distribution will be denoted $\PDIP[\alpha,\theta]$. 

\begin{definition}\label{def:IP_1}
 An \emph{interval partition} is a set $\beta$ of disjoint, open subintervals of some finite real interval $[0,M]$, that cover $[0,M]$ up to a Lebesgue-null set. We write $\IPmag{\beta}$ to denote $M$. We refer to the elements of an interval partition as its \emph{blocks}. The Lebesgue measure $\Leb(U)$ of a block $U\in\beta$ is called its \emph{mass}. We denote the empty interval partition by $\emptyset$.\vspace{-0.2cm}
\end{definition}

Regenerative ordering arose early in the study of the $\PoiDir[\alpha, \theta]$ family due to the following observation \cite{Pitman97}: if $\mathcal{Z}_\alpha$ is the zero set of a $(2\!-\!2\alpha)$-dimensional Bessel bridge (or Brownian bridge, if $\alpha=\frac12$) then $\mathcal{Z}^c_\alpha$ can be written as a countable union of disjoint open intervals that comprise a regenerative interval partition in the sense of \cite{GnedPitm05,Taksar80}, with the ranked sequence of their lengths having $\PoiDir[\alpha, \alpha]$ distribution.  The law of this collection of open intervals is the $\PDIP[\alpha,\alpha]$ distribution.  A similar relation holds between the complement of the zero set of a $(2\!-\!2\alpha)$-dimensional Bessel process run for time $1$ and the $\PoiDir[\alpha, 0]$-distribution, although for our purposes we define the \PDIP[\alpha,0] as the left-to-right reversal of the resulting partition. Our interest in the regenerative order comes from its connection with continuum random trees \cite{PitmWink09} which shows, for example, how Aldous's Brownian Continuum Random Tree can be constructed from an i.i.d.\ sequence of $\PDIP[\frac12,\frac12]$.

Here is another well-known construction of a \PDIP[\alpha,\alpha]: begin with a \Stable[\alpha] subordinator $Y\! =\! (Y_t,t\!\ge\!0)$, with Laplace exponenet $\Phi(\lambda)\! =\! \lambda^\alpha$. Let $S\sim\ExpDist[r]$ independent of $Y$, and consider the complement of the range $B = \{Y_t,t\!\ge\!0\} \cap [0,S]$ as an interval partition of $[0,\sup(B)]$. I.e.\ we are looking at the jump intervals of the subordinator prior to exceeding level $Y$. We normalize this to a partition of $[0,1]$ by dividing the endpoints of each block by $\sup(B)$; the resulting partition is a \PDIP[\alpha,\alpha] and is independent of $\sup(B)$, which has law \GammaDist[\alpha,r]. See Proposition \ref{prop:PDIP} for details.

In Section \ref{sec:mainresults} we specify transition kernels for the diffusions that we will study and we state our main results. 
In fact, these diffusions arise from a construction in \cite{IPPA}, explained in Section \ref{IPPAsection}; however, once we have shown that said constructions yield the claimed transition kernels, we can study the diffusions directly via these kernels.
%
Further motivation via connections to long-standing conjectures is given in Section \ref{sec:intro:AD}.

\subsection{Main results}\label{sec:mainresults}

Fix $\alpha\in(0,1)$. We first specify a transition semigroup $(\kappa_y^{(\alpha)},y\!\ge\! 0)$ on interval partitions that satisfies a \em branching property\em: given any interval partition $\beta$, the blocks $U\in\beta$ will give rise to \em independent \em interval partitions $\gamma_U$ (possibly empty) at 
time $y$, and $\kappa_y^{(\alpha)}(\beta,\,\cdot\,)$ will be the distribution of their concatenation. See Figure \ref{fig:semigroup}. 

Let us formalize this concatenation. We call the family $(\gamma_U)_{U\in\beta}$ \emph{summable} if $\sum_{U\in\beta}\IPmag{\gamma_U} < \infty$. We 
then define
 $S(U) := \sum_{U^\prime=(u^\prime,v^\prime)\in\beta\colon u^\prime<u}\IPmag{\gamma_{U^\prime}}$ for $U\!=\!(u,v)\!\in\!\beta$,
 and the \emph{concatenation}\vspace{-0.1cm}
\begin{figure}[t]
 \centering
 \input{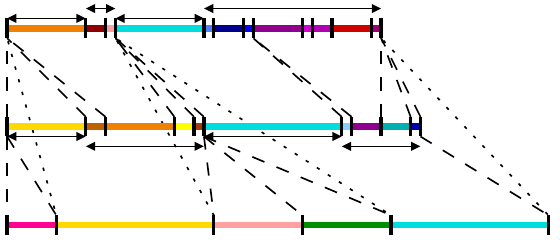_t}
 \caption{Illustration of the transition kernel $\kappa_y$: $\beta^0$ has five blocks $U_1,\ldots,U_5$; some blocks contribute $\emptyset$ for time $y$, here
   $U_1$, $U_3$ and $U_4$, others non-trivial partitions, here $U_2$ and $U_5$, hence $\beta^y=(0,L_2^y)\concat\gamma_2^y\concat(0,L_5^y)\concat\gamma_5^y$. Semigroup property requires consistency of dotted transition from 0 to $z$ 
   and composition of dashed transitions from $0$ to $y$ and from $y$ to $z$.\label{fig:semigroup}\vspace{-0.1cm}}
\end{figure}
 \begin{equation}\label{eq:IP:concat_def}
  \Concat_{U\in\beta}\gamma_U := \{(S(U)+x,S(U)+y)\colon\ U\in\beta,\ (x,y)\in \gamma_U\}.\vspace{-0.2cm}
 \end{equation} 
We also write $\gamma\concat\gamma^\prime$ to concatenate two interval partitions. 
For $c\ge0$ let $c\gamma$ denote the interval partition obtained by multiplying each block in $\beta$ by $c$.

Now, fix $b,r>0$. Let $B_{r}^{(\alpha)}\sim \GammaDist[\alpha,r]$, $\bar\gamma^{(\alpha)}\sim \PDIP[\alpha,\alpha]$, and let $L_{b,r}^{(\alpha)}$ be an $(0,\infty)$-valued random variable with Laplace transform\vspace{-0.1cm}
\begin{equation}
  \EV\left[e^{-\lambda L_{b,r}^{(\alpha)}}\right] = \left(\frac{r+\lambda}{r}\right)^{\alpha}\frac{e^{br^2/(r+\lambda)}-1}{e^{br}-1},
  \label{LMBintro}\vspace{-0.1cm}
\end{equation}
all assumed to be independent. Let
\begin{equation}\label{eq:mu}
 \mu_{b,r}^{(\alpha)}(\,\cdot\,) = e^{-br}\delta_\emptyset(\,\cdot\,) + (1-e^{-br})\Pr\big\{\big(0,L_{b,r}^{(\alpha)}\big) \concat B_{r}^{(\alpha)}\bar\gamma^{(\alpha)}\in\cdot\,\big\}.
\end{equation}
For $y\ge0$ and $\beta$ any interval partition, let $\kappa_y^{(\alpha)}(\beta,\cdot)$ denote the law of\vspace{-0.1cm}
\begin{equation}
 \label{eq:intro:transn}
 \Concat_{U\in\beta}\gamma_U \quad \text{where} \quad \gamma_U\sim \mu_{\Leb(U),1/2y}^{(\alpha)}\text{ independently for each }U\in\beta.
\end{equation}
In fact, it is easily checked that a.s.\ only finitely many of the $\gamma_U$ are non-empty; cf.\ \cite[Lemma 6.1]{IPPA}. Thus, \eqref{eq:intro:transn} describes a concatenation of just finitely many partitions of the form $(0,L)\concat (B\bar\gamma)$, each comprising a leftmost block of mass $L$ followed by a randomly scaled \PDIP[\alpha,\alpha].

Adapting Lamperti \cite{Lamperti72}, we say that a Markov process $(\beta^y,y\!\ge\! 0)$ is 1-self-similar if $(c\beta^{y/c},y\ge 0)$ has the same semigroup as $(\beta^y,y\!\ge\! 0)$.

\begin{theorem}\label{thmtype1Hunt} Let $\alpha\!\in\!(0,1)$. Then the maps $\beta\mapsto \kappa_y^{(\alpha)}(\beta,\cdot\,)$, $y\ge 0$, are weakly continuous and form the transition semigroup of a 1-self-similar path-continuous Hunt process $(\beta^y,\,y\ge 0)$ on a space $(\IPspace,\dI)$ of interval partitions (defined in Definition \ref{def:IP:metric}).  
\end{theorem}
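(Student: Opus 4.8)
\emph{Plan and branching reduction.} The plan is to realise $(\beta^y,y\ge 0)$ via the excursion-theoretic construction of \cite{IPPA} --- a \StableA\ scaffolding whose jumps carry independent \sBESQA\ ``spindles'', read off by the \skewer\ map --- to identify its one-dimensional transition probabilities with $\kappa^{(\alpha)}_y$, and then to harvest the remaining properties. Both the candidate kernel (by \eqref{eq:intro:transn}) and the \skewer\ process (by its clade decomposition, from \cite{IPPA}) are branching: the part of the construction sitting over a block $U\in\beta$ evolves as an independent ``clade'' depending only on $\Leb(U)$, and the level-$y$ partition is the concatenation of the clades' level-$y$ partitions. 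So it suffices to establish: (i) the level-$y$ law of a clade from a single block of mass $m$ is $\mu^{(\alpha)}_{m,1/2y}$; (ii) the clade partitions are a.s.\ summable, with only finitely many nonempty at each $y>0$, so the concatenation lies in $\IPspace$ (this is \cite[Lemma 6.1]{IPPA}); and (iii) these pieces compose consistently, which then yields the Chapman--Kolmogorov identity for $(\kappa^{(\alpha)}_y)$ from the simple Markov property of the \skewer\ process, together with $\kappa^{(\alpha)}_0=\mathrm{id}$ (from \eqref{LMBintro}, since $L^{(\alpha)}_{b,r}\to b$ and $B^{(\alpha)}_r\to0$ as $r\to\infty$, whence $\mu^{(\alpha)}_{b,\infty}=\delta_{\{(0,b)\}}$).

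\emph{The single-clade marginal (the crux).} I would decompose a clade from mass $m$ into its leftmost spindle --- a \sBESQA\ started from $m$ and absorbed at $0$ --- together with the sub-clades thrown off by the scaffolding over that spindle's lifetime, and exploit the Markov-type properties of clades established in \cite{IPPA}. First, the clade's total mass is a \BESQ[0] started from $m$ (by \sBESQA--\StableA\ excursion computations, or from the recursive structure); hence the clade is absorbed at $\emptyset$ by level $y$ with probability $e^{-m/2y}$, the standard absorption probability of \BESQ[0]. On the complementary event the leftmost block has the conditioned level-$y$ leftmost-block mass $L^{(\alpha)}_{m,1/2y}$ --- a Bessel-function computation producing the Laplace transform \eqref{LMBintro} --- and the remainder is an independent partition whose total mass is $B^{(\alpha)}_{1/2y}\sim\GammaDist[\alpha,1/2y]$ and whose normalisation is a \PDIP[\alpha,\alpha]; the latter two, and their independence, come from Proposition \ref{prop:PDIP} applied to the regenerative sub-clade structure (the complement of the range of a killed \Stable[\alpha] subordinator: normalised partition $\sim$ \PDIP[\alpha,\alpha], total length $\sim\GammaDist$, and these independent). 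The genuinely delicate points are the mutual independence of leftmost-block mass, scale and shape, and the precise emergence of \eqref{LMBintro}. (That the semigroup carries scaled-\PDIP[\alpha,\alpha] laws to mixtures of such --- ``pseudo-stationarity'' --- drops out of this and drives the companion paper.)

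\emph{Self-similarity, weak continuity, Hunt property.} The branching form reduces $1$-self-similarity to the scaling relation $c\,\mu^{(\alpha)}_{b,r}=\mu^{(\alpha)}_{cb,\,r/c}$, which is immediate from \eqref{LMBintro} on replacing $\lambda$ by $c\lambda$, from $\GammaDist[\alpha,r]$ scaling to $\GammaDist[\alpha,r/c]$, and from $be^{-br}$ depending on $(b,r)$ only through $br$ --- reflecting the scaling of \StableA\ and \sBESQA\ in the construction. For weak continuity of $\beta\mapsto\kappa^{(\alpha)}_y(\beta,\cdot\,)$: $b\mapsto\mu^{(\alpha)}_{b,r}$ is weakly continuous with $\mu^{(\alpha)}_{0+,r}=\delta_\emptyset$ (again \eqref{LMBintro}, $e^{-br}\to1$), and $\beta_n\to\beta$ in $(\IPspace,\dI)$ forces convergence of the block-mass families in a way that, combined with continuity of concatenation on $\IPspace$ (from \cite{IPPA}) and the vanishing contribution of vanishing masses, gives weak convergence of the concatenated laws; the only delicate point is uniform control of the small-mass tail, which the metric $\dI$ is built to provide. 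Finally, $(\IPspace,\dI)$ is Polish, the \skewer\ construction provides a version with continuous paths (with continuous absorption at $\emptyset$), and weak continuity of the semigroup together with path-continuity upgrades the simple Markov property to the strong Markov property; path-continuity also gives quasi-left-continuity, so $(\beta^y,y\ge 0)$ is a path-continuous Hunt process. I expect the single-clade marginal of the middle step to be the main obstacle: marshalling the \sBESQA\ and \StableA\ excursion theory so as to reproduce exactly \eqref{LMBintro} together with the independent $\GammaDist\times\PDIP[\alpha,\alpha]$ description of the remainder.
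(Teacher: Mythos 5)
Your proposal follows essentially the same route as the paper: reduce to a single clade via the branching structure, identify the single-clade level-$y$ marginal with $\mu^{(\alpha)}_{b,1/2y}$ by splitting off the leftmost block (whose mass has Laplace transform \eqref{LMBintro}, a Bessel-function computation) from an independent $\GammaDist[\alpha,1/2y]$-scaled \PDIP[\alpha,\alpha] read off a killed \Stable[\alpha] subordinator, and import path-continuity, the Hunt property, self-similarity and continuity in the initial state from the scaffolding-and-spindles construction of \cite{IPPA}. One small caution: in this paper the \BESQ[0] total-mass statement is itself deduced from the transition kernel, so the absorption probability $e^{-b/2y}$ should be obtained directly from the clade-lifetime law $\mClade\{\life^+\le y\,|\,m^0=b\}=e^{-b/2y}$ rather than from the total-mass process, to avoid circularity.
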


We will refer to such processes as \emph{type-1 evolutions}. The reader may wonder why we chose \eqref{LMBintro}-\eqref{eq:intro:transn}. 
On the one hand, the choice of \eqref{LMBintro} is constrained by the semigroup property, including the branching property; and on the other, it will fit neatly into place in our stationarity computations. 
To prove Theorem \ref{thmtype1Hunt}, we will show that this semigroup belongs to a class of interval partition evolutions (\em IP-evolutions\em) introduced in 
\cite{IPPA} by a Poissonian construction that reveals the branching property but otherwise leaves the semigroup rather implicit. 

As each \PDIP[\alpha,\alpha] has infinitely many blocks, so too do the type-1 evolutions $(\beta^y,y\ge 0)$ of Theorem \ref{thmtype1Hunt} on the event $\{\beta^y\neq\emptyset\}$. There is no rightmost block but rather $\beta^y$ has infinitely many blocks to the right of $\IPmag{\beta^y}-\epsilon$, for every $\epsilon\!>\!0$. 
 However, as only finitely many of the $\gamma_U$ in \eqref{eq:intro:transn} are non-empty, $\beta^y$ comprises a finite alternating sequence of leftmost blocks and (reversible) rescaled \PDIP[\alpha,\alpha] with no right or leftmost blocks. In particular, $\beta^y$ has a leftmost block when $\beta^y\neq\emptyset$. It is natural to then consider a related kernel that begins with an additional \PDIP\ component: let $\widetilde{\kappa}_y^{(\alpha)}(\beta,\cdot)$ denote the distribution of
\begin{equation}\label{eq:intro:transn_0}
 B\bar\gamma\concat\Concat_{U\in\beta}\gamma_U,
\end{equation}
where $B \sim \GammaDist[\alpha,1/2y]$, $\bar\gamma\sim \PDIP[\alpha,\alpha]$, and the $\gamma_U$, $U\in\beta$, are as in \eqref{eq:intro:transn}, all jointly independent.
\begin{theorem}\label{thmtype0Hunt} The conclusions of Theorem \ref{thmtype1Hunt} also hold for the family $\big(\widetilde{\kappa}_y^{(\alpha)},\,y\ge 0\big)$, for each $\alpha\in(0,1)$.
\end{theorem}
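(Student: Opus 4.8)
The plan is to deduce Theorem~\ref{thmtype0Hunt} from Theorem~\ref{thmtype1Hunt} by exploiting the fact that $\widetilde\kappa_y^{(\alpha)}$ is just $\kappa_y^{(\alpha)}$ with an independent, initial-condition-free scaled \PDIP[\alpha,\alpha] prepended. Write $\varrho_y^{(\alpha)}$ for the law of $B\bar\gamma$ with $B\sim\GammaDist[\alpha,1/2y]$ and $\bar\gamma\sim\PDIP[\alpha,\alpha]$ independent, so that $\varrho_y^{(\alpha)}=\widetilde\kappa_y^{(\alpha)}(\emptyset,\cdot)$ and, for every interval partition $\beta$,
\begin{equation*}
 \widetilde\kappa_y^{(\alpha)}(\beta,\cdot)=\varrho_y^{(\alpha)}\concat\kappa_y^{(\alpha)}(\beta,\cdot),
\end{equation*}
a concatenation of two independent random partitions. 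The whole argument then rests on one identity, a \emph{pseudo-stationarity} of the type-1 dynamics for scaled \PDIP[\alpha,\alpha]'s: for all $y,z>0$,
\begin{equation}\label{eq:plan:pseudostat}
 \varrho_z^{(\alpha)}\concat\big(\text{$\kappa_z^{(\alpha)}$ evolved for time $z$ from $\varrho_y^{(\alpha)}$}\big)=\varrho_{y+z}^{(\alpha)},
\end{equation}
i.e.\ running a type-1 evolution for time $z$ from a $\GammaDist[\alpha,1/2y]$-scaled \PDIP[\alpha,\alpha], then prepending an independent $\GammaDist[\alpha,1/2z]$-scaled \PDIP[\alpha,\alpha], reproduces a $\GammaDist[\alpha,1/2(y+z)]$-scaled \PDIP[\alpha,\alpha]. (At $\beta=\emptyset$, \eqref{eq:plan:pseudostat} is exactly the Chapman--Kolmogorov equation for $\widetilde\kappa^{(\alpha)}$, so its particular shape is forced.)

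Granting \eqref{eq:plan:pseudostat}, I would verify $\widetilde\kappa_y^{(\alpha)}\widetilde\kappa_z^{(\alpha)}=\widetilde\kappa_{y+z}^{(\alpha)}$ as follows. The branching property of $\kappa^{(\alpha)}$, inherited from the Poissonian construction of \cite{IPPA}, says that under $\widetilde\kappa_z^{(\alpha)}$ the blocks of a partition evolve independently over $[y,y+z]$ and are re-concatenated in place. Applying this to a $\widetilde\kappa_y^{(\alpha)}(\beta,\cdot)$-distributed partition $\varrho\concat\Concat_{U\in\beta}\gamma_U$ and using the type-1 semigroup property (Theorem~\ref{thmtype1Hunt}) together with the observation that $\mu_{m,1/2y}^{(\alpha)}=\kappa_y^{(\alpha)}(\{(0,m)\},\cdot)$, the blocks descending from $\Concat_{U\in\beta}\gamma_U$ reproduce, conditionally on $\beta$, a $\kappa_{y+z}^{(\alpha)}(\beta,\cdot)$-distributed partition; the remaining part -- a fresh $\varrho_z^{(\alpha)}$-component prepended at time $y+z$, concatenated with the time-$z$ type-1 evolution of the $\varrho_y^{(\alpha)}$-component -- is precisely the left-hand side of \eqref{eq:plan:pseudostat}, hence $\varrho_{y+z}^{(\alpha)}$-distributed. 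Concatenating recovers $\widetilde\kappa_{y+z}^{(\alpha)}(\beta,\cdot)$.

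The remaining assertions then follow much as for type-1. Weak continuity of $\beta\mapsto\widetilde\kappa_y^{(\alpha)}(\beta,\cdot)$ is immediate from weak continuity of $\beta\mapsto\kappa_y^{(\alpha)}(\beta,\cdot)$, $\dI$-continuity of concatenation on $\IPspace$, and the fact that $\varrho_y^{(\alpha)}$ does not depend on $\beta$. The $1$-self-similarity is checked exactly as for type-1: use the scaling identity $c\,\mu_{b,r}^{(\alpha)}=\mu_{cb,\,r/c}^{(\alpha)}$ read off from \eqref{LMBintro}--\eqref{eq:mu}, the scale-invariance of \PDIP[\alpha,\alpha], the fact that multiplying a $\GammaDist[\alpha,c/2y]$ variable by $c$ yields a $\GammaDist[\alpha,1/2y]$ variable, and that concatenation commutes with scaling. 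For the Hunt property and path-continuity I would argue as in the proof of Theorem~\ref{thmtype1Hunt}: a type-0 evolution from $\emptyset$ is the IP-evolution of \cite{IPPA} built with no initial blocks (pure immigration of clades), and a type-0 evolution from $\beta$ is obtained by concatenating, to its right, an independent type-1 evolution from $\beta$; since the \skewer\ operation of \cite{IPPA} turns concatenation of the underlying L\'evy-plus-excursion data into concatenation of interval partitions, this coupling is itself an IP-evolution in the sense of \cite{IPPA}, so the resulting process is a path-continuous Hunt process, its regularity (non-explosion, $\dI$-continuity, continuity across $\{\beta^y=\emptyset\}$) being inherited verbatim from the type-1 analysis, and its semigroup is $(\widetilde\kappa_y^{(\alpha)})$ by the Chapman--Kolmogorov equations and continuity just established.

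The main obstacle is the pseudo-stationarity identity \eqref{eq:plan:pseudostat}. I would prove it from the representation of a $\GammaDist[\alpha,r]$-scaled \PDIP[\alpha,\alpha] as the sequence of jump intervals of a \Stable[\alpha] subordinator run until an independent $\ExpDist[r]$ time (Proposition~\ref{prop:PDIP}): decompose the starting $\varrho_y^{(\alpha)}$-partition into its blocks, evolve each of the a.s.\ finitely many surviving blocks by the explicit one-block kernel $\mu_{b,1/2z}^{(\alpha)}$ of \eqref{LMBintro}--\eqref{eq:mu}, prepend the fresh $\varrho_z^{(\alpha)}$-component, and -- via the regenerative/Markov structure of the subordinator and a Laplace-transform computation -- recognize the result as again the jump intervals of a \Stable[\alpha] subordinator run to an independent exponential time, now of rate $1/2(y+z)$. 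The delicate points are tracking the interaction of the infinitely many \PDIP\ blocks with the killing implicit in $\mu^{(\alpha)}$ (only finitely many survive, but there is no leftmost surviving block, only a leftmost surviving \emph{clade}), and checking that the precise algebra of \eqref{LMBintro} -- in particular the factor $\big((r+\lambda)/r\big)^{\alpha}$ and the ratio of exponentials -- is exactly what makes the Gamma/\PDIP\ structure close up; indeed \eqref{LMBintro} was reverse-engineered for this, as noted after Theorem~\ref{thmtype1Hunt}.
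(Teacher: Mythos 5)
Your route to the semigroup identity is genuinely different from the paper's, and that part is sound. You verify Chapman--Kolmogorov for $\widetilde\kappa^{(\alpha)}$ algebraically, reducing it via the branching property and the type-1 semigroup to your pseudo-stationarity identity for the $\varrho_y^{(\alpha)}$-components; that identity is true and is essentially Step 3 of the paper's Proposition \ref{prop:pseudostat:exp} (with $\rho=1/2y$ one has $\rho/(2z\rho+1)=1/2(y+z)$), and your reduction of the general case to the case $\beta=\emptyset$ is correct. The paper never verifies Chapman--Kolmogorov directly: it constructs the process probabilistically as $\cev\nbeta^y\concat\vecc\nbeta^y$, where $\cev\nbeta$ comes from a scaffolding descending from $\infty$ (built by a Kolmogorov consistency argument over first-passage descents from level $j$, cf.\ \eqref{eq:type-0:consistency} and Remark \ref{rmk:type_0_constr}) and $\vecc\nbeta$ is an independent type-1 evolution, proves that this concatenation is Markov, and reads off the kernel (Proposition \ref{prop:type-0:transn}). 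Your route front-loads a substantial Laplace-transform computation that the paper defers to Section \ref{sec:pseudostat}, but it is self-contained and legitimate.

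The gap is in the Hunt/Markov part. First, the process started from $\emptyset$ is not ``the IP-evolution of \cite{IPPA} built with no initial blocks'' --- that process is constantly $\emptyset$ ($\bP^1_\emptyset=\delta_\emptyset$) --- and the coming-down-from-infinity component lies outside the scope of the construction in \cite{IPPA}, so nothing is inherited ``verbatim.'' Second, and more seriously, Chapman--Kolmogorov for the kernels together with the correct one-dimensional marginals of Proposition \ref{prop:type-0:transn} does not make the constructed process Markov in its own natural filtration: one must show that the conditional law of the future given the path up to level $y$ depends only on the concatenation $\nbeta^y=\cev\nbeta^y\concat\vecc\nbeta^y$, and not on finer information such as how $\nbeta^y$ splits into its two components or how the underlying spindles are arranged. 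This is exactly the content of Proposition \ref{prop:type-0:simple_Markov}, which the paper proves in Appendix \ref{appxtype0} via the cutoff decomposition \eqref{eq:type-0:Fy-+_decomp} and the conditional independence, given the level-$y$ skewer, of the scaffolding-and-spindles above and below that level (Propositions \ref{prop:PRM:Fy-_Fy+} and \ref{prop:type-1:Fy-_Fy+}). Your argument needs this step or an equivalent one; the strong Markov property then follows from the simple one together with the finite-dimensional continuity of Corollary \ref{cor:type-1:cts_init_2}, which your proposal also leaves untouched.
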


We will refer to such processes as \em type-0 evolutions\em. We will present all further developments in parallel for type-1 and type-0 evolutions
. Let us start with total mass processes.

Squared Bessel processes are 1-self-similar $[0,\infty)$-valued diffusions with
$$dZ(y)=\delta dy+2\sqrt{Z(y)}dB(y),\quad Z(0)=b\ge 0,\quad 0<y<\zeta,$$
where $\delta\in\BR$ is a real parameter, $(B(y),y\ge 0)$ is Brownian motion and $\zeta=\infty$ if $\delta>0$ while $\zeta=\inf\{y\ge 0\colon Z(y)=0\}$ for $\delta\le 0$. We set $Z(y)=0$ for $y\ge\zeta$. See \cite[Chapter XI]{RevuzYor}. We say $Z$ is $\BESQ[\delta]$ starting from $b$.
The \BESQ[0] diffusion, also called the Feller diffusion, is well-known to satisfy the additivity property that the sum of independent \BESQ[0] diffusions from any initial conditions is still \BESQ[0]. 
A \BESQ[\delta] diffusion for $\delta\!>\!0$ ($\delta\!=\!0$) can be viewed as branching processes with (without) immigration \cite{KawaWata71}.

\begin{theorem}\label{thm:BESQ_total_mass}
  Consider a type-1 evolution, respectively a type-0 evolution, $\left( \beta^y,\; y\ge 0  \right)$ with $\beta^0 \in \IPspace$. Then the total mass 
  process $\left(\IPmag{\beta^y},\,y\ge0\right)$ is a \BESQ[0] diffusion, respectively a \BESQ[2\alpha] diffusion, starting from $\IPmag{\beta^0}$.
\end{theorem}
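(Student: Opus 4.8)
The plan is to identify the law of $(\IPmag{\beta^y},\,y\ge0)$ by showing that it is a path-continuous Markov process whose transition semigroup is that of the relevant squared Bessel diffusion; a path-continuous Markov process with the $\BESQ[\delta]$ semigroup is a $\BESQ[\delta]$ diffusion (see \cite[Ch.\ XI]{RevuzYor}). Recall from \cite[Ch.\ XI]{RevuzYor} that a $\BESQ[\delta]$ started from $b\ge0$ has, at time $t>0$, Laplace transform $\lambda\mapsto(1+2\lambda t)^{-\delta/2}\exp\bigl(-\lambda b/(1+2\lambda t)\bigr)$, and that these one-dimensional marginals characterize the $\BESQ[\delta]$ transition kernels. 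So it suffices to compute the conditional Laplace transform of $\IPmag{\beta^{y+z}}$ given $\beta^y$ and check that it depends on $\beta^y$ only through $\IPmag{\beta^y}$ and matches the displayed formula.

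\textbf{The single-block computation.} The crux is a single-block initial condition. If $\beta^0=\{(0,b)\}$ then, by \eqref{eq:intro:transn}, $\beta^y$ has law $\mu^{(\alpha)}_{b,1/2y}$, so by \eqref{eq:mu} its total mass equals $0$ with probability $e^{-b/2y}$ and otherwise equals the independent sum $L^{(\alpha)}_{b,1/2y}+B^{(\alpha)}_{1/2y}$, the second term having total mass $B^{(\alpha)}_{1/2y}$ since $\bar\gamma^{(\alpha)}$ is a partition of $[0,1]$. Write $r=1/2y$ and combine $\EV[e^{-\lambda B^{(\alpha)}_r}]=(r/(r+\lambda))^\alpha$ with \eqref{LMBintro}: the factor $((r+\lambda)/r)^\alpha$ cancels, and using $1-e^{-br}=e^{-br}(e^{br}-1)$ the whole expression telescopes to
\begin{equation*}
 \EV\bigl[e^{-\lambda\IPmag{\beta^y}}\bigr]=e^{-br}\,e^{br^2/(r+\lambda)}=\exp\!\left(-\frac{\lambda b}{1+2\lambda y}\right),
\end{equation*}
which is exactly the $\BESQ[0]$ Laplace transform from $b$ at time $y$. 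This is precisely where the choice \eqref{LMBintro} is engineered to fit.

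\textbf{General initial condition and the Markov property.} By the branching property built into \eqref{eq:intro:transn}, for arbitrary $\beta^0\in\IPspace$ the state $\beta^y$ is the concatenation of independent partitions $\gamma_U$, $U\in\beta^0$, each distributed as the time-$y$ state of a single-block evolution from $\Leb(U)$ (only finitely many non-empty and the family summable, cf.\ \cite[Lemma 6.1]{IPPA}), so $\IPmag{\beta^y}=\sum_{U\in\beta^0}\IPmag{\gamma_U}$ and multiplying the single-block transforms gives $\EV[e^{-\lambda\IPmag{\beta^y}}]=\exp(-\lambda\IPmag{\beta^0}/(1+2\lambda y))$. Since $(\beta^y)$ is a Hunt process (Theorem \ref{thmtype1Hunt}) with $\beta^{y+z}\mid\beta^y\sim\kappa^{(\alpha)}_z(\beta^y,\cdot)$, applying the same identity under conditioning yields
\begin{equation*}
 \EV\bigl[e^{-\lambda\IPmag{\beta^{y+z}}}\,\big|\,\mathcal{F}_y\bigr]=\exp\!\left(-\frac{\lambda\IPmag{\beta^y}}{1+2\lambda z}\right),
\end{equation*}
a function of $\IPmag{\beta^y}$ alone. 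As Laplace transforms determine laws on $[0,\infty)$, this shows $(\IPmag{\beta^y},y\ge0)$ is Markov with the $\BESQ[0]$ transition semigroup; since $\IPmag{\,\cdot\,}\colon(\IPspace,\dI)\to[0,\infty)$ is continuous and $\beta$ has continuous paths, so does the total mass process, which is therefore a $\BESQ[0]$ from $\IPmag{\beta^0}$.

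\textbf{Type-0 and the main obstacle.} For a type-0 evolution the only change is that \eqref{eq:intro:transn_0} prepends an independent scaled $\PDIP[\alpha,\alpha]$ of total mass $B\sim\GammaDist[\alpha,1/2z]$ at each transition, contributing an extra factor $\EV[e^{-\lambda B}]=(1+2\lambda z)^{-\alpha}$; everything else goes through verbatim (using Theorem \ref{thmtype0Hunt} in place of Theorem \ref{thmtype1Hunt}), giving the $\BESQ[2\alpha]$ transition semigroup and hence a $\BESQ[2\alpha]$ from $\IPmag{\beta^0}$. I expect the single-block Laplace-transform identity of the second paragraph to be the only non-routine point — checking that the apparently ad hoc \eqref{LMBintro} produces exactly the $\BESQ[0]$ marginal; the remainder is bookkeeping with the branching property, the Hunt/Markov property, and the classical characterization of $\BESQ[\delta]$.
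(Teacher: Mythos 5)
Your proof is correct and follows essentially the same route as the paper's: the single-block Laplace-transform identity forced by \eqref{LMBintro}, additivity over the blocks of $\beta^0$ via the branching property, and the Markov property of the interval-partition evolution to upgrade one-dimensional marginals to the law of the whole total-mass process. The only cosmetic differences are that you verify the semigroup identity through a conditional Laplace transform rather than the paper's induction on finite-dimensional marginals, and that you read off the leftmost-block law directly from \eqref{LMBintro} instead of re-deriving it from the clade decomposition.
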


Neither \BESQ[0] nor \BESQ[2\alpha] admit stationary distributions, so neither do the type-1 or type-0 evolutions themselves. However, we do have the
following ``pseudo-stationarity'' result. 

\begin{theorem}\label{thm:pseudostat}
 Let $\ol\beta\sim\PDIP[\alpha,0]$, respectively \PDIP[\alpha,\alpha], and, independently, $Z\sim\BESQ[0]$, respectively \BESQ[2\alpha], with an arbitrary  initial distribution. Let $(\nbeta^y,\,y\geq 0)$ be a type-1 evolution, respectively \vspace{-0.1cm} type-0 evolution, with $\nbeta^0\stackrel{d}{=}Z(0)\ol\beta$. Then for each fixed $y\ge 0$ we have $\nbeta^y \stackrel{d}{=} Z(y)\ol\beta$.
\end{theorem}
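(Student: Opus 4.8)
The statement is a ``pseudo-stationarity'' identity: if we start a type-1 (resp.\ type-0) evolution from $Z(0)\ol\beta$ with $\ol\beta\sim\PDIP[\alpha,0]$ (resp.\ $\PDIP[\alpha,\alpha]$) independent of $Z\sim\BESQ[0]$ (resp.\ $\BESQ[2\alpha]$), then at each fixed time $y$ the law is $Z(y)\ol\beta$. The natural approach is to compute the one-dimensional marginal at time $y$ directly from the transition kernel and recognize it as a scaled $\PDIP$. I would first reduce to a \emph{clean} initial condition: by conditioning on $Z(0)=b$ it suffices to show that, for $\ol\beta\sim\PDIP[\alpha,0]$ (resp.\ $\PDIP[\alpha,\alpha]$) and any fixed $b\ge0$, the law $\int\kappa_y^{(\alpha)}(b\ol\beta,\cdot)\,\Pr(\ol\beta\in d\ol\beta)$ (resp.\ with $\widetilde\kappa$) equals $\Pr(Z(y)\ol\beta\in\cdot)$ where $Z$ starts from $b$; then the general statement follows by mixing over the arbitrary law of $Z(0)$ and using that $Z(y)$ given $Z(0)=b$ has the $\BESQ$ transition law (and Theorem~\ref{thm:BESQ_total_mass} already guarantees the total mass of $\nbeta^y$ is exactly this $Z(y)$, so the only thing left to check is the \emph{conditional} law of the normalized partition given the total mass).

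**Main computation.** I would exploit the branching property of $\kappa_y^{(\alpha)}$ together with the scaling relation $c\,\PDIP[\alpha,\alpha]$ and the known decomposition of $\PDIP$ via a $\Stable[\alpha]$ subordinator sampled up to an independent exponential time (Proposition~\ref{prop:PDIP}, referenced in the excerpt). The strategy: realize $b\,\ol\beta$ (for $\ol\beta\sim\PDIP[\alpha,\alpha]$, the type-0 case) as the jump intervals of a $\Stable[\alpha]$ subordinator $Y$ stopped at an exponential level, rescaled so the total mass is $b$. Applying $\widetilde\kappa_y^{(\alpha)}$ means: prepend an independent $\GammaDist[\alpha,1/2y]$-scaled $\PDIP[\alpha,\alpha]$, and replace each block $U$ independently by $\gamma_U\sim\mu^{(\alpha)}_{\Leb(U),1/2y}$, i.e.\ (with probability $1-e^{-\Leb(U)/2y}$) a leftmost block of mass $L^{(\alpha)}_{\Leb(U),1/2y}$ followed by an independent $\GammaDist[\alpha,1/2y]$-scaled $\PDIP[\alpha,\alpha]$, and kill it otherwise. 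The point is that the Laplace transform~\eqref{LMBintro} was reverse-engineered precisely so that, after concatenating over the subordinator's jumps and using the regenerative/spinal structure of $\PDIP$, the resulting object is again a $\Stable[\alpha]$ subordinator sampled up to an exponential time with a new (random) rate, hence again a scaled $\PDIP[\alpha,\alpha]$, with total mass distributed as the $\BESQ[2\alpha]$ transition from $b$. For the type-1 case one does the same bookkeeping with $\PDIP[\alpha,0]$ (the left-to-right reversal), where the leading block plays a distinguished role and the reversal is needed to line up the spinal block on the correct side; the additivity of $\BESQ[0]$ handles the total-mass part. Concretely I would verify the identity at the level of a rich enough family of test functionals — e.g.\ joint Laplace transforms of block masses against the regenerative structure, or equivalently the finite-dimensional distributions under the bijection $\PDIP\leftrightarrow$ subordinator — and invoke the uniqueness of such transforms.

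**Alternative (cleaner) route.** Rather than a bare-hands subordinator computation, it may be more transparent to phrase everything through the scaling/time-change relation between these kernels and the Poissonian IP-evolutions of~\cite{IPPA}, together with the known pseudo-stationarity of the underlying $\mathtt{BESQ}(-2\alpha)$-driven excursion construction. In that framework, $\PDIP$ laws are the ``entrance''/stationary objects for a spectrally positive stable-driven interval-partition flow, and pseudo-stationarity under the scaling semigroup reduces to: (i) the additive/branching structure is preserved under the map $\beta\mapsto c\beta$, and (ii) the relevant excursion measure $\nu_{\mathtt{BESQ}}^{(-2\alpha)}$ (resp.\ $\nu_{\mathtt{BESQ}}$) has the right homogeneity so that pushing a $\PDIP$-distributed configuration forward by time $y$ and normalizing gives back a $\PDIP$. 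I would use whichever description of $\kappa_y^{(\alpha)},\widetilde\kappa_y^{(\alpha)}$ in terms of the \cite{IPPA} construction is available to us, and then cite the pseudo-stationarity already established there for the unnormalized processes, transferring it through the scaling and time-change.

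**Expected obstacle.** The genuine difficulty is the algebraic identity at the heart of the first route: showing that concatenating, over the jumps of a stable subordinator run to an exponential level, independent copies of $(0,L^{(\alpha)}_{m,1/2y})\concat(\GammaDist[\alpha,1/2y]\text{-scaled }\PDIP[\alpha,\alpha])$ — thinned by the $e^{-m/2y}$ killing — and prepending one more scaled $\PDIP$, reproduces a stable subordinator to an exponential level with the new rate dictated by the $\BESQ$ flow. This is exactly the computation the specific form of~\eqref{LMBintro} is engineered for, so the Laplace transform~\eqref{LMBintro} must be plugged in and the resulting exponential integral manipulated; the combinatorics of lining up ``leftmost block vs.\ spinal block'' correctly (and, for type-1, the role of the left-to-right reversal and the $\BESQ[0]$ vs.\ $\BESQ[2\alpha]$ dichotomy) is where care is needed. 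Everything else — reduction to deterministic $b$, mixing over $Z(0)$, consistency with Theorem~\ref{thm:BESQ_total_mass}, and the appeal to uniqueness of Laplace transforms — is routine.
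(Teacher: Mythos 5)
Your overall strategy --- compute the time-$y$ marginal via the branching property and the subordinator representation of $\PDIP$ laws, and recognize the result as a scaled $\PDIP$ by matching Laplace exponents --- is the paper's strategy, and your identification of the key identity (that $\Phi_{1/2y}$ plus the Laplace exponent of the leftmost-block subordinator built from \eqref{LMBintro} reassembles into an exponentially tilted stable exponent) is exactly what the paper verifies. However, there is a genuine gap in your reduction. You propose to fix $Z(0)=b$ first and then ``realize $b\,\ol\beta$ as the jump intervals of a $\Stable[\alpha]$ subordinator stopped at an exponential level, rescaled so the total mass is $b$.'' That rescaling (equivalently, conditioning on the total mass) destroys precisely the Poissonian structure your main computation needs: once the total mass is pinned to a deterministic $b$, the blocks are no longer the points of a Poisson random measure run up to an independent exponential time, so the thinning, competing-clocks, and i.i.d.-marks arguments that would let you recognize the output as the range of a tilted subordinator all fail. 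The independence of the normalized partition from the total mass (Proposition \ref{prop:PDIP}\,(iv)) guarantees the rescaled object has the right \emph{law}, but it does not restore the independence of the jumps after conditioning.

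The paper resolves this by reversing the order of your two reductions: it first proves the statement for $Z(0)\sim\ExpDist[\rho]$ (type 1), respectively $\GammaDist[\alpha,\rho]$ (type 0) --- exactly the randomization under which Proposition \ref{prop:PDIP}\,(iv) hands you an honest subordinator stopped at an independent exponential level, with $\Leb(U_0)$ and $\IPmag{\gamma}$ independent Gamma variables --- and only then deduces the fixed-$b$ case (Proposition \ref{prop:pseudostat:fixed}) by uniqueness of Laplace transforms in $b$. That second step is not routine: it requires matching the resulting transform in $b$ against the explicit $\BESQ[0]$ (resp.\ $\BESQ[2\alpha]$) transition density via Bessel-function integrals, and then invoking continuity in the initial state to pass from Lebesgue-a.e.\ $b$ to every $b$. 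You would need to add both of these steps. Finally, your ``alternative route'' of citing pseudo-stationarity from \cite{IPPA} is circular: that paper supplies only the Poissonian construction and the Hunt-process properties, and pseudo-stationarity is established here for the first time.
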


To obtain stationary diffusions on partitions of the unit interval, 
our third and fourth families of IP-evolutions, we employ a procedure that we call \textit{de-Poissonization}. 
Consider the total mass process $\left(\IPmag{\beta^y},\; y\ge 0 \right)$ from Theorem \ref{thm:BESQ_total_mass} and the time-change\vspace{-0.2cm}
\begin{equation}\label{eq:de_Pois_tc_intro}
 \rho(u) := \inf\left\{  y\ge 0:\;  \int_0^y \IPmag{\beta^z}^{-1} dz > u   \right\}, \quad u \ge 0.\vspace{-0.1cm}
\end{equation}

\begin{theorem}\label{thm:stationary}
 Let $(\beta^y,\,y\ge0)$ denote a type-1 evolution, respectively a type-0 evolution, with initial state $\beta^0 \in \IPspace\setminus\{\emptyset\}$. Then the process\vspace{-0.2cm}
$$
  (\ol\beta^u,\,u\ge0) := \left(  \IPmag{\beta^{\rho(u)}}^{-1} \beta^{\rho(u)},\; u \ge 0  \right)\vspace{-0.2cm}
$$
 is a path-continuous Hunt process on $(\IPspace_1,\dI)$ where $\IPspace_1 := \{\beta\!\in\!\IPspace\colon \IPmag{\beta}\!=\!1\}$, with a stationary distribution given by $\PDIP[\alpha,0]$ respectively $\PDIP[\alpha, \alpha]$.
\end{theorem}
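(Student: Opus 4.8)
The plan is to establish the three conclusions — path-continuity, the Hunt property, and $\PDIP$-stationarity — separately, relying on $1$-self-similarity (Theorems~\ref{thmtype1Hunt} and~\ref{thmtype0Hunt}) and the total-mass description (Theorem~\ref{thm:BESQ_total_mass}) for the first two, and on pseudo-stationarity (Theorem~\ref{thm:pseudostat}) for the third. Write $Z(y):=\IPmag{\beta^y}$, so that $Z\sim\BESQ[0]$, resp.\ $\BESQ[2\alpha]$, from $\IPmag{\beta^0}>0$ in the type-1, resp.\ type-0, case, and let $\zeta:=\inf\{y\ge0\colon Z(y)=0\}$, which is finite a.s.\ since the indices $0$ and $2\alpha$ lie in $[0,2)$. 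Then $A(y):=\int_0^y Z(z)^{-1}\,dz$ is a.s.\ a continuous strictly increasing additive functional on $[0,\zeta)$, and I would first verify that $A(\zeta-)=\infty$ a.s.\ — a classical feature of $\BESQ[\delta]$ for $\delta\in[0,2)$, e.g.\ via the time-reversal of $Z$ at $\zeta$ (a $\BESQ[4-\delta]$ from $0$, hence of order $t$ as $t\downarrow0$, so that $\int_0^{\cdot}dt/t=\infty$; cf.\ \cite[Ch.~XI]{RevuzYor}). Thus $\rho=A^{-1}$ is a continuous strictly increasing bijection of $[0,\infty)$ onto $[0,\zeta)$, $\beta^{\rho(u)}$ is well-defined with $\IPmag{\beta^{\rho(u)}}>0$ for all $u\ge0$, and $u\mapsto\beta^{\rho(u)}$ is path-continuous; since the normalisation map $\pi\colon\{\beta\in\IPspace\colon\IPmag{\beta}>0\}\to\IPspace_1$, $\pi(\beta):=\IPmag{\beta}^{-1}\beta$, is $\dI$-continuous, $(\ol\beta^u,\,u\ge0)=(\pi(\beta^{\rho(u)}),\,u\ge0)$ is path-continuous.

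For the Hunt property, I note that time-changing the Hunt process $(\beta^y)$ by the inverse $\rho$ of the strictly increasing continuous additive functional $A$ (with $A(0)=0$, $A(\zeta-)=\infty$) yields a Hunt process $(\beta^{\rho(u)},\,u\ge0)$ on the open set $\{\IPmag{\beta}>0\}\subseteq\IPspace$, by classical time-change theory. A short computation with $1$-self-similarity shows that replacing $\beta^0$ by $c\beta^0$ ($c>0$) replaces $Z$ by $(cZ(y/c))_{y\ge0}$, hence $A$ by $y\mapsto A(y/c)$, $\rho$ by $c\rho$, and $\beta^{\rho(u)}$ by $c\beta^{\rho(u)}$, so that $\ol\beta^u=\pi(\beta^{\rho(u)})$ is unchanged. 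Therefore the transition semigroup of $(\beta^{\rho(u)})$ is covariant under scaling and descends through $\pi$: conditionally on $\beta^{\rho(u)}=c\gamma$ with $\gamma:=\pi(\beta^{\rho(u)})\in\IPspace_1$, the future of $(\pi(\beta^{\rho(\,\cdot\,)}))$ has a law depending only on $\gamma$, so $(\ol\beta^u)$ is Markov on $\IPspace_1$; the strong Markov property and quasi-left-continuity descend through the continuous surjection $\pi$, whence $(\ol\beta^u)$ is a path-continuous Hunt process on $(\IPspace_1,\dI)$.

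For stationarity, take $\beta^0\sim\PDIP[\alpha,0]$, resp.\ $\PDIP[\alpha,\alpha]$, so that $\ol\beta^0=\pi(\beta^0)=\beta^0$ has the same law and $\IPmag{\beta^0}=1$. By pseudo-stationarity (Theorem~\ref{thm:pseudostat}) with deterministic $Z(0)=1$, for each fixed $y\ge0$ we have $\beta^y\stackrel{d}{=}Z(y)\ol\beta$ with $\ol\beta\sim\PDIP$ independent of $Z\sim\BESQ[0]$, resp.\ $\BESQ[2\alpha]$, from $1$; in particular, on $\{Z(y)>0\}$, the shape $\beta^y/\IPmag{\beta^y}$ has law $\PDIP$ and is independent of $\IPmag{\beta^y}$. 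The crux of the proof — and, I expect, the main obstacle — is to strengthen this to the statement that, under this starting law, $\beta^y/\IPmag{\beta^y}$ has conditional law $\PDIP$ given the entire past $(Z(z),\,0\le z\le y)$ of the total-mass process. Morally this is a skew-product (Lamperti-type) fact — the ``shape'' coordinates of the evolution run on the clock $A$ and are independent of the total-mass process, much as for ratios of independent $\BESQ$'s — and I would extract it from the Poissonian construction of \cite{IPPA}, in which the arrangement of the blocks and the excursion masses enter as independent ingredients, together with Theorem~\ref{thm:pseudostat}; this deserves a standalone lemma.

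Granting that lemma, fix bounded measurable $f\ge0$ on $\IPspace_1$ and $g\ge0$ on $[0,\infty)$. The substitution $y=\rho(u)$, i.e.\ $u=A(y)$ with $du=\IPmag{\beta^y}^{-1}dy$, together with $\ol\beta^{A(y)}=\beta^y/\IPmag{\beta^y}$, gives the pathwise identity $\int_0^\infty g(u)f(\ol\beta^u)\,du=\int_0^\zeta g(A(y))\,f\!\left(\beta^y/\IPmag{\beta^y}\right)\IPmag{\beta^y}^{-1}\,dy$. Taking expectations, applying Fubini, and conditioning the integrand on $(Z(z),\,0\le z\le y)$ — with respect to which $g(A(y))\IPmag{\beta^y}^{-1}\cf\{y<\zeta\}$ is measurable while $\beta^y/\IPmag{\beta^y}$ has conditional law $\PDIP$ — we obtain
\[
 \int_0^\infty g(u)\,\EV\!\left[f(\ol\beta^u)\right]du=\left(\int f\,d\PDIP\right)\int_0^\infty\EV\!\left[g(A(y))\IPmag{\beta^y}^{-1}\cf\{y<\zeta\}\right]dy=\left(\int f\,d\PDIP\right)\int_0^\infty g(u)\,du,
\]
the last equality being the same substitution with $f\equiv1$, using $A(\zeta-)=\infty$. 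As $g\ge0$ is arbitrary, $\EV[f(\ol\beta^u)]=\int f\,d\PDIP$ for Lebesgue-a.e.\ $u\ge0$; for bounded continuous $f$ the left-hand side is continuous in $u$ (by path-continuity of $(\ol\beta^u)$ and bounded convergence), so the identity holds for every $u\ge0$, i.e.\ $\ol\beta^u\sim\PDIP$ for all $u$. Since $(\ol\beta^u,\,u\ge0)$ is Markov by the previous step, this says exactly that $\PDIP[\alpha,0]$, resp.\ $\PDIP[\alpha,\alpha]$, is a stationary distribution.
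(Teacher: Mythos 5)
The decisive step of your stationarity argument --- that under the $\PDIP$-scaled initial law the normalized partition $\IPmag{\nbeta^y}^{-1}\scaleI\nbeta^y$ has conditional law $\PDIP$ given the \emph{entire mass history} $(Z(z),\,0\le z\le y)$ --- is exactly where the work lies, and you have not proved it. Theorem \ref{thm:pseudostat} gives, for a fixed $y$, only that $\nbeta^y\stackrel{d}{=}Z(y)\scaleI\ol\beta$ with $\ol\beta$ independent of the single variable $Z(y)$; it says nothing about the joint law with $(Z(z),\,z<y)$. Your heuristic that the Poissonian construction feeds in "arrangement of blocks" and "excursion masses" as independent ingredients is not accurate: the blocks at level $y$ are cross-sections of the very spindles whose widths determine the total mass at every other level, so shape and mass history are built from the same randomness and there is no ready-made skew-product decomposition. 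The paper's Lemma \ref{strongstat} is precisely your missing lemma, and its proof is not soft: it conditions on $B=m$ in the representation $\nbeta^y=B\scaleI\gamma^{y/B}$, applies fixed-time pseudo-stationarity to factor out $h$, and then runs an induction over finite-dimensional marginals $f_0(\IPmag{\nbeta^{y_0}})\cdots f_n(\IPmag{\nbeta^{y_n}})$ using the Markov property of the evolution --- the induction closes only because the conditional law of $\nbeta^{y_1}$ is again a scaled $\PDIP$, so the hypothesis propagates; a monotone class argument then handles general $\cF_{\rm mass}^y$-measurable $\eta$. Without this lemma your Fubini computation has nothing to condition on, so as written the proof is incomplete at its core.

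Granting that lemma, the rest is essentially correct, and your route from it to stationarity genuinely differs from the paper's: you integrate the occupation identity $\int_0^\infty g(u)f(\ol\nbeta^u)\,du=\int_0^\zeta g(A(y))f(\IPmag{\nbeta^y}^{-1}\scaleI\nbeta^y)\IPmag{\nbeta^y}^{-1}dy$, apply Fubini and the conditional-law statement at each fixed $y$, and upgrade from a.e.\ $u$ to every $u$ by path-continuity. The paper instead extends the lemma from fixed times to $(\cF_{\rm mass}^y)$-stopping times by dyadic approximation (Theorem \ref{thm:pseudostat_strong}) and evaluates at $Y=\Absa(u)$. Your version avoids the stopping-time extension; the paper's yields the stronger strong-pseudo-stationarity statement, which is of independent use. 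Two smaller points: your Hunt argument via classical time-change plus the scaling covariance $D(\bsa)=D(c\scaleI\nbeta^{y/c})$ matches Proposition \ref{prop:dePois:Markov} in substance, but Sharpe's definition as used in the paper also requires continuity of the semigroup in the initial state, which the paper obtains from Helland's continuity of the time-change map and which you should address; and "strong Markov and quasi-left-continuity descend through $\pi$" deserves a sentence, since a continuous image of a Hunt process is not automatically Hunt --- it is the scaling covariance that makes the projection Markov in the first place.
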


In light of this, we will refer to the de-Poissonized evolutions $(\ol\beta^u,u\!\ge\!0)$ as $\left(\alpha,0\right)$- and  \em $\left(\alpha,\alpha\right)$-IP-evolutions\em.  
%
%
In a sequel paper \cite{IPPPetrov}, we show that the process of ranked block sizes of an $\left(\alpha,0\right)$- or 
$\left(\alpha,\alpha\right)$-IP evolution is a diffusion introduced by Petrov \cite{Petrov09}, extending Ethier and Kurtz \cite{EthiKurt81}, and 
further studied and ramified in \cite{RuggWalk09,RuggWalkFava13,Ruggiero14,FengSun10,FengSunWangXu11}. Indeed, the viewpoint of the richer 
IP-evolution in $(\IPspace,\dI)$ gives insight into the creation of blocks and the evolution of associated quantities such as the 
$\alpha$-diversity process in their diffusions on decreasing sequences. This is of interest in allele-frequency models with
infinitely many types \cite{RuggWalkFava13}. 

Lamperti \cite{Lamperti67} used the time-change \eqref{eq:de_Pois_tc_intro} to construct continuous-state branching processes from 
L\'evy processes. When applied to Brownian motion, the diffusion coefficient becomes state-dependent, linear in population size. When applied to
\BESQ, the diffusion coefficient becomes quadratic, and while this is wiped out when scaling by population size, the effect on 
the scaled interval partition is that blocks behave like coupled Jacobi diffusions. Indeed, in \cite{WarrYor98,Pal11}, Jacobi and Wright--Fisher 
diffusions are obtained as a de-Poissonization of a vector of independent \BESQ\ processes via the same time-change. In that paper, the sum of the 
\BESQ\ processes turns out to be independent of the de-Poissonized process. See also \cite{Pal13}. 

\begin{conjecture}\label{conj:mass_struct}
 Consider a type-1 or type-0 evolution $(\beta^y,\,y\ge0)$ starting from $\beta\in\IPspace$. Its total mass process $(\IPmag{\beta^y},\,y\ge 0)$,  
 as in Theorem \ref{thm:BESQ_total_mass}, is independent of its de-Poissonization $(\ol\beta^u,\,u\ge 0)$, as in Theorem \ref{thm:stationary}.
\end{conjecture}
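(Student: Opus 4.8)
The plan is to exhibit a \emph{skew-product} structure: the Lamperti time-change \eqref{eq:de_Pois_tc_intro} decouples the radial coordinate $\IPmag{\beta^y}$ from the angular coordinate $\ol\beta^u$, and the decoupling is visible at the level of generators. Write $\delta_1=0$ and $\delta_0=2\alpha$, so that by Theorem \ref{thm:BESQ_total_mass} the total mass of a type-$i$ evolution is $\BESQ[\delta_i]$, with generator $\mathcal{L}_{\delta_i}g(r)=\delta_i g'(r)+2rg''(r)$. For $g$ smooth on $[0,\infty)$ and $f$ in a suitable core of functions on $\IPspace_1$, consider the test functions $F_{g,f}(\beta):=g(\IPmag\beta)\,f\big(\IPmag\beta^{-1}\beta\big)$. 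The first step is to show that the (extended) generator $\procvgen^{(i)}$ of the type-$i$ evolution acts on these by
$$\procvgen^{(i)}F_{g,f}(\beta)\;=\;\big(\mathcal{L}_{\delta_i}g\big)(\IPmag\beta)\,f\big(\IPmag\beta^{-1}\beta\big)\;+\;\IPmag\beta^{-1}\,g(\IPmag\beta)\,\big(\vgen^{(i)}f\big)\big(\IPmag\beta^{-1}\beta\big),$$
where $\vgen^{(i)}$ is the generator of the $(\alpha,0)$- resp.\ $(\alpha,\alpha)$-IP-evolution of Theorem \ref{thm:stationary} on $\IPspace_1$. The two points of substance are that \emph{no cross term} couples $g'$ with a derivative of $f$, and that the angular part carries exactly the weight $\IPmag\beta^{-1}$. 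Both are dictated by structure already proved: $1$-self-similarity (Theorem \ref{thmtype1Hunt}) forces $\procvgen^{(i)}$ to be homogeneous of degree $-1$ under scaling, while scaling acts trivially on $\IPmag\beta^{-1}\beta$; Theorem \ref{thm:BESQ_total_mass} identifies the $f\equiv 1$ slice as $\mathcal{L}_{\delta_i}$; and the pseudo-stationarity identity (Theorem \ref{thm:pseudostat}) --- that if $\nbeta^0\stackrel{d}{=}Z(0)\ol\beta$ with $\ol\beta\sim\PDIP[\alpha,0]$ resp.\ $\PDIP[\alpha,\alpha]$ then $\nbeta^y\stackrel{d}{=}Z(y)\ol\beta$ --- pins down the angular part as the generator of the stationary de-Poissonized evolution, time-dilated by $\IPmag\beta^{-1}$.

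Granting the displayed identity, the remainder is a standard time-change argument. Set $\widetilde R_u:=\IPmag{\beta^{\rho(u)}}$, so $\ol\beta^u=\widetilde R_u^{-1}\beta^{\rho(u)}$ and $\rho'(u)=\widetilde R_u$. Time-changing by $\rho$ multiplies the generator by $\IPmag\beta$, so the pair $(\widetilde R_u,\ol\beta^u)_{u\ge0}$ solves the martingale problem for
$$(g,f)\;\longmapsto\;\big(\widetilde{\mathcal{L}}_i g\big)(\IPmag\beta)\,f\big(\IPmag\beta^{-1}\beta\big)+g(\IPmag\beta)\,\big(\vgen^{(i)}f\big)\big(\IPmag\beta^{-1}\beta\big),\qquad \widetilde{\mathcal{L}}_i g(r):=r\,\mathcal{L}_{\delta_i}g(r),$$
which is a \emph{sum} of operators acting on the two coordinates separately: $\widetilde R$ is the one-dimensional diffusion $d\widetilde R=\delta_i\widetilde R\,du+2\widetilde R\,d\widetilde B$ (Lamperti's transform of $\BESQ[\delta_i]$), and $\ol\beta^\cdot$ is the Hunt process of Theorem \ref{thm:stationary}. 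Provided both coordinate martingale problems are well-posed, the product of their laws solves the joint problem, so by uniqueness $\widetilde R$ and $(\ol\beta^u)$ are independent. Finally $\rho(u)=\int_0^u\widetilde R_v\,dv$ is a functional of $\widetilde R$ alone, hence so is its inverse $A(y)=\int_0^y\IPmag{\beta^z}^{-1}dz$, and therefore $\IPmag{\beta^y}=\widetilde R_{A(y)}$ is a functional of $\widetilde R$ alone. Combining, $(\IPmag{\beta^y},\,y\ge0)$ is independent of $(\ol\beta^u,\,u\ge0)$, which is the claim.

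The main obstacle is the generator identity of the first paragraph: one must exhibit an explicit core of test functions on which $\procvgen^{(i)}$ can be computed from the kernel \eqref{eq:mu}--\eqref{eq:intro:transn} --- in particular controlling the contribution of the infinitely many small blocks created near $\IPmag\beta$ --- and one must identify $\vgen^{(i)}$ with a bona fide generator of the Hunt process of Theorem \ref{thm:stationary} whose martingale problem is well-posed, so that the uniqueness step above is legitimate. A logically equivalent route that sidesteps explicit generators is to \emph{synthesize} the type-$i$ evolution: take an $(\alpha,0)$- resp.\ $(\alpha,\alpha)$-IP-evolution $(\ol\beta^u)$ and an independent copy of $\widetilde R$, form $A$ and $\rho$ from $\widetilde R$ as above, set $\beta^y:=\widetilde R_{A(y)}\,\ol\beta^{A(y)}$, and verify via the transition kernels and pseudo-stationarity that $(\beta^y)$ has the type-$i$ semigroup of \eqref{eq:intro:transn} resp.\ \eqref{eq:intro:transn_0}; independence is then automatic by construction. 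Either way, $1$-self-similarity and pseudo-stationarity are the structural inputs forcing the decoupling, in analogy with the de-Poissonization of independent $\BESQ$ processes into Jacobi/Wright--Fisher diffusions in \cite{WarrYor98,Pal11}, where the sum is likewise independent of the normalized vector.
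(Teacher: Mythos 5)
The statement you are proving is labelled a \emph{Conjecture} in the paper; the authors do not prove it and explicitly offer only a weaker substitute (Theorem \ref{thm:pseudostat_strong}, strong pseudo-stationarity, which gives the $\PDIP$ law of $\ol\beta^u$ at $(\cF^y_{\rm mass})$-stopping times but not independence of the two \emph{processes}). So there is no proof in the paper to compare against, and your argument would, if complete, settle an open problem. It is not complete.

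The gap is in your first paragraph, and it is not the technical obstacle you flag at the end but the central mathematical content of the conjecture: the claim that $\procvgen^{(i)}F_{g,f}$ contains \emph{no cross term} coupling $g'$ with a derivative of $f$. None of the three inputs you cite forces this. Self-similarity only forces the generator to be homogeneous of degree $-1$ under $\beta\mapsto c\beta$, and a cross term of the form $\IPmag{\beta}^{-1}\cdot(\text{bilinear in }g',\,Df)$ is perfectly homogeneous of degree $-1$, so scaling does not exclude it. Theorem \ref{thm:BESQ_total_mass} only identifies the $f\equiv 1$ slice, which says nothing about cross terms. Pseudo-stationarity (Theorem \ref{thm:pseudostat}) is a statement about one-dimensional marginals started from the special initial law $Z(0)\ol\beta$; it identifies the invariant measure of the angular part but is consistent with an arbitrary ``rotational drift'' of the angular coordinate driven by the radial one. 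Indeed, in the finite-dimensional antecedent you invoke (\cite{WarrYor98,Pal11}), the independence of the sum and the normalized vector is a theorem requiring a genuine computation with the explicit generator, and it holds only because the quadratic covariation structure of $\BESQ$ makes the cross term vanish after the time change --- it is not a consequence of self-similarity plus the law of the sum. Here no explicit generator or core is available in the paper (the sequel \cite{IPPPetrov} only treats the ranked-mass projection), so the vanishing of the cross term cannot even be checked, let alone assumed. Your alternative ``synthesis'' route has the same circularity: verifying that $\widetilde R_{A(y)}\,\ol\beta^{A(y)}$ has the type-$i$ semigroup from an \emph{arbitrary} initial state requires either the independence you are trying to prove or a transition-kernel identity strictly stronger than pseudo-stationarity (which only covers initial states of product form $Z(0)\ol\beta$ with $\ol\beta\sim\PDIP$). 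Finally, even granting the generator identity, the uniqueness step needs well-posedness of the martingale problem for the de-Poissonized evolution on $\IPspace_1$, which the paper does not establish. The time-change and product-martingale-problem mechanics in your second paragraph are fine; everything upstream of them is the open problem.
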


See Theorem \ref{thm:pseudostat_strong} for a weaker result.

\subsection{Construction of interval partition evolutions following \cite{IPPA}}\label{IPPAsection}

In \cite{IPPA}, we gave a general construction of processes in a space of interval partitions based on spectrally positive L\'evy processes
(\em scaffolding\em) whose point process of jump heights (interpreted as lifetimes of individuals) is marked by excursions (\em spindles\em, giving ``sizes'' varying during the lifetime). Informally, the IP-evolution, indexed by level, considers for each level $y\ge 0$
the jumps crossing that level and records for each such jump an interval whose length is the ``size'' of the individual (width of the spindle) when crossing that level, ordered from left to right without leaving gaps. This construction and terminology is illustrated in Figure \ref{fig:skewer_1}; we inscribe the corresponding spindle vertically into each jump, depicted as a laterally symmetric shaded blob.

\begin{figure}[t]
 \centering
 \input{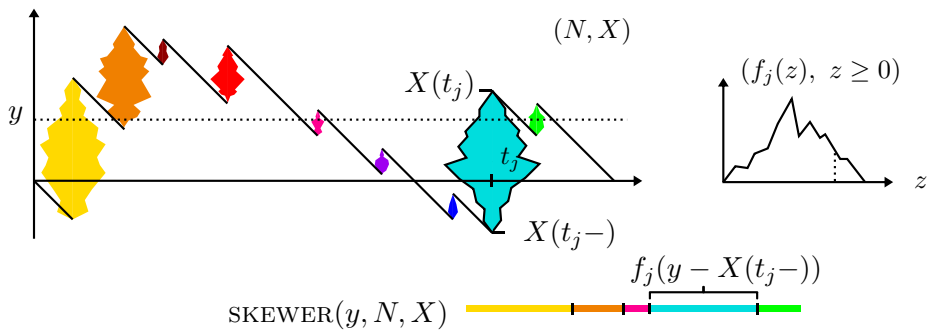}
 \caption{Left: The slanted black lines comprise the graph of the scaffolding $X$. Shaded blobs decorating jumps describe the corresponding spindles: points $(t_j,f_j)$ of $N$. Right: Graph of one spindle. Bottom: A skewer, with blocks shaded to correspond to spindles; not to scale.\label{fig:skewer_1}\vspace{-0.2cm}}
\end{figure}

Specifically, if $N=\sum_{i\in I}\delta(t_i,f_i)$ is a point process of excursions $f_i$ at times $t_i\in[0,T]$ of excursion lengths $\zeta_i$ (spindle heights), 
and $X$ is a real-valued process with jumps $\Delta X(t_i):=X(t_i)-X(t_i-)=\zeta_i$ at times $t_i$, $i\in I$, we define the interval partition $\skewer(y,N,X)$ at level $y$, as follows.

\begin{definition}\label{def:skewer}
 For $y\in\BR$, $t\in [0,T]$, the \emph{aggregate mass} in $(N,X)$ at level $y$, up to time $t$ is\vspace{-0.3cm}
 \begin{equation}
  M_{N,X}^y(t) := \sum_{i\in I\colon t_i\le t}f_i(y - X(t_i-)).\label{eq:agg_mass_from_spindles}\vspace{-0.3cm}
 \end{equation}
 The \emph{skewer} of $(N,X)$ at level $y$, denoted by $\skewer(y,N,X)$, is defined as\vspace{-0.3cm}
 \begin{equation}
   \left\{\left(M^y_{N,X}(t-),M^y_{N,X}(t)\right)\!\colon t\in [0,T],\ M^y_{N,X}(t-) < M^y_{N,X}(t)\right\}\label{eq:skewer_def}\vspace{-0.3cm}
 \end{equation}
 and the \emph{skewer process} as $\skewerP(N,X) := \big( \skewer(y,N,X),y\!\geq\! 0\big)$. 
\end{definition}

Let $\fN$ denote a Poisson random measure (\PRM) with intensity measure $\Leb\otimes\nu$ (denoted by \PRM[\Leb\otimes\nu]), where $\nu$ is the Pitman--Yor excursion measure \cite{PitmYor82} of a $[0,\infty)$-valued diffusion. Let $\fX$ be an associated L\'evy process, with its \PRM\ of jumps equaling the image of $\fN$ under the map from excursions to lifetimes, stopped at a stopping time $T$. In \cite{IPPA}, we established criteria on $\nu$ and $T$ under which $\skewerP(\fN,\fX)$ is a diffusion, i.e.\ a path-continuous strong Markov process. In this IP-evolution, each interval length (block) evolves independently according to the $[0,\infty)$-valued diffusion, which we call the \em block diffusion\em, while in between the (infinitely many) blocks, new blocks appear at times equal to the pre-jump levels of $\fX$. 


This IP-evolution has a particular initial distribution. In Definition \ref{constr:type-1} we formalize a construction to start this diffusion from any $\beta\in\IPspace$. Informally, we define 
$(\bN_{\beta},\bX_{\beta})$ as follows. For each interval $V\in\beta$ let $\bff_V$ denote a block diffusion starting from $\Leb(V)$ and killed 
upon hitting $0$. We denote its lifetime by $\life(\bff_V)$. Let $\bX_V$ be an independent L\'evy process starting from $\life(\bff_V)$, killed 
upon hitting $0$. We form $\bN_V$ by marking jumps of $\bX_V$ with independent excursions drawn from $\nu$, conditioned to have lifetimes equal to corresponding jump heights, and we mark with $\ff_V$ the jump of height $\life(\bff_V)$ at time 0. We do this independently for each $V\in\beta$. Scaffolding $\bX_\beta$ and point measure $\bN_\beta$ are formed by concatenating $\bX_V$ or $\bN_V$, $V\in\beta$. A continuous version of 
$\skewerP(\bN_{\beta},\bX_{\beta})$ is an IP-evolution starting from $\beta$.

\begin{theorem}\label{thm:diffusion_0}
 When the block diffusion is \BESQA\ and the scaffolding is built from spectrally positive \StableA\ L\'evy processes, the IP-evolution constructed as above is the type-1 evolution of Theorem \ref{thmtype1Hunt}.
\end{theorem}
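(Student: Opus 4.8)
The plan is to compute the one-dimensional marginals of $\skewerP(\bN_\beta,\bX_\beta)$, identify them with $\kappa^{(\alpha)}_y(\beta,\cdot)$, and then invoke from \cite{IPPA} that $\skewerP(\bN_\beta,\bX_\beta)$ is a path-continuous Hunt process. The branching property established in \cite{IPPA} reduces everything to a single block: $(\bN_\beta,\bX_\beta)$ is a concatenation of the independent pieces $(\bN_V,\bX_V)$, $V\in\beta$, and $\skewer$ is compatible with concatenation of scaffolds, so $\skewer(y,\bN_\beta,\bX_\beta)=\Concat_{U\in\beta}\skewer(y,\bN_U,\bX_U)$ with independent terms --- which is exactly the structure of $\kappa_y^{(\alpha)}(\beta,\cdot)$, defined as the law of $\Concat_{U\in\beta}\gamma_U$ with the $\gamma_U\sim\mu_{\Leb(U),1/2y}^{(\alpha)}$ independent. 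Hence it suffices to prove that for a single block $V$ of mass $m=\Leb(V)$ one has $\skewer(y,\bN_V,\bX_V)\sim\mu_{m,1/2y}^{(\alpha)}$ for every $y>0$.

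Fix $m,y>0$. For the atom at $\emptyset$: by the total-mass computation of \cite{IPPA} (the scaffolding is killed at $0$, so there is no net immigration), $\big(\IPmag{\skewer(z,\bN_V,\bX_V)},\,z\ge0\big)$ is a $\BESQ[0]$ started from $m$, and hence the skewer is empty at level $y$ with probability $e^{-m/2y}$ --- the atom $e^{-mr}\delta_\emptyset$ of $\mu_{m,1/2y}^{(\alpha)}$ with $r=1/2y$. For the rest I would disintegrate $(\bN_V,\bX_V)$ along the excursions of the spectrally positive $\StableA$ scaffolding $\bX_V$ at level $y$ (which is regular for itself for this process). The blocks of $\skewer(y,\bN_V,\bX_V)$ are precisely the spindles carried by jumps of $\bX_V$ that straddle level $y$, listed in time, i.e.\ left-to-right, order; the first one gives the leftmost block, of mass $L$. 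If the initial spindle $\bff_V$ survives to level $y$, i.e.\ $\life(\bff_V)>y$, then $L=\bff_V(y)$, the value of a $\sBESQA$ from $m$ at time $y$; otherwise $L$ is the width at level $y$ of the spindle on the first jump of $\bX_V$ over $y$. After this initial segment $\bX_V$ sits at level $y$, and by the strong Markov property there the remaining straddling spindles --- hence the blocks to the right of $L$ --- are independent of $L$ and come, in local-time order, from those excursions of $\bX_V$ below $y$ that return by over-shooting $y$, run up to the first excursion that instead descends to $0$, i.e.\ kills the scaffolding (such killing excursions occur at a constant rate per unit local time at $y$).

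Each returning below-$y$ excursion contributes one block whose width at level $y$ is a functional of the $\StableA$ over-shoot and of a $\sBESQA$ excursion evaluated at the crossing age. Since the fluctuation theory --- in particular the ladder structure --- of the spectrally positive $\StableA$ scaffolding is governed by a $\Stable[\alpha]$ subordinator, and the killing of $\bX_V$ at $0$, a distance $y$ below the level, caps this structure at an independent exponential cutoff, the blocks to the right of $L$ assemble, in order, into the range-complement of a $\Stable[\alpha]$ subordinator run up to an independent exponential level; by Proposition \ref{prop:PDIP} this equals $B\ol\gamma$ with $B\sim\GammaDist[\alpha,\rho]$ and $\ol\gamma\sim\PDIP[\alpha,\alpha]$ independent, for some rate $\rho>0$. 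Now $\rho$ and the law of $L$ are pinned down simultaneously: on non-extinction the total mass is $L+B$, which must be distributed as $Z(y)$ conditioned to be positive for $Z\sim\BESQ[0]$ from $m$; since $L$ is independent of $(B,\ol\gamma)$ and $B$ of $\ol\gamma$, dividing Laplace transforms forces $\rho=1/2y$ and gives $\EV[e^{-\lambda L}]=(1+2\lambda y)^{\alpha}\big(e^{c/(1+2\lambda y)}-1\big)/(e^{c}-1)$ with $c=m/2y$, which is exactly \eqref{LMBintro} at $b=m$, $r=1/2y$. (A direct local-time computation recovers the same $\rho$, the $1/2$ reflecting the $\BESQ$ normalisation, and serves as a check.) This shows $\skewer(y,\bN_V,\bX_V)\sim\mu_{m,1/2y}^{(\alpha)}$, hence the general transition kernel equals $\kappa_y^{(\alpha)}$, and together with the Hunt-process and path-continuity statements of \cite{IPPA} this proves Theorem \ref{thm:diffusion_0} and thereby Theorem \ref{thmtype1Hunt}.

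The main obstacle is making the level-$y$ excursion/ladder decomposition of $(\bN_V,\bX_V)$ rigorous and extracting the closed form \eqref{LMBintro}: one must assemble precise two-sided-exit and over-shoot laws for the spectrally positive $\StableA$ process --- which, being killed at $0$, has its scale invariance broken, and this is exactly what produces the exponential cutoff/tempering in the $\PDIP$-part --- together with exact expressions for the $\sBESQA$ excursion measure at a fixed age, handle carefully the conditioning on non-extinction and the dichotomy ``$\bff_V$ survives to level $y$ or not'', and prove cleanly that $L$ decouples from the $\PDIP$-component so that the conditional law really factorises as in $\mu_{m,1/2y}^{(\alpha)}$.
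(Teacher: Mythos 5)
Your overall architecture matches the paper's: reduce to a single block via the branching/concatenation structure (Corollary \ref{cor:type-1:gen_transn}), decompose the single clade at level $y$ into a leftmost spindle plus a Poisson family of level-$y$ excursions, identify the part to the right of the leftmost block as an exponentially-stopped tempered $\Stable[\alpha]$ subordinator (hence a Gamma-scaled \PDIP[\alpha,\alpha]), and obtain independence of the leftmost block from the rest by the strong Markov property at the first passage of level $y$. That is exactly Proposition \ref{prop:type-1:transn}, after which the paper, like you, simply cites \cite{IPPA} for the Hunt property and continuity in the initial state.

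There is, however, a genuine circularity in how you pin down the two quantities that make the kernel explicit. You take as input that the total mass of the single-clade skewer is a \BESQ[0] started from $m$ --- attributing this to \cite{IPPA} --- and use it twice: to get the extinction probability $e^{-m/2y}$, and to determine the leftmost-block law \eqref{LMBintro} together with the rate $\rho=1/2y$ by dividing Laplace transforms. But the \BESQ[0] total-mass law is Theorem \ref{thm:BESQ_total_mass} of the present paper, not a result of \cite{IPPA}, and its proof in Section \ref{sec:type-1_gen:cts} uses precisely the entrance law of Proposition \ref{prop:type-1:transn} and the Laplace transform of $L^y$ from Lemma \ref{lem:LMB} --- i.e.\ the very law you are trying to derive. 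The paper instead gets the extinction probability from $\mClade\{\life^+\le z\,|\,m^0=b\}=e^{-b/2z}$ (Proposition \ref{prop:clade:stats}(iv), proved in the appendix from the \InvGammaDist[1+\alpha,b/2] law of the overshoot and a two-sided exit computation for the stable scaffolding), gets the $(2y)^{-\alpha}$ rate of the local-time cutoff and the tempered L\'evy measure by direct clade statistics and Poisson thinning, and --- this is the real work your shortcut skips --- computes $\mClade\{m^y\in dc\,|\,m^0=b,\,\life^+>y\}$ via the time-reversal coupling of Lemma \ref{lem:LMB_reversal} and explicit Bessel-function integrals in Lemma \ref{lem:LMB}. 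A secondary point: even granting \BESQ[0] total mass, dividing Laplace transforms does not by itself ``force'' $\rho=1/2y$; for a wrong $\rho$ the quotient still vanishes at infinity, and you would have to rule it out by a complete-monotonicity argument or compute $\rho$ directly from the excursion theory --- at which point you are back to the paper's computation. To repair your proof, either supply an independent derivation of the \BESQ[0] total mass from the scaffolding-and-spindles construction, or replace the division step by the direct computation of the leftmost-block law.
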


Theorems \ref{thm:diffusion_0} and \ref{thm:BESQ_total_mass} together can be viewed as a Ray--Knight theorem for a discontinuous L\'evy process. 
The local time of the stopped 
L\'evy process is not Markov in level \cite{EiseKasp93}, but our marking of jumps and skewer map fill in the missing information about jumps to construct a larger Markov process.  Moreover, the local time of the L\'evy process can be measurably recovered from the skewer process; see \cite[Theorem 37]{Paper0}. The appearance of \BESQ[0] total mass is an additional connection to the second Brownian Ray--Knight theorem \cite[Theorem XI.(2.3)]{RevuzYor}, in which local time evolves as \BESQ[0].

It is well-known (see \cite{PitmYor82,ShigaWata73}) that $\BESQ$ processes of nonnegative dimensions satisfy an additivity property. This does not extend to negative dimensions (see \cite{PitmWink18}, however). Theorem \ref{thm:BESQ_total_mass} states that the sum of countably many \BESQA\ excursions anchored at suitably random positions on the time axis gives a \BESQ[0] process. This can be interpreted as an extension of the additivity of $\BESQ$ processes to negative dimensions.

\subsection{Bigger picture: conjectures by Aldous and by Feng and Sun}
\label{sec:intro:AD}

\newcommand{\BCRT}{\texttt{BCRT}}

\begin{figure}\centering
 \input{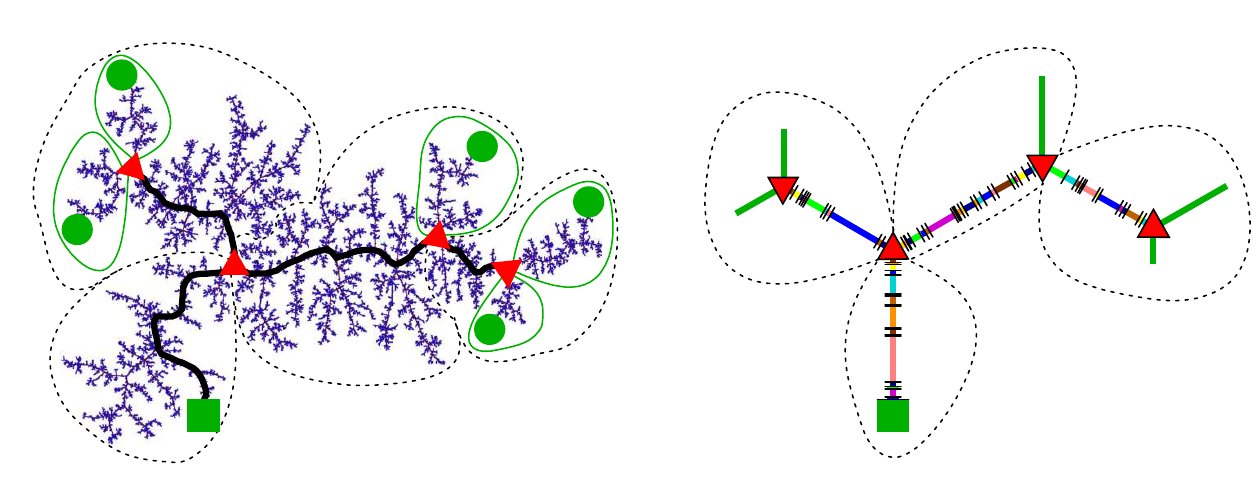_t}
 \caption{Left: a \BCRT, from simulations by Kortchemski \cite{Kortchemski}, and the subtree (heavy black lines) spanned by the branch points (red triangles) separating five leaves (green circles) and the root (green square). Right: projection of mass $p$ onto the subtree spanned by these branch points, represented as interval partitions. The five green lines on the outside of the tree represent masses of the subtrees containing the five leaves.\label{fig:BCRT}}
\end{figure}

The Brownian continuum random tree (\BCRT) is a random rooted, weighted continuum tree $(\mathcal{T},d,\rho,\mu)$, where $(\mathcal{T},d)$ is a tree-like metric space called a continuum tree, $\rho\in\mathcal{T}$ is the root, and $\mu$ is a non-atomic probability measure supported on the leaves of $\cT$. See Figure \ref{fig:BCRT}. The \BCRT\ was introduced by Aldous \cite{AldousCRT1,AldousCRT3} and has subsequently become a major topic of study, touching fields including the Brownian map \cite{LeGallMier12} 
and Liouville quantum gravity \cite{Sheffield16,MillShef19}. The \emph{spinal decomposition} of a \BCRT\ \cite{PitmWink09} is formed by sampling a random leaf $\Sigma\sim \mu$ and decomposing the tree into a path from $\rho$ to $\Sigma$, called the ``spine,'' and a totally ordered collection of ``spinal'' subtrees branching off of this path. The spinal projection of the \BCRT\ is an interval partition with block masses equal to the $\mu$-masses of the spinal subtrees ordered by increasing distance from the root. This forms a \PDIP[\frac12,\frac12] \cite{PitmWink09}.

In 1999, Aldous conjectured \cite{ADFields,AldousDiffusionProblem} that a certain Markov chain on binary trees should have a continuum analogue that would be a diffusion on continuum trees, stationary with the law of the \BCRT. The present work belongs to a series of papers by the same authors  \cite{Paper0,IPPA,Paper2,Paper3,Paper4,Paper5} that resolve this conjecture. In particular, we introduce a ``multi-spinal projection'' of the CRT, depicted in Figure \ref{fig:BCRT}, that finds \PDIP[\frac12,\frac12] and \PDIP[\frac12,0] components in the \BCRT. In \cite{Paper4}, we construct the conjectured diffusion via a projectively consistent system of multi-spinal projections in which the interval partitions evolve according to the dynamics introduced here.

L\"ohr, Mytnik, and Winter \cite{LohrMytnWint18} have also solved a version of Aldous's conjecture, but their process is on a space of ``algebraic measure trees,'' which capture the algebraic but not the metric structure of the trees.

The present work also belongs to a project to resolve a conjecture of Feng and Sun \cite{FengPD,FengSun10}. They conjecture that Petrov's two-parameter Poisson--Dirichlet diffusions should arise naturally as the projection onto the Kingman simplex of a Fleming--Viot diffusion. There have been several attempts to solve this, most recently \cite{CdBERS17,FengSun19}. In \cite{IPPPetrov} we show that the diffusions constructed here project down to Petrov's diffusions with $\alpha\in(0,1)$ and $\theta=0$ or $\theta=\alpha$. In forthcoming work \cite{IPPAT} we use our method to construct measure-valued diffusions based upon the interval partition evolutions presented here, and we generalize to a full two-parameter family of diffusions.

\subsection{Structure}

The structure of this paper is as follows. In Section \ref{sec:prelim}, we recall more details of the set-up of \cite{IPPA}, as required in
this paper. In Section \ref{sec:type-0+transitions}, we prove Theorems \ref{thm:diffusion_0}, \ref{thmtype1Hunt}, \ref{thmtype0Hunt} and 
\ref{thm:BESQ_total_mass} about transition kernels, type-0 evolutions and total mass processes. In Section \ref{sec:dePoissonization}, we turn to 
pseudo-stationarity and de-Poissonization, and we prove Theorems \ref{thm:pseudostat} and \ref{thm:stationary}.


\section{Preliminaries}\label{sec:prelim}

In this section we recall the set-up in which \cite{IPPA} constructed IP-evolutions, including any of the technical notation and results that are 
relevant for the present paper, from \cite{IPPA} and other literature. We fix $\alpha\in(0,1)$ throughout.

\subsection{The state space $(\IPspace,\dI)$: interval partitions with diversity}
\label{sec:prelim_IP}

The definitions in this section are recalled from \cite{IPspace}.

\begin{definition}\label{def:diversity_property} Let $\HIPspace$ denote the set of all interval partitions in the sense of Definition \ref{def:IP_1}. 
  We say that an interval partition $\beta\in\HIPspace$ of a finite interval $[0,M]$ has the \emph{$\alpha$-diversity property}, or that $\beta$ is an 
  \emph{interval partition with diversity}, if the following limit exists for every $t\in [0,M]$:
  \begin{equation}
   \IPLT_{\beta}(t) := \Gamma(1-\alpha)\lim_{h\downto 0}h^\alpha\#\{(a,b)\in \beta\colon\ |b-a|>h,\ b\leq t\}.\label{eq:IPLT}
  \end{equation}
  We denote by $\IPspace\subset\HIPspace$ the set of interval partitions $\beta$ that possess the $\alpha$-diversity property. We call $\IPLT_{\beta}(t)$ the 
  \emph{diversity} of the interval partition up to $t\in[0,M]$. For $U\in\beta$, $t\in U$, we write $\IPLT_{\beta}(U)=\IPLT_{\beta}(t)$, and we write 
  $\IPLT_{\beta}(\infty) := \IPLT_{\beta}(M)$ to denote the \emph{total ($\alpha$-)diversity} of $\beta$.
\end{definition}

Note that $\IPLT_\beta(U)$ is well-defined, since $\IPLT_{\beta}$ is constant on each interval $U \in \beta$, as the intervals are disjoint. We define a reversal involution and a scaling operation on interval partitions $\beta\in\HIPspace$,
 \begin{equation}\label{eq:IP:xforms}
  \reverse_{\textnormal{IP}}(\beta) :=\big\{(\IPmag{\beta}\!-\!b,\IPmag{\beta}\!-\!a)\colon (a,b)\!\in\!\beta\big\}, \quad \scaleI[c][\beta] := \big\{(ca,cb)\colon (a,b)\!\in\! \beta\big\}
 \end{equation}
 for $c>0$. Recall from \eqref{eq:IP:concat_def} the notion of concatenation $\Concat_{U\in\beta}\gamma_U$ of a summable family of interval partitions, $(\gamma_U)_{U\in\beta}$.

Let us discuss some examples of interval partitions relevant for this paper. 

\begin{proposition}\label{prop:PDIP}
\begin{enumerate}[label=(\roman*), ref=(\roman*)]
 \item Consider the zero-set $\mathcal{Z}=\{t\!\in\![0,1]\colon B^{\rm br}_t\!=\!0\}$ of standard Brownian bridge $B^{\rm br}$. Then $(0,1)\setminus\mathcal{Z}$ is a union of
   disjoint open intervals that form an interval partition $\gamma$ with $\frac12$-diversity a.s.. The ranked interval lengths have \PoiDir[\frac12,\frac12] distribution. We call 
   $\gamma$ a Poisson--Dirichlet interval partition \PDIP[\frac12,\frac12].
 \item For Brownian motion $B$, the interval partition $\gamma^\prime$ of $[0,1]$ associated with its zero-set has $\frac12$-diversity a.s.. 
   The ranked
   interval lengths have $\PoiDir[\frac12,0]$ distribution. We call the reversal $\reverse_{\textnormal{IP}}(\gamma^\prime)$ a \PDIP[\frac12,0].
 \item With $B^{\rm br}$ and $B$ in (i)-(ii) as \BESQ[2-2\alpha] bridge and \BESQ[2-2\alpha] process, we define \PDIP[\alpha,\alpha] and
   \PDIP[\alpha,0], which have $\alpha$-diversity.
 \item Let $Y$ be a \Stable[\alpha] subordinator with Laplace exponent $\Phi(\lambda) = \lambda^{\alpha}$. 
   Let $Z\!\sim\!\ExpDist[r]$ be independent of $Y$ and
   $S\!:=\! \inf\{s\!>\! 0\!\!:$ $Y(s)\! >\! Z\}$. Then $Y(S\!-)\!\sim\!\GammaDist[\alpha,r]$ and 
   $Z\!-\!Y(S\!-)\!\sim\!\GammaDist[1\!-\!\alpha,r]$ are independent. For $\beta := \{(Y(s-),Y(s))\colon s\!\in\!(0,S),Y(s-)\!<\!Y(s)\}$ and $\beta^\prime=\{(0,Z\!-\!Y(S\!-))\}\concat\beta$, \label{item:PDIP:Stable}\vspace{-0.2cm}
   \begin{align*}\reverse_{\textnormal{IP}}(\ol{\beta})\stackrel{d}{=}\ol{\beta}&:= \scaleI[\IPmag{\beta}^{-1}][\beta]\sim\PDIP[\textstyle\alpha,\alpha]\\
    \ol{\beta}^\prime&:=\IPmag{\beta^\prime}^{-1}\scaleI\beta^\prime\sim\PDIP[\textstyle\alpha,0].
    \end{align*}
 \item  \label{item:PDIP:tilted}
  For any $r>0$, let $Y_r  = (Y_{r}(s),\, s\ge0)$ denote a subordinator with Laplace exponent
  \begin{equation}
    \Phi_r(\lambda) = (r+\lambda)^\alpha - r^\alpha = \int_0^\infty(1-e^{-\lambda c})\frac{\alpha}{\Gamma(1-\alpha)}c^{-1-\alpha}e^{-cr}dc
    \label{eq:tilted_Laplace}
  \end{equation}
  and let $S_r \sim \ExpDist[r^\alpha]$ independent. Let $\beta$ be as in (iv). Then
  \begin{equation}\label{eq:PDIP_from_tilted}
   \{(Y_{r}(s-),Y_{r}(s))\colon s\!\in\!(0,S_r),Y_{r}(s-)<Y_{r}(s)\}\stackrel{d}{=}\beta.
  \end{equation}
  \end{enumerate}
\end{proposition}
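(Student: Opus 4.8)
The plan is to treat (i)--(iii) as quotations from the literature and to prove (iv)--(v), with the exponential-tilting identity (v) as the real content. For (i)--(iii): that the complement of the zero set of a $(2-2\alpha)$-dimensional Bessel bridge, resp.\ Bessel process on $[0,1]$, is an interval partition with the $\alpha$-diversity property \eqref{eq:IPLT} and ranked block masses $\PoiDir[\alpha,\alpha]$, resp.\ $\PoiDir[\alpha,0]$, is the Pitman--Yor description of the two-parameter family via a stable subordinator, together with standard Bessel excursion theory (existence of the diversity/local-time limit, and regenerativity in the sense of Gnedin--Pitman); here I would only assemble references.

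For (iv) I would compute the joint law of the undershoot $Y(S-)$ and the residual $Z-Y(S-)$ by a ``memorylessness plus compensation'' bookkeeping. For fixed deterministic $z$, the partition of the jumps of $Y$ strictly before $S_z:=\inf\{u:Y(u)>z\}$ is constant as $z$ ranges over a single jump interval $(Y(u-),Y(u))$, so integrating $re^{-rz}\,dz$ over it yields the weight $e^{-rY(u-)}-e^{-rY(u)}$; combining this with the memorylessness of $Z-Y(u-)$ given $\{Z>Y(u-)\}$ rewrites $\mathbf{E}\big[e^{-\lambda Y(S-)-\mu(Z-Y(S-))}\big]$ as a sum over the jumps of $Y$, and the compensation (L\'evy system) formula, using $\int_0^\infty(1-e^{-qc})\nu_0(dc)=q^\alpha$ for $\nu_0(dc)=\tfrac{\alpha}{\Gamma(1-\alpha)}c^{-1-\alpha}dc$ and $\mathbf{E}\big[\int_0^\infty e^{-qY(s-)}ds\big]=q^{-\alpha}$, collapses it to the product $\big(\tfrac{r}{r+\lambda}\big)^\alpha\big(\tfrac{r}{r+\mu}\big)^{1-\alpha}$; hence $Y(S-)\sim\GammaDist[\alpha,r]$ is independent of $Z-Y(S-)\sim\GammaDist[1-\alpha,r]$ (and $Z=Y(S-)+(Z-Y(S-))\sim\ExpDist[r]$, as it must). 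That $\IPmag{\beta}^{-1}\beta\sim\PDIP[\alpha,\alpha]$ and is independent of $\IPmag{\beta}=Y(S-)$ then follows from the scaling invariance of $Y$ and the Pitman--Yor derivation of $\PoiDir[\alpha,\alpha]$ from a stable subordinator; for $\beta'$, prepending the residual block (whose normalized mass is $\BetaDist[1-\alpha,\alpha]$) and reversing gives $\PDIP[\alpha,0]$, using that $\PDIP[\alpha,\alpha]$ is reversal-invariant by the reversibility of the regenerative range.

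Part (v) is an Esscher transform. Let $\mathbf{P}^{(0)}$ be the law of the $\Stable[\alpha]$ subordinator $Y$ (L\'evy measure $\nu_0$, exponent $\lambda^\alpha$); the exponential martingale $\big(e^{-rY(s)+sr^\alpha}\big)_{s\ge0}$ defines a law $\mathbf{P}^{(r)}$ under which $Y$ is a subordinator with L\'evy measure $e^{-cr}\nu_0(dc)$ and exponent $\Phi_r(\lambda)=(r+\lambda)^\alpha-r^\alpha$, i.e.\ $Y$ is the $Y_r$ of \eqref{eq:tilted_Laplace}. Writing $\beta^{(s)}:=\{(Y(u-),Y(u))\colon u\in[0,s),\ Y(u-)<Y(u)\}$ (so $\IPmag{\beta^{(s)}}=Y(s-)$ and $s\mapsto\beta^{(s)}$ is left-continuous, hence predictable), the same ``constant over a jump interval'' observation gives, for bounded measurable $F$,
\[
 \mathbf{E}[F(\beta)]=\mathbf{E}^{(0)}\!\left[\int_0^\infty r e^{-rz}F\big(\beta^{(S_z)}\big)\,dz\right]
 =\mathbf{E}^{(0)}\!\left[\sum_{u\colon Y(u-)<Y(u)}F\big(\beta^{(u)}\big)\big(e^{-rY(u-)}-e^{-rY(u)}\big)\right];
\]
the compensation formula applied to the predictable integrand $F(\beta^{(u)})e^{-rY(u-)}(1-e^{-r\Delta Y(u)})$, with $\int_0^\infty(1-e^{-rc})\nu_0(dc)=r^\alpha$, then yields
\[
 \mathbf{E}[F(\beta)]=r^\alpha\,\mathbf{E}^{(0)}\!\left[\int_0^\infty F\big(\beta^{(s)}\big)e^{-rY(s)}\,ds\right],
\]
and, since $F(\beta^{(s)})$ is $\mathcal{F}_s$-measurable, $\mathbf{E}^{(0)}[F(\beta^{(s)})e^{-rY(s)}]=e^{-sr^\alpha}\mathbf{E}^{(r)}[F(\beta^{(s)})]$, so
\[
 \mathbf{E}[F(\beta)]=\mathbf{E}^{(r)}\!\left[\int_0^\infty r^\alpha e^{-sr^\alpha}F\big(\beta^{(s)}\big)\,ds\right]=\mathbf{E}\big[F\big(\beta_{Y_r}^{(S_r)}\big)\big],
\]
the last equality because $S_r\sim\ExpDist[r^\alpha]$ is independent of $Y_r$ and $\beta_{Y_r}^{(S_r)}$ is a.s.\ the partition in the statement. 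Conceptually the identity is forced by $\Phi_r(\lambda)+r^\alpha=(r+\lambda)^\alpha$: killing the tilted subordinator at rate $r^\alpha$ exactly reproduces the shift $\lambda\mapsto\lambda+r$ of the stable exponent effected by the $\ExpDist[r]$-level truncation.

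I expect the main obstacle to be part (iv) --- the identification of the normalized partitions with the $\PDIP$ laws and their independence of the total mass, where the genuinely two-parameter input and care with the regenerative/size-biased structure are needed; the tilting computation in (v) is essentially mechanical once the predictability of $s\mapsto\beta^{(s)}$ and the compensation-formula step are in place.
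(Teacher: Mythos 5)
Your proposal is correct, and for parts (i)--(iv) it follows the same citation-based route as the paper (Bessel zero sets, the Perman--Pitman--Yor independence of the last zero and the bridge, and the stable-subordinator representation of \PoiDir[\alpha,\theta]). The difference is in what you choose to prove versus cite: the paper disposes of the \GammaDist[\alpha,r]/\GammaDist[1-\alpha,r] independence in (iv) and the tilting identity (v) by invoking \cite[Proposition 21]{PitmYorPDAT} and ``elementary properties of (killed) subordinators,'' whereas you derive both from scratch --- the joint Laplace transform of undershoot and residual via the compensation formula, and (v) via an Esscher change of measure combined with the same ``integrate $re^{-rz}\,dz$ over a jump interval'' device and the predictability of $s\mapsto\beta^{(s)}$. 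Your computations check out (in particular $r(r+\mu)^{\alpha-1}(r+\lambda)^{-\alpha}$ does factor as claimed, and killing the tilted subordinator at rate $r^\alpha$ correctly reproduces $(r+\lambda)^\alpha$), and this buys a self-contained proof of the exponential-tilting fact that the paper only quotes. Two small points: you should also record that $\beta$ in (iv) has the $\alpha$-diversity property --- the paper gets this from the strong law of large numbers for the Poisson process of jumps of $Y$ together with monotonicity of $\IPLT_\beta$, and it transfers to $\ol\beta$, $\beta'$, $\ol\beta{}'$ by scaling; and your phrase ``prepending the residual block and reversing'' should be read as: the reversal of the Brownian zero-set partition puts the meander block on the left, matching $\beta'=\{(0,Z-Y(S-))\}\concat\beta$ once the reversal-invariance of the \PDIP[\alpha,\alpha] component is used --- the logic is right but worth stating in the correct order.
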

\begin{proof}
  That $\beta$ is an interval partition with $\alpha$-diversity follows from the Strong Law of Large Numbers for the Poisson process of 
  jumps and the monotonicity of $\IPLT_{\beta}(t)$ in $t$. By the definition of the scaling map, the same holds for $\ol{\beta}$.   
  This entails the same for $\beta^\prime$ and $\ol{\beta}^\prime$. Recall that the inverse local time of $B$ is a \Stable[\alpha] 
  subordinator. Hence, the remainder is well-known; see \cite[Lemma 3.7]{PermPitmYor92}, which states that the last zero $G$ of $B$ is 
  a \BetaDist[\alpha,1-\alpha] variable independent of a Bessel bridge $(B(uG)/\sqrt{G},0\!\le\! u\!\le\! 1)$. Finally, the \PoiDir[\alpha,\alpha]\ and 
  \PoiDir[\alpha,0]\ distributions can be read from \cite[Corollary 4.9]{CSP}, and the last claim follows from \cite[Proposition 21]{PitmYorPDAT} and elementary properties of (killed) subordinators.
\end{proof}

\begin{definition} \label{def:IP:metric}
 We adopt the standard discrete mathematics notation $[n] := \{1,2,\ldots,n\}$.
 For $\beta,\gamma\in \IPspace$, a \emph{correspondence} from $\beta$ to $\gamma$ is a finite sequence of ordered pairs of intervals $(U_1,V_1),\ldots,(U_n,V_n) \in \beta\times\gamma$, $n\geq 0$, where the sequences $(U_j)_{j\in [n]}$ and $(V_j)_{j\in [n]}$ are each strictly increasing in the left-to-right ordering of the interval partitions.

 The \emph{distortion} $\dis(\beta,\!\gamma,(U_j,V_j)_{j\in [n]})$ of a correspondence $(U_j,V_j)_{j\in [n]}$ from $\beta$ to $\gamma$ is defined to 
 be the maximum of the following four quantities:\vspace{-0.1cm}
 \begin{enumerate}[label=(\roman*), ref=(\roman*)]
  \item[(i-ii)] $\sup_{j\in [n]}|\IPLT_{\beta}(U_j) - \IPLT_{\gamma}(V_j)|$ and $|\IPLT_{\beta}(\infty) - \IPLT_{\gamma}(\infty)|$,
  \item[(iii)] $\sum_{j\in [n]}|\Leb(U_j)-\Leb(V_j)| + \IPmag{\beta} - \sum_{j\in [n]}\Leb(U_j)$, \label{item:IP_m:mass_1}
  \item[(iv)] $\sum_{j\in [n]}|\Leb(U_j)-\Leb(V_j)| + \IPmag{\gamma} - \sum_{j\in [n]}\Leb(V_j)$. \label{item:IP_m:mass_2}
 \end{enumerate}
 
 For $\beta,\gamma\in\IPspace$ we define $\displaystyle\dI(\beta,\gamma) := \inf_{n\ge 0,\,(U_j,V_j)_{j\in [n]}}\dis\big(\beta,\gamma,(U_j,V_j)_{j\in [n]}\big)$,
 where the infimum is over all correspondences from $\beta$ to $\gamma$.
\end{definition}

\begin{theorem}[Theorem 2.3(d) of \cite{IPspace}]\label{thm:Lusin}
 $(\IPspace,\dI)$ is Lusin, i.e.\ homeomorphic to a Borel subset of a compact metric space. 
\end{theorem}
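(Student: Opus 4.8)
The plan is to realise $(\IPspace,\dI)$ as a Borel subset of a Polish space -- which suffices, since any Polish space embeds as a $G_\delta$, hence as a Borel subset, of the compact metric Hilbert cube -- via an explicit encoding that ``sees'' both the block structure and the diversity. To $\beta\in\IPspace$ I would associate the triple $\Phi(\beta):=\big(\mu_\beta,\ \IPmag\beta,\ \IPLT_\beta(\infty)\big)$, where
\[
 \mu_\beta \ :=\ \sum_{U=(a,b)\in\beta}\Leb(U)\ \delta_{\left(a/(1+a),\ \IPLT_\beta(U)\right)}
\]
is a finite Borel measure on the Polish space $E:=[0,1)\times[0,\infty)$ (the map $a\mapsto a/(1+a)$ compresses the position axis injectively onto $[0,1)$), $\IPmag\beta$ is the total mass, and $\IPLT_\beta(\infty)$ the total $\alpha$-diversity of Definition \ref{def:diversity_property}. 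The target $\mathcal{M}_{\mathrm f}(E)\times[0,\infty)\times[0,\infty)$, with the weak topology on measures and the Euclidean topology on the other factors, is Polish. The map $\Phi$ is injective: the atoms of $\mu_\beta$ sit at distinct positions $p_i=a_i/(1+a_i)$, which recover the left endpoints $a_i$ and the left-to-right order of the blocks, their masses recover the block lengths, and since $\beta$ tiles $[0,\IPmag\beta]$ up to a Lebesgue-null set this recovers $\beta$ itself. The diversity coordinate is redundant for recovery but is needed for the topology, as explained below.

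The two remaining tasks are to show that $\Phi$ is a homeomorphism onto its image and that the image is Borel. For the latter: a triple $(\mu,M,D)$ lies in $\Phi(\IPspace)$ iff (i) $\mu$ is purely atomic, $\mu=\sum_i m_i\delta_{(p_i,d_i)}$ with distinct $p_i$, and the open intervals $(a_i,a_i+m_i)$ with $a_i:=p_i/(1-p_i)$ are pairwise disjoint with $\sum_i m_i=M=\sup_i(a_i+m_i)$; and (ii) for each $j$, the limit defining $\IPLT$ in \eqref{eq:IPLT} at $t=a_j+m_j$ (computed from the $a_i$ and $m_i$) exists and equals $d_j$, and the analogous limits at all rational $t$ exist with supremum $D$. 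Every clause of (i) is a countable combination of Borel conditions on $\mu$ (total mass, atomicity, pairwise disjointness of the induced intervals); and every clause of (ii) is a countable conjunction of statements of the form ``a prescribed $\limsup_{h\downarrow0}$ equals the corresponding $\liminf$ and equals a prescribed value'' -- Borel, because the functions of $h$ involved are monotone so the limits may be read along rational $h\downarrow0$. Hence $\Phi(\IPspace)$ is Borel, and $(\IPspace,\dI)$ is (modulo the homeomorphism claim) homeomorphic to a Borel subset of a Polish space, so it is Lusin; in particular it is separable, and one can moreover exhibit an explicit countable dense set of finite concatenations of rational-mass blocks $(0,q)$ and ``diversity gadgets'' $\Concat_{k\ge N}(0,ck^{-1/\alpha})$ with rational parameters, whose total masses and total $\alpha$-diversities -- by the Strong Law of Large Numbers applied to the count of large blocks -- range densely over all admissible pairs, approximating an arbitrary $\beta$ by matching all of its blocks of mass exceeding some small $\delta$ and filling the gaps by gadgets of matching mass and diversity. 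It is precisely the non-closed, $\limsup=\liminf$ nature of condition (ii) that prevents $\Phi(\IPspace)$ from being $G_\delta$, and hence forces ``Lusin'' rather than ``Polish''.

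The main obstacle is proving that $\Phi$ is a homeomorphism onto its image, i.e.\ that $\dI(\beta_n,\beta)\to0$ is equivalent to $\mu_{\beta_n}\to\mu_\beta$ weakly together with $\IPmag{\beta_n}\to\IPmag\beta$ and $\IPLT_{\beta_n}(\infty)\to\IPLT_\beta(\infty)$. The forward implication is comparatively easy: near-optimal correspondences match all but $o(1)$ of the mass, matched blocks have converging masses, positions and cumulative diversities, and the distortion controls $\IPmag{\cdot}$ and $\IPLT_{\cdot}(\infty)$ directly via terms (ii)--(iv) of Definition \ref{def:IP:metric}. The reverse implication is the delicate point: from weak convergence of $\mu_{\beta_n}$ one must construct order-preserving correspondences of vanishing distortion, and this requires ruling out that a large block of $\beta_n$ ``splits'' into several atoms, controlling the total mass carried by the small blocks, and controlling the cumulative diversities uniformly along the correspondence. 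Here one uses crucially that each $\beta_n$ genuinely has the $\alpha$-diversity property, so that the recorded diversity coordinate together with the convergence $\IPLT_{\beta_n}(\infty)\to\IPLT_\beta(\infty)$ rigidifies the small-block profile tightly enough; these estimates are the technical heart of the argument (carried out in \cite{IPspace}), and they are exactly where the correspondence-distortion metric $\dI$ is genuinely stronger than coarser alternatives such as $\dH$.
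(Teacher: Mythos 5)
The present paper does not prove this statement: it is imported verbatim as Theorem~2.3(d) of \cite{IPspace}, so there is no in-paper argument to compare yours against. Taken on its own terms, your outline (encode $\beta$ by an atomic measure recording position, mass and per-block diversity, plus total mass and total diversity; show the encoding is a homeomorphism onto its image; show the image is Borel in a Polish space) is a sound and standard route to Lusin-ness, and the key design decision --- putting $\IPLT_\beta(U)$ into the atom's location --- is exactly what is needed for $\Phi^{-1}$ to be continuous, since the distortion in Definition~\ref{def:IP:metric} only constrains diversities at matched blocks and at infinity, both of which are read directly off the encoding. (For the same reason, your final sentence overstates what is needed: once you rule out splitting --- which is automatic, because two atoms of mass $\ge\delta$ are $\delta$-separated in position, positions being cumulative masses --- the correspondence built from the blocks of mass $>\delta$ already controls all four quantities in Definition~\ref{def:IP:metric}; no ``rigidification of the small-block profile'' is required. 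You do, however, defer this whole direction to \cite{IPspace}, so the proposal is not self-contained there.)

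The genuine gap is in your Borel-ness argument: condition (ii) tests the existence of the limit \eqref{eq:IPLT} only at block endpoints and at rational $t$, and this does not characterize $\Phi(\IPspace)$. Counterexample: fix an irrational $t_0$; tile an interval to the left of $t_0$ by blocks with right endpoints increasing to $t_0$ and lengths $\epsilon_k$ chosen so that $h^\alpha\#\{k\colon\epsilon_k>h\}$ oscillates (e.g.\ like $1+\tfrac12\sin(\log\log(1/h))$, which is still a decreasing counting function with summable lengths since $\alpha<1$), and tile an interval to the right of $t_0$ by blocks with left endpoints decreasing to $t_0$ carrying the opposite oscillation. For every $t<t_0$ only finitely many of these blocks are counted; for every $t>t_0$ --- in particular every rational and every block endpoint --- all of the left family and all but finitely many of the right family are counted, the oscillations cancel, and the limit exists; but at $t=t_0$ only the left family is counted and the limit fails. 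So the partition is not in $\IPspace$, yet its encoding satisfies your conditions (i)--(ii): monotonicity lets you pass from all $h$ to rational $h$, but not from all $t$ to countably many $t$. This matters because the honest condition ``the limit exists for every $t$'' involves an uncountable quantifier and a priori yields only a co-analytic set, which is not enough for Lusin-ness. The gap is repairable: since $\#_h(q_1)\le\#_h(t)\le\#_h(q_2)$ for rationals $q_1<t<q_2$, the limit can only fail at discontinuity points of the monotone function $q\mapsto\IPLT(q)$ on the rationals; these form a countable set whose points are Borel functions of $\mu$, and imposing existence of the limit there is a further countable conjunction of Borel conditions. As written, though, your characterization describes a strictly larger Borel set than $\Phi(\IPspace)$, so the Borel-ness of the image --- and hence the conclusion --- does not follow.
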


\subsection{Spindles: excursions to describe block size evolutions}
\label{sec:BESQ}

Let $\cD$ denote the space of c\`adl\`ag functions from $\BR$ to $[0,\infty)$. Let
 \begin{equation}
  \Exc := \left\{f\in\cD\ \middle| \begin{array}{c}
    \displaystyle \exists\ z\in(0,\infty)\textrm{ s.t.\ }\restrict{f}{(-\infty,0)\cup [z,\infty)} = 0,\\[0.2cm]
    \displaystyle f\text{ positive and continuous on }(0,z)
   \end{array}\right\}.\label{eq:cts_exc_space_def}
 \end{equation}
denote the space of positive c\`adl\`ag excursions whose only jumps may be at birth and death. We define the \emph{lifetime}
 $
  \life(f) = \sup\{s\geq 0\colon f(s) > 0\}
 $.

\begin{lemma}[Equation (13) in \cite{GoinYor03}]\label{lem:BESQ:length}
  Let $Z=(Z_s,s\ge 0)$ be a \BESQA\ process starting from $z\!>\!0$. Then the absorption time $\zeta(Z)\!=\!\inf\{s\!\ge\! 0\!\!:$ $Z_s\!=\!0\}$ has distribution 
  \InvGammaDist[1+\alpha,z/2], i.e.\ $z/(2\zeta(Z))$ has density $(\Gamma(1+\alpha))^{-1}x^{\alpha}e^{-x}$, $x\in(0,\infty)$. 
\end{lemma}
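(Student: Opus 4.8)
The plan is to identify $\zeta(Z)$ by time-reversing the absorbed \BESQA-path into a \emph{transient} squared Bessel process and then invoking the classical last-exit-time identity for the latter; this recovers \cite[Eq.\ (13)]{GoinYor03}, which one could of course also just cite directly. Concretely, for every $\delta<2$ and $z>0$ a \BESQ[\delta]-process $Z$ from $z$ is absorbed at $0$ at the a.s.\ finite time $\zeta(Z)$, and it is classical (see, e.g., \cite[Ch.\ XI]{RevuzYor}) that the reversed path $\widetilde Z:=\big(Z_{(\zeta(Z)-s)-},\,0\le s\le\zeta(Z)\big)$ has the law of a \BESQ[4-\delta]-process started from $0$, run up to its last-exit time $\Lambda_z:=\sup\{s\ge0\colon\widetilde Z_s=z\}$ from level $z$; in particular $\zeta(Z)\overset{d}{=}\Lambda_z$. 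Taking $\delta=\minus2\alpha$ makes $4-\delta=4+2\alpha>2$, so that $\widetilde Z$ is transient and $\Lambda_z$ is a.s.\ finite.

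Next I would compute the law of the last-exit time $\Lambda_z$ for the transient \BESQ[4+2\alpha]-process $\widetilde Z$ of the previous step, or more generally for any transient \BESQ[d] from $0$ with index $\nu:=d/2-1>0$. From the scale function $s(y)=-y^{-\nu}/\nu$ one reads off $\mathbf P_y(\widetilde Z\text{ hits }z)=(z/y)^\nu$ for $y\ge z$ (and $1$ for $y\le z$); conditioning on $\widetilde Z_t$, whose $\mathbf P_0$-law is that of $2t$ times a \GammaDist[d/2,1] variable, this gives
\[
 \mathbf P_0(\Lambda_z>t)=\mathbf P_0(\widetilde Z_t\le z)+\mathbf E_0\!\left[(z/\widetilde Z_t)^\nu\,\mathbf 1\{\widetilde Z_t>z\}\right],
\]
and differentiating in $t$ and simplifying the resulting (incomplete) Gamma integrals shows that $z/(2\Lambda_z)$ has density $\Gamma(\nu)^{-1}x^{\nu-1}e^{-x}$, i.e.\ $\Lambda_z\sim\InvGammaDist[\nu,z/2]$. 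Plugging in $d=4+2\alpha$, so that $\nu=1+\alpha$, and combining with the reversal identity yields $\zeta(Z)\sim\InvGammaDist[1+\alpha,z/2]$, i.e.\ $z/(2\zeta(Z))$ has density $\Gamma(1+\alpha)^{-1}x^\alpha e^{-x}$, as required.

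An equivalent route that sidesteps time reversal is purely analytic: $v_\lambda(z):=\mathbf E_z\big[e^{-\lambda\zeta(Z)}\big]$ is the bounded solution on $(0,\infty)$ of $2zv''-2\alpha v'=\lambda v$ with $v_\lambda(0{+})=1$, which is a constant multiple of $(\lambda z)^{(1+\alpha)/2}K_{1+\alpha}\big(2\sqrt{2\lambda z}\big)$, the constant being fixed by $K_{1+\alpha}(w)\sim\tfrac12\Gamma(1+\alpha)(w/2)^{-(1+\alpha)}$ as $w\downarrow0$; one then recognises this as the Laplace transform of \InvGammaDist[1+\alpha,z/2] via the standard generalized-inverse-Gaussian integral. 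In both approaches the probabilistic content is entirely classical, so I expect the only genuine obstacle to be bookkeeping: matching the dimension/index conventions across the reversal and last-exit identities, or, in the analytic route, tracking the Bessel-function normalisation. If brevity is preferred, the lemma is literally \cite[Eq.\ (13)]{GoinYor03}.
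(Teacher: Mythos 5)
Your main argument is correct, but note that the paper offers no proof of this lemma at all: it is stated with the bracketed attribution ``Equation (13) in [G\"oing-Jaeschke--Yor]'' and simply cited. So anything you write is, by construction, ``a different route from the paper''. That said, your route is a clean and self-contained derivation of the cited fact. The time-reversal duality you invoke (absorbed \BESQ[\delta] from $z$, $\delta<2$, reversed from its absorption time, is \BESQ[4-\delta] from $0$ run to its last exit from $z$) is classical and, for $\delta=-2\alpha$, is exactly the duality between \BESQA\ and \BESQ[4+2\alpha] that already underlies Lemma \ref{lem:BESQ:existence} in the paper, so your proof sits naturally in the paper's framework. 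Your computation of the last-exit law is also correct: with $\nu=d/2-1$, $u=z/2t$ and $\widetilde Z_t\sim 2t\,\GammaDist[d/2,1]$ one gets $\mathbf P_0(\Lambda_z>t)=\gamma(\nu+1,u)/\Gamma(\nu+1)+u^{\nu}e^{-u}/\Gamma(\nu+1)$, and the recurrence $\gamma(\nu+1,u)=\nu\gamma(\nu,u)-u^{\nu}e^{-u}$ collapses this to $\gamma(\nu,u)/\Gamma(\nu)=\mathbf P(\GammaDist[\nu,1]<u)$, i.e.\ $z/(2\Lambda_z)\sim\GammaDist[\nu,1]$; with $\nu=1+\alpha$ this is the claim. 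This is Getoor's last-exit law for transient Bessel processes, transferred to the squared picture.

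One small correction in your optional analytic route: the bounded solution of $2zv''-2\alpha v'=\lambda v$ is a multiple of $(\lambda z)^{(1+\alpha)/2}K_{1+\alpha}\big(\sqrt{2\lambda z}\big)$, not $K_{1+\alpha}\big(2\sqrt{2\lambda z}\big)$; equivalently, the Laplace transform of \InvGammaDist[1+\alpha,z/2] is $\frac{2}{\Gamma(1+\alpha)}(\lambda z/2)^{(1+\alpha)/2}K_{1+\alpha}\big(\sqrt{2\lambda z}\big)$, and with this argument your normalisation via $K_{1+\alpha}(w)\sim\frac12\Gamma(1+\alpha)(w/2)^{-(1+\alpha)}$ does give $v(0+)=1$. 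Since this route is an aside and your primary argument is complete, this is only a bookkeeping slip, but with the factor $2$ left in, the displayed function does not solve the ODE.
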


For the purpose of the following, we define \emph{first passage times} $H^a\colon\Exc\to[0,\infty]$ via $H^a(f)=\inf\{s\ge 0\colon f(s)=a\}$, $a>0$.

\begin{lemma}[Section 3 of \cite{PitmYor82}]\label{lem:BESQ:existence}
  There is a measure $\Lambda$ on $\Exc$ such that $\Lambda\{f\!\in\!\Exc\colon f(0)\!\neq\! 0\}\!=\!0$, 
  $\Lambda\{H^a\!<\!\infty\}\!=\!a^{-1-\alpha}$, $a\!>\!0$, and under $\Lambda(\,\cdot\,|\,H^a\!<\!\infty)$,\linebreak the restricted canonical process 
  $f|_{[0,H^a]}$ is a \BESQ[4+2\alpha] process starting from 0 and stopped at the first passage time of $a$, independent of $f(H^a+\cdot\,)$, which is a \BESQA\ process 
  starting from $a$. 
\end{lemma}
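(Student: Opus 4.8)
The plan is to build $\Lambda$ as an increasing limit of an explicit consistent family of \emph{finite} measures indexed by a passage level $a\downto 0$, the limit being forced by the first-passage decomposition asserted in the statement. For each $a>0$ I would let $\Lambda_a$ be the measure on $\Exc$ of total mass $a^{-1-\alpha}$, supported on $\{H^a<\infty\}$, under which the normalised canonical path is the concatenation of a \BESQ[4+2\alpha] process started from $0$ and run up to its first passage time of $a$ with an \emph{independent} \BESQA\ process started from $a$ and run to its absorption time $\life$. Such a path lies in $\Exc$: since $4+2\alpha>2$, the first piece leaves $0$ continuously, stays positive, and a.s.\ reaches every level, in particular $a$ in finite time; the second piece stays positive until it is continuously absorbed at $0$. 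In particular each $\Lambda_a$ gives zero mass to $\{f(0)\neq0\}$. Once the family $(\Lambda_a)_{a>0}$ is shown to be consistent, the monotone limit $\Lambda:=\lim_n\Lambda_{1/n}(\,\cdot\cap\{H^{1/n}<\infty\})$ will be a $\sigma$-finite measure on $\Exc$ (finite on each $\{H^{1/n}<\infty\}$), supported on $\bigcup_n\{H^{1/n}<\infty\}$, with $\Lambda(\,\cdot\cap\{H^a<\infty\})=\Lambda_a$ for every $a$; the three stated properties then read off immediately from the definition of $\Lambda_a$.

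\textbf{The two diffusion inputs.} Everything rests on two facts about \BESQA, both coming from its generator $\mathcal{L}f(x)=2xf''(x)-2\alpha f'(x)$. First, solving $\mathcal{L}s=0$ gives the scale function $s(x)=x^{1+\alpha}$ (up to a constant), so by the one-dimensional scale-function formula a \BESQA\ started from $b$ reaches a level $a>b$ before absorption with probability $s(b)/s(a)=(b/a)^{1+\alpha}$. Second, the Doob $h$-transform of \BESQA\ by $h=s$ — i.e.\ \BESQA\ conditioned to hit $a$ before $0$ — has generator $\mathcal{L}f+\tfrac{4xh'(x)}{h(x)}f'=2xf''+(4+2\alpha)f'$, which is the \BESQ[4+2\alpha] generator; hence this conditioned process, run up to $H^a$, is a \BESQ[4+2\alpha] from $b$ run up to $H^a$. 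I expect this second input — the identification of the conditioned diffusion with \BESQ[4+2\alpha], and thus the appearance of the dimension $4+2\alpha$ — to be the only substantive point; everything else is bookkeeping. (One may instead simply cite Pitman--Yor \cite{PitmYor82}, Section 3, where such excursion measures are constructed from the general theory of excursions of one-dimensional diffusions.)

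\textbf{Consistency.} Fix $0<b<a$. Any $f$ with $H^a(f)<\infty$ is continuous and positive on a neighbourhood of $H^a(f)$, taking values up to $a$, so it also has $H^b(f)<\infty$; hence $\{H^a<\infty\}\subseteq\{H^b<\infty\}$. Under $\Lambda_b$, the event $\{H^a<\infty\}$ is exactly the event that the \BESQA-from-$b$ second component reaches level $a$, which by the first input has conditional probability $(b/a)^{1+\alpha}$; therefore $\Lambda_b\{H^a<\infty\}=b^{-1-\alpha}(b/a)^{1+\alpha}=a^{-1-\alpha}=\Lambda_a\{H^a<\infty\}$. On that event, by the second input the second component up to its passage of $a$ is a \BESQ[4+2\alpha] from $b$ run to $H^a$, and by the strong Markov property of \BESQ[4+2\alpha] at $H^b$ its concatenation with the first component (a \BESQ[4+2\alpha] from $0$ run to $H^b$) is a single \BESQ[4+2\alpha] from $0$ run to $H^a$; while by the strong Markov property of \BESQA\ at $H^a$ the remainder of the path is a \BESQA\ from $a$, independent of the part before $H^a$. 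This is precisely the (normalised) description of $\Lambda_a$, so $\Lambda_b(\,\cdot\cap\{H^a<\infty\})=\Lambda_a$.

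\textbf{Conclusion.} By this consistency the measures $\Lambda_{1/n}(\,\cdot\cap\{H^{1/n}<\infty\})$ increase in $n$, so $\Lambda$ above is a well-defined measure, $\sigma$-finite since $\Lambda\{H^{1/n}<\infty\}=n^{1+\alpha}<\infty$. For any $a>0$ and $1/n\le a$ one has $\{H^a<\infty\}\cap\{H^{1/n}<\infty\}=\{H^a<\infty\}$, so consistency gives $\Lambda(\,\cdot\cap\{H^a<\infty\})=\Lambda_{1/n}(\,\cdot\cap\{H^a<\infty\})=\Lambda_a$. Hence $\Lambda\{f(0)\neq0\}=0$ (each $\Lambda_a$ is carried by paths with $f(0)=0$), $\Lambda\{H^a<\infty\}=a^{-1-\alpha}$, and $\Lambda(\,\cdot\mid H^a<\infty)=a^{1+\alpha}\Lambda_a$ is exactly the law of the asserted independent concatenation, completing the proof.
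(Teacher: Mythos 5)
Your construction is correct, and it is essentially the argument behind the citation: the paper does not prove this lemma but refers to Section 3 of Pitman--Yor (1982), where the excursion measure is built in exactly this way, via the first-passage decomposition at levels $a$, the scale function $s(x)=x^{1+\alpha}$ giving the normalisation $a^{-1-\alpha}$, and the Doob $h$-transform identification of \BESQA\ conditioned to reach $a$ before $0$ with \BESQ[4+2\alpha]. Your generator computations and the consistency check of the family $(\Lambda_a)_{a>0}$ are accurate, so the projective-limit construction goes through as written.
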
  

We call continuous excursions, such as $\Lambda$-a.e.\ $f\in\Exc$, \emph{spindles}. If $f\in\Exc$ is discontinuous at birth and/or death, we call it a \emph{broken spindle}.  We make the following choice of scaling, which yields $\Phi(\lambda) = \lambda^\alpha$ in Proposition \ref{prop:agg_mass_subord}:
\begin{equation}\label{eq:BESQ:exc_meas_defn}
 \mBxc = \mBxcA = \frac{2\alpha(1+\alpha)}{\Gamma(1-\alpha)}\Lambda,
\end{equation}
where $\Lambda$ is as in Lemma \ref{lem:BESQ:existence}. We will use this as the intensity measure for \BESQA\ spindles in our Poissonian scaffolding and spindles construction. 

We define a \emph{reversal involution} $\reverseexc$ and a \emph{scaling operator} $\scaleB$ by saying, for $a>0$ and $f\in\Exc$,
 \begin{equation}
  \reverseexc (f)\!:=\!\big(f\big((\life(f)\!-\!y)\!-\!\big),y\!\in\!\BR\big)  \text{ and }  \scaleB[a][f]\!:=\!\left(af(y/a),y\!\in\!\BR\right)\!.\label{eq:BESQ:scaling_def}
 \end{equation}

Lemmas 2.8 and 2.9 of \cite{IPPA} state that for $a>0$,
\begin{gather}
 \mBxc(\reverseexc \in\cdot\;) = \mBxc,\qquad 
 \mBxc(\scaleB[a][f] \in\cdot\;) = a^{1+\alpha}\mBxc,\label{eq:BESQ:scaling_inv}\\
 \text{and}\ \ 
 \mBxc\{f \in \Exc\colon \life(f) \geq y\} = \frac{\alpha}{2^\alpha\Gamma(1 - \alpha)\Gamma(1 + \alpha)}y^{-1-\alpha}.\label{eq:BESQ:lifetime}
\end{gather}

\subsection{Scaffolding: jumps describe births and deaths of blocks}
\label{sec:prelim:JCCP}

Let $\bN$ denote a \PRM[\Leb\otimes\mBxc] on $[0,\infty)\times \Exc$. By \eqref{eq:BESQ:lifetime} and standard mapping of \PRM s, $\int\delta(s,\zeta(f))d\bN(s,f)$ is a \PRM[\Leb\otimes\mBxc(\zeta\in\cdot\,)], where, as $z\downarrow 0$, \vspace{-.1cm}
\begin{align*}
 \int_{(z,\infty]}x\,\mBxc(\zeta\in dx)&=\int_\Exc\cf\{\zeta(f)>z\}\zeta(f)d\mBxc(f)\vspace{-.1cm}\\
  & = \frac{1+\alpha}{2^\alpha\Gamma(1-\alpha)\Gamma(1+\alpha)}z^{-\alpha}\longrightarrow\infty.\vspace{-.1cm}
\end{align*}
As a consequence, for a \PRM[\Leb\otimes\mBxc(\zeta\in\cdot\,)], the sum of $\zeta(f)$ over points $(t,f)$ in any finite time interval $(a,b)$ is a.s.\ infinite. To define a L\'evy process $\bX$ incorporating all $\zeta(f)$ as jump heights, we require a limit with compensation, namely $\bX=\xi_\bN$, where for all $t\ge 0$,\vspace{-.1cm}
\begin{equation}\label{eq:scaffolding}
  \xi_\bN(t):=\lim_{z\downto 0}\!\left(\!\int_{[0,t]\times\{g\in\Exc\colon\zeta(g) > z\}}\!\!\!\life(f)d\bN(s,\!f) - t\frac{(1\!+\!\alpha)z^{-\alpha}}{2^\alpha\Gamma(1\!-\!\alpha)\Gamma(1\!+\!\alpha)}\!\right)\!.\vspace{-.1cm}
\end{equation}

Then the \emph{scaffolding} $\fX$ is a spectrally positive stable L\'evy process of index $1+\alpha$, with L\'evy measure and Laplace exponent given by \vspace{-.1cm}
 \begin{equation}
  \mBxc(\zeta\!\in\! dx)=\frac{\alpha(1\!+\!\alpha)x^{-2-\alpha}}{2^\alpha\Gamma(1\!-\!\alpha)\Gamma(1\!+\!\alpha)}dx\quad\mbox{and}\quad\psi(\lambda) = \frac{\lambda^{1+\alpha}}{2^\alpha\Gamma(1\!+\!\alpha)}.\label{eq:JCCP:Laplace}\vspace{-.1cm}
 \end{equation}
 We write ``\StableA '' to refer exclusively to L\'evy processes with this Laplace exponent. In particular, such processes are spectrally positive. 

Boylan \cite{Boylan64} proved that $\fX$ has an a.s.\ unique jointly continuous \emph{local time} process $(\ell_\bX^y(t);\ y\in\BR,t\geq 0)$; i.e.\ for all bounded measurable $f\colon\BR\rightarrow[0,\infty)$ and $t\ge 0$,\vspace{-.1cm}
\begin{equation*}
 \int_{-\infty}^\infty f(y)\ell_\bX^y(t)dy=\int_0^t f(\bX(s))ds.
\end{equation*}

\subsection{Interval partition evolutions from scaffolding and spindles}


We now formalize the construction stated before Theorem \ref{thm:diffusion_0}. We write $\bX|_{[0,T]}$, respectively 
$\bN|_{[0,T]}:=\bN|_{[0,T]\times\Exc}$, to restrict to times $[0,T]$, resp.\ $[0,T]\times\Exc$, by setting the process, resp.\ measure, equal to 0 outside this set.

\begin{definition}[$\bP^1_{\beta}$, $\mBxc$-IP-evolution, Lemma 5.1 of \cite{IPPA}]\label{constr:type-1}
 Let $\beta\in\IPspace$. If $\beta = \emptyset$ then $\bP^1_{\beta}=\delta_\emptyset$ is the Dirac mass on the constant function
 $\emptyset\in\cC([0,\infty),\IPspace)$. Otherwise, for each $U\!\in\!\beta$ we carry out the following construction independently; see Figure \ref{fig:clade_constr}. Let $\bN$ denote a \PRM[\Leb\otimes\mBxc] with scaffolding $\fX$ as in \eqref{eq:scaffolding} and $\bff$ an independent \BESQA\ started from $\Leb(U)$ and absorbed at 0. Consider the hitting time $T := \inf\{t\!>\!0\colon\fX(t)\! =\! -\life(\bff)\}$. Let
 $$\bN_U := \Dirac{0,\bff}+\restrict{\bN}{[0,T]}, \quad \len(\bN_U):=T, \quad \fX_U:=\life(\bff)+\fX|_{[0,T]}.$$ 
 
 \begin{figure}
  \centering
  \input{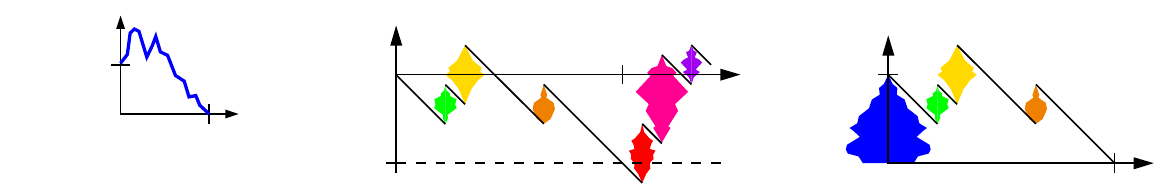_t}
  \caption{Illustration of the construction of $(\bN_U,\fX_U)$ in Definition \ref{constr:type-1}.\label{fig:clade_constr}}
 \end{figure}
 
 Recalling Definition \ref{def:skewer}, 
 we call a $\dI$-path-continuous version $(\nbeta^y,\,y\ge0)$ of $\Concat_{U\in\beta}\skewer(y,\bN_{U},\fX_{U})$, $y\ge 0$, a \emph{$\mBxc$-IP-evolution starting from $\beta$}.
 We denote its distribution on $\cC([0,\infty),\IPspace)$ by $\bP^1_\beta$. For probability measures $\mu$ on $\IPspace$, we write $\bP^1_{\mu} := \int \bP^1_{\beta}\mu(d\beta)$ to denote the $\mu$-mixture of the laws $\bP^1_{\beta}$.
\end{definition}

In \cite[Proposition 5.11]{IPPA}, we proved the existence of $\mBxc$- and other IP-evolutions. Moreover, we showed that we can define 
concatenations $\bN_{\beta} := \ConcatIL_{U\in\beta}\bN_U$ (and $\fX_\beta=\ConcatIL_{U\in\beta}\fX_U$) of point measures (and scaffoldings) such that $\skewerP(\bN_\beta,\bX_\beta)$ is well-defined, $\dI$-path-continuous, and the diversities of the resulting interval partitions coincide with scaffolding local times at all times and levels:
\begin{equation}
    \IPLT_{\skewer(y,\bN_\beta,\bX_\beta)}\big( M^y_{\bN_\beta,\bX_\beta}(t) \big) = \ell^y_{\bX_\beta}(t) \quad \text{for }t,y\ge0.\label{eq:div_LT_condition}%
\end{equation}


Recall from \cite{IPPA} the following useful property of $\mBxc$-IP-evolutions.

\begin{lemma}[\cite{IPPA}, Lemma 5.1]\label{lem:finite_survivors} $\!\!$
 Let $(\bN_U,\fX_U)_{U\in\beta}$ as in Definition \ref{constr:type-1}, $y\!>\!0$. 
 Then a.s.\ $\skewerP(y,\bN_U,\bX_U)\neq\emptyset$ for at most finitely many $U\!\in\!\beta$.
\end{lemma}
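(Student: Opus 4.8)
The plan is to control the number of surviving clades by a Borel--Cantelli estimate on the top level reached by each clade's scaffolding. For $U\in\beta$ let $H_U:=\sup_{t\in[0,\len(\bN_U)]}\bX_U(t)$ be the maximal level attained by the scaffolding $\bX_U$. First note that $\skewer(y,\bN_U,\bX_U)\neq\emptyset$ forces $H_U>y$: by Definition \ref{def:skewer} a non-empty skewer at level $y$ means some spindle of $\bN_U$ is alive at level $y$; for the time-$0$ spindle $\bff_U$, based at level $0$, this means $y<\life(\bff_U)$, and since $\bX_U$ reaches level $\life(\bff_U)$ we get $H_U\ge\life(\bff_U)>y$; for any later spindle $(t_i,f_i)$ it means $\bX_U(t_i-)<y<\bX_U(t_i-)+\life(f_i)=\bX_U(t_i)\le H_U$. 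As $\beta$ has only countably many blocks, by the first Borel--Cantelli lemma it suffices to prove $\sum_{U\in\beta}\Pr(H_U>y)<\infty$.

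\textbf{Survival estimate.} Fix $U\in\beta$ and condition on $a:=\life(\bff_U)$. Given $a$, the scaffolding $\bX_U$ is a spectrally positive \StableA\ L\'evy process started from $a$ and absorbed upon hitting $0$, and $\{H_U>y\}$ is the event that it attains level $y$ before absorption. By the classical two-sided exit formula for spectrally one-sided stable L\'evy processes (equivalently, the $0$-scale function of the spectrally negative dual is proportional to $x\mapsto x^\alpha$), this conditional probability equals $1-(1-a/y)^\alpha$ when $a\le y$ and equals $1$ when $a\ge y$, so in either case it is at most $a/y$ --- here using $1-(1-t)^\alpha\le t$ for $t\in[0,1]$. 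By Lemma \ref{lem:BESQ:length}, $a\sim\InvGammaDist[1+\alpha,\Leb(U)/2]$, i.e.\ $\Leb(U)/(2a)\sim\GammaDist[1+\alpha,1]$, so $\EV[a]=\Leb(U)/(2\alpha)$ and therefore
\[
  \Pr(H_U>y)\ =\ \EV\big[\Pr(H_U>y\mid a)\big]\ \le\ \frac{\EV[a]}{y}\ =\ \frac{\Leb(U)}{2\alpha\, y}.
\]
A direct integration over $a$ in fact yields the exact value $\Pr(H_U>y)=1-e^{-\Leb(U)/(2y)}$, the probability that a \BESQ[0] diffusion started from $\Leb(U)$ is positive at time $y$, consistently with Theorem \ref{thm:BESQ_total_mass}.

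\textbf{Conclusion.} Since $\beta$ is an interval partition of $[0,\IPmag{\beta}]$ we have $\sum_{U\in\beta}\Leb(U)=\IPmag{\beta}<\infty$, hence
\[
  \sum_{U\in\beta}\Pr\big(\skewer(y,\bN_U,\bX_U)\neq\emptyset\big)\ \le\ \sum_{U\in\beta}\Pr(H_U>y)\ \le\ \frac{\IPmag{\beta}}{2\alpha\, y}\ <\ \infty,
\]
and Borel--Cantelli gives the claim. I expect the survival estimate to be the main point: it is the only step that uses the model-specific ingredients, namely the exact absorption-time law of the \BESQA\ block diffusion (Lemma \ref{lem:BESQ:length}) and the exit behaviour of the \StableA\ scaffolding. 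If one prefers to avoid scale functions, one may instead invoke --- once it is available from \cite{IPPA,Paper0} --- that the total mass of a single clade started from a block of mass $\Leb(U)$ is a \BESQ[0] diffusion from $\Leb(U)$; then $\Pr(H_U>y)$ is exactly the probability that this diffusion is positive at level $y$, namely $1-e^{-\Leb(U)/(2y)}\le\Leb(U)/(2y)$, and the remaining steps are unchanged.
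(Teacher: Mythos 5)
Your proof is correct and takes essentially the same route as the argument in the cited source: bound the per-clade survival probability and apply Borel--Cantelli, the key point being $\Pr(H_U>y)=1-e^{-\Leb(U)/2y}\le \Leb(U)/2y$, which is summable over $U\in\beta$ because $\sum_{U\in\beta}\Leb(U)=\IPmag{\beta}<\infty$. Your derivation of that survival probability, by conditioning on $\life(\bff_U)$ and using the two-sided exit formula for the spectrally positive \StableA\ scaffolding, is consistent with Proposition \ref{prop:clade:stats}\ref{item:CS:max:mass} and \eqref{eq:transn:lifetime}, so nothing is missing.
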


\subsection{Decompositions of scaffolding and spindles at a fixed level}\label{sec:decomp}

This section summarizes \cite[Section 4]{IPPA}. Let $(\fN,\fX)$ denote a \PRM[\Leb\otimes\mBxcA] and the associated \StableA\ scaffolding, as above. For $y\in\BR$ we denote by
\begin{equation}
 \tau^y(s) := \tau_\fX^y(s) := \inf\{t\ge 0\colon \ell^y_\fX(t)>s\},\quad  s\geq 0,
\end{equation}
the level-$y$ inverse local time process of $\bX$. In order to prove that the interval partition process of Definition \ref{constr:type-1} has the transition kernel $\kappa^{(\alpha)}$ of \eqref{eq:intro:transn}, it is useful to consider the decomposition of a \StableA\ scaffolding process $\bX$ into excursions about a level. For fixed $y\in\BR$, we can decompose the path of $\bX$ after the first hitting time of level $y$ into a collection of excursions about level $y$. It\^o's excursion theory \cite{Ito72} states that excursions of $\fX$ about level $y$ form a \PRM , $\sum_{s>0\colon \tau^y(s)>\tau^y(s-)}\delta(s,g_s)$, where each $g_s$ is an excursion of $\fX$ about level $y$ and the corresponding $s$ is the level-$y$ local time at which $g_s$ occurs. The $\sigma$-finite intensity measure $\mSxc = \mSxcA$ of this \PRM\ is called the It\^o measure of these excursions.

\begin{figure}
 \centering
 \input{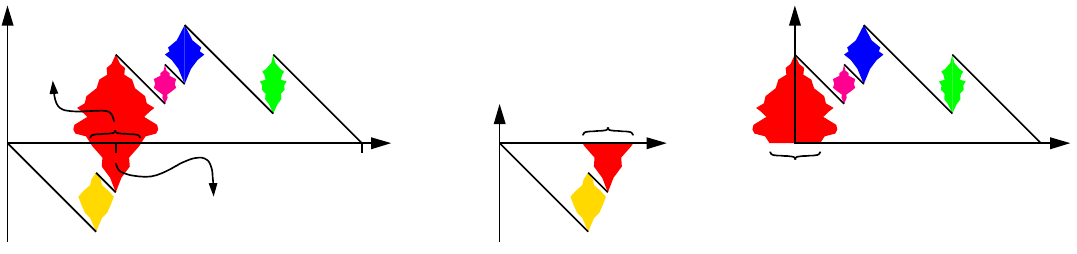_t}
 \caption{Decomposition of a bi-clade $N$ into an anti-clade $N^-$ and clade $N^+$.\label{fig:bi-clade_decomp}}
\end{figure}

Extending the restriction notation of the previous section, we write $\shiftrestrict{\fN}{[a,b]}$ to denote the \emph{shifted restriction}, formed by first taking the restriction of $\fN$ to $[a,b]\times\cE$, then translating to obtain a point process supported on $[0,b-a]\times\Exc$. If $\fX = \xi(\fN)$ has an excursion $g$ about level $y$ during the interval $[a,b]$, then $N = \shiftrestrict{\fN}{[a,b]}$ satisfies $g = \xi(N)$. We call $N$ a \emph{bi-clade}; see Figure \ref{fig:bi-clade_decomp}.

Each excursion of $\fX$ about level $y$ comprises an initial escape down from $y$; a single jump up across $y$, which we call the middle jump; and a final return back down to $y$. For the purpose of the following, let $T$ denote the time of this middle jump in a bi-clade $N$. We split the spindle $f_T$ marking the middle jump into two broken spindles: $\check f_T$ corresponding to the part of the jump that occurs below level $0$, and $\hat f_T$ corresponding to the continuation of that jump. Then, we decompose our bi-clade $N$ into two point processes, depicted in Figure \ref{fig:bi-clade_decomp}:
\begin{equation*}
 N^- = \Restrict{N}{[0,T)}+\delta\big(T,\check f_T\big) \quad \text{and} \quad N^+ = \delta\big(0,\hat f_T\big) + \ShiftRestrict{N}{(T,\infty)}.
\end{equation*}
We call $N^-$ and $N^+$ the \emph{anti-clade} and \emph{clade} parts of $N$.

Consider the stochastic kernel $\lambda$ that takes in a scaffolding process $g$ and maps it to the law of a point process of spindles $N$ with $\xi(N) = g$ by independently marking each jump of $g$ with a \BESQA\ excursion conditioned to have lifetime equal to the height of the jump. We can obtain a $\sigma$-finite It\^o measure on bi-clades by mixing this kernel over the It\^o measure of \StableA\ excursions: 
$\mClade := \mCladeA := \int\lambda(g,\cdot\,)d\mSxc(g)$. 
Let $\mClade^+$ and $\mClade^-$ denote the pushforward of this measure onto clades and anti-clades, respectively. 
We denote by $\bF^y = \sum_{s>0\colon \tau^y(s)>\tau^y(s-)}\delta(s,N_s)$ the point process of bi-clades corresponding to the excursions of $\fX$ about level $y$, and we denote by $\bF^{\ge y} = \sum\delta(s,N_s^+)$ and $\bF^{\le y} = \sum\delta(s,N_s^-)$ the corresponding point processes of (anti-)clades.

\begin{proposition}[Proposition 4.9 and Corollary 4.10 of \cite{IPPA}]\label{prop:bi-clade_PRM}
 $\bF^y$ is a \PRM[\Leb\otimes\mClade], $\bF^{\ge y}$ is a \PRM[\Leb\otimes\mClade^+], and $\bF^{\le y}$ is a \PRM[\Leb\otimes\mClade^-].
\end{proposition}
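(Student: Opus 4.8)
\textbf{Proof plan for Proposition \ref{prop:bi-clade_PRM}.}
The plan is to derive the Poisson structure of $\bF^y$, $\bF^{\ge y}$ and $\bF^{\le y}$ directly from It\^o excursion theory applied to the \StableA\ scaffolding $\fX$, together with the marking that produces $\fN$. Since this is quoted verbatim as Proposition 4.9 and Corollary 4.10 of \cite{IPPA}, the natural route here is to reconstruct that argument: first establish the PRM property at the level of scaffolding excursions, then transfer it to decorated bi-clades via an independent marking kernel, and finally push forward under the measurable splitting maps $N \mapsto N^+$ and $N \mapsto N^-$.

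First I would recall that $\fX$ is a spectrally positive \StableA\ L\'evy process with continuous local time $(\ell^y_\fX(t))$, as in Section \ref{sec:prelim:JCCP}. Fix a level $y$. By the strong Markov property at the first hitting time of $y$ and It\^o's excursion theory for the Markov process $\fX$ relative to the local time clock $s\mapsto \tau^y(s)$, the point process $\sum_{s\colon \tau^y(s)>\tau^y(s-)}\delta(s,g_s)$ of excursions of $\fX$ about level $y$ is a \PRM\ on $[0,\infty)\times\{\text{excursions}\}$ with intensity $\Leb\otimes\mSxc$, where $\mSxc=\mSxcA$ is by definition the It\^o measure of these excursions. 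Next I would bring in the spindle decorations: conditionally on $\fX$, the excursion intervals $[\tau^y(s-),\tau^y(s)]$ partition the relevant part of the time axis, and the marks $\fN$ restricted to disjoint time intervals are conditionally independent because $\fN$ is Poisson and, given $\fX$, each jump of $\fX$ in such an interval is independently marked by a \BESQA\ excursion conditioned on its lifetime. This conditional independence, combined with the fact that the kernel $\lambda(g,\cdot\,)$ depends only on the excursion $g$ and not on where it sits, is exactly the hypothesis of the marking theorem for Poisson random measures; applying it yields that $\bF^y = \sum_s \delta(s,N_s)$ is a \PRM[\Leb\otimes\mClade] with $\mClade = \int\lambda(g,\cdot\,)\,d\mSxc(g)$.

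Finally, the statements for $\bF^{\ge y}$ and $\bF^{\le y}$ follow by the mapping theorem for Poisson random measures: the maps $N\mapsto N^+$ and $N\mapsto N^-$ defined by splitting the middle-jump spindle $f_T$ into $\hat f_T$ and $\check f_T$ are measurable functions of a bi-clade, so their images under $\bF^y$ are \PRM s with intensities equal to the pushforwards $\mClade^+$ and $\mClade^-$ of $\mClade$. I would also note the independence structure: since $N$ determines the ordered pair $(N^-,N^+)$ and conversely $N$ is recovered from $(N^-,N^+)$ by re-gluing the broken spindle, $\bF^{\le y}$ and $\bF^{\ge y}$ are measurable images of the single \PRM\ $\bF^y$, and each individually is again Poisson.

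The main obstacle I anticipate is the careful bookkeeping around the middle jump: one must verify that the splitting $f_T = (\check f_T,\hat f_T)$ is jointly measurable and compatible with the excursion-theoretic decomposition (so that $\xi(N^-)$ ends exactly at the jump time and $\xi(N^+)$ starts there), and that no mass is lost or double-counted at level $y$ itself, where $\ell^y_\fX$ has its support. Handling the a.s.\ properties of the \StableA\ path at level $y$ — that each excursion has exactly one jump crossing $y$, and that the scaffolding is at $y$ only on a Lebesgue-null set — is where the argument requires genuine care rather than routine verification; everything else is a clean application of Poisson mapping and marking theorems. Since all of this is carried out in \cite[Section 4]{IPPA}, in the present paper it suffices to cite that work.
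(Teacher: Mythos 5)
Your proposal is correct and follows essentially the same route as the source: the present paper gives no proof of this statement but simply quotes it from \cite{IPPA}, where the argument is exactly the one you reconstruct --- It\^o excursion theory for the \StableA\ scaffolding at level $y$, conditionally independent marking of jumps by spindles to pass from $\mSxc$ to $\mClade$, and the Poisson mapping theorem for the measurable splittings $N\mapsto N^{\pm}$. Your closing remark is also the right conclusion: in this paper it suffices to cite \cite[Section 4]{IPPA}.
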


We define a time-reversal involution and a scaling operator on point processes of spindles:
\begin{equation}\label{eq:clade:xform_def}
 \begin{split}
  \reverseH(N) &:= \int \Dirac{\len(N)-t,\reverseexc(f)}dN(t,f),\\
  b\scaleH N &:= \int \Dirac{b^{1+\alpha}t,\scaleB[b][f]}dN(t,f) \qquad \text{for }b>0,
\end{split}
\end{equation}
where $\scaleB$ and $\reverseexc$ are as in \eqref{eq:BESQ:scaling_def}. The map $\reverseH$ reverses the order of spindles and reverses time within each spindle. Lemma 4.11 of \cite{IPPA} notes that, for $b>0$ and $A$ a measurable set of bi-clades,
\begin{equation}\label{eq:clade:invariance}
 \mClade(\reverseH(A)) = \mClade(A) \quad \text{and} \quad \mClade(b\scaleH A) = b^{-\alpha}\mClade(A).
\end{equation}

Note that each bi-clade of $\fN$ corresponding to an excursion of $\fX$ about level $y$ gives rise to a single block in $\skewer(y,\fN,\fX)$, with block mass equal to a cross-section of the spindle marking the middle jump of the excursion. We denote this mass by
\begin{equation}
 m^0(N) := \!\int\! \max\Big\{f\big((-X(s-))\!-\!\big),\,f\!\big(\!-\!X(s-)\big)\Big\}dN(s,f).\label{eq:clade:mass_def}
\end{equation} 
The two quantities in the $\max\{\cdot\,,\cdot\}$ are equal for typical bi-clades but will differ when $m^0$ is applied to an anti-clade or clade; cf.\ Figure \ref{fig:bi-clade_decomp}.

\begin{proposition}[Aggregate mass process, Proposition 8(i) of \cite{Paper0} with $q=c=1$]\label{prop:agg_mass_subord}
  Consider $\fN\sim\PRM(\Leb\otimes\mBxc)$ and $\fX=\xi_\fN$ as in \eqref{eq:scaffolding}. Then for any fixed $y\in\BR$, the process $\bM^y(s):=M^y_{\bN,\fX}\circ\tau^y_{\fX}(s) - M^y_{\bN,\fX}\circ\tau^y_{\fX}(0)$, $s\geq 0$, is a \Stable[\alpha] subordinator with Laplace exponent $\Phi(\lambda) = \lambda^\alpha$.
\end{proposition}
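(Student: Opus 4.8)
The plan is to identify $\bM^y$ as a subordinator by means of It\^o excursion theory for the scaffolding $\fX$, and then to pin down its Laplace exponent by an explicit L\'evy-measure computation. (This is Proposition~8(i) of \cite{Paper0}, specialised to $q=c=1$; I outline the argument.) First I would note that $M^y_{\bN,\fX}$ increases only at the countably many jump times of $\fX$ at which $\fX$ crosses level $y$, the increment being precisely the cross-section at level $y$ of the spindle marking that jump. Composing with the inverse local time $\tau^y_\fX$, these crossing jumps are in bijection with the bi-clades of $\fN$ about level $y$: as recalled in Section~\ref{sec:decomp}, each excursion of $\fX$ about $y$ contains exactly one jump across $y$ (the middle jump), and its contribution to $M^y$ is exactly $m^0(N)$ for the corresponding bi-clade $N$, cf.\ \eqref{eq:clade:mass_def}, while jumps of the excursion lying entirely above or entirely below $y$ contribute nothing. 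Hence, with $\bF^y$ the point process of bi-clades about $y$, one gets $\bM^y(s)=\sum_{s'\le s}m^0(N_{s'})$ summed over the atoms $(s',N_{s'})$ of $\bF^y$. Since $\bF^y$ is a \PRM[\Leb\otimes\mClade] by Proposition~\ref{prop:bi-clade_PRM}, $\bM^y$ is therefore a driftless subordinator with L\'evy measure the pushforward $\mClade\circ(m^0)^{-1}$ and Laplace exponent $\Phi(\lambda)=\int(1-e^{-\lambda m^0(N)})\,\mClade(dN)$, once this measure is seen to integrate $1\wedge x$. (The subtracted term $M^y_{\bN,\fX}\circ\tau^y_\fX(0)$ only discards mass accumulated before $\fX$ first reaches $y$ and is irrelevant to the increments.)

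It then remains to show $\Phi(\lambda)=\lambda^\alpha$. Using the disintegration $\mClade=\int\lambda(g,\cdot)\,d\mSxc(g)$, I would first integrate out the spindle marking the middle jump: in a $\mSxc$-excursion $g$ of \StableA\ about $0$, writing $a=-g(T-)>0$ for the depth and $\Delta=\Delta g(T)>a$ for the height at the middle jump time $T$, the mass $m^0$ is the value at time $a$ of a \BESQA\ excursion conditioned to have lifetime $\Delta$, so
$$\Phi(\lambda)=\int\Big(1-\EV\big[\,e^{-\lambda f(a)}\,\big|\,\life(f)=\Delta\,\big]\Big)\,\mSxc(dg).$$
The inner expectation is a squared-Bessel computation (a \BESQ\ excursion conditioned on its length is built from \BESQ\ bridges; cf.\ Lemmas~\ref{lem:BESQ:length} and \ref{lem:BESQ:existence}), while the outer integral uses the explicit joint description, under the It\^o measure of the spectrally positive \StableA\ process, of the depth $a$ and height $\Delta$ of the middle jump in terms of its L\'evy measure \eqref{eq:JCCP:Laplace} and the overshoot of the running infimum. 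The normalization $2\alpha(1+\alpha)/\Gamma(1-\alpha)$ fixed in \eqref{eq:BESQ:exc_meas_defn} is exactly what makes these two contributions combine to $\lambda^\alpha$, the resulting L\'evy measure being $\frac{\alpha}{\Gamma(1-\alpha)}x^{-1-\alpha}\,dx$, which is $1\wedge x$-integrable since $\alpha\in(0,1)$ (thereby justifying the claim above). Equivalently, one can reach the same identity by evaluating $\EV[e^{-\lambda\bM^y(s)}]$ directly via the exponential formula for the \PRM\ $\fN$ together with It\^o excursion theory for $\fX$ about level $y$; this is the same computation organized differently.

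The main obstacle is precisely this L\'evy-measure computation: correctly identifying the joint law of $(a,\Delta)$ under the stable It\^o measure, combining it with the conditioned-length \BESQA\ spindle, and verifying that all the constants collapse to give exactly $\Phi(\lambda)=\lambda^\alpha$. By contrast, the identification of $\bM^y$ as a subordinator is a routine consequence of the Poissonian structure in Proposition~\ref{prop:bi-clade_PRM}.
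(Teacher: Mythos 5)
First, a point of orientation: the paper does not prove Proposition \ref{prop:agg_mass_subord} at all — it is imported verbatim as Proposition 8(i) of \cite{Paper0} — and the paper then uses it in the \emph{opposite} direction, deducing the L\'evy measure $\mClade(m^0\in\cdot\,)$ from the known exponent $\lambda^\alpha$ (see the remark immediately after the statement). Your first half is correct and is the standard route: $M^y_{\bN,\fX}$ increases only at jumps of $\fX$ crossing level $y$, each complete excursion of $\fX$ about $y$ carries exactly one such jump, contributing $m^0(N)$ for the corresponding bi-clade, and since $\bF^y$ is a \PRM[\Leb\otimes\mClade] in the local-time parametrization (Proposition \ref{prop:bi-clade_PRM}), the time-changed process $\bM^y$ is a driftless subordinator with L\'evy measure $\mClade(m^0\in\cdot\,)$; subtracting $M^y_{\bN,\fX}\circ\tau^y_\fX(0)$ correctly discards the incomplete initial bi-clade.

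The gap is in the second half. The identity $\int(1-e^{-\lambda m^0})\,d\mClade=\lambda^\alpha$ \emph{is} the quantitative content of the proposition, and you do not carry out the computation: "the constants collapse to give exactly $\lambda^\alpha$" is an assertion of the conclusion, not a proof of it. Your proposed disintegration — the joint law under $\mSxc$ of the undershoot $a$ and height $\Delta$ of the middle jump, combined with the value at time $a$ of a length-$\Delta$ conditioned \BESQA\ excursion — can be made to work, but everything hinges on the normalization of the It\^o measure and of the local time, which is exactly where the constant can go wrong and which you leave unexamined. Note that this paper's own appendix assembles the same constants by a cleaner bookkeeping: scaling invariance \eqref{eq:clade:invariance} forces $\mClade\{m^0>b\}=Cb^{-\alpha}$, and $C=1/\Gamma(1-\alpha)$ is pinned down in the proof of Proposition \ref{prop:clade:stats2}\ref{item:CS2:mass} by computing $\mClade\{J>z\}$ in two ways — once from the inverse-local-time and first-passage subordinators (Propositions \ref{prop:inv_LT:subord} and \ref{prop:hitting_time:subord}, which fix the normalization), and once by integrating the \InvGammaDist[1+\alpha,b/2] law of the over/undershoot given $m^0=b$ (from Lemma \ref{lem:BESQ:length}) against $Cb^{-1-\alpha}db$. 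None of those items uses the present proposition, so there is no circularity, and once $\mClade\{m^0\in db\}=\frac{\alpha}{\Gamma(1-\alpha)}b^{-1-\alpha}db$ is in hand the Laplace exponent is a one-line Gamma integral. You should either route your argument through that computation or fully execute your own disintegration; as written, the step that makes the statement true is missing.
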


Note that the L\'evy measure for this subordinator equals $\mClade(m^0\in\cdot\,)$, so the formula for this is implied by the proposition; we state it explicitly in Proposition \ref{prop:clade:stats}\ref{item:CS:mass}. 


By Lemma 4.14 of \cite{IPPA}, $\mClade$ has a unique $m^0$-disintegration $\mClade(\,\cdot\,|\,m^0)$ with the scaling property that for all $b>0$,
\begin{equation}
  \mClade(\,\cdot\;|\;m^0 =b) = \mClade\left(\scaleH[b^{-1}][N]\in\,\cdot\ \middle|\;m^0 = 1\right).\label{eq:clade_mass_ker}
\end{equation}
 Proposition 4.15 of \cite{IPPA} notes that under $\mClade(\,\cdot\,|\,m^0=b)$ the point measure is distributed as the concatenation 
\begin{equation}\label{eq:clade_split:indep}
  \ol{N}_b^-\concat\ol{N}_b^+\text{\ \ where\ \ }\ol{N}_b^+\text{ is independent of }\ol{N}_b^-,
\end{equation}
with the convention that the two broken spindles at the concatenation time $\len(\ol{N}_b^-)$, which are resp.\ rightmost for
$\ol{N}_b^-$ and leftmost for $\ol{N}_b^+$ and which resp.\ end and start at mass $b$, are concatenated to form a single spindle. This concatenation reverses the decomposition depicted in Figure \ref{fig:bi-clade_decomp}. The same proposition states that
\begin{equation}\label{eq:clade_split:distrib}
 \reverseH(\ol{N}_b^-) \stackrel{d}{=} \ol{N}_b^+\stackrel{d}{=}\fN_{U} \sim \mClade^+\{\,\cdot\;|\;m^0 = b\},
\end{equation}
where $\fN_{U}$ is as in Definition \ref{constr:type-1} with $U = (0,b)$.

\section{Transition kernels, total mass and type-0 evolutions\vspace{-0.1cm}}
\label{sec:type-0+transitions}

\subsection{The transition kernel of $\mBxc$-IP-evolutions}
\label{sec:type-1:clade}

\begin{definition}\label{def:clade_stats}
 We define statistics of bi-clades $N$ with $X=\xi_N$:
  \begin{itemize}\item \emph{overshoot} and \emph{undershoot} at $T_0^+(N)=\inf\{t>0\colon X(t)\ge 0\}$\vspace{-0.1cm}  
      $$J^+(N) := X(T_0^+(N)),\quad\mbox{and}\quad J^-(N) := -X(T_0^+(N)-),\vspace{-0.1cm}$$
    \item \emph{clade lifetime} and \emph{anti-clade lifetime}\vspace{-0.1cm}
	  $$\life^+(N) := \sup_{t\in[0,\len(N)]} X(t)\quad\mbox{and}\quad\life^-(N) := -\inf_{t\in[0,\len(N)]}X(t).$$
  \end{itemize}
\end{definition}

\begin{figure}
 \centering
 \input{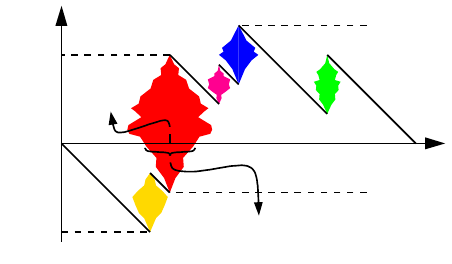_t}\vspace{-0.2cm}
 \caption{A bi-clade, with the statistics of \eqref{eq:clade:mass_def} and Definition \ref{def:clade_stats} labeled.\label{fig:clade_stats}}
\end{figure}

\begin{table}[b]
 \centering
 \begin{tabular}{|c|Sc|}\hline
  $J^+(\scaleH[c][N]) = cJ^+(N)$ & $J^-(\scaleH[c][N]) = cJ^-(N)$ \\\hline
  $\life^+(\scaleH[c][N]) = c\life^+(N)$ & $\life^-(\scaleH[c][N]) = c\life^-(N)$ \\\hline
  $T_0^+(c\scaleH N) = c^{1+\alpha}T_0^+(N)$ & $\len(c \scaleH N) = c^{1+\alpha}\len(N)$ \\\hline
  $m^0(c\scaleH N) = cm^0(N)$& $\ell^y_{\xi(c \scaleH N)}(t) = c^{\alpha}\ell^{y/c}_{\xi(N)}(tc^{-1-\alpha})$\\\hline
 \end{tabular}\vspace{4pt}
 \caption{How statistics of \eqref{eq:clade:mass_def} and Definition \ref{def:clade_stats} scale as $N$ is scaled, as in Section \ref{sec:decomp}.\label{tbl:clade_scaling}}
\end{table}

The above quantities appear labeled in Figure \ref{fig:clade_stats}. The rates at which they scale under $\scaleH$ are listed in Table \ref{tbl:clade_scaling}. 
We call $\life^+$ ``lifetime'' rather than ``maximum'' since values in the scaffolding function play the role of times in the evolving interval 
partitions $(\skewer(y,N,X),\ y\geq 0)$ that we ultimately wish to study. As with conditioning on $m^0$ in \eqref{eq:clade_mass_ker}, there are unique kernels with scaling properties that allow us to condition $\mClade$ on the exact value of any one of the other quantities of Definition \ref{def:clade_stats} and get a resulting probability distribution.

\begin{proposition}\label{prop:clade:stats}
 \begin{enumerate}[label=(\roman*), ref=(\roman*)]
  \item $\displaystyle \mClade\big\{ m^0 > b \big\} = \frac{1}{\Gamma(1-\alpha)}b^{-\alpha}.$\label{item:CS:mass}\vspace{-0.1cm}
  \item $\displaystyle \mClade\big\{ \life^+ > z \big\} = \frac{1}{2^\alpha}z^{-\alpha}.$\label{item:CS:max}
  \item $\displaystyle \mClade\big\{ J^+ \in dy\;\big|\;m^0 = b\big\} = \frac{b^{1+\alpha}}{2^{1+\alpha}\Gamma(1+\alpha)}y^{-2-\alpha}e^{-b/2y}dy.$\label{item:CS:over:mass}
  \item $\displaystyle \mClade\big\{ \life^+ \leq z\;\big|\;m^0 = b\big\} = e^{-b/2z}.$\label{item:CS:max:mass}
  \item $\displaystyle \mClade\big\{ m^0 \leq b\;\big|\;J^+ = y\big\} = 1 - e^{-b/2y}.$\label{item:CS:mass:over}
  \item $\displaystyle \mClade\big\{ \life^+ \leq z\;\big|\;J^+ = y\big\} = \cf\{z\geq y\}\left(\frac{z-y}{z}\right)^\alpha.$\label{item:CS:max:over}
  \item $\displaystyle \mClade\big\{ m^0 \in db\;\big|\;\life^+ \geq z\big\} = \frac{\alpha 2^\alpha z^\alpha}{\Gamma(1-\alpha)}b^{-1-\alpha}(1-e^{-b/2z})db.$\label{item:CS:mass:max}
 \end{enumerate}\vspace{-2pt}\ 
 
 \noindent All of these equations remain true if we replace all superscripts `$+$' with `$-$'.
\end{proposition}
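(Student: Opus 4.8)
### Proof proposal for Proposition \ref{prop:clade:stats}

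\textbf{Overall strategy.} The plan is to reduce everything to two inputs: (a) the It\^o measure $\mSxc$ of \StableA\ excursions about a level, whose relevant functionals are classical (first passage / overshoot laws for spectrally positive stable L\'evy processes), and (b) the conditional law, given a \StableA\ excursion $g$, of the decorating spindle marking the middle jump, which under $\lambda(g,\cdot\,)$ is a \BESQA\ excursion conditioned to have a prescribed lifetime, so that its mass cross-sections at the relevant heights are governed by Lemmas \ref{lem:BESQ:length} and \ref{lem:BESQ:existence}. Since $\mClade = \int\lambda(g,\cdot\,)d\mSxc(g)$, each identity will come out as an integral over $\mSxc$ of a conditional \BESQ-probability. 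I will set up a single master computation and then read off the seven parts as specializations; the `$-$' versions then follow because $\reverseH$ preserves $\mClade$ (cf.\ \eqref{eq:clade:invariance}) and swaps the roles of the over/undershoot and of the clade/anti-clade lifetimes.

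\textbf{Parts \ref{item:CS:mass} and \ref{item:CS:max}: the two ``tail'' formulas.} For \ref{item:CS:mass}, the statement $\mClade\{m^0>b\} = b^{-\alpha}/\Gamma(1-\alpha)$ is, by the remark following Proposition \ref{prop:agg_mass_subord}, nothing but the tail of the L\'evy measure of the \Stable[\alpha] aggregate-mass subordinator with $\Phi(\lambda)=\lambda^\alpha$, since $\mClade(m^0\in\cdot\,)$ \emph{is} that L\'evy measure; so I would simply invert $\lambda\mapsto\lambda^\alpha$ via $\int_0^\infty(1-e^{-\lambda b})\alpha b^{-1-\alpha}\,db/\Gamma(1-\alpha)=\lambda^\alpha$ and integrate the tail. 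For \ref{item:CS:max}, the event $\{\life^+>z\}$ means the \StableA\ excursion about the level reaches height at least $z$ above that level; this is an It\^o-measure computation for $\mSxc$. I would get it from the scaling property $\mSxc(\life^+\!>z)=z^{-\alpha}\mSxc(\life^+\!>1)\cdot(\text{const})$ combined with the normalization already pinned down in \eqref{eq:BESQ:lifetime}--\eqref{eq:JCCP:Laplace}: the Laplace exponent $\psi(\lambda)=\lambda^{1+\alpha}/(2^\alpha\Gamma(1+\alpha))$ fixes the constant, giving the factor $2^{-\alpha}$. (Equivalently, one recognizes $\life^+$ as the maximum of the \StableA\ excursion and uses the known $\sigma$-finite law of excursion heights for a spectrally positive stable process.)

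\textbf{Parts \ref{item:CS:over:mass}--\ref{item:CS:mass:max}: the conditional laws.} These are the substantive computations and where the main work lies. Fix the disintegration of $\mClade$ over $m^0$ from \eqref{eq:clade_mass_ker}--\eqref{eq:clade_split:distrib}: conditionally on $m^0=b$, the clade part $\ol N_b^+$ is $\mClade^+\{\cdot\,|\,m^0=b\}$-distributed, i.e.\ (for $b=1$, then rescale) it is the point-measure/scaffolding pair $\fN_U$, $U=(0,1)$, of Definition \ref{constr:type-1}: a \BESQA\ spindle $\bff$ of mass $1$ and lifetime $\life(\bff)$, followed by the \StableA\ scaffolding run until it hits $-\life(\bff)$. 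Under this picture, $J^+=X(T_0^+)$ is exactly the value of the scaffolding at its first passage above $0$, and $\life^+$ is its running maximum before absorption.
For \ref{item:CS:over:mass} I would condition further on $\life(\bff)=\ell$: by Lemma \ref{lem:BESQ:length} the law of $\ell$ given $m^0=1$ is $\InvGammaDist[1+\alpha,1/2]$ (density $\propto \ell^{-2-\alpha}e^{-1/2\ell}$ in the appropriate parametrization), and given $\ell$ the overshoot $J^+$ of the \StableA\ process above $0$ started from $\ell$ below is, by the strong Markov/first-passage structure of spectrally positive stable processes (equivalently by the explicit clade construction), a deterministic measurable functional whose marginal is obtained by a Chapman--Kolmogorov-type integral. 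The claim that the \emph{joint} law of $(m^0,J^+)$ has $m^0$-conditional density $\frac{b^{1+\alpha}}{2^{1+\alpha}\Gamma(1+\alpha)}y^{-2-\alpha}e^{-b/2y}$ then amounts to checking, for $b=1$, that $J^+$ has density $\propto y^{-2-\alpha}e^{-1/2y}$, i.e.\ $J^+\stackrel{d}{=}1/(2G)$ with $G\sim\GammaDist[1+\alpha,1]$ --- which I would derive by matching the Laplace transform in $\lambda$ against $\psi$ using the compensation formula for the Poisson scaffolding, or more directly by identifying $J^+$ with a size-biased/first-passage transform of $\life(\bff)$. Then \ref{item:CS:max:mass} is the companion: $\mClade\{\life^+\le z\,|\,m^0=b\}=e^{-b/2z}$ follows because $\{\life^+<z\}$ given $\bff$ is the event that the \StableA\ scaffolding hits $-\life(\bff)$ before hitting $+z$; the probability of this, integrated against the $\InvGammaDist$ law of $\life(\bff)$, collapses to $e^{-b/2z}$. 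Parts \ref{item:CS:mass:over}, \ref{item:CS:max:over}, \ref{item:CS:mass:max} are then \emph{consequences} of \ref{item:CS:over:mass}, \ref{item:CS:max:mass} and the tails \ref{item:CS:mass}, \ref{item:CS:max} by Bayes' rule for $\sigma$-finite measures: e.g.\ $\mClade\{m^0\le b\,|\,J^+=y\}$ is obtained by disintegrating $\mClade$ the other way, using \ref{item:CS:over:mass} together with the marginal $\sigma$-finite law of $J^+$ (read off from $\psi$), and the algebra produces $1-e^{-b/2y}$; similarly \ref{item:CS:max:over} combines \ref{item:CS:max:mass} with \ref{item:CS:over:mass}, the $\cf\{z\ge y\}$ appearing because $\life^+\ge J^+$ always, and the $((z-y)/z)^\alpha$ factor coming from the spatial-homogeneity (translation) of the \StableA\ scaffolding above level $0$ together with \ref{item:CS:max}. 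Finally \ref{item:CS:mass:max} is Bayes applied to \ref{item:CS:max:mass} against the mass tail \ref{item:CS:mass}.

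\textbf{Main obstacle and the `$-$' versions.} The crux is the exact constants in \ref{item:CS:over:mass}--\ref{item:CS:max:mass}: one must propagate the normalizations from \eqref{eq:BESQ:exc_meas_defn}, \eqref{eq:BESQ:lifetime}, \eqref{eq:JCCP:Laplace} through the disintegration without drift, which is bookkeeping-heavy but routine once the $\InvGammaDist[1+\alpha,b/2]$ law of $\life(\bff)$ and the $\Phi(\lambda)=\lambda^\alpha$, $\psi(\lambda)=\lambda^{1+\alpha}/(2^\alpha\Gamma(1+\alpha))$ normalizations are in hand; I would organize it so that \ref{item:CS:max:mass} is proved first (a clean exponential first-passage identity), then differentiate/transform to get \ref{item:CS:over:mass}, then deduce the three Bayes-type parts. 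For the `$-$' versions: by \eqref{eq:clade_split:indep}--\eqref{eq:clade_split:distrib}, $\reverseH(\ol N_b^-)\stackrel{d}{=}\ol N_b^+$, and $\reverseH$ maps $J^+\!\mapsto\!J^-$, $\life^+\!\mapsto\!\life^-$ while fixing $m^0$ (up to the single-spindle convention at the splitting time); since $\mClade$ is $\reverseH$-invariant by \eqref{eq:clade:invariance}, every identity transfers verbatim with all `$+$' replaced by `$-$'.
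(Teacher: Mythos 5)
Your outline is correct in substance and, for the core items \itemref{item:CS:over:mass}--\itemref{item:CS:mass:max}, follows essentially the same architecture as the paper's proof (given in Appendix \ref{sec:clade_stats} for the larger Proposition \ref{prop:clade:stats2}): identify the law of $J^+$ given $m^0=b$ from the middle spindle, get \itemref{item:CS:max:over} from the two-sided exit problem for the spectrally positive \StableA\ scaffolding started at $y$, obtain \itemref{item:CS:max:mass} by integrating the product of these two (a Gamma integral), and deduce \itemref{item:CS:mass:over} and \itemref{item:CS:mass:max} by Bayes' rule against the tails; the `$-$' versions follow from \eqref{eq:clade_split:distrib} exactly as you say. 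Two of your routes differ from the paper's in a way worth noting. For \itemref{item:CS:mass} you invoke Proposition \ref{prop:agg_mass_subord}: this is the cleaner route and is the one the paper itself endorses in the main text, whereas the appendix pins down the constant independently via the law of $J=J^++J^-$ (which serves as a consistency check). For \itemref{item:CS:over:mass}, the decisive fact is simply that $J^+=\life(\bff)$ for the initial \BESQA\ spindle $\bff$ of the clade (cf.\ Corollary \ref{cor:clade_law_given_over}), so Lemma \ref{lem:BESQ:length} gives the \InvGammaDist[1+\alpha,b/2] law in one line; your description via a ``Chapman--Kolmogorov-type integral over the law of $\life(\bff)$'' treats $J^+$ as something downstream of $\life(\bff)$ and is an unnecessary detour, though you land on the correct density.

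The one genuine soft spot is the constant $2^{-\alpha}$ in \itemref{item:CS:max}. You assert that the Laplace exponent $\psi(\lambda)=\lambda^{1+\alpha}/(2^\alpha\Gamma(1+\alpha))$ ``fixes the constant'' in the excursion height tail, but the height tail of the It\^o measure depends on the normalization of local time, and here local time is normalized by occupation density (Boylan), not by the convention implicit in the standard formula $n(\sup>z)=1/W(z)$. Calibrating these two normalizations requires an argument of the kind the paper carries out in Proposition \ref{prop:inv_LT:subord} for excursion \emph{lengths}; you would need the analogous calibration for heights, which you do not supply. The paper avoids this entirely by deriving \itemref{item:CS:max} \emph{last}, as $\int_0^\infty(1-e^{-b/2z})\,\mClade\{m^0\in db\}$ using \itemref{item:CS:mass} and \itemref{item:CS:max:mass}; note that this reordering causes no circularity in your scheme, because \itemref{item:CS:max:over} only needs the scale function up to a multiplicative constant ($W(x)\propto x^\alpha$), not the normalization of \itemref{item:CS:max}. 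I recommend adopting that ordering, or else supplying the height-measure calibration explicitly.
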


The proof of this is given in Appendix \ref{sec:clade_stats}; it is based on (\ref{eq:clade_split:distrib}) and well-known properties of our spectrally one-sided \StableA\ process.

\begin{corollary}\label{cor:clade_law_given_over}
 Take $y>0$. Let $A\sim \ExpDist[1/2y]$. Conditionally given $A$, let $\bff$ denote a \BESQA\ first-passage bridge from $\bff(0) = A$ to $\bff(y) = 0$, in the sense of \cite{BertChauPitm03}. Let $\bN$ be a \PRM[\Leb\otimes\mBxc], with $T^{-y}$ the hitting time of $-y$ by $\bX$. Then $\Dirac{0,\bff}+\restrict{\bN}{[0,T^{-y}]}$ has law $\mClade^+(\,\cdot\;|\;J^+ = y)$.
\end{corollary}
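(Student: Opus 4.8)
The plan is to reach $\mClade^+(\,\cdot\mid J^+=y)$ by first conditioning \emph{also} on $m^0$, identifying that doubly-conditioned law explicitly via the clade decomposition of \cite{IPPA}, and then mixing over $m^0$ using the conditional law from Proposition~\ref{prop:clade:stats}\ref{item:CS:mass:over}.

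\textbf{Step 1 (condition on $m^0$).} By \eqref{eq:clade_split:distrib}, $\mClade^+(\,\cdot\mid m^0=b)$ is the law of the clade $\fN_{(0,b)}$ of Definition~\ref{constr:type-1}, i.e.\ of $\Dirac{0,\bff}+\restrict{\bN}{[0,T]}$ where $\bff$ is a \BESQA\ started from $b$ and absorbed at $0$, $\bN\sim\PRM[\Leb\otimes\mBxc]$ is independent, and $T=\inf\{t\colon\bX(t)=-\life(\bff)\}$ with $\bX=\xi_\bN$. Reading $J^+$ off this description, the initial jump of the scaffolding of $\fN_{(0,b)}$ has height $\life(\bff)$, so $J^+(\fN_{(0,b)})=\life(\bff)$ a.s.; equivalently, by Lemma~\ref{lem:BESQ:length}, $J^+$ is $\InvGammaDist[1+\alpha,b/2]$ given $m^0=b$, which is exactly Proposition~\ref{prop:clade:stats}\ref{item:CS:over:mass}.

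\textbf{Step 2 (condition further on $J^+=y$).} Since $\bff$ and $\bN$ are independent and $T$ is a deterministic functional of $(\bX,\life(\bff))$, conditioning $\fN_{(0,b)}$ on $\{J^+=y\}=\{\life(\bff)=y\}$ leaves $\bN$ unchanged, replaces $T$ by the hitting time $T^{-y}$ of $-y$ by $\bX$, and replaces $\bff$ by a \BESQA\ started from $b$ conditioned to be absorbed at $0$ at time $y$. This conditioning is well-defined (and weakly continuous in $y$) because the absorption time has the explicit $\InvGammaDist[1+\alpha,b/2]$ density of Lemma~\ref{lem:BESQ:length}, and the resulting conditioned process is precisely the \BESQA\ first-passage bridge from $b$ to $0$ over $[0,y]$ in the sense of \cite{BertChauPitm03}. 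Hence $\mClade^+(\,\cdot\mid m^0=b,\,J^+=y)$ is the law of $\Dirac{0,\bff}+\restrict{\bN}{[0,T^{-y}]}$ with $\bff$ this first-passage bridge and $\bN$ an independent $\PRM[\Leb\otimes\mBxc]$.

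\textbf{Step 3 (mix over $m^0$).} A Bayes-type disintegration identity for the $\sigma$-finite measure $\mClade$ --- writing $\int F\,d\mClade$ by disintegrating first over $m^0$ and then over $J^+$, and matching it with the disintegration over $J^+$ alone --- yields $\mClade^+(\,\cdot\mid J^+=y)=\int_0^\infty\mClade^+(\,\cdot\mid m^0=b,\,J^+=y)\,\mClade(m^0\in db\mid J^+=y)$, with both sides probability measures (cf.\ the discussion preceding Proposition~\ref{prop:clade:stats}). By Proposition~\ref{prop:clade:stats}\ref{item:CS:mass:over}, $\mClade(m^0\in db\mid J^+=y)=\tfrac{1}{2y}e^{-b/2y}\,db$, i.e.\ $A:=m^0\sim\ExpDist[1/2y]$. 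Substituting the description from Step~2 with $b$ replaced by this random $A$ gives exactly the law asserted in the statement. The only point that needs care is the interchange of the two disintegrations of $\mClade$ in Step~3; this is harmless because the kernels $\mClade(\,\cdot\mid m^0=\cdot\,)$ and $\mClade(\,\cdot\mid J^+=\cdot\,)$ are genuine (probability) regular conditional distributions fixed by their scaling properties, and the $m^0$- and $J^+$-marginals of $\mClade$ are $\sigma$-finite with the explicit densities recorded in Proposition~\ref{prop:clade:stats}.
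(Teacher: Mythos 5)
Your proposal is correct and follows essentially the same route as the paper: identify $\mClade^+(\,\cdot\mid m^0=b)$ via \eqref{eq:clade_split:distrib}, note $J^+=\zeta(\bff)$ so that further conditioning on $J^+=y$ turns $\bff$ into a first-passage bridge independent of the Poisson part, and then mix over $m^0\sim\ExpDist[1/2y]$ using Proposition~\ref{prop:clade:stats}\ref{item:CS:mass:over}. The extra care you take in Step~3 about the two disintegrations is a harmless elaboration of the paper's one-line mixing identity.
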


\begin{proof}
 By \eqref{eq:clade_split:distrib}, under $\mClade^+(\,\cdot\,|\,m^0\!=\!b)$ a clade $N^+$ has the form $\DiracBig{0,\bff} + \bN'$. Here, $\bff$ is a \BESQA\ starting from $b$ and, conditionally given $\bff$, the point process $\bN'$ is distributed like $\bN$ stopped at time $T^{-\life(\bff)}$. Then $J^+(N^+) = \zeta(\bff)$. Thus, we may further condition $\mClade^+(\,\cdot\;|\;m^0 = b,\ J^+ = y)$. Under this new law, $N^+$ has the same form, and $\bff$ is now distributed like a \BESQA\ first-passage bridge from $b$ to $0$ in time $y$. So, since its lifetime is fixed, in this setting $\bff$ is independent of $\bN'$. Now,
 $$\mClade^+(\,\cdot\;|\;J^+ = y) = \int \mClade^+(\,\cdot\;|\;m^0 = b,\ J^+ = y)\mClade^+(m^0\in db\;|\;J^+ = y).$$
 The conditional law of $m^0$ above appears in Proposition \ref{prop:clade:stats} \ref{item:CS:mass:over}. In particular, under this law, $m^0\sim \ExpDist[1/2y]$.
\end{proof}

On a suitable probability space $(\Omega,\cA,\Pr)$ let $\bN$ be a \PRM[\Leb\otimes\mBxc] on $[0,\infty)\times \Exc$. We write $\bX=\xi_\bN$ for the scaffolding associated as in \eqref{eq:scaffolding}. 
Fix $b>0$ and let $\bff$ be a \BESQA\ starting from $b$ and absorbed upon hitting zero, independent of $\bN$. 
Let $\olN := \Dirac{0,\bff} + \bN$. We use barred versions of our earlier notation to refer to the corresponding objects associated with $\olN$,
e.g.\ $\olX = \bX + \zeta(\bff)$. Let $\ol T^0 = T^{-\life(\bff)}$ denote the first hitting time of 0 by $\olX$, or of $-\life(\bff)$ by $\fX$, and set $\widehat\bN := \restrict{\olN}{[0,\ol T^0)}$. By (\ref{eq:clade_split:distrib}), $\widehat\bN$ has distribution $\mClade^+(\,\cdot\;|\;m^0 = b)$. We use hatted versions of our earlier notation to refer to the corresponding objects associated with $\widehat\bN$. Set $(\widehat\nbeta^y,\,y\geq 0) := \skewerP(\widehat\bN,\widehat\bX)$.

\begin{proposition}[Entrance law for $\mBxc$-IP-evolution from $\{(0,b)\}$]\label{prop:type-1:transn}
 The lifetime of $(\widehat\nbeta^y,y\ge 0)$ has \InvGammaDist[1,b/2] distribution, i.e.
 \begin{equation}
  \Pr\big\{\life^+\big(\whN\big)> y\big\} = \Pr(\widehat\nbeta^y \not= \emptyset) = 1 - e^{-b/2y} \qquad \text{for }y>0.\label{eq:transn:lifetime}
 \end{equation}
 The conditional law of $\widehat\nbeta^y$ given the event $\{\widehat\nbeta^y\neq \emptyset\}$ equals $\mu_{b,1/2y}^{(\alpha)}$ as defined in \eqref{eq:mu}.
\end{proposition}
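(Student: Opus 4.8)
Throughout write $r=1/2y$, so that $br=b/2y$ and the two cases of $\mu^{(\alpha)}_{b,r}$ in \eqref{eq:mu} are indexed by this $r$.

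\emph{Step 1: the non‑emptiness probability.} First I would record that $\whX=\xi_{\whN}$ is spectrally positive, is issued from level $0$, and that its jump at time $0$ carries the leftmost spindle $\bff$ and has height $\zeta(\bff)=J^+(\whN)$. Hence $\widehat\nbeta^y=\skewer(y,\whN,\whX)$ is non‑empty precisely when some jump of $\whX$ crosses level $y$, equivalently when $\whX$ exceeds $y$, i.e.\ on $\{\life^+(\whN)>y\}$. Since $\whN$ has law $\mClade^+(\,\cdot\mid m^0=b)$, Proposition~\ref{prop:clade:stats}\ref{item:CS:max:mass} gives $\Pr\{\life^+(\whN)>y\}=1-\mClade^+\{\life^+\le y\mid m^0=b\}=1-e^{-b/2y}$, which is \eqref{eq:transn:lifetime} and matches the first term of $\mu^{(\alpha)}_{b,1/2y}$.

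\emph{Step 2: decomposition at level $y$.} On $\{\widehat\nbeta^y\neq\emptyset\}$ I would write $\widehat\nbeta^y=(0,L^y)\concat\gamma^y$, where $(0,L^y)$ is the leftmost block: $L^y$ is $m^0$ of the first excursion of $\whX$ about level $y$ (the cross‑section at level $y$ of the first spindle whose jump crosses $y$), and this first excursion necessarily returns to $y$. Consequently, by the strong Markov property at the first hitting time of $y$ and Proposition~\ref{prop:bi-clade_PRM}, the later excursions of $\whX$ about $y$ form, in the level‑$y$ local time $s$, a $\PRM(\Leb\otimes\mClade)$ in which every excursion contributes one block of mass $m^0$ to $\gamma^y$, and $\whX$ hits $0$ at the first local time $\Theta$ at which an excursion below $y$ reaches depth $y$.

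\emph{Step 3: $\gamma^y$ is a rescaled \PDIP[\alpha,\alpha], independent of $L^y$.} By Proposition~\ref{prop:agg_mass_subord} the aggregate mass at level $y$, reparametrised by level‑$y$ local time, is a \Stable[\alpha] subordinator of exponent $\lambda^\alpha$, so $\mClade(m^0\in dx)=\frac{\alpha}{\Gamma(1-\alpha)}x^{-1-\alpha}\,dx$. Thinning the PRM of excursions about $y$ according to depth: $\Theta$ is the first atom of an independent Poisson clock of rate $\mClade\{\life^-\ge y\}=(1/2y)^\alpha$ (the `$-$'-version of Proposition~\ref{prop:clade:stats}\ref{item:CS:max}), independent of the excursions below $y$ of depth $<y$; and the latter contribute jumps with law $\mClade(m^0\in dx,\,\life^-<y)=\frac{\alpha}{\Gamma(1-\alpha)}x^{-1-\alpha}e^{-x/2y}\,dx$, using the `$-$'-version of Proposition~\ref{prop:clade:stats}\ref{item:CS:max:mass}. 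Thus the surviving aggregate mass is a subordinator with the Laplace exponent $\Phi_{1/2y}$ of \eqref{eq:tilted_Laplace}, run for an independent $\ExpDist\big((1/2y)^\alpha\big)$ amount of local time, so by Proposition~\ref{prop:PDIP}\ref{item:PDIP:Stable}--\ref{item:PDIP:tilted} its interval partition of jumps is a \PDIP[\alpha,\alpha] scaled by an independent $\GammaDist(\alpha,1/2y)$ total mass, i.e.\ $\gamma^y\stackrel{d}{=}B^{(\alpha)}_{1/2y}\bar\gamma^{(\alpha)}$ as in \eqref{eq:mu}. Independence of $\gamma^y$ from $L^y$ is then structural: $L^y$ is a function of the first excursion about $y$ (hence of $\bff$ and of the first crossing), $\gamma^y$ of the strictly later excursions and of the independent killing clock, and the computation above shows the law of $\gamma^y$ does not depend on the crossing overshoot.

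\emph{Step 4: the leftmost block, and the main obstacle.} It remains to show the conditional law of $L^y$ given $\{\widehat\nbeta^y\neq\emptyset\}$ is that of $L^{(\alpha)}_{b,1/2y}$, i.e.\ has Laplace transform \eqref{LMBintro} with $r=1/2y$. I would condition on $J^+(\whN)$: on $\{J^+>y\}$ the leftmost spindle reaches level $y$ and $L^y=\bff(y)$ is the value at level $y$ of a \BESQA\ first‑passage bridge from $b$ to $0$ of length $J^+$ (in the sense used in the proof of Corollary~\ref{cor:clade_law_given_over}), the conditional law of $J^+$ given $m^0=b$ being supplied by Proposition~\ref{prop:clade:stats}\ref{item:CS:over:mass}; on $\{J^+\le y\}$, $L^y$ is instead the cross‑section at $y$ of the first spindle of the underlying $\PRM(\Leb\otimes\mBxc)$ to jump across $y$, whose statistics are again controlled by Proposition~\ref{prop:clade:stats} and Corollary~\ref{cor:clade_law_given_over}. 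Summing the two contributions and simplifying should produce \eqref{LMBintro}. I expect this computation --- extracting $L^y$ through the level‑$y$ decomposition, handling the dichotomy according to whether $\bff$ survives to level $y$, and verifying the explicit transform \eqref{LMBintro} --- to be the principal difficulty, together with the care required in the excursion‑theoretic and strong‑Markov arguments at level $y$ (the first crossing time of $y$ is defined through the jumps of $\whX$, so one must check it is a stopping time, and the exceptional first excursion about $y$ must be treated separately).
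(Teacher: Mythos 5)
Your Steps 1--3 are correct and follow essentially the same route as the paper: the survival probability via Proposition \ref{prop:clade:stats}\ref{item:CS:max:mass}; the identification of the blocks to the right of the leftmost as the jumps of a subordinator with the tilted L\'evy measure $\frac{\alpha}{\Gamma(1-\alpha)}x^{-1-\alpha}e^{-x/2y}dx$ (obtained by thinning the \PRM\ of level-$y$ bi-clades at the first anti-clade of depth $\ge y$), hence a \GammaDist[\alpha,1/2y]-scaled \PDIP[\alpha,\alpha] by Proposition \ref{prop:PDIP}\ref{item:PDIP:Stable}--\ref{item:PDIP:tilted}; and the independence of the leftmost block from the rest via the strong Markov property at the first passage of level $y$. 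Your thinning computation is in fact slightly more direct than the paper's (which recovers the same tilted measure by subtracting \ref{item:CS:mass:max} times \ref{item:CS:max} from \ref{item:CS:mass}).

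The genuine gap is Step 4, and it is not a small one: the identity $\Pr(L^y\in dc)=$ \eqref{LMBintro} is the quantitative heart of the proposition, since $\mu^{(\alpha)}_{b,1/2y}$ is \emph{defined} through the law $L^{(\alpha)}_{b,1/2y}$, and you only set up the dichotomy on $\{J^+>y\}$ versus $\{J^+\le y\}$ without resolving it. The case $\{J^+>y\}$ is indeed just the \BESQA\ transition density $q^{(-2\alpha)}_y(b,c)$. But for $\{J^+=z\le y\}$ your claim that the law of $L^y$ is ``controlled by Proposition \ref{prop:clade:stats} and Corollary \ref{cor:clade_law_given_over}'' does not go through: none of those formulas give the distribution of the cross-section, at a level $y$ strictly above the starting point $z$, of the first spindle to jump across $y$ in a first-passage descent conditioned to reach $y$ before $0$. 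The paper needs an additional idea here (Lemma \ref{lem:LMB_reversal}): a coupling based on the reversal invariance \eqref{eq:clade_split:distrib} which identifies $\mClade^+\{m^y\in dc\,|\,J^+=z,\life^+>y\}$ with $\mClade^+\{m^0\in dc\,|\,\life^+\in(y-z,y)\}$, the latter being computable from Proposition \ref{prop:clade:stats}\ref{item:CS:max} and \ref{item:CS:mass:max}. Even granting that, the integration against $\mClade\{J^+\in dz\,|\,m^0=b\}$ and the summation of the two cases require nontrivial Bessel-function integral identities before \eqref{LMBintro} emerges (this is the content of Lemma \ref{lem:LMB}). As written, your proof establishes the survival probability and the law of $\widehat\nbeta^y$ minus its leftmost block, but not the law of the leftmost block itself, so the conclusion that the conditional law is $\mu^{(\alpha)}_{b,1/2y}$ is not yet proved.
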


We restate the claim about the leftmost block $L^y\!:=\!L_{b,1/2y}^{(\alpha)}$ under $\mu_{b,1/2y}^{(\alpha)}$ in terms of the leftmost spindle mass at 
level $y$ in a bi-clade $N$ with $X=\xi_N$, 
\begin{equation}\label{eq:LMB_def}
  m^y(N) := M^y_{N,X}\big(\inf\{ t\geq 0\colon M^y_{N,X}(t) >0\}\big).
 \end{equation}

\begin{lemma}\label{lem:LMB}
 For $b,c,y>0$, $\mClade\{ m^y\in dc\,|\,m^0 = b,\,\life^+ > y\}$ equals 
 \begin{equation}
 \Pr(L^y\!\in\! dc)=\frac{2^\alpha y^\alpha}{e^{b/2y}-1}c^{-1-\alpha}e^{-c/2y}\sum_{n=1}^\infty\frac{1}{n!\Gamma(n-\alpha)}\left(\frac{bc}{4y^2}\right)^ndc.
 \end{equation}
 with Laplace transform as specified in \eqref{LMBintro} for $r=1/2y$. 
\end{lemma}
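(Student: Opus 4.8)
The plan is to compute the $\mClade$-law of $m^y$ under the conditioning $\{m^0=b,\ \life^+>y\}$, match it to the stated density, and then read off the Laplace transform \eqref{LMBintro} by a term-by-term inversion. Since $m^y$ (see \eqref{eq:LMB_def}) and $\life^+$ are functions of a bi-clade only through its clade part, and since by \eqref{eq:clade_split:indep}--\eqref{eq:clade_split:distrib} the clade under $\mClade^+(\,\cdot\,\mid m^0=b)$ has the form $\delta(0,\bff)+\bN'$ with $\bff$ a \BESQA\ started from $b$ and absorbed at $0$ and, given $\bff$, with $\bN'$ a \PRM[\Leb\otimes\mBxc] stopped when its scaffolding first hits $-\life(\bff)$, I would work in this decomposed picture. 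The leftmost spindle of the clade is $\bff$, anchored at scaffolding level $0$; using that a spectrally positive \StableA\ scaffolding reaches new suprema only by jumps (and creeps downwards), the leftmost block of the clade at level $y$ is $\bff(y)$ on the event $\{J^+>y\}$, where $J^+:=\life(\bff)$, and otherwise it is the value at level $y$ of the first spindle of $\bN'$ whose jump crosses level $y-J^+$ from below. Correspondingly $\{\life^+>y\}$ holds automatically in the first case and equals $\{$ the scaffolding of $\bN'$ reaches level $y-J^+$ before hitting $-J^+\}$ in the second.

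\textbf{The two cases.} In the first case $m^y=\bff(y)$ is the value at time $y$ of a \BESQA\ started from $b$, on the event that it is not yet absorbed; its classical sub-transition density is $\frac1{2y}(b/c)^{(1+\alpha)/2}e^{-(b+c)/2y}I_{1+\alpha}(\sqrt{bc}/y)$, which expands as a power series in $bc/4y^2$ with coefficients $1/(k!\,\Gamma(k+2+\alpha))$. In the second case, conditionally on $J^+=a\le y$, I would invoke the classical two-sided exit formulas for the spectrally positive \StableA\ process (scale function $W(z)=z^\alpha/\Gamma(1+\alpha)$) for the joint law of the undershoot $V$ and overshoot $W^\ast$ at first passage above $y-a$, restricted to the event that this happens before the scaffolding hits $-a$; and I would use that the mass contributed at level $y$ is the value at internal time $V$ of the crossing spindle, which, being a \BESQA\ excursion conditioned on its lifetime $V+W^\ast$, has a $\GammaDist[2+\alpha,(2V)^{-1}+(2W^\ast)^{-1}]$ law (an elementary computation from the \BESQ\ Markov property together with \eqref{eq:BESQ:lifetime}). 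Integrating this over $(V,W^\ast)$, then over the conditional law of $J^+$ given $m^0=b$, namely $\tfrac{b^{1+\alpha}}{2^{1+\alpha}\Gamma(1+\alpha)}a^{-2-\alpha}e^{-b/2a}\,da$ from Proposition~\ref{prop:clade:stats}\ref{item:CS:over:mass}, and adding the first-case contribution, the two pieces should collapse to $e^{-(b+c)/2y}(2y)^\alpha c^{-1-\alpha}\sum_{n\ge1}\frac{(bc/4y^2)^n}{n!\,\Gamma(n-\alpha)}$; dividing by $\mClade\{\life^+>y\mid m^0=b\}=1-e^{-b/2y}$ from Proposition~\ref{prop:clade:stats}\ref{item:CS:max:mass} gives the density in the statement. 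Equivalently, one recognizes this as the law of $\GammaDist[K-\alpha,1/2y]$ with $K$ a $\Poisson[b/2y]$ variable conditioned to be positive, the conditioning event corresponding to $\{\life^+>y\}$.

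\textbf{The Laplace transform.} With $r=1/2y$ the $n$-th term of the density integrates against $e^{-\lambda c}\,dc$ to $\frac{(br^2)^n}{n!\,\Gamma(n-\alpha)}\int_0^\infty c^{\,n-1-\alpha}e^{-(r+\lambda)c}\,dc=(r+\lambda)^\alpha\tfrac1{n!}\big(\tfrac{br^2}{r+\lambda}\big)^n$, so summing over $n\ge1$ and multiplying by the prefactor $r^{-\alpha}/(e^{br}-1)$ yields $\big(\tfrac{r+\lambda}{r}\big)^\alpha\tfrac{e^{br^2/(r+\lambda)}-1}{e^{br}-1}$, which is exactly \eqref{LMBintro}; taking $\lambda=0$ confirms in passing that the density integrates to $1$.

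\textbf{Main obstacle.} The hard part is the second case: carrying out the two-sided exit computation for the spectrally positive stable scaffolding, combining it with the Gamma cross-section of the crossing spindle and with the conditional law of $J^+$, and then recognizing---via incomplete-gamma and Bessel-function identities such as $\sum_{k\ge0}u^k/\Gamma(k+2+\alpha)=e^u\gamma(1+\alpha,u)/(\Gamma(1+\alpha)u^{1+\alpha})$---that the first- and second-case contributions add up exactly to the compound-Poisson mixture above. A possibly cleaner variant would bypass the explicit series by computing the conditional Laplace transform $\mClade[e^{-\lambda m^y};\life^+>y\mid m^0=b]$ directly in the same two cases and checking it equals $e^{-b/2y}\big(e^{br^2/(r+\lambda)}-1\big)\big((r+\lambda)/r\big)^\alpha$; I would adopt whichever organises the stable-process fluctuation input most economically.
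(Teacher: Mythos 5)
Your two-case decomposition of $\{\life^+>y\}$ according to whether $J^+>y$, your treatment of the first case via the \BESQA\ sub-transition density $q^{(-2\alpha)}_y(b,c)$, and the term-by-term Laplace inversion at the end all coincide with the paper's proof. Where you genuinely diverge is the second case, $J^+\le y<\life^+$. The paper's key device there is Lemma \ref{lem:LMB_reversal}: a path coupling (resting on the reversal invariance \eqref{eq:clade_split:distrib} and on the fact that the post-$T^z$ segment of a first-passage descent from $y$ is a first-passage descent from $z$) which identifies $\mClade^+\{m^y\in\cdot\mid J^+=z,\,\life^+>y\}$ with $\mClade^+\{m^0\in\cdot\mid\life^+\in(y-z,y)\}$; the latter is read off from the one-sided statistics already tabulated in Proposition \ref{prop:clade:stats2} \ref{item:CS2:max} and \ref{item:CS2:mass:max}, so the remaining integral over $z$ involves only elementary Gamma and Bessel integrals. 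You instead propose a frontal fluctuation-theoretic attack: the joint undershoot--overshoot law at two-sided exit for the spectrally positive \StableA\ scaffolding, combined with the (correct) observation that the crossing spindle's cross-section given undershoot $V$ and overshoot $W^*$ is \GammaDist[2+\alpha,(2V)^{-1}+(2W^*)^{-1}], then integration against the \InvGammaDist[1+\alpha,b/2] law of $J^+$. This is a viable alternative, and it avoids the coupling lemma entirely, but it costs you the potential density of the doubly killed stable process (needed to express the exit law via the compensation formula) and a triple integral in place of the paper's single one.

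The one substantive reservation: as written, the decisive step of your route --- that these integrals ``collapse'' to the compound-Poisson mixture \GammaDist[K-\alpha,1/2y] with $K\sim\Poisson[b/2y]$ conditioned to be positive --- is asserted, not carried out, and you flag it yourself as the main obstacle. Since this is exactly the part of the lemma that requires work, your proposal needs either that computation in full (or its Laplace-transform variant) or a substitute for Lemma \ref{lem:LMB_reversal} before it constitutes a proof; everything surrounding it is sound.
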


That this distribution has Laplace transform \eqref{LMBintro} is elementary. We prove the remainder of this lemma at the end of Appendix \ref{sec:clade_stats}.

\begin{remark}\label{rmk:transn:PDIP}
 $\!\!$By Proposition \ref{prop:PDIP} \ref{item:PDIP:Stable}--\ref{item:PDIP:tilted}, taking $B^y\sim\GammaDist[\alpha,1/(2y)]$ independent of $\ol\gamma\sim\PDIP[\alpha,\alpha]$, and $S^y\sim\ExpDist[(2y)^{-\alpha}]$ independent of a subordinator $(R^y,y\ge 0)$ with Laplace exponent $\Phi_{1/2y}(\lambda)$, then
 \begin{equation}\label{eq:transn:PDIP} 
   B^y\scaleI\ol\gamma\stackrel{d}{=}\{(R^y(t-),R^y(t))\colon t\in [0,S^y],\ R^y(t-)<R^y(t)\},
 \end{equation}
 and we denote its distribution by $\mu_{0,1/2y}^{(\alpha)}$.
\end{remark}

\begin{proof}[Proof of Proposition \ref{prop:type-1:transn}]
 By construction, $\life(\bff)$ is independent of $\bN$. 
 By Proposition \ref{prop:bi-clade_PRM} and the aforementioned independence, the point process $\olF^y = \bF^{y-\life(\bff)}$ is a \PRM[\Leb\otimes\mClade]. Let $\widehat S^y := \oll^y(\ol T^0)$. If $\widehat\bN$ survives past level $y$ then $\widehat S^y$ is the level $y$ local time at which some excursion of $\olX$ about level $y$ first reaches down to level zero:
 \begin{equation*}
  \widehat S^y = \cf\big\{\life^+\big(\whN\big)\!>\!y\big\}\inf\left\{s\!>\!0\colon \olF^y\big([0,s]\times\{N\colon \life^-(N) \geq y\}\big) > 0 \right\}.
 \end{equation*}
 Conditionally given the event $\{\life^+(\widehat\bN)\!>\!y\}$ of survival beyond level $y$, it follows from the Poisson property of $\olF^y$ and the description of $\mClade\{\life^-\!\in\!\cdot\,\}$\linebreak in Proposition \ref{prop:clade:stats} \ref{item:CS:max} that $\widehat S^y \sim \ExpDist[(2y)^{-\alpha}]$, which is the distribution of $S^y$ as specified in Remark \ref{rmk:transn:PDIP}. In light of this, up to null events,
 \begin{equation}\label{eq:type-1:no_revival}
  \big\{\life^+\big(\widehat\bN\big) \leq y\big\} = \big\{\wh S^y = 0\big\} = \big\{\widehat\bF^{\geq y} = 0\big\} = \big\{\widehat\nbeta^y = \emptyset\big\}.
 \end{equation}
 Recall from (\ref{eq:clade_split:distrib}) that $\whN\sim\mClade^+\{\,\cdot\;|\;m^0=b\}$. Thus, \eqref{eq:transn:lifetime} follows from the formula for $\mClade^+\{\life^+ > z\;|\;m^0=b\}$ stated in Proposition \ref{prop:clade:stats} \ref{item:CS:max:mass}.
 
 Assuming $\life^+(\widehat\bN) > y$, time $\ol T^0$ occurs during an anti-clade of $\olN$ below level $y$ at local time $\widehat S^y$. In particular, the subsequent level-$y$ clade, also at local time $\wh S^y$, is cut entirely from $\whN$. Thus, $\widehat\bF^{\geq y} = \restrict{\olF^{\geq y}}{[0,\widehat S^y)}$. That is, $\whF^{\geq y}$ is obtained from $\olF^{\geq y}$ by Poisson thinning. By Proposition \ref{prop:clade:stats} assertions \ref{item:CS:mass}, \ref{item:CS:max}, and \ref{item:CS:mass:max},
 \begin{equation*}
 \begin{split}
  &\mClade\{ m^0\in db;\ \zeta^- < y \}\\ 
    &= \mClade\{ m^0\in db \} - \mClade\{m^0\in db\;|\;\zeta^- > y \} \mClade\{ \zeta^- > y \}\\
  	&= \frac{\alpha}{\Gamma(1-\alpha)}b^{-1-\alpha}db - \frac{1}{2^\alpha}y^{-\alpha}\frac{\alpha 2^\alpha y^{\alpha}}{\Gamma(1-\alpha)}(1-e^{-b/2y})b^{-1-\alpha}db\\
  	&= \frac{\alpha}{\Gamma(1-\alpha)}b^{-1-\alpha}e^{-b/2y}db = e^{-b/2y}\mClade\{ m^0\in db\}.
 \end{split}
 \end{equation*}
 Thus, it follows from Proposition \ref{prop:agg_mass_subord} that the conditional law of $\big(M^y_{\whN}\circ\tau^y_{\whN}(s)-M^y_{\whN}\circ\tau^y_{\whN}(0),\ s\in [0,\wh S^y]\big)$ given $\{\life^+(\widehat\bN) > y\}$ equals the law of $\restrict{R^y}{[0,S^y]}$. Thus, appealing to \eqref{eq:type-1:no_revival}, 
 the conditional distribution of $\wh\nbeta^y$ minus its leftmost block given $\{\wh\nbeta^y\neq\emptyset\}$ is as described in \eqref{eq:transn:PDIP}.
 
 The mass $m^y(\whN)$ of the leftmost block is a function of $\restrict{\whN}{[0,\wh T^y)}$, whereas $\wh\nbeta^y$ minus its leftmost block is a function of $\restrict{\whN}{[\wh T^y,\infty)}$. 
 When the latter is shifted to a measure $\ShiftRestrict{\whN}{[0,\infty)}$ on $[0,\infty)\times\Exc$, these are independent, by the strong Markov property of $\whN$. We conclude by Lemma \ref{lem:LMB}.
\end{proof}


\begin{corollary}[Transition kernel for $\mBxc$-IP-evolutions]\label{cor:type-1:gen_transn}
 Fix $y>0$. Let $(\beta^y_U,U\!\in\!\gamma)$ denote an independent family of partitions, with each $\beta^y_U$ distributed like $\wh\nbeta^y$ in Proposition$\,$\ref{prop:type-1:transn} with $b\!=\!\Leb(U)$. Then $\skewer(y,\bN_{\beta},\bX_{\beta})$ $\stackrel{d}{=}\ConcatIL_{U\in\beta}\beta^y_U$, and this law is supported on $\IPspace$.
%
\end{corollary}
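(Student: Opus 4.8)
The plan is to read the statement off from the branching structure built into Definition~\ref{constr:type-1} together with the entrance law of Proposition~\ref{prop:type-1:transn}. By Definition~\ref{constr:type-1} and \cite[Proposition~5.11]{IPPA}, the $\mBxc$-IP-evolution started from $\beta$ is the $\dI$-path-continuous version of $\big(\Concat_{U\in\beta}\skewer(y,\bN_U,\bX_U),\ y\ge0\big)$, where the pairs $(\bN_U,\bX_U)$, $U\in\beta$, are built \emph{independently}; in particular, for each fixed $y>0$ one has $\skewer(y,\bN_\beta,\bX_\beta)=\Concat_{U\in\beta}\skewer(y,\bN_U,\bX_U)$ almost surely. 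Now the construction of one pair $(\bN_U,\bX_U)$ in Definition~\ref{constr:type-1} --- a $\BESQA$ spindle $\bff$ started from $\Leb(U)$, placed as $\Dirac{0,\bff}$ on top of an independent $\PRM[\Leb\otimes\mBxc]$ whose scaffolding is stopped at the first passage to $-\life(\bff)$ --- is verbatim the construction of $(\whN,\whX)$ preceding Proposition~\ref{prop:type-1:transn} with $b=\Leb(U)$. Hence $\skewer(y,\bN_U,\bX_U)$ has the law of $\widehat\nbeta^y$ from that proposition with $b=\Leb(U)$, which by the hypothesis of the corollary is the law of $\beta^y_U$. Since $\beta$ is countable and both families $\big(\skewer(y,\bN_U,\bX_U)\big)_{U\in\beta}$ and $\big(\beta^y_U\big)_{U\in\beta}$ consist of coordinatewise independent pieces with matching marginals, the two families have the same joint law.

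Next I would push this identity of joint laws through the concatenation operator. Both families have only finitely many non-empty members almost surely: for the first this is Lemma~\ref{lem:finite_survivors}; for the second, Proposition~\ref{prop:type-1:transn} gives $\Pr\{\beta^y_U\neq\emptyset\}=1-e^{-\Leb(U)/2y}\le\Leb(U)/2y$, so $\sum_{U\in\beta}\Pr\{\beta^y_U\neq\emptyset\}\le\IPmag{\beta}/2y<\infty$ and Borel--Cantelli (cf.\ \cite[Lemma~6.1]{IPPA}) applies. Thus on each side the concatenation is almost surely a finite concatenation, hence well defined, and $\Concat$ is a Borel map on the set of such finitely-supported families; applying it to the two equidistributed families yields $\skewer(y,\bN_\beta,\bX_\beta)\stackrel{d}{=}\Concat_{U\in\beta}\beta^y_U$. (If one prefers to avoid quoting measurability of $\Concat$, one can truncate to the finitely many $U\in\beta$ with $\Leb(U)\ge 1/n$, where the corresponding identity in law is elementary, and let $n\to\infty$: by the a.s.\ finiteness above the truncated concatenations stabilise to the full ones.)

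For the support assertion it suffices to observe that $\mu_{b,1/2y}^{(\alpha)}(\IPspace)=1$ for every $b>0$, so that each $\beta^y_U$ lies in $\IPspace$ almost surely. Indeed, from \eqref{eq:mu} (with $r=1/2y$) the measure $\mu_{b,1/2y}^{(\alpha)}$ is a mixture of $\delta_\emptyset$ and the law of a single leftmost block concatenated with a rescaled $\PDIP[\alpha,\alpha]$: the empty partition and a single block lie in $\IPspace$ (the latter with zero diversity), a rescaled $\PDIP[\alpha,\alpha]$ has the $\alpha$-diversity property by Proposition~\ref{prop:PDIP} together with the scale-invariance of that property, and the $\alpha$-diversity is additive under concatenation, so the concatenation lies in $\IPspace$. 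Since a finite concatenation of elements of $\IPspace$ again lies in $\IPspace$ and $\Concat_{U\in\beta}\beta^y_U$ is a.s.\ finite, its law is supported on $\IPspace$; equivalently, $\skewer(y,\bN_\beta,\bX_\beta)\in\IPspace$ is immediate from \eqref{eq:div_LT_condition}. The only step needing genuine care is the passage through $\Concat$ in the second paragraph: one must make sure that no block mass is lost in the tail of the infinite index set $\beta$, which is exactly what the Borel--Cantelli estimate above rules out.
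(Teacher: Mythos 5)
Your proof is correct and follows essentially the route the paper intends: the corollary is read off from the fact that Definition \ref{constr:type-1} builds $(\bN_U,\fX_U)$ exactly as the clade $(\whN,\whX)$ preceding Proposition \ref{prop:type-1:transn} with $b=\Leb(U)$ (via \eqref{eq:clade_split:distrib}), combined with the independence over $U\in\beta$ and the a.s.\ finiteness of the set of surviving clades (Lemma \ref{lem:finite_survivors}). Your extra care about pushing the identity of joint laws through $\Concat$ and about the support in $\IPspace$ only fills in details the paper leaves implicit.
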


\begin{proof}[Proof of Theorems \ref{thmtype1Hunt} and \ref{thm:diffusion_0}] Note that Corollary \ref{cor:type-1:gen_transn} identifies 
  $(\kappa_y^{(\alpha)},y\!\ge\! 0)$ as the semigroup of $\mBxc$-IP-evolutions, which are 1-self-similar path-continuous Hunt processes by 
  \cite[Theorem 1.4]{IPPA} and continuous in the initial state, by Proposition \cite[Proposition 5.20]{IPPA}. Hence, $(\kappa_y^{(\alpha)},y\ge 0)$ is as required for Theorem \ref{thmtype1Hunt} and $\mBxc$-IP-evolutions are
  type-1 evolutions as claimed in Theorem \ref{thm:diffusion_0}.
\end{proof}

\subsection{Type-0 evolutions: construction and properties}
\label{sec:type-0}
\def\cFI{\cF_{\IPspace}}

We will construct type-0 evolutions from point measures of
spindles and associated scaffolding. Let $\bN$ denote a \PRM[\Leb\otimes\mBxc] on $[0,\infty)\times\Exc$. For $y\in\BR$, let $T^y$ denote the first hitting time of $y$ by the scaffolding $\bX=\xi_\bN$ associated with $\bN$ in \eqref{eq:scaffolding}. We define
\begin{equation*}
 \cev\nbeta^y_j := \skewer\left(y,\restrict{\bN}{[0,T^{-j})},j+\restrict{\bX}{[0,T^{-j})}\right) \qquad \text{for }j\in\BN,\ y\in [0,j].
\end{equation*}
Note that for $k>j$ the pair $\big(\Restrict{\bN}{[T^{j-k},T^{-k})},k+\Restrict{\bX}{[T^{j-k},T^{-k})}\big)$ shifted to a pair 
$\big(\ShiftRestrict{\bN}{[T^{j-k},T^{-k})},k+\ShiftRestrict{\bX}{[T^{j-k},T^{-k})}\big)$ on time interval $[0,T^{-k}-T^{j-k}]$ has the same distribution as $\big(\restrict{\bN}{[0,T^{-j})},j+\restrict{\bX}{[0,T^{-j})}\big)$, and thus
\begin{equation}\label{eq:type-0:consistency}
  (\cev\nbeta^y_k,\ y\in [0,j]) \stackrel{d}{=} (\cev\nbeta^y_j,\ y\in [0,j]).
\end{equation}
Thus, by Kolmogorov's extension theorem and \cite[Lemma II.35.1]{RogersWilliams}, there exists a continuous process $(\cev\nbeta^y,\ y\geq 0)$ such that for every $j\in\BN$ we have $(\cev\nbeta^y,\ y\in [0,j]) \stackrel{d}{=} (\cev\nbeta^y_j,\ y\in [0,j])$.

\begin{definition}[$\BPr^0_{\beta}$, $\BPr^0_{\mu}$]
\label{constr:type_0}
 Let $\beta\in\IPspace$. Let $(\cev\nbeta^y,\ y\geq 0)$ be as above and $(\vecc\nbeta^y,\ y\geq 0)$ an independent type-1 evolution starting from $\beta$. Consider $(\nbeta^y,\,y\geq 0) = (\cev\nbeta^y\concat\vecc\nbeta^y,\ y\geq 0)$. Let $\BPr^0_{\beta}$ its law on $\cCRI$. 
 For probability measures $\mu$ on $\IPspace$ we write $\BPr^0_{\mu}=\int\BPr^0_\beta\mu(d\beta)$.
\end{definition}

We will show that $\BPr^0_\mu$ is the distribution of a type-0 evolution starting from initial distribution $\mu$, by showing that 
$(\BPr^0_\beta,\beta\!\in\!\IPspace)$ is a family of distributions of a Markov process with transition semigroup 
$(\widetilde{\kappa}_y^{(\alpha)},y\!\ge\! 0)$ as defined just above Theorem \ref{thmtype0Hunt}. 
The path-continuity of $(\cev\nbeta^y\!\concat\!\vecc\nbeta^y,y\!\geq\! 0)$ follows from our results for type-1 evolutions.

\begin{figure}
 \centering
 \input{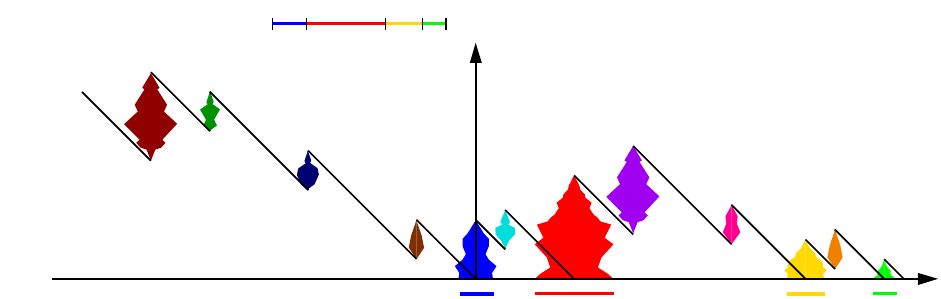_t}
 \caption{To the left of the y-axis, $\big(\cev{\bN},\cev{\bX}\big)$, as in Remark \ref{rmk:type_0_constr}. To the right, $(\bN_\beta,\bX_\beta)$, as below Definition \ref{constr:type-1}.\label{fig:down_from_infty}}
\end{figure}

\begin{remark}\label{rmk:type_0_constr}
 It is possible to construct the type-0 evolution as the skewer of a point process of spindles, rather than via consistency and the extension theorem as we have done above. This would involve setting up a point process of spindles $\cev\bN$ on $(-\infty,0)\times\Exc$ such that, for a suitable extension of the definition \eqref{eq:scaffolding} of $\bX$, the resulting process $\cev\bX$ could be understood as a \StableA\ first-passage descent from $\infty$ down to 0; see Figure \ref{fig:down_from_infty}. Related processes have been studied in the literature. For example, Bertoin \cite[Section VII.2]{BertoinLevy} constructs spectrally negative L\'evy processes that are conditioned to stay positive. Transforming such a process via sign change and an increment reversal  results in a spectrally positive process coming down from $\infty$ to $0$. We find the above consistency  approach to be notationally friendlier.
\end{remark}

Note that $(\cev\nbeta^y,\, y\geq 0)$ itself has distribution $\BPr^0_\emptyset$. We will see that $\emptyset$ is a reflecting boundary for type-0 evolutions, whereas it is absorbing for type 1.

\begin{proposition}[Transition kernel under $\BPr^0_\beta$]\label{prop:type-0:transn}
 Take $\gamma\in\IPspace$ and $y>0$. Let $(\gamma^y_U,\,U\in\gamma)$ denote an independent family of partitions, with each $\gamma^y_U$ distributed 
 as $\wh\nbeta^y$ in Proposition \ref{prop:type-1:transn} with $b = \Leb(U)$. Let $\gamma_0\sim\mu_{0,1/2y}^{(\alpha)}$ be as in 
 \eqref{eq:transn:PDIP}, independent of $(\gamma^y_U,\ U\in\gamma)$. Then under $\BPr^0_{\gamma}$, the interval partition $\nbeta^y$ has the same 
 distribution as
 $\gamma_0\concat \Concat_{U\in\gamma}\gamma^y_U$ in \eqref{eq:intro:transn_0}. 
\end{proposition}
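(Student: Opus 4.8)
The plan is to exploit the decomposition $\nbeta^y=\cev\nbeta^y\concat\vecc\nbeta^y$ built into $\BPr^0_\gamma$ in Definition~\ref{constr:type_0} and reduce the statement to an identification of the law of the single partition $\cev\nbeta^y$. Since $(\vecc\nbeta^y,y\ge0)$ is an independent type-1 evolution from $\gamma$, Corollary~\ref{cor:type-1:gen_transn} gives $\vecc\nbeta^y\stackrel{d}{=}\Concat_{U\in\gamma}\gamma^y_U$, independent of $\cev\nbeta^y$, while $\gamma_0$ in the statement is $\sim\mu_{0,1/2y}^{(\alpha)}$ independently of $(\gamma^y_U)_{U\in\gamma}$. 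As concatenation is a measurable operation on ordered pairs of interval partitions, the claimed equality in law with $\gamma_0\concat\Concat_{U\in\gamma}\gamma^y_U$ of \eqref{eq:intro:transn_0} will follow at once once I show that $\cev\nbeta^y\sim\mu_{0,1/2y}^{(\alpha)}$, with $\mu_{0,1/2y}^{(\alpha)}$ as in Remark~\ref{rmk:transn:PDIP}.

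To pin down the law of $\cev\nbeta^y$, I would fix an integer $j>y$ and use the consistency \eqref{eq:type-0:consistency}, together with the extension argument following it, to replace $\cev\nbeta^y$ by $\cev\nbeta^y_j=\skewer\big(y,\restrict{\bN}{[0,T^{-j})},j+\restrict{\bX}{[0,T^{-j})}\big)$. Writing $W:=j+\bX$, this is the skewer at level $y$ of the scaffolding $W$, which starts at $j>y$ and is absorbed at $0$ at time $T^{-j}$ (being spectrally positive, $\bX$ hits $-j$ continuously). This is exactly the configuration analyzed in the proof of Proposition~\ref{prop:type-1:transn} for the partition there called ``$\wh\nbeta^y$ minus its leftmost block'', except that now there is no initial spindle and no conditioning on survival. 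Indeed, $W$ decreases continuously from $j$ until it first reaches $y$, and on that initial stretch no jump straddles $y$, so it contributes no block at level $y$ --- this is what takes the place of the leftmost block in the type-1 picture. After the first hitting time of $y$, the strong Markov property exhibits $W$ as a \StableA\ process started at $y$ and run to absorption at $0$, so by It\^o's excursion theory and Proposition~\ref{prop:bi-clade_PRM} its excursions about $y$, marked with spindles, form a \PRM[\Leb\otimes\mClade] in level-$y$ local time, each bi-clade that crosses above $y$ contributing a single block of mass $m^0$ (cf.\ \eqref{eq:clade:mass_def}), while excursions staying below $y$ contribute nothing. The absorption time $T^{-j}$ falls in the first excursion, in local time, of depth $\life^-\ge y$; and --- this is the one genuinely geometric point, which I expect to require the most care --- because a bi-clade lies below its base level only during its initial descent, prior to its middle jump, restricting $\bN$ to $[0,T^{-j})$ deletes that middle jump, so the terminating excursion too contributes no block at level $y$.

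Granting this structural description, the remaining computation is a transcription of the Poisson-thinning argument in the proof of Proposition~\ref{prop:type-1:transn} with every superscript $+$ replaced by $-$. Writing $\wh S$ for the level-$y$ local time of the terminating excursion, thinning the bi-clade \PRM\ by $\{\life^-\ge y\}$ versus $\{\life^-<y\}$ makes $\wh S$ exponential of rate $\mClade\{\life^->y\}=(2y)^{-\alpha}$ by Proposition~\ref{prop:clade:stats}, assertion \ref{item:CS:max} (superscript reversed), independent of the complementary thinned process, while assertions \ref{item:CS:mass}, \ref{item:CS:max} and \ref{item:CS:mass:max} give $\mClade\{m^0\in db;\ \life^-<y\}=e^{-b/2y}\,\mClade\{m^0\in db\}=\frac{\alpha}{\Gamma(1-\alpha)}b^{-1-\alpha}e^{-b/2y}\,db$, which by \eqref{eq:tilted_Laplace} with $r=1/2y$ is the L\'evy measure of a subordinator $R^y$ of Laplace exponent $\Phi_{1/2y}$. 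Then Proposition~\ref{prop:agg_mass_subord} and this tilting identify the mass process of the skewer at level $y$, run in level-$y$ local time, with $\restrict{R^y}{[0,\wh S]}$, whence $\cev\nbeta^y_j=\{(R^y(t-),R^y(t))\colon t\in[0,\wh S],\ R^y(t-)<R^y(t)\}$ with $R^y$ independent of $\wh S\sim\ExpDist[(2y)^{-\alpha}]$; by Remark~\ref{rmk:transn:PDIP} this is $\mu_{0,1/2y}^{(\alpha)}$. Since the law does not depend on $j$, we conclude $\cev\nbeta^y\sim\mu_{0,1/2y}^{(\alpha)}$, and with the first paragraph the proposition follows. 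I expect the only non-routine step to be the excursion-theoretic bookkeeping at level $y$ in the second paragraph --- in particular the verification that the terminating excursion contributes no block --- everything else being either an immediate consequence of Corollary~\ref{cor:type-1:gen_transn} or a transcription of computations already carried out for Proposition~\ref{prop:type-1:transn}.
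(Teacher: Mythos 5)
Your proposal is correct and follows essentially the same route as the paper: the same reduction via Definition \ref{constr:type_0} and Corollary \ref{cor:type-1:gen_transn} to identifying the law of $\cev\nbeta^y$, and the same excursion-theoretic bookkeeping at level $y$ (including the key observation that the initial descent and the terminating anti-clade contribute no block). The only difference is that the paper identifies the post-$T^{y-j}$ segment distributionally with the post-$\wh T^y$ segment of a clade conditioned on $\{m^0=1,\life^+>y\}$ and then simply cites the conclusion of Proposition \ref{prop:type-1:transn}, whereas you re-run the Poisson-thinning computation from that proof directly on the type-0 ingredient; both are valid.
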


\begin{proof}
 Let $(\cev\nbeta^z)$ and $(\vecc\nbeta^z)$ be as in Definition \ref{constr:type_0} with $\beta = \gamma$. By Corollary \ref{cor:type-1:gen_transn}, $\vecc\nbeta^y \stackrel{d}{=} \ConcatIL_{U\in\gamma}\gamma^y_U$. By construction, this is independent of $\cev\nbeta^y$. It remains only to show that $\cev\nbeta^y$ is distributed like the interval partition arising from the range of $R^y$, up to time $S^y$, in the
 notation of \eqref{eq:transn:PDIP}.
 
 Let $\whN$ have law $\mClade^+\{\,\cdot\,|\,m^0\!=\!1,\,\life^+\!>\!y\}$. Let $\wh T^y$ denote the first hitting time of $y$ in its scaffolding $\whX$. It follows from the description of $\mClade^+\{\,\cdot\,|\,m^0\!=\!1\}$ in (\ref{eq:clade_split:distrib}) and the strong Markov property of $\whN$ applied at time $\wh T^{y}$ that the pair $\big(\restrict{\whN}{[\wh T^y,\infty)},\restrict{\whX}{[\wh T^y,\infty)}\big)$ when shifted by
$\wh T^y$ to $\big(\shiftrestrict{\whN}{[\wh T^y,\infty)},\shiftrestrict{\whX}{[\wh T^y,\infty)}\big)$ has the same distribution as   $\big(\restrict{\bN}{[T^{y-j},T^{-j})}, j+\restrict{\bX}{[T^{y-j},T^{-j})}\big)$ for $j>y$, so
 \begin{equation}\skewer\left(y,\shiftrestrict{\whN}{[\wh T^y,\infty)},\shiftrestrict{\whX}{[\wh T^y,\infty)}\right) \stackrel{d}{=} \cev\nbeta^y.\vspace{-0.1cm} \label{eq:type-0:restricted_skewer}
 \end{equation}
 Note that $\skewer(y,\whN,\whX)$ equals a single leftmost block, corresponding to the first jump of $\whX$ across level $y$, concatenated with the skewer on the left in \eqref{eq:type-0:restricted_skewer}. By \eqref{eq:transn:PDIP}, that term in \eqref{eq:type-0:restricted_skewer} 
 has the desired distribution.
\end{proof}

Let us now state the simple Markov property with respect to the natural filtration $(\cFI^y,\,y\ge 0)$ generated by the canonical process on $\cC([0,\infty),\IPspace)$.

\begin{proposition}[Simple Markov property under $\BPr^0_\mu$]\label{prop:type-0:simple_Markov}
 Let $\mu$ be a probability distribution on $\IPspace$. Fix $y>0$. Take $\eta,f\colon\cCRI\to [0,\infty)$ measurable, with $\eta$ measurable in $\cFI^y$. Let $\theta_y$ denote the shift operator. Then 
  $\BPr^0_\mu\big[\eta\, f\circ\theta_y \big] = \BPr^0_{\mu}\left[\eta\, \BPr^0_{\nbeta^y}[f]\right].$
\end{proposition}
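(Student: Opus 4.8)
Integrating against $\mu$, it suffices to treat $\mu=\delta_\beta$, so I would work on a probability space carrying a process $(\cev\nbeta^z,z\ge 0)$ together with an independent type-1 evolution $(\vecc\nbeta^z,z\ge 0)$ started from $\beta$ as in Definition \ref{constr:type_0}, so that $\nbeta^z=\cev\nbeta^z\concat\vecc\nbeta^z$ has law $\BPr^0_\beta$, and set $\cG^y:=\sigma(\cev\nbeta^u,\vecc\nbeta^u\colon u\le y)$, which contains $\cFI^y$. The goal becomes to show that the conditional law of $(\nbeta^{y+z},z\ge 0)$ given $\cG^y$ is $\BPr^0_{\nbeta^y}$; since the latter is a measurable function of $\nbeta^y$ alone, hence $\cFI^y$-measurable, the tower property over $\cFI^y\subseteq\cG^y$ then yields the proposition. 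Two purely structural facts drive the argument. First, by Definition \ref{constr:type-1} and \cite[Proposition 5.11]{IPPA}, a type-1 evolution from a concatenation $\gamma_1\concat\gamma_2$ is the concatenation of two \emph{independent} type-1 evolutions, from $\gamma_1$ and from $\gamma_2$ (the branching property). Second, combining this with Definition \ref{constr:type_0}: for any splitting $\gamma=\gamma_1\concat\gamma_2$ the law $\BPr^0_\gamma$ is that of the concatenation of a $\BPr^0_\emptyset$-distributed process with independent type-1 evolutions from $\gamma_1$ and from $\gamma_2$. Taking $\gamma_1=\cev\nbeta^y$, $\gamma_2=\vecc\nbeta^y$ and regrouping, $\BPr^0_{\nbeta^y}$ is the law of a $\BPr^0_{\cev\nbeta^y}$-process concatenated with an independent type-1 evolution from $\vecc\nbeta^y$. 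Since $\nbeta^{y+z}=\cev\nbeta^{y+z}\concat\vecc\nbeta^{y+z}$, it therefore suffices to show that, conditionally on $\cG^y$: (I) the futures $(\cev\nbeta^{y+z},z\ge 0)$ and $(\vecc\nbeta^{y+z},z\ge 0)$ are independent; (II) the second is a type-1 evolution from $\vecc\nbeta^y$; and (III) the first has law $\BPr^0_{\cev\nbeta^y}$.

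Claims (I) and (II) are straightforward: $(\vecc\nbeta^z,z\ge 0)$ is a type-1 evolution, hence a path-continuous Hunt process by Theorem \ref{thmtype1Hunt}, so it has the simple Markov property, and it is independent of $(\cev\nbeta^z,z\ge 0)$; conditioning the independent pair on $\cG^y$ is the same as conditioning each coordinate process on its own history up to level $y$, which delivers (I) and (II) at once.

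The real content is (III), the simple Markov property of $(\cev\nbeta^z,z\ge 0)$ with transition from a state $\gamma$ given by $\BPr^0_\gamma$. I would realize $(\cev\nbeta^z,z\ge 0)$ as the skewer process of a scaffolding-and-spindles pair $(\cev\bN,\cev\bX)$ in which $\cev\bX$ is the \StableA\ first-passage descent from $\infty$ to $0$ of Remark \ref{rmk:type_0_constr} --- or, to stay strictly within the framework already set up, run the same argument on the finite-level approximations $\cev\nbeta_j$ of \eqref{eq:type-0:consistency} and pass to the limit. Let $T^y$ be the first time $\cev\bX$ reaches level $y$ and split $(\cev\bN,\cev\bX)$ there. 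By spatial homogeneity of the stable process the pre-$T^y$ fragment is a first-passage descent from $\infty$ to $y$, i.e.\ a copy of $(\cev\bN,\cev\bX)$ shifted up by $y$; by the strong Markov property of $\cev\bX$ at $T^y$ it is independent of the post-$T^y$ fragment. The pre-$T^y$ fragment lies entirely at levels $\ge y$, so it contributes no block to $\cev\nbeta^u$ for $u\le y$, and consequently $(\cev\nbeta^u,u\le y)$ --- in particular $\cev\nbeta^y$ --- is a measurable functional of the post-$T^y$ fragment alone. Reading the pre-$T^y$ fragment at level $y+z$ produces, through the shift, a fresh $\BPr^0_\emptyset$-process independent of $\cev\nbeta^y$. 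Reading the post-$T^y$ fragment at level $y+z$: by the level-$y$ decomposition used in the proof of Proposition \ref{prop:type-0:transn}, its excursions above level $y$ are bi-clades, each of which, conditionally on its level-$y$ block mass $b$, is a clade of law $\mClade^+\{\,\cdot\mid m^0=b\}$ whose skewer process is, by Theorem \ref{thm:diffusion_0} applied with $U=(0,b)$, a type-1 evolution from $\{(0,b)\}$; since these level-$y$ blocks concatenate, in left-to-right (local-time) order, to $\cev\nbeta^y$, the branching property identifies the aggregate post-$T^y$ contribution as a type-1 evolution from $\cev\nbeta^y$. As the pre-$T^y$ contribution sits, in time order, entirely to the left of the post-$T^y$ one, $(\cev\nbeta^{y+z},z\ge 0)$ is the concatenation of a fresh $\BPr^0_\emptyset$-process with an independent type-1 evolution from $\cev\nbeta^y$, i.e.\ has law $\BPr^0_{\cev\nbeta^y}$ by Definition \ref{constr:type_0}; and, being built from the post-$T^y$ fragment and the independent pre-$T^y$ fragment, this conditional description depends on $\cG^y$ only through $\cev\nbeta^y$. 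This is (III).

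I expect the main obstacle to be making the level-$y$ decomposition in the proof of (III) rigorous: the descent from $\infty$ is constructed only through a Kolmogorov extension/consistency argument (Remark \ref{rmk:type_0_constr}, \eqref{eq:type-0:consistency}) rather than as a genuine L\'evy process started at $\infty$, so one must either build that object and justify the strong Markov property at the first-passage time $T^y$ directly, or perform the splitting for every finite-level approximant $\cev\nbeta_j$ and show the resulting conditional laws converge, using \eqref{eq:type-0:consistency} and a monotone-class argument. A secondary technicality is the precise measurability statement that $(\cev\nbeta^u,u\le y)$ and the block masses $b$ are functionals of the post-$T^y$ fragment; this can be handled via the identification \eqref{eq:div_LT_condition} of diversities with scaffolding local times together with the elementary structure of the skewer map.
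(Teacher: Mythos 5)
Your proposal is correct in substance and its underlying decomposition coincides with the paper's: split the ``left'' component at the first passage of level $y$, use the strong Markov property of the stable scaffolding there, and then perform the level-$y$ (anti-)clade decomposition on what remains, handling the type-1 component by its own Markov/branching property. The difference is one of packaging. You run the argument on the idealized descent from $\infty$ and defer its rigorous construction; the paper takes exactly the fallback you name: it first reduces to finite-dimensional cylinder functions $\eta'(\nbeta^{u_j})$, $f'(\nbeta^{y+v_j})$, fixes $z>u_n$, and works with the finite approximant $\cev\bN_{z+y}$ split at $T^{-z}$ (your $T^y$), so that no infinite-descent object is ever needed; a monotone class argument then recovers general $\eta,f$. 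The second place where your sketch leans on more than it cites is the step identifying the aggregate contribution of the excursions above level $y$ as a type-1 evolution from $\cev\nbeta^y$ conditionally independent of the past: the fixed-level clade description from the proof of Proposition \ref{prop:type-0:transn} is not enough, and one needs the process-level conditional-independence statements with regular conditional distributions $\Pr^1_{\cdot}$, namely Propositions \ref{prop:PRM:Fy-_Fy+} and \ref{prop:type-1:Fy-_Fy+} (Propositions 4.24 and 5.6 of \cite{IPPA}), applied to the cutoff measures $\cutoffL{\cdot}{\cdot}$ and $\cutoffG{\cdot}{\cdot}$, together with Lemma \ref{lem:cutoff_skewer} to translate back to skewer processes. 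With those two substitutions --- finite approximants in place of the descent from $\infty$, and the cutoff propositions in place of the one-level transition description --- your argument is the paper's proof.
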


We prove this result in Appendix \ref{appxtype0}. 

\begin{proposition}[Continuity in the initial state]\label{prop:type-1:cts_in_init_state}
 For $f\colon\IPspace\to [0,\infty)$ bounded and continuous and $z>0$, the map $\beta\mapsto\BPr^0_{\beta}[f(\nbeta^z)]$ is continuous on $(\IPspace,\dI)$.
\end{proposition}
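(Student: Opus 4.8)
The plan is to use the decomposition in Definition \ref{constr:type_0}: under $\BPr^0_\beta$ the process is $(\cev\nbeta^y\concat\vecc\nbeta^y,\,y\ge 0)$, where $(\cev\nbeta^y,\,y\ge 0)$ has the $\beta$-free law $\BPr^0_\emptyset$ and is independent of the type-1 evolution $(\vecc\nbeta^y,\,y\ge 0)\sim\bP^1_\beta$. Fix $z>0$ and let $\sigma_z$ denote the law of $\cev\nbeta^z$; by Proposition \ref{prop:type-0:transn} (taking $\gamma=\emptyset$) together with Remark \ref{rmk:transn:PDIP}, $\sigma_z=\mu_{0,1/2z}^{(\alpha)}$, so $\sigma_z$ is a probability measure on $\IPspace$ not depending on $\beta$. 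Using independence and $f\ge 0$,
\begin{equation*}
 \BPr^0_\beta\big[f(\nbeta^z)\big] = \int_{\IPspace}F_{\cev\gamma}(\beta)\,\sigma_z(d\cev\gamma), \qquad \text{where } F_{\cev\gamma}(\beta):=\bP^1_\beta\big[f\big(\cev\gamma\concat\vecc\nbeta^z\big)\big],
\end{equation*}
so that the entire $\beta$-dependence now sits inside the integrand $F_{\cev\gamma}(\beta)$.

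Next I would check that for each fixed $\cev\gamma\in\IPspace$ the map $\beta\mapsto F_{\cev\gamma}(\beta)$ is continuous on $(\IPspace,\dI)$; two ingredients enter. First, the concatenation map $(\gamma_1,\gamma_2)\mapsto\gamma_1\concat\gamma_2$ is continuous from $\IPspace\times\IPspace$ to $\IPspace$: given correspondences from $\gamma_1$ to $\gamma_1'$ and from $\gamma_2$ to $\gamma_2'$, their union---with the blocks inherited from the second partition shifted to the right, and using that in $\gamma_1\concat\gamma_2$ the diversities coming from $\gamma_2$ are shifted by the finite total diversity of $\gamma_1$---is a correspondence from $\gamma_1\concat\gamma_2$ to $\gamma_1'\concat\gamma_2'$ whose four distortion terms in Definition \ref{def:IP:metric} are each controlled by the sum of the two input distortions, giving $\dI(\gamma_1\concat\gamma_2,\gamma_1'\concat\gamma_2')\le\dI(\gamma_1,\gamma_1')+\dI(\gamma_2,\gamma_2')$. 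Hence $\gamma\mapsto f(\cev\gamma\concat\gamma)$ is continuous and bounded by $\|f\|_\infty$. Second, the type-1 semigroup is weakly continuous in its initial state by Theorem \ref{thmtype1Hunt} (equivalently \cite[Proposition 5.20]{IPPA}), i.e.\ for every bounded continuous $g\colon\IPspace\to\BR$ the map $\beta\mapsto\bP^1_\beta[g(\vecc\nbeta^z)]=\int g\,d\kappa_z^{(\alpha)}(\beta,\cdot)$ is continuous. Applying this with $g=f(\cev\gamma\concat\,\cdot\,)$ yields continuity of $\beta\mapsto F_{\cev\gamma}(\beta)$, with the uniform bound $|F_{\cev\gamma}|\le\|f\|_\infty$ over all $\cev\gamma$.

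Finally, given $\beta_n\to\beta$ in $(\IPspace,\dI)$, the previous step gives $F_{\cev\gamma}(\beta_n)\to F_{\cev\gamma}(\beta)$ for every $\cev\gamma\in\IPspace$, and since $|F_{\cev\gamma}(\beta_n)|\le\|f\|_\infty$ with $\sigma_z$ a probability measure, dominated convergence gives $\BPr^0_{\beta_n}[f(\nbeta^z)]=\int F_{\cev\gamma}(\beta_n)\,\sigma_z(d\cev\gamma)\to\int F_{\cev\gamma}(\beta)\,\sigma_z(d\cev\gamma)=\BPr^0_\beta[f(\nbeta^z)]$; as $(\IPspace,\dI)$ is metric, sequential continuity proves the claim. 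The only point requiring genuine care is the interchange of the limit in $\beta$ with the expectation over the random ``anchor'' $\cev\nbeta^z$, but this is immediate from the uniform bound $\|f\|_\infty$, so there is no real obstacle: everything reduces to the already-established continuity of the type-1 semigroup in its initial state and the elementary Lipschitz continuity of concatenation. (Measurability of $\cev\gamma\mapsto F_{\cev\gamma}(\beta)$, needed to form the integral, follows from joint continuity of $(\cev\gamma,\gamma)\mapsto f(\cev\gamma\concat\gamma)$ and the fact that $\beta\mapsto\bP^1_\beta$ is a stochastic kernel, as already used in Definition \ref{constr:type-1}.)
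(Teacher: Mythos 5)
Your proof is correct and follows essentially the same route as the paper's: both rest on the decomposition $\nbeta^z=\cev\nbeta^z\concat\vecc\nbeta^z$ from Definition \ref{constr:type_0}, the weak continuity of the type-1 semigroup from Theorem \ref{thmtype1Hunt}, and control of $\dI$ under concatenation. The only difference is in the technical finish: the paper couples the evolutions via Skorokhod representation and uses $\dI(\cev\gamma\concat\gamma_1,\cev\gamma\concat\gamma_2)=\dI(\gamma_1,\gamma_2)$, whereas you disintegrate over the law of $\cev\nbeta^z$ and conclude by dominated convergence, using only the subadditivity bound $\dI(\gamma_1\concat\gamma_2,\gamma_1'\concat\gamma_2')\le\dI(\gamma_1,\gamma_1')+\dI(\gamma_2,\gamma_2')$ — an equally valid and, if anything, slightly more elementary finish.
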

\begin{proof} Consider a sequence $(\beta_n,n\ge 0)$ in $\cI$ such that $\dI(\beta_n,\beta_0)\rightarrow 0$ and 
  $\nbeta_n^y:=\cev\nbeta_n^y\concat\vecc\nbeta_n^y$, $y\ge 0$, associated evolutions as in Definition \ref{constr:type_0}, $n\ge 0$. 
  Then we have $\vecc\nbeta_n^z\rightarrow\vecc\nbeta_0^z$ in distribution, by weak continuity of $\kappa^{(\alpha)}_z$ of Theorem 
  \ref{thmtype1Hunt}. By Skorokhod's representation theorem, we may assume a.s.\ convergence, and we may also assume $\cev\nbeta_0^z$ 
  on the same probability space, independent of $(\vecc\nbeta_n^z,n\ge 0)$. Set $\td\nbeta_n^z:=\cev\nbeta_0^z\concat\vecc\nbeta_n^z$, $n\ge 0$.
  Then $\dI(\td\nbeta_n^z,\td\nbeta_0^z)=\dI(\vecc\nbeta_n^z,\vecc\nbeta_0^z)$ and hence \vspace{-0.1cm} 
 $$\BPr^0_{\beta_n}[f(\nbeta^z)]=\EV[f(\nbeta_n^z)]=\EV[f(\td\nbeta_n^z)]\rightarrow\EV[f(\td\nbeta_0^z)]=\BPr^0_{\beta_0}[f(\nbeta^z)].\vspace{-0.7cm}$$
\end{proof}

\begin{corollary}\label{cor:type-1:cts_init_2}
 Take $m\in\BN$, let $f_1,\ldots,f_m\colon\IPspace\rightarrow[0,\infty)$ be bounded and continuous, and take $0\le z_1<\cdots<z_m$. Then $\beta \mapsto \BPr^0_{\beta}\left[\prod_{i=1}^m f_i(\nbeta^{z_i})\right]$ is continuous.
 %
\end{corollary}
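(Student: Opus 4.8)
The plan is to prove the statement by induction on $m$, at each step peeling off the factor at the largest level $z_m$ by combining the simple Markov property (Proposition~\ref{prop:type-0:simple_Markov}) with the continuity-in-initial-state result (Proposition~\ref{prop:type-1:cts_in_init_state}). For the base case $m=1$: if $z_1>0$, the claim is exactly Proposition~\ref{prop:type-1:cts_in_init_state}; if $z_1=0$, then under $\BPr^0_\beta$ we have $\nbeta^0=\beta$ by construction, so $\beta\mapsto\BPr^0_\beta[f_1(\nbeta^0)]=f_1(\beta)$ is continuous by hypothesis.

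For the inductive step, assume the statement for $m-1$ functions. Given $f_1,\dots,f_m$ bounded continuous and $0\le z_1<\cdots<z_m$, set $y:=z_{m-1}$, let $\eta:=\prod_{i=1}^{m-1}f_i(\nbeta^{z_i})$ and $f(\omega):=f_m(\omega^{z_m-z_{m-1}})$ for $\omega\in\cCRI$, so that $f_m(\nbeta^{z_m})=(f\circ\theta_y)(\nbeta)$. Since $z_1,\dots,z_{m-1}\in[0,y]$, the map $\eta$ is nonnegative and $\cFI^{y}$-measurable, and $f$ is nonnegative and measurable on $\cCRI$. Applying Proposition~\ref{prop:type-0:simple_Markov} with the probability measure $\delta_\beta$,
\[
 \BPr^0_\beta\Big[\prod_{i=1}^m f_i(\nbeta^{z_i})\Big] = \BPr^0_\beta\big[\eta\,(f\circ\theta_y)\big] = \BPr^0_\beta\big[\eta\,\BPr^0_{\nbeta^{y}}[f]\big] = \BPr^0_\beta\Big[\Big(\prod_{i=1}^{m-2} f_i(\nbeta^{z_i})\Big)\,g(\nbeta^{z_{m-1}})\Big],
\]
where $g(\gamma):=f_{m-1}(\gamma)\,\BPr^0_\gamma\big[f_m(\nbeta^{z_m-z_{m-1}})\big]$ and the empty product is read as $1$ when $m=2$. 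Because $f_m$ is bounded, $\gamma\mapsto\BPr^0_\gamma[f_m(\nbeta^{z_m-z_{m-1}})]$ is bounded, and it is continuous by Proposition~\ref{prop:type-1:cts_in_init_state} (using $z_m-z_{m-1}>0$); since $f_{m-1}$ is also bounded and continuous, $g$ is bounded and continuous. Thus the right-hand side is $\BPr^0_\beta$ applied to a product of $m-1$ bounded continuous functions of $\nbeta$ at the strictly increasing levels $z_1<\cdots<z_{m-1}$, namely $f_1,\dots,f_{m-2},g$, and the induction hypothesis yields continuity of $\beta\mapsto\BPr^0_\beta[\prod_{i=1}^m f_i(\nbeta^{z_i})]$, completing the step.

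Given Propositions~\ref{prop:type-0:simple_Markov} and~\ref{prop:type-1:cts_in_init_state}, there is no substantial obstacle here; the only points requiring care are the bookkeeping that $\eta$ really lies in $\cFI^{y}$ and the observation that one application of the Markov property lowers the number of factors by exactly one, because the new function $g$ absorbs both $f_{m-1}$ and the conditional expectation of $f_m$. This is what makes the induction close.
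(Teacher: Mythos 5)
Your proof is correct and is essentially the argument the paper has in mind: the paper defers to the analogous type-1 statement \cite[Corollary 5.21]{IPPA}, which is proved by exactly this induction, peeling off the last factor via the simple Markov property and folding the resulting conditional expectation into a new bounded continuous function via continuity in the initial state. The only cosmetic point is that when $m=2$ and $z_1=0$ your time $y=z_{m-1}$ equals $0$, which Proposition~\ref{prop:type-0:simple_Markov} formally excludes, but there the identity is trivial since $\nbeta^0=\beta$.
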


See \cite[Corollary 5.21]{IPPA} for a proof in the type-1 case. The type-0 case is analogous, and we also deduce the strong Markov property, in
the natural filtration $(\cFI^y,\,y\ge 0)$ generated by the canonical process on $\cC([0,\infty),\IPspace)$.

\begin{proposition}[Strong Markov property]\label{prop:strong_Markov}
 Let $\mu$ be a probability distribution on $\IPspace$. Let $Y$ be an a.s.\ finite stopping time in $(\cFI^y,\,y\ge 0)$. Take $\eta,f\colon\cCRI\to [0,\infty)$ measurable, with $\eta$ measurable with respect to $\cFI^Y$. Let $\theta_y$ denote the shift operator. Then 
  $\BPr^0_\mu\big[\eta\, f\circ\theta_Y \big] = \BPr^0_{\mu}\left[\eta\, \BPr^0_{\nbeta^Y}[f]\right]$.
\end{proposition}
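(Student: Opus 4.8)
The plan is to run the standard passage from the simple Markov property (Proposition~\ref{prop:type-0:simple_Markov}) to the strong Markov property: discretize the stopping time $Y$ from above, apply the simple Markov property at each dyadic level, and then pass to the limit using $\dI$-path-continuity of type-0 evolutions together with the continuity of finite-dimensional expectations in the initial state (Corollary~\ref{cor:type-1:cts_init_2}).

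\textbf{Reduction.} First I would reduce to the case where $f$ is bounded, continuous, and of cylinder form $f(\omega)=\prod_{i=1}^m f_i(\omega^{z_i})$ with $f_i\colon\IPspace\to[0,\infty)$ bounded continuous and $0\le z_1<\cdots<z_m$, and where $\eta=\cf_A$ for $A\in\cFI^Y$. Since $(\IPspace,\dI)$ is Lusin (Theorem~\ref{thm:Lusin}), $\cCRI$ with the topology of locally uniform convergence is Lusin and such cylinder functions form a multiplicative class generating its Borel $\sigma$-algebra; a monotone-class argument will then promote the identity from this class to all measurable $\eta,f\ge0$. I would also note that the case $\{Y=0\}$ is handled directly: on $\{Y=0\}$ the variable $\eta$ is $\cFI^0=\sigma(\omega^0)$-measurable, and the claimed identity there is exactly the defining property $\BPr^0_\mu=\int\BPr^0_\beta\,\mu(d\beta)$; so one may assume $Y>0$ a.s.

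\textbf{Discretization and simple Markov property.} Set $Y_n:=2^{-n}\lceil 2^n Y\rceil$, a decreasing sequence of $(\cFI^y)$-stopping times with $Y_n\downarrow Y$, taking values in the countable set $D_n:=2^{-n}\BN\setminus\{0\}$. For $d\in D_n$ one has $\{Y_n=d\}=\{d-2^{-n}<Y\le d\}\in\cFI^d$; moreover, since $\eta$ is $\cFI^Y$-measurable, $\eta\,\cf\{Y\le d\}$ is $\cFI^d$-measurable, hence so is $\eta\,\cf\{Y_n=d\}$. Applying Proposition~\ref{prop:type-0:simple_Markov} at each fixed level $d$ to the $\cFI^d$-measurable factor $\eta\,\cf\{Y_n=d\}$, and summing the resulting identities over the (disjoint) events $\{Y_n=d\}$, $d\in D_n$, gives
\[
 \BPr^0_\mu\big[\eta\,(f\circ\theta_{Y_n})\big]=\BPr^0_\mu\big[\eta\,\BPr^0_{\nbeta^{Y_n}}[f]\big].
\]

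\textbf{Passage to the limit.} Finally let $n\to\infty$. On the left, $Y_n\downarrow Y$ together with the $\dI$-path-continuity of the canonical process under $\BPr^0_\mu$ gives $\theta_{Y_n}\omega\to\theta_Y\omega$ in $\cCRI$, so for our bounded continuous cylinder $f$ we get $f\circ\theta_{Y_n}\to f\circ\theta_Y$ pointwise, and bounded convergence applies. On the right, path-continuity gives $\nbeta^{Y_n}\to\nbeta^Y$ in $(\IPspace,\dI)$ a.s., and for $f(\omega)=\prod_i f_i(\omega^{z_i})$ the map $\beta\mapsto\BPr^0_\beta[f]=\BPr^0_\beta\big[\prod_i f_i(\nbeta^{z_i})\big]$ is continuous on $(\IPspace,\dI)$ by Corollary~\ref{cor:type-1:cts_init_2}, so $\BPr^0_{\nbeta^{Y_n}}[f]\to\BPr^0_{\nbeta^Y}[f]$ a.s., and bounded convergence applies again. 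This yields the identity for cylinder $f$, and the monotone-class extension from the reduction step completes the proof.

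\textbf{Main obstacle.} The delicate point is the limit on the right-hand side: it is essential that $\beta\mapsto\BPr^0_\beta[f]$ be \emph{continuous}, not merely measurable, which is precisely what Corollary~\ref{cor:type-1:cts_init_2} (built on Proposition~\ref{prop:type-1:cts_in_init_state} and, ultimately, the weak continuity of $\widetilde\kappa^{(\alpha)}$ from Theorem~\ref{thmtype0Hunt}) supplies. A secondary nuisance is the measurability bookkeeping relating $\cFI^Y$ to the $\cFI^d$ for the approximating times, and checking that the cylinder functions are rich enough for the monotone-class step given that $\IPspace$ is only Lusin rather than Polish.
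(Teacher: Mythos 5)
Your proof is correct and follows exactly the route the paper intends: the paper gives no explicit proof of this proposition, merely asserting after Corollary~\ref{cor:type-1:cts_init_2} that the strong Markov property is "deduced" analogously to the type-1 case in \cite[Corollary 5.21]{IPPA}, and that deduction is precisely your dyadic discretization of $Y$ from above, application of Proposition~\ref{prop:type-0:simple_Markov} on each event $\{Y_n=d\}$, and passage to the limit via path-continuity together with continuity in the initial state. (The same dyadic-approximation device appears in the paper's proof of Theorem~\ref{thm:pseudostat_strong}.)
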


\begin{proof}[Proof of Theorem \ref{thmtype0Hunt}] Taking Sharpe's definition of a Hunt process, e.g.\ \cite[Definition A.18]{Li11}, we must check four 
 properties for the semigroup $(\widetilde{\kappa}_y^{(\alpha)},y\ge 0)$ on $(\IPspace,\dI)$. By Proposition \ref{prop:type-0:transn}, the 
 distributions $\BPr^0_\mu$ are distributions of Markov processes with semigroup $(\widetilde{\kappa}_y^{(\alpha)},y\ge 0)$. 
 
 (i) By Theorem \ref{thm:Lusin}, $(\IPspace,\dI)$ is Lusin. 
  
 (ii) From Proposition \ref{prop:type-1:cts_in_init_state}, $(\widetilde{\kappa}_y^{(\alpha)},y\ge 0)$ is continuous in the initial state. 
 
 (iii) By construction before Definition \ref{constr:type_0}, sample paths under $\BPr^0_\mu$ are continuous.
 
 (iv) Proposition \ref{prop:strong_Markov}, the strong Markov property holds under $\BPr^0_\mu$.
 
 We prove 1-self-similarity in Lemma \ref{lem:type-1:scaling}.
\end{proof}


\subsection{Total mass processes}
\label{sec:type-1_gen:cts}


Recall that Theorem \ref{thm:BESQ_total_mass} claims \BESQ[0] and \BESQ[2\alpha] total masses, respectively, for all type-1 and type-0 evolutions, 
regardless of their initial state $\beta\!\in\!\IPspace$. By the path-continuity in Theorems \ref{thmtype1Hunt} and \ref{thmtype0Hunt}, the total mass processes are path-continuous, so we only need to check finite-dimensional marginal distributions.

\begin{proof}[Proof of the type-1 assertion of Theorem \ref{thm:BESQ_total_mass}]
 Let $(\wh\nbeta^y,\,y\geq 0)$ be as in Proposition \ref{prop:type-1:transn}. We proceed by establishing: (i) the desired 1-dimensional marginals; (ii) finite-dimensional marginals. 
  For each of these, we show the property first for $(\big\|\wh\nbeta^y\big\|,y\ge 0)$, then for $(\IPmag{\nbeta^y},y\ge 0)$.
 
 (i) By \cite[p.\ 441]{RevuzYor}, the Laplace transform of the marginal distribution at time $y > 0$ of a \BESQ[0] process $(Z(u),\,u\geq 0)$ starting from $b$ is
 \begin{equation*}
  \EV\left[e^{-\lambda Z(y)}\right] = \exp\left(-\frac{\lambda b}{2y\lambda+1}\right).
 \end{equation*}
 
 We wish to compare this to the Laplace transform of $\big\|\wh\nbeta^y\big\|$. In the notation of Proposition \ref{prop:type-1:transn}, 
 Lemma \ref{lem:LMB}, and Remark \ref{rmk:transn:PDIP}, given that it is not zero, $\big\|\wh\nbeta^y\big\| \stackrel{d}{=} R^y(S^y)+L^y$. As noted in 
 Remark \ref{rmk:transn:PDIP},  
 $R^y(S^y) \sim \GammaDist[\alpha,1/2y]$, with Laplace transform $(2y\lambda\!+\!1)^{-\alpha}$. 
 As for $L^y$, we note that, by
  Lemma \ref{lem:LMB},
 \begin{align}
  \EV\left[e^{-\lambda L^y}\right]
  	&= \frac{2^\alpha y^\alpha}{e^{b/2y}-1}\sum_{n=1}^\infty\frac{1}{n!\Gamma(n-\alpha)}\left(\frac{b}{4y^2}\right)^n\int_0^\infty c^{n-1-\alpha}e^{-(\lambda+1/2y)c}dc\nonumber\\
  	&= \frac{(2y\lambda+1)^\alpha}{e^{b/2y}-1}\sum_{n=1}^\infty\frac{1}{n!}\left(\frac{b}{2y(2y\lambda+1)}\right)^n.\label{eq:clade:LMB_Laplace_1}
 \end{align}
 From \eqref{eq:transn:lifetime}, $\Pr\big\{\wh\nbeta^y=\emptyset\big\} = e^{-b/2y}$. 

Now, to prove $\EV \left[\exp\left(-\lambda\big\|\wh\nbeta\big\|\right)\right] = \EV\left[\exp\left(-\lambda Z(y)\right)\right]$ it suffices to show
   $$\EV\left[e^{-\lambda Z(y)}\right] = e^{-b/2y}+(1-e^{-b/2y})\EV\left[e^{-\lambda R^y(S^y)}\right]\EV\left[e^{-\lambda L^y}\right];$$
  i.e.
  \begin{align}  
  \EV\left[e^{-\lambda L^y}\right] 
    &= \frac{\EV[e^{-\lambda Z(y)}]-e^{-b/2y}}{(1-e^{-b/2y})\EV[e^{-\lambda R^y(S^y)}]}\notag \\
    &= \frac{(2y\lambda+1)^\alpha}{1-e^{-b/2y}}\left(e^{-\lambda b/(2y\lambda+1)}-e^{-b/2y}\right)\label{eq:clade:LMB_Laplace_2}\\
   	&= \frac{(2y\lambda+1)^\alpha}{e^{b/2y}-1}\sum_{n=1}^\infty\frac{1}{n!}\left(\frac{b}{2y(2y\lambda+1)}\right)^n,\notag
  \end{align}
which is the expression in \eqref{eq:clade:LMB_Laplace_1}. Hence, $\big\|\wh\nbeta^y\big\|$ is distributed like $Z(y)$ for fixed $y$. This result extends to general initial states $\beta\in\IPspace$ by way of the independence of the point measures $(\bN_U,\,U\in\beta)$ in Definition \ref{constr:type-1}, and by \cite[Theorem 4.1 (iv)]{PitmYor82}, which states that an arbitrary sum of independent \BESQ[0] processes with summable initial values is a \BESQ[0].
 
 (ii) We now prove equality of finite-dimensional marginal distributions by an induction based on the Markov properties of type-1 evolutions and of \BESQ[0]. For $1$-dimensional marginals, we have proved the result in part (i). We now assume the result holds for all $n$-dimensional marginal distributions starting from any initial distribution. We write $\bQ_a$ to denote the law of a \BESQ[0] process $(Z(y),y\!\ge\!0)$ starting from $a\!>\!0$. For all $0\!\leq\! y_1\!<\!\cdots\!<\!y_n\!<\!y_{n+1}$ and $\lambda_j\!\in\![0,\infty)$, $j\!\in\![n+1]$, we have, by the simple Markov property,
 \begin{align*}
  &\bP^1_{\{(0,a)\}}\!\left[\prod_{j=1}^{n+1}e^{-\lambda_j\IPmag{\nbeta^{y_j}}}\right]
    = \bP^1_{\{(0,a)\}}\!\left[e^{-\lambda_1\IPmag{\nbeta^{y_1}}}\bP^1_{\nbeta^{y_1}}\!\left[\prod_{k=1}^{n}e^{-\lambda_{k+1}\IPmag{\nbeta^{y_{k+1}-y_1}}}\right]\right]\\
    &= \bP^1_{\{(0,a)\}}\!\left[e^{-\lambda_1\IPmag{\nbeta^{y_1}}}\bQ_{\IPmag{\nbeta^{y_1}}}\!\left[\prod_{k=1}^{n}e^{-\lambda_{k+1}Z(y_{k+1}-y_1)}\right]\right]\\
    &= \bQ_a\!\left[e^{-\lambda_1Z(y_1)}\bQ_{Z(y_1)}\!\left[\prod_{k=1}^{n}e^{-\lambda_{k+1}Z(y_{k+1}-y_1)}\right]\right]\!
    = \bQ_a\!\!\left[\prod_{j=1}^{n+1}e^{-\lambda_jZ(y_j)}\right]\!.
 \end{align*}
 Again, this extends to general initial distributions by \cite[Theorem 4.1 (iv)]{PitmYor82} and the independence of the $(\bN_U,\,U\in\beta)$ in Definition \ref{constr:type-1}. This completes the induction step and establishes the equality of finite-dimensional distributions, hence the equality of distributions of the processes.
\end{proof}

We now show that for any $\beta\in\IPspace$, under $\BPr^0_{\beta}$ we have 
$(\IPmag{\nbeta^y},y\geq0)\sim\BESQ[2\alpha]$.

\begin{proof}[Proof of the type-0 assertion of Theorem \ref{thm:BESQ_total_mass}]
 By definition, a type-0 evolution is continuous, so it suffices to show that the total mass process is a Markov process with the same transition kernel as \BESQ[2\alpha]. First assume $\beta = \emptyset$. The marginal distribution of \BESQ[2\alpha] is given in \cite[(50)]{GoinYor03} as
 $$q_y^{(2\alpha)}(0,b)db=\frac{1}{2^\alpha y^\alpha}\frac{1}{\Gamma(\alpha)}b^{\alpha-1}e^{-b/2y}db,$$
 which is the \GammaDist[\alpha,1/2y] distribution. Note that there is no point mass at $b=0$, as 0 is reflecting for \BESQ[2\alpha]. As noted in \eqref{eq:transn:PDIP}, $R^y(S^y)\sim\GammaDist[\alpha,1/2y]$ as well. The extension to finite-dimensional marginals follows as in the proof of the type-1 assertion of Theorem \ref{thm:BESQ_total_mass} above. This completes the proof when $\beta = \emptyset$. Now, by Definition \ref{constr:type_0}, the total mass process of a type-0 evolution from a general initial state is a \BESQ[2\alpha] added to the total mass process of an independent type-1 evolution, which by the type-1 assertion is a \BESQ[0]. Thus, the theorem follows from the well-known additivity property of \BESQ-processes; see e.g.\ \cite[Theorem XI.(1.2)]{RevuzYor}.
\end{proof}

As noted in the introduction, type-1 evolutions can be viewed as branching processes, with each interval giving rise to an independently evolving component. We have shown that type-1 evolutions have \BESQ[0] total mass, which is itself a continuous-state branching process. We have now also shown that type-0 evolutions have \BESQ[2\alpha] total mass; this can be viewed as a continuous-state branching process with immigration \cite{KawaWata71}. Indeed, in the construction in Definition \ref{constr:type_0}, the $\vecc\beta^y$ component can be viewed as all descendants of the initial population $\beta$, while $\cev\beta^y$ includes all descendants of subsequent immigrants. See \cite{Paper0,IPPA} for more discussion of the branching process interpretation of the scaffolding-and-spindles construction.

We note one additional connection between type-0 and type-1 evolutions.

\begin{proposition}\label{prop:remaining_type-0}
 Fix $\beta\in\IPspace$, $a>0$, and let $\gamma := \{(0,a)\}\concat\beta$. Consider an independent pair $(\bff,\,(\nbeta^y,\,y\ge0))$ comprising a \BESQA\ and a type-0 evolution with respective initial states $a$ and $\gamma$. Let $(\td\nbeta^y,\,y\ge0)$ denote a type-1 evolution starting from $\gamma$ and let $Y$ denote the lifetime of the original leftmost block in $(\td\nbeta^y,\,y\ge 0)$. Then $Y$ is an $(\cFI^y)$-stopping time and \vspace{-.2cm}
 $$(\td\nbeta^y,\ y\in [0,Y))  \stackrel{d}{=} \big(\big\{(0,\bff(y))\big\} \concat \nbeta^y,\ y\in[0,\life(\bff)) \big). \vspace{-.2cm}$$
\end{proposition}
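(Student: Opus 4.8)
The plan is to read off both sides of the claimed identity from the scaffolding‑and‑spindles constructions of Definitions \ref{constr:type-1} and \ref{constr:type_0} and match them clade by clade. Build $(\td\nbeta^y,\,y\ge 0)$ from $\gamma=\{(0,a)\}\concat\beta$ as $\td\nbeta^y=\ConcatIL_{U\in\gamma}\skewer(y,\bN_U,\fX_U)$ with independent clades. For the leftmost block $U_0=(0,a)$ Definition \ref{constr:type-1} gives $\bN_{U_0}=\delta(0,\bff)+\restrict{\bN}{[0,T]}$ and $\fX_{U_0}=\zeta(\bff)+\restrict{\fX}{[0,T]}$, with $\bff$ a \BESQA\ from $a$, $\bN\sim\PRM[\Leb\otimes\mBxc]$ independent of $\bff$, $\fX=\xi_{\bN}$, and $T=T^{-\zeta(\bff)}$ the first hitting time of $-\zeta(\bff)$ by $\fX$. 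The marked spindle $\bff$ sits at time $0$ in $\bN_{U_0}$, hence at aggregate mass $0$ in every skewer of $(\bN_{U_0},\fX_{U_0})$, whereas all other jumps of all the $\fX_U$'s occur at strictly positive aggregate mass; so the leftmost block of $\td\nbeta^y$ is $\{(0,\bff(y))\}$ for $y<\zeta(\bff)$, the block $(0,a)$ has lifetime $Y=\zeta(\bff)\sim\InvGammaDist[1+\alpha,a/2]$ (Lemma \ref{lem:BESQ:length}), and, writing $L(\cdot)$ for the (measurable) functional ``mass of the leftmost block, or $0$ if there is none,'' the process $y\mapsto L(\td\nbeta^y)$ is a.s.\ continuous on $[0,Y]$ and strictly positive on $[0,Y)$, so that $Y=\inf\{y\colon L(\td\nbeta^y)=0\}$ is an $(\cFI^y)$-stopping time.

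Next I would compute $\td\nbeta^y$ for $y<Y$. A direct aggregate‑mass computation gives $M^y_{\bN_{U_0},\fX_{U_0}}(t)=\bff(y)+M^y_{\restrict{\bN}{[0,T]},\,\cev X}(t)$ for $t\ge0$, where $\cev X:=\zeta(\bff)+\restrict{\fX}{[0,T]}$ is, conditionally on $\zeta(\bff)=h$, a spectrally positive \StableA\ first‑passage descent from $h$ down to $0$ (creeping, so it starts at $h$ and ends at $0$) whose jumps carry \BESQA\ spindles via $\bN$. Hence, using the independence of the clades and that $(\bN_U,\fX_U)_{U\in\beta}$ is a type‑1 evolution from $\beta$, for $y<Y=\zeta(\bff)$,
$$\td\nbeta^y=\{(0,\bff(y))\}\concat\skewer\!\big(y,\restrict{\bN}{[0,T]},\cev X\big)\concat\Concat_{U\in\beta}\skewer(y,\bN_U,\fX_U),$$
and the three pieces are mutually independent apart from the middle one being a function of $\zeta(\bff)$ and the independent $\bN$.

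On the other side, write $\nbeta^y=\cev\nbeta^y\concat\vecc\nbeta^y$ as in Definition \ref{constr:type_0}, with $\vecc\nbeta$ an independent type‑1 evolution from $\beta$ and $\cev\nbeta$ the coming‑down‑from‑infinity process, and take $\bff$ on this side independent of $(\cev\nbeta,\vecc\nbeta)$. The key point is that, by the very definition of $\cev\nbeta$, for each integer $j$ the restriction $(\cev\nbeta^y,\,y\le j)$ is distributed as $(\cev\nbeta^y_j,\,y\le j)$, i.e.\ as the skewer on levels $[0,j]$ of a \StableA\ descent from $j$ to $0$ with \BESQA\ marks. Applying the strong Markov property of the \StableA\ scaffolding at its first passage below an arbitrary level $h>0$ — at which, by spectral positivity, it equals exactly $h$, and after which lie all the jumps that can cross levels $\le h$ — one upgrades this to: for every $h>0$, $(\cev\nbeta^y,\,y<h)$ is distributed as the skewer on $[0,h)$ of a \StableA\ descent from $h$ to $0$ built from fresh randomness. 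Now condition on $\zeta(\bff)=h$ on both sides: the pair (leftmost block $\bff$, middle piece) has the same conditional law, namely $\bff$ together with ``descent from $h$ to $0$, skewered on $[0,h)$'' — on the left because the middle piece depends on $\bff$ only through $h$, on the right because $\cev\nbeta\perp\bff$. Since $\zeta(\bff)\stackrel{d}{=}\zeta(\bff)$, the type‑1‑from‑$\beta$ pieces agree in law, and all three pieces are independent of one another (given $\zeta(\bff)$) on both sides, concatenating yields $(\td\nbeta^y,\,y\in[0,Y))\stackrel{d}{=}(\{(0,\bff(y))\}\concat\nbeta^y,\,y\in[0,\zeta(\bff)))$.

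I expect the main obstacle to be the bookkeeping in the first two steps: confirming that the leftmost block of $\td\nbeta^y$ is exactly the $(0,a)$-clade's marked spindle up to level $\zeta(\bff)$ — unaffected by any revival of that clade above $\zeta(\bff)$ — so that $Y=\zeta(\bff)$, and cleanly isolating ``the \StableA\ descent from $\zeta(\bff)$ to $0$'' inside the $(0,a)$-clade by deleting its initial marked jump. A secondary subtlety is that on the left the descent height is coupled to the leftmost‑block lifetime while on the right $\cev\nbeta$ has no ``height'' at all; this is harmless precisely because a descent from $\infty$ restricted to levels below any $h$ never sees what lies above $h$ — which is what the consistency \eqref{eq:type-0:consistency} built into the definition of $\cev\nbeta$ encodes, and what the strong‑Markov‑at‑first‑passage argument makes precise for non‑integer $h$.
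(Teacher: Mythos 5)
Your proposal is correct and follows essentially the same route as the paper: reduce to the single leftmost clade $\Dirac{0,\bff}+\restrict{\bN}{[0,T^{-\life(\bff)})}$ via \eqref{eq:clade_split:distrib}, match the post-$\bff$ descent with the coming-down-from-infinity component $\cev\nbeta$ of the type-0 construction through the consistency \eqref{eq:type-0:consistency}, and handle general $\beta$ by concatenating an independent type-1 evolution from $\beta$. The paper's proof is far terser ("comparing this to the construction ... proves the claimed identity"), and your write-up usefully fills in exactly the steps it glosses over — the extension of \eqref{eq:type-0:consistency} to random, non-integer heights via the strong Markov property at a (creeping) first passage, and the conditioning on $\life(\bff)$.
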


\begin{proof}
 We begin with $\beta = \emptyset$. Let $\bN_{\gamma}\sim \mClade^+(\,\cdot\;|\;m^0=a)$. By (\ref{eq:clade_split:distrib}), $\bN_{\gamma}$ is distributed like $\Dirac{0,\bff}+\restrict{\bN}{[0,T^{-\life(\bff)})}$, where $\bff$ is a \BESQA\ starting from $a$, independent of $\bN$. Comparing this to the construction of the type-0 evolution $(\cev\nbeta^y,\,y\ge 0)$ around \eqref{eq:type-0:consistency} proves the claimed identity in this case. For other values of $\beta$, the type-1 and type-0 evolutions with respective laws $\BPr^1_{\gamma}$ and $\BPr^0_{\beta}$ may be constructed by concatenating each of the evolutions in the previous case with an independent type-1 evolution with law $\BPr^1_{\beta}$. 
\end{proof}

We define $L\colon \HIPspace\to [0,\infty)$ to map an interval partition to the mass of its leftmost block, or 0 if none exists. Let $R\colon\HIPspace\to [0,\infty)$ denote the remaining mass, $R(\nbeta) = \IPmag{\nbeta}-L(\nbeta)$. It is not hard to see that these maps are measurable.

\begin{corollary}\label{cor:remaining_BESQ}
 Let $(\nbeta^y,\,y\ge0)$ be a type-1 evolution. Let $Y := \inf\{y > 0\colon L(\nbeta^y-) = 0\}$. Then $(L(\nbeta^y),\,y\in [0,Y))$ and $(R(\nbeta^y),\,y\in [0,Y))$ are jointly distributed as an independent \BESQA\ and \BESQ[2\alpha], stopped when the \BESQA\ hits zero.
\end{corollary}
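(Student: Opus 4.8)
The plan is to read the corollary off from Proposition~\ref{prop:remaining_type-0} and the type-0 half of Theorem~\ref{thm:BESQ_total_mass}. First I would dispose of the case in which $\nbeta^0$ has no leftmost block; there $L(\nbeta^0)=0$, and one checks that $Y=0$, so that there is nothing to prove: near level $0$ the leftmost block of $\nbeta^y$ comes from the clade of the leftmost \emph{surviving} block $U$ of $\nbeta^0$, and as $y\downto 0$ this $U$ ranges over blocks of ever-smaller mass, with correspondingly short clade lifetimes (\InvGammaDist[1,\Leb(U)/2] by Proposition~\ref{prop:type-1:transn}), so $L(\nbeta^y-)$ vanishes at levels accumulating at $0$. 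Hence I may assume $\nbeta^0=\{(0,a)\}\concat\beta'$ with $a:=L(\nbeta^0)>0$ and $\beta'\in\IPspace$ the remaining blocks, noting $\IPmag{\beta'}=R(\nbeta^0)$.

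Next I would apply Proposition~\ref{prop:remaining_type-0} with this $a$ and with its initial interval partition taken equal to $\beta'$, so that its ``$\gamma$'' is $\nbeta^0$. It yields an independent pair $(\bff,(\td\nbeta^y,y\ge0))$ --- a \BESQA\ diffusion started from $a$ and a type-0 evolution started from $\beta'$ --- together with the lifetime $Y$ of the original leftmost block, with
$$\big(\nbeta^y,\ y\in[0,Y)\big)\ \stackrel{d}{=}\ \big(\{(0,\bff(y))\}\concat\td\nbeta^y,\ y\in[0,\life(\bff))\big).$$
I would then argue that this $Y$ is precisely the stopping time $\inf\{y>0\colon L(\nbeta^y-)=0\}$ of the corollary: no block is ever born to the left of the original leftmost block, so that block is the leftmost block of $\nbeta^y$ for as long as it survives, whence $L(\nbeta^y)$ equals its mass on that range; by path-continuity this mass, and therefore $L(\nbeta^y-)$, is strictly positive on the open interval before the block's death and equals $0$ at the death level, which under the displayed identity is $\life(\bff)$, the first hitting time of $0$ by $\bff$.

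On the right-hand side of the display, while $\bff(y)>0$ the interval $(0,\bff(y))$ is the leftmost block, so $L$ of that partition is $\bff(y)$ and $R$ of it is $\IPmag{\td\nbeta^y}$. Transporting the identity of stopped processes through the measurable maps $L$ and $R$ then gives
$$\big(L(\nbeta^y),R(\nbeta^y)\big)_{y\in[0,Y)}\ \stackrel{d}{=}\ \big(\bff(y),\IPmag{\td\nbeta^y}\big)_{y\in[0,\life(\bff))}.$$
By the type-0 half of Theorem~\ref{thm:BESQ_total_mass}, $(\IPmag{\td\nbeta^y},y\ge0)$ is a \BESQ[2\alpha] started from $\IPmag{\beta'}=R(\nbeta^0)$, and it is independent of $\bff\sim\BESQA$. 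Since $\life(\bff)$ is the first zero of this \BESQA, the right-hand side is exactly an independent \BESQA\ and \BESQ[2\alpha] run until the first zero of the \BESQA, which is the claim.

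The step I expect to require the most care is the identification of $Y$ in the second paragraph: one must confirm that $Y$ is an $(\cFI^y)$-stopping time (this is part of Proposition~\ref{prop:remaining_type-0}), that the initial leftmost block really does remain leftmost throughout its life, and that ``$L(\nbeta^y-)$'', interpreted as $L$ applied to the $\dI$-left-limit of the path, coincides with that block's mass as one approaches its death level --- so that the corollary's $Y$ matches the proposition's $Y$ and the stopped-process identity may be read off coordinatewise. The ``$Y=0$'' reduction in the no-leftmost-block case also needs the short argument indicated above; everything else is immediate from the two cited results.
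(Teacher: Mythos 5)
Your proof is correct and follows the paper's own (one-line) argument: the corollary is read off from Proposition \ref{prop:remaining_type-0} together with the type-0 case of Theorem \ref{thm:BESQ_total_mass}. The details you supply --- the degenerate case $L(\nbeta^0)=0$, the identification $Y=\life(\bff)$ via the fact that no block is ever inserted to the left of the original leftmost block, and the observation that the type-0 component must be started from the partition $\beta'$ of remaining blocks (as in the \emph{proof} of Proposition \ref{prop:remaining_type-0}, rather than from $\gamma$ as its statement literally reads) --- are all sound and are exactly what the paper leaves implicit.
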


\begin{proof} This follows from Proposition \ref{prop:remaining_type-0} and Theorem \ref{thm:BESQ_total_mass}.\end{proof}

\def\cvN{\cev\bN}
\def\cvX{\cev\bX}


\section{De-Poissonization and stationary IP-evolutions}\label{sec:dePoissonization}

\subsection{Pseudo-stationarity of type-1 evolutions and type-0 evolutions}\label{sec:pseudostat}

We prove Theorem \ref{thm:pseudostat} in stages over the course of this section by considering different cases for the law of the initial total
mass $Z(0)$. Later, we demonstrate a stronger form of this theorem in Theorem \ref{thm:pseudostat_strong}.

\begin{proposition}\label{prop:pseudostat:exp}
 Suppose that in the setting of the type-1, respectively type-0, assertion of Theorem \ref{thm:pseudostat} we have $Z(0)\sim \ExpDist[\rho]$, respectively $Z(0)\sim\GammaDist[\alpha,\rho]$, for some $\rho\in(0,\infty)$. Then the conditional law of $\nbeta^y$, given $\{\nbeta^y\neq\emptyset\}$, equals the unconditional law of $(2y\rho+1)Z(0)\scaleI\ol\beta$.
\end{proposition}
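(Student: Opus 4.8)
The plan is to reduce the statement to the explicit transition kernels already obtained --- Proposition~\ref{prop:type-1:transn} with Corollary~\ref{cor:type-1:gen_transn} in the type-1 case, Proposition~\ref{prop:type-0:transn} in the type-0 case --- together with the subordinator realisation of scaled Poisson--Dirichlet interval partitions in Proposition~\ref{prop:PDIP}\ref{item:PDIP:Stable}. Throughout set $r:=1/2y$ and $\rho':=\rho/(2y\rho+1)$, so that $1/\rho'=1/\rho+2y=1/\rho+1/r$ and $(2y\rho+1)Z(0)\sim\ExpDist[\rho']$ in the type-1 case, resp.\ $\GammaDist[\alpha,\rho']$ in the type-0 case. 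The reason the $\ExpDist[\rho]$, resp.\ $\GammaDist[\alpha,\rho]$, hypothesis is the natural one is that it lets us realise the \emph{random} initial state $\nbeta^0\stackrel{d}{=}Z(0)\odot\ol\beta$ jointly with $Z(0)$ via a single subordinator: by Proposition~\ref{prop:PDIP}\ref{item:PDIP:Stable}, $\ol\beta$ is independent of its total mass, and in the type-1 case that mass has precisely the law of the killing level $E\sim\ExpDist[\rho]$ of a \Stable[\alpha] subordinator $Y$ stopped at the passage time $S$ of $E$, with $\ol\beta$ additionally carrying a leftmost block of mass $E-Y(S-)\sim\GammaDist[1-\alpha,\rho]$; in the type-0 case it has the law of the undershoot $Y(S-)\sim\GammaDist[\alpha,\rho]$ and there is no leftmost block. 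Hence every interval partition in the argument --- the initial state, the fresh \PDIP\ factors inside $\mu^{(\alpha)}_{b,r}$, and the conjectured output --- is the interval partition of jump intervals of a \Stable[\alpha] subordinator killed at the passage of an independent Exponential level, and the whole task is to track how the kernel transforms the rate of that level, and (type-1 only) the leftmost block.

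First I would realise $\nbeta^0$ as above. Then, conditionally on $\nbeta^0$, Corollary~\ref{cor:type-1:gen_transn}, resp.\ Proposition~\ref{prop:type-0:transn}, gives $\nbeta^y\stackrel{d}{=}\gamma_0\concat\Concat_{U\in\nbeta^0}\gamma^y_U$, where $\gamma_0\sim\mu^{(\alpha)}_{0,r}$ in the type-0 case (there is no $\gamma_0$ for type-1), and for each block $U$ of mass $b$ we have $\gamma^y_U=\emptyset$ with probability $e^{-br}$ and otherwise $\gamma^y_U=\{(0,L^{(\alpha)}_{b,r})\}\concat(B_r\odot\bar\gamma_U)$, all independent; by Remark~\ref{rmk:transn:PDIP} each factor $B_r\odot\bar\gamma_U$, and also $\gamma_0$, is the jump intervals of an independent \Stable[\alpha] subordinator killed at the passage of an independent $\ExpDist[r]$ level. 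Consequently $\nbeta^y$ is the jump intervals of a subordinator assembled by: running a \Stable[\alpha] subordinator until it passes a fresh $\ExpDist[r]$ level (this produces $\gamma_0$; omit for type-1); then walking through the jumps of $Y$ before $S$ and, at each jump of size $b$, independently either skipping it (probability $e^{-br}$) or inserting a jump of size $L^{(\alpha)}_{b,r}$ and then running a fresh \Stable[\alpha] subordinator until it passes a fresh $\ExpDist[r]$ level; stopping when $Y$ passes $E$. Since only finitely many $\gamma^y_U$ are non-empty (the remark following \eqref{eq:intro:transn}), only finitely many insertions occur, so there are no convergence issues; and the conditioning on $\{\nbeta^y\neq\emptyset\}$ is vacuous for type-0 (because of $\gamma_0$) and for type-1 is exactly the event that at least one jump is not skipped, under which the construction is to be read.

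The remaining step is to show that this composite is again the jump intervals of a \Stable[\alpha] subordinator killed at the passage of an $\ExpDist[\rho']$ level --- which by Proposition~\ref{prop:PDIP}\ref{item:PDIP:Stable} is $\GammaDist[\alpha,\rho']\odot\PDIP[\alpha,\alpha]\stackrel{d}{=}(2y\rho+1)Z(0)\odot\ol\beta$ --- and, for type-1, that the leftmost block of the composite (the $L^{(\alpha)}_{b,r}$ coming from the first non-skipped jump) has law $\GammaDist[1-\alpha,\rho']$ and is independent of the rest, so that the composite together with this block is $\ExpDist[\rho']\odot\PDIP[\alpha,0]\stackrel{d}{=}(2y\rho+1)Z(0)\odot\ol\beta$. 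This is where the approach meets its main obstacle: reassembling the ``skip\,/\,insert $L^{(\alpha)}_{b,r}$\,/\,restart the \Stable[\alpha] clock'' operation along the jumps of $Y$ into a single subordinator --- the time-change inherent in the reassembly does not change the jump intervals, but one must identify the resulting L\'evy measure and killing rate. This is a L\'evy-measure identity in which the precise form \eqref{LMBintro} of the law of $L^{(\alpha)}_{b,r}$ is decisive; as already observed after Theorem~\ref{thm:diffusion_0}, \eqref{LMBintro} is constrained exactly so that such compositions close up.

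A convenient way to organise the computation is to record its total-mass shadow first: for a block of mass $b$,
\[
 \EV\!\left[e^{-\lambda\IPmag{\gamma^y_U}}\right]
 = e^{-br}+(1-e^{-br})\left(\tfrac{r}{r+\lambda}\right)^{\!\alpha}\left(\tfrac{r+\lambda}{r}\right)^{\!\alpha}\frac{e^{br^2/(r+\lambda)}-1}{e^{br}-1}
 = \exp\!\left(-\frac{b\lambda}{2y\lambda+1}\right),
\]
the \BESQ[0] marginal from $b$ of Theorem~\ref{thm:BESQ_total_mass}; summing over the blocks of $\nbeta^0$ and adding $\gamma_0$ in the type-0 case returns the $\GammaDist[\alpha,\rho']$ total mass predicted by the claim. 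The full statement then follows by the analogous computation of the Laplace functional of the whole interval partition, using the independence of the constituent pieces and the finiteness noted above; equivalently, one checks that the composite is a regenerative interval partition and matches its dislocation measure and killing rate with those of $\GammaDist[\alpha,\rho']\odot\PDIP[\alpha,\alpha]$, resp.\ $\ExpDist[\rho']\odot\PDIP[\alpha,0]$.
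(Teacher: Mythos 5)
Your set-up coincides with the paper's: realise $\nbeta^0\stackrel{d}{=}Z(0)\ol\beta$ via a \Stable[\alpha] subordinator killed at the passage of an independent $\ExpDist[\rho]$ level (so that the non-special blocks carry the tilted L\'evy measure $\frac{\alpha}{\Gamma(1-\alpha)}b^{-1-\alpha}e^{-\rho b}db$ and the killing clock is $\ExpDist[\rho^\alpha]$), apply the kernel blockwise, and try to recognise the output as another killed stable subordinator range. Your total-mass shadow computation is correct. But the proof stops exactly where the work begins: the step you defer to ``the analogous computation of the Laplace functional'' is the entire content of the proposition. Concretely, two things must be proved and neither is automatic. (a) The leftmost block of $\nbeta^y$ --- which is the $L^{(\alpha)}_{b,r}$ of the first surviving clade, where the first candidate is the special block $U_0$ with mass $\GammaDist[1-\alpha,\rho]$ rather than a tilted stable jump; note your composite description (``walking through the jumps of $Y$'') omits $\gamma_{U_0}$ entirely --- has law $\GammaDist[1-\alpha,\rho/(2y\rho+1)]$ given survival and is independent of everything to its right. (b) The subsequent alternating sequence of inserted blocks $L^y_i$ and fresh killed \Stable[\alpha] ranges reassembles into one subordinator with Laplace exponent $(\lambda+\rho')^\alpha-(\rho')^\alpha$ stopped at an independent $\ExpDist[(\rho')^\alpha]$ time, where $\rho'=\rho/(2y\rho+1)$.

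The paper's Steps 2--3 are precisely (a) and (b). For (b) it first establishes the independence structure: the unstopped remaining-mass subordinators concatenate into a single subordinator with exponent $\Phi_{1/2y}$, independent of the geometric number $D$ of surviving clades, of the durations $S^y_i$, and of the subordinator of inserted leftmost blocks, whose exponent $\Phi^y_{\textnormal{LMB}}$ is computed from \eqref{LMBintro} mixed against the tilted stable mass law; the punchline is the identity $\Phi_{1/2y}(\lambda)+\Phi^y_{\textnormal{LMB}}(\lambda)=(\lambda+\rho')^\alpha-(\rho')^\alpha$. Step (a) is a separate Laplace-transform computation mixing \eqref{LMBintro} against the two different mass laws ($\GammaDist[1-\alpha,\rho]$ for $U_0$, tilted stable for the rest) and dividing by the survival probability $1/(2y\rho+1)$, which itself requires the competing-clocks argument you gesture at. Your alternative of ``checking the composite is a regenerative interval partition and matching its dislocation measure'' would need the same independence structure to even establish regenerativity. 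As it stands the proposal is a correct plan with the decisive computations missing.
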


\begin{proof}
 We begin with the type-1 case. 
 We prove this by separately comparing the Laplace transforms of the leftmost blocks of the two interval partitions, comparing Laplace exponents of the subordinators of remaining block masses, and confirming that in each partition the leftmost block is independent of the remaining blocks. This is done in three steps.
 
 \emph{Step 1}. Following Proposition \ref{prop:PDIP} \ref{item:PDIP:Stable}, we may represent $\ol\beta$ as 
 $\{(0,1-G)\}\concat \big(G\scaleI \ol\gamma\big)$, where $G\sim\BetaDist[\alpha,1-\alpha]$ is independent of $\ol\gamma\sim\PDIP[\alpha,\alpha]$. 
 Let $\beta := Z(0)\scaleI\ol\beta$ and $\gamma := Z(0)G\scaleI\ol\gamma$. We denote the leftmost block of $\beta$ by $U_0 := (0,Z(0)(1-G))$. Since 
 $Z(0) \sim \ExpDist[\rho]$, the masses $\Leb(U_0)$ and $\IPmag{\gamma}$ are independent \GammaDist[1-\alpha,\rho] and \GammaDist[\alpha,\rho] 
 variables. 
 
 By Proposition \ref{prop:PDIP} \ref{item:PDIP:Stable}, the partition $\gamma$ corresponds to the range of a \Stable[\alpha] subordinator stopped
 prior to crossing an independent random level $S\sim \ExpDist[\rho]$. This stopping corresponds to thinning the Poisson random measure of jumps, tilting the L\'evy measure by a factor of $e^{-\rho x}$.  


 By the branching property of the transition kernel $\kappa_y^{(\alpha)}$ of type-1 evolutions, we can write 
 $\nbeta^y=\Concat_{U\in\beta}\gamma_U$ with independent interval partitions $\gamma_U\sim\mu_{\Leb(U),1/2y}^{(\alpha)}$, $U\in\beta$. We also
 write $\gamma^y=\Concat_{U\in\gamma}\gamma_U$ so that $\nbeta^y=\gamma_{U_0}\concat\gamma^y$.

 If we consider each $\gamma_U$, $U\in\gamma$, as a mark of the corresponding jump of the tilted subordinator, these marks form a 
 \PRM[\Leb\otimes\mu^*] on $[0,\infty)\times\IPspace$ with 
 $$\mu^*=\int_{(0,\infty)}\mu_{b,1/2y}^{(\alpha)}\frac{\alpha}{\Gamma(1-\alpha)}b^{-1-\alpha}e^{-\rho b}db,$$ 
 stopped at an independent exponential time $E$ of rate
 \begin{equation*}
  \int_0^\infty (1-e^{-\rho b})\frac{\alpha}{\Gamma(1-\alpha)}b^{-1-\alpha}db = \rho^\alpha.
 \end{equation*}
  Since $\mu_{b,1/2y}^{(\alpha)}\{\emptyset\}\!=\!e^{-b/2y}$, the rate of non-empty interval partitions in a \PRM[\Leb\otimes\mu^*] is
  $$
   \int_0^\infty\!(1\!-\!e^{-b/2y})\frac{\alpha}{\Gamma(1\!-\!\alpha)}e^{-\rho b}b^{-1-\alpha}db
  	= \!\left(\!\rho\!+\!\frac{1}{2y}\right)^\alpha\!\!-\rho^\alpha = \!\left(\!\left(\frac{2y\rho\!+\!1}{2y\rho}\right)^\alpha\!\!-\!1\!\right)\rho^\alpha.
  $$
 By competing exponential clocks, the probability of seeing no non-empty interval partition before the independent 
 $\ExpDist\big(\rho^\alpha\big)$ time is
 \begin{equation}\label{eq:p_s_E:competing_clocks}
  \Pr\{\gamma^y=\emptyset\} = \frac{\rho^\alpha}{\left(((2y\rho+1)/(2y\rho))^\alpha-1\right)\rho^\alpha+\rho^\alpha} = \left(\frac{2y\rho}{2y\rho+1}\right)^\alpha.
 \end{equation}
  Since $\Leb(U_0)\sim\GammaDist[1-\alpha ,\rho]$, we also have
 \begin{equation}\label{eq:p_s_E:LMB0_survival}
  \Pr\{\gamma_{U_0}\!=\!\emptyset\} = \int_0^\infty\! e^{-b/2y} \frac{1}{\Gamma\left(1\!-\!\alpha \right)}\rho^{1-\alpha}b^{-\alpha}e^{-\rho b}db = \left(\frac{2y\rho}{2y\rho\!+\!1}\right)^{1-\alpha}.
 \end{equation}
 Thus, by the independence of $\gamma_{U_0}$ and $\gamma^y$,
 \begin{equation}\label{eq:p_s_E:survival}
  \Pr\{\nbeta^y\neq\emptyset\} = 1-\left(\frac{2y\rho}{2y\rho+1}\right)^{\alpha+1-\alpha} = \frac{1}{2y\rho+1}.\pagebreak[2]
 \end{equation}
 
 \emph{Step 2}. First, we compute the Laplace transform of the leftmost block mass $L(\nbeta^y)$ on the event that it arises from $\gamma_{U_0}$; then, we compute it on the event that it arises from one of the interval partitions that make up $\gamma^y$. 
 The Laplace transform $\mu_{b,1/2y}^{(\alpha)}[e^{-\lambda L}]$, may be read from \eqref{LMBintro}. We multiply \eqref{LMBintro} by 
 the probability $1-e^{-b/2y}$ of being non-empty and integrate against the \GammaDist[1-\alpha ,\rho] law of $\Leb(U_0)$:
 \begin{equation*}
 \begin{split}
  &\EV\left[ e^{-\lambda L(\gamma_{U_0})}\cf\{\gamma_{U_0}\neq\emptyset\}\right] = \EV\left[\mu_{\Leb(U_0),1/2y}^{(\alpha)}\big[ e^{-\lambda L}\cf\{L\neq 0\}\big]\! \right]\\
  &= \int_0^\infty\frac{(2y\lambda+1)^\alpha}{1-e^{-b/2y}}\left(e^{-\lambda b/(2y\lambda+1)}-e^{-b/2y}\right)\left(1-e^{-b/2y}\right)\frac{\rho^{1-\alpha}b^{-\alpha}}{\Gamma(1-\alpha)}e^{-\rho b}db\\ 
  &= \left(2y\lambda+1\right)^\alpha\left(\left(\frac{\rho(2y\lambda+1)}{\rho(2y\lambda+1)+\lambda}\right)^{1-\alpha}\!-\left(\frac{2y\rho}{2y\rho+1}\right)^{1-\alpha}\right).
 \end{split}
 \end{equation*}
 The non-empty partitions in $\Concat_{U\in\gamma}\gamma_U$ form an i.i.d.\ sequence with law $\mu^*(\,\cdot\,|\,L\neq 0)$. Then the contribution to $\EV[\exp(-\lambda L(\beta^y))]$ follows similarly:
 \begin{align*}
  &\EV\left[ e^{-\lambda L(\gamma^y)}\cf\{L(\gamma^y)> 0=L(\gamma_{U_0})\} \right]\notag\\
  & = \frac{\Pr\{L(\gamma^y)\!>\! 0 \!=\! L(\gamma_{U_0})\}}{\mu^*\{ L\neq 0\}} \mu^*\left[ e^{-\lambda L}\cf\{L\neq 0\}\right]\notag\\
  &=\! \frac{\left(\frac{2y\rho}{2y\rho+1}\right)^{1-\alpha}\left(\!1\!-\!\left(\frac{2y\rho}{2y\rho+1}\right)^\alpha\right)}{\left(\left(\frac{2y\rho+1}{2y\rho}\right)^\alpha-1\right)\rho^\alpha}\\
     &\qquad\qquad\qquad\times\int_0^\infty\!\!\!(2y\lambda\!+\!1)^\alpha\left(e^{-\lambda b/(2y\lambda+1)}\!-\!e^{-b/2y}\right)\frac{\alpha b^{-1-\alpha}e^{-\rho b}}{\Gamma(1-\alpha)}db\notag\\[-1.3cm]
  \end{align*}
  \begin{equation}= \left(2y\lambda+1\right)^\alpha\left(\left(\frac{2y\rho}{2y\rho+1}\right)^{1-\alpha}-\left(\frac{2y\rho}{2y\rho+1}\right)\left(\frac{\lambda + \rho(2y\lambda+1)}{\rho(2y\lambda+1)}\right)^\alpha\right).\label{eq:p_s_E:LMB_Laplace_2}
 \end{equation}
 Adding these terms and dividing by the formula for $\Pr\{\nbeta^y\neq\emptyset\}$ in \eqref{eq:p_s_E:survival}, we get\vspace{-0.1cm}
 \begin{equation*}
 \begin{split}
  &\EV\left[e^{-\lambda L(\nbeta^y)}\middle|\nbeta^y\neq\emptyset\right]\\
  &\!=\left(2y\lambda\!+\!1\right)^\alpha \left(\!  (2y\rho\!+\!1)\left(\frac{\rho(2y\lambda+1)}{\rho(2y\lambda\!+\!1)+\lambda}\right)^{1-\alpha}\! - 2y\rho\left(\frac{\rho(2y\lambda\!+\!1)+\lambda}{\rho(2y\lambda+1)}\right)^\alpha  \right)\\
  &\!= \left(\frac{\rho}{\rho(2y\lambda\!+\!1)+\lambda}\right)^{1-\alpha} = \left(\frac{\rho/(2y\rho+1)}{(\rho/(2y\rho\!+\!1))+\lambda}\right)^{1-\alpha}\,.
 \end{split}
 \end{equation*}
 This is the Laplace transform of $(2y\rho\!+\!1)\Leb(U_0)\!\sim\!\GammaDist[1\!-\!\alpha,\rho/(2y\rho\!+\!1)]$.\pagebreak[2]
 
 \emph{Step 3}. As noted after Theorem \ref{thmtype1Hunt}, 
   we have $\gamma_U\neq\emptyset$ for at most finitely many $U\in\beta$, and 
   $\nbeta^y$ equals the concatenation of $\gamma_{U_0}$ with those finitely many $\gamma_U$. Recall from Step 1 that we may view the
   $\gamma_U$, $U\in\gamma$, as points of a \PRM[\Leb\otimes\mu^*] stopped at an independent $\ExpDist\big(\rho^\alpha\big)$ time. If we condition 
   on $\{\nbeta^y\neq\emptyset\}$ then, following the competing exponential clocks argument around \eqref{eq:p_s_E:competing_clocks}, we may view 
   the non-empty $\gamma_U$ beyond the leftmost non-empty interval partition as coming from an infinite sequence of independent interval partitions 
   with distribution $\mu^*(\,\cdot\,|\,L\neq 0)$ stopped after an independent number $D\sim\GeomDist\big((2y\rho/(2y\rho+1))^\alpha\big)$ of 
   clades, where $D$ can equal 0.
 
   In the notation of Remark \ref{rmk:transn:PDIP}, the $i^{\text{th}}$ interval partition with distribution $\mu^*(\,\cdot\,|\,L\neq 0)$ contributes 
   its own leftmost block $(0,L^y_i)$, followed by masses from an independent subordinator $R^y_i$ stopped at an independent 
   \ExpDist[(2y)^{-\alpha}] time $S^y_i$, for all $i\ge 1$. The leftmost non-empty interval partition contributes a special leftmost block studied 
   in Step 2, and independent masses from $(R_0^y,S_0^y)$, as for $i\ge 1$. We call the masses from $(R^y_i,S^y_i)$, $i\ge 0$, the ``remaining 
   masses.'' 
   So, we may view the masses in $\nbeta^y$ beyond the far leftmost as arising from an alternating sequence of remaining masses of interval 
   partitions $i=0,\ldots,D$ and the leftmost blocks for $i=1,\ldots,D$.
 
   The stopped $R_i^y$ from all $i\ge 0$ can be combined to capture all remaining masses in a single unstopped subordinator $R_{\rm rem}^y$ with 
   Laplace exponent $\Phi_{1/2y}$ as in Remark \ref{rmk:transn:PDIP}, independent of 
   $(S_i^y,i\ge 0)$ and $D$, and hence of $\widetilde{S}^y:=S_0^y+\cdots+S_D^y$, which is exponential with parameter 
     $$(2y)^{-\alpha}(2y\rho/(2y\rho+1))^\alpha = (\rho/(2y\rho + 1))^\alpha.$$
   This is the time that corresponds to stopping $R_{\rm rem}^y$ after the $D^{\text{th}}$ non-empty interval partition. This independence also 
   yields the independence of $R_{\rm rem}^y$ from the subordinator that has jumps of sizes $L_i^y$ at times $S_0+\cdots+S_{i-1}^y$, $i\ge 1$, with 
   Laplace exponent $(2y)^{-\alpha}\mu^*[ 1-e^{-\lambda L}\,|\,L\neq 0]$. Note
 \begin{equation*}
 \begin{split}
  \mu^*\left[ 1\!-\!e^{-\lambda L}\middle|L\!\neq\!0\right] &= 1 - \frac{\mu^*\left[e^{-\lambda L}\middle|L\!\neq 0\right]}{\mu^*\{L\neq 0\}}\\
  	&= 1-(2y\lambda+1)^\alpha\frac{\left(\frac{2y\rho+1}{2y\rho}\right)^\alpha-\left(\frac{\rho+(2y\rho+1)\lambda}{\rho(1+2\lambda y)}\right)^\alpha}{\left(\frac{2y\rho+1}{2y\rho}\right)^\alpha-1},
 \end{split}
 \end{equation*}
where the expression is a multiple of that in \eqref{eq:p_s_E:LMB_Laplace_2}. By an elementary thinning argument, this subordinator stopped after the $D^{\text{th}}$ jump can be viewed as a subordinator with Laplace exponent
 \begin{equation*}
 \begin{split}
  &\Phi^y_{\textnormal{LMB}}(\lambda) = \frac{1}{(2y)^\alpha}\left(1-\left(\frac{2y\rho}{2y\rho+1}\right)^\alpha\right)\mu^*\left[ 1-e^{-\lambda L}\middle|L\neq 0\right]\\
    &\!= \frac{1}{(2y)^\alpha}\left(\!1\!-\! \left(\frac{2y\rho}{2y\rho\!+\!1}\right)^{\!\alpha}\! - (2y\lambda \!+\! 1)^\alpha\left(\!1 \!-\! \left(\frac{2y\rho}{2y\rho \!+\! 1}\right)^{\!\alpha}\!\left(\frac{\rho \!+\! (2y\rho \!+\! 1)\lambda}{\rho(2y\lambda\! +\! 1)}\right)^{\!\alpha}\right)\!\right)\!.
 \end{split}
 \end{equation*}
stopped at the independent time $\widetilde{S}^y\sim\ExpDist\big((\rho/(2y\rho + 1))^\alpha\big)$.

 Putting these pieces together, $\Phi_{1/2y}(\lambda)+\Phi^y_{\textnormal{LMB}}(\lambda)$ is given by
 \begin{equation*}
 \begin{split}
  &\frac{1}{(2y)^\alpha}\!\left(\!\!(2\lambda y\!+\!1)^\alpha\!\!-\!1\!+\!1\!-\!\left(\!\frac{2y\rho}{2y\rho\!+\!1}\!\right)^{\!\!\alpha}\!\!\!-\!(2y\lambda\!+\!1)^\alpha\!+\!\left(\!\frac{2y}{2y\rho\!+\!1}\!\right)^{\!\!\alpha}\!\!\!(\rho\!+\!(2y\rho\!+\!1)\lambda)^\alpha\!\!\right)\\
  &\!=\! \left(\!\lambda\!+\!\frac{\rho}{2y\rho+1}\!\right)^\alpha\!\!\!-\!\left(\!\frac{\rho}{2y\rho+1}\!\right)^\alpha\!\! =\! \int_0^\infty\!\!(1\!-\!e^{-\lambda x})\frac{\alpha}{\Gamma(1\!-\!\alpha)}e^{-x\rho/(2y\rho+1)}x^{-1-\alpha}dx.
 \end{split}
 \end{equation*}
 The last expression above is the Laplace transform of a subordinator that, when stopped at an independent $\ExpDist\big((\rho/(2y\rho + 1))^\alpha\big)$ time, corresponds as in \eqref{eq:transn:PDIP} 
  to a \PDIP[\alpha,\alpha] scaled by a \GammaDist[\alpha,\rho/(2y\rho+1)] variable that is independent of the \PDIP[\alpha,\alpha]. Putting this together with the result of Step 2 and the independence, in both $\nbeta^y$ and $\beta$, of the leftmost block from the rest, we conclude that $\nbeta^y$ is distributed like a \PDIP[\alpha,0] scaled by an independent \ExpDist[\rho/(2y\rho+1)] variable, as desired.
 
 Looking at the semigroup $\widetilde{\kappa}_y^{(\alpha)}$ defined in Introduction, Step 3 above also proves the claim for the type-0 evolution.
\end{proof}

\begin{lemma}[Scaling invariance of type-1 and type-0 evolutions]\label{lem:type-1:scaling}
 Fix $c>0$. If $(\nbeta^y,\,y\geq 0)$ is a type-1 (respectively type-0) evolution then so is $(c\scaleI \nbeta^{y/c},\,y\geq 0)$.
\end{lemma}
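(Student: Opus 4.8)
The plan is to check that the scaled process $(c\scaleI\nbeta^{y/c},\,y\ge 0)$ is again a path-continuous Markov process with transition semigroup $(\kappa_y^{(\alpha)},y\ge0)$ in the type-1 case, respectively $(\widetilde{\kappa}_y^{(\alpha)},y\ge0)$ in the type-0 case. By Corollary \ref{cor:type-1:gen_transn} (respectively Proposition \ref{prop:type-0:transn}) these are exactly the transition semigroups of type-1 (respectively type-0) evolutions, and a path-continuous Markov process is determined in law by its initial distribution together with its semigroup, so this will identify the scaled process as a type-1 (respectively type-0) evolution. Path-continuity of the scaled process is immediate: $\beta\mapsto c\scaleI\beta$ is a bi-Lipschitz homeomorphism of $(\IPspace,\dI)$, since scaling multiplies block masses by $c$ and diversities by $c^\alpha$, so distortions of correspondences change by at most the factor $\max\{c,c^\alpha\}$; and $y\mapsto y/c$ is a continuous increasing time-change. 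So the content is purely the semigroup identity.

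Unwinding the definitions, the transition of $(c\scaleI\nbeta^{y/c},\,y\ge0)$ from a state $c\scaleI\gamma$ over duration $s$ is the pushforward of $\kappa_{s/c}^{(\alpha)}(\gamma,\cdot\,)$ under $\beta\mapsto c\scaleI\beta$, so what must be shown is
\begin{equation*}
 \kappa_s^{(\alpha)}\big(c\scaleI\gamma,\ c\scaleI B\big)=\kappa_{s/c}^{(\alpha)}(\gamma,B),\qquad \gamma\in\IPspace,\ B\subseteq\IPspace\text{ measurable},\ s>0,
\end{equation*}
and likewise with $\widetilde{\kappa}^{(\alpha)}$ throughout. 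By the branching property \eqref{eq:intro:transn} (respectively \eqref{eq:intro:transn_0}) together with $c\scaleI\big(\Concat_U\gamma_U\big)=\Concat_U\big(c\scaleI\gamma_U\big)$, this reduces to a single-block statement about the laws $\mu^{(\alpha)}_{b,r}$ of \eqref{eq:mu}, namely: if $\beta\sim\mu^{(\alpha)}_{b,r}$ then $\lambda\scaleI\beta\sim\mu^{(\alpha)}_{\lambda b,\,r/\lambda}$ for all $b\ge0$, $r>0$, $\lambda>0$; and its $b=0$ analogue, that $\lambda$-scaling sends the $\mu^{(\alpha)}_{0,r}$-component of Remark \ref{rmk:transn:PDIP} to $\mu^{(\alpha)}_{0,\,r/\lambda}$, needed for the extra \PDIP[\alpha,\alpha]-component of the type-0 kernel. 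Indeed, applying the single-block relation with $\lambda=1/c$, $b=c\,\Leb(U)$, $r=1/2s$ turns a $\mu^{(\alpha)}_{c\,\Leb(U),\,1/2s}$ block of $\kappa_s^{(\alpha)}(c\scaleI\gamma,\cdot\,)$, after scaling by $1/c$, into a $\mu^{(\alpha)}_{\Leb(U),\,1/(2s/c)}$ block of $\kappa_{s/c}^{(\alpha)}(\gamma,\cdot\,)$, which is exactly the bookkeeping the identity requires.

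It remains to verify the single-block scaling relation, which I expect to be the only place calling for care, since this is where one must confirm that the explicit form \eqref{LMBintro} really encodes the substitution $b\mapsto\lambda b$, $r\mapsto r/\lambda$ and not some other relation. Using \eqref{eq:mu} and \eqref{eq:transn:PDIP} it splits into three elementary points: (i) $e^{-br}=e^{-(\lambda b)(r/\lambda)}$, so the atom weight at $\emptyset$ is preserved; (ii) if $B^{(\alpha)}_r\sim\GammaDist[\alpha,r]$ then $\lambda B^{(\alpha)}_r\sim\GammaDist[\alpha,r/\lambda]$, and $\lambda\scaleI(B\ol\gamma)=(\lambda B)\ol\gamma$ for $\ol\gamma\sim\PDIP[\alpha,\alpha]$; and (iii) $\lambda L^{(\alpha)}_{b,r}\stackrel{d}{=}L^{(\alpha)}_{\lambda b,\,r/\lambda}$, a one-line computation from \eqref{LMBintro}: substituting $b\mapsto\lambda b$, $r\mapsto r/\lambda$ in $\EV[e^{-\theta L^{(\alpha)}_{\lambda b,\,r/\lambda}}]$ and simplifying reproduces $\EV[e^{-\theta\lambda L^{(\alpha)}_{b,r}}]$, with the factors $((r+\lambda\theta)/r)^\alpha$, $e^{br^2/(r+\lambda\theta)}$ and $e^{br}$ matching term by term. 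Combining (i)--(iii) gives the single-block relation, the branching property lifts it to all of $\IPspace$, the semigroup identity follows for both $\kappa^{(\alpha)}$ and $\widetilde{\kappa}^{(\alpha)}$, and hence $(c\scaleI\nbeta^{y/c},\,y\ge0)$ is a type-1 (respectively type-0) evolution. As a consistency check one may note that the total-mass marginal of $\mu^{(\alpha)}_{b,1/2y}$ is the time-$y$ law of \BESQ[0] started from $b$ by Theorem \ref{thm:BESQ_total_mass}, and \BESQ[0] is manifestly $1$-self-similar, which forces precisely this scaling of the parameters. An alternative, in the spirit of Section \ref{IPPAsection}, would argue directly from Definition \ref{constr:type-1} via the scaling operator $\scaleH$, Table \ref{tbl:clade_scaling}, the self-similarity of the \StableA\ scaffolding and of \BESQA\ spindles, and the relation $\skewer(y,c\scaleH N,\xi(c\scaleH N))=c\scaleI\skewer(y/c,N,\xi(N))$; this is clean for type-1 but needs an extra step for the ``descent from infinity'' part of the type-0 construction, so the semigroup argument seems preferable.
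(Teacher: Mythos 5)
Your proof is correct and takes essentially the same route as the paper's: reduce to comparing semigroups via the branching property, then verify the single-block scaling relations by Laplace transform, in particular $cL^{(\alpha)}_{b/c,rc}\stackrel{d}{=}L^{(\alpha)}_{b,r}$. The only cosmetic difference is that you check the scaling of the $\GammaDist[\alpha,r]$-scaled \PDIP[\alpha,\alpha] remainder directly, whereas the paper verifies the equivalent subordinator identity $cY^{(\alpha)}_{rc}(s/c^\alpha)\stackrel{d}{=}Y^{(\alpha)}_r(s)$; these coincide by \eqref{eq:transn:PDIP}.
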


\begin{proof}
 As scaling clearly preserves path properties and Markovianity, we compare the semigroups of $(\nbeta^y,\,y\geq 0)$ and 
 $(c\scaleI \nbeta^{y/c},\,y\geq 0)$. It suffices to show that $cL^{(\alpha)}_{b/c,rc}\ed L^{(\alpha)}_{b,r}$ and $cY^{(\alpha)}_{rc}(s/c^\alpha)\ed Y_r^{(\alpha)}(s)$:
 \begin{align*}
   &\EV\!\left[\!\exp\!\left(\!-\!\lambda cL^{(\alpha)}_{b/c,rc}\right)\!\right]=\left(\!\frac{rc\!+\!\lambda c}{rc}\!\right)^{\!\alpha}\frac{e^{(b/c)(rc)^2/(rc+\lambda c)}\!-\!1}{e^{(b/c)rc}-1}=\EV\!\left[\!\exp\!\left(\!-\!\lambda L_{b,r}^{(\alpha)}\!\right)\!\right]\!,\\
   &\EV\left[\exp\left(-\lambda cY^{(\alpha)}_{rc}(s/c^\alpha)\right)\right]
   =e^{-(s/c^\alpha)(rc+\lambda c)^\alpha)}=\EV\left[\exp\left(-\lambda Y_r^{(\alpha)}(s)\right)\right].\\[-1.2cm]
 \end{align*}
\end{proof}

We can now invert Laplace transforms to deduce the following.

\begin{proposition}\label{prop:pseudostat:fixed}
 Both assertions of Theorem \ref{thm:pseudostat} hold if $Z(0) = b \geq 0$ is fixed.
\end{proposition}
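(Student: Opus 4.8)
The plan is to bootstrap from Proposition~\ref{prop:pseudostat:exp} to the fixed-mass case by de-mixing an exponential (type-1) or Gamma (type-0) family of initial masses, and then to upgrade the resulting almost-everywhere identity to every $b$ using continuity in the initial state. Write $\ol\beta\sim\PDIP[\alpha,0]$ (resp.\ $\PDIP[\alpha,\alpha]$) and, independently, $Z\sim\BESQ[0]$ (resp.\ $\BESQ[2\alpha]$), with the convention $\emptyset=0\cdot\ol\beta$. Proposition~\ref{prop:pseudostat:exp} says the conditional law of $\nbeta^y$ given $\{\nbeta^y\neq\emptyset\}$ is that of $(2y\rho+1)Z(0)\,\ol\beta$, and combined with $\Pr\{\nbeta^y\neq\emptyset\}=1/(2y\rho+1)$ from \eqref{eq:p_s_E:survival} this determines the \emph{entire} law of $\nbeta^y$ when $Z(0)\sim\ExpDist[\rho]$ (resp.\ $\GammaDist[\alpha,\rho]$): it is the law of $\widetilde Z\,\ol\beta$, where $\widetilde Z$ carries an atom of size $2y\rho/(2y\rho+1)$ at $0$ and is otherwise $\ExpDist[\rho/(2y\rho+1)]$ (resp.\ is atomless and $\GammaDist[\alpha,\rho/(2y\rho+1)]$). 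On the other hand, integrating the time-$y$ marginal Laplace transform of $\BESQ[0]$ from $b$, namely $e^{-\lambda b/(2y\lambda+1)}$ (resp.\ $(2y\lambda+1)^{-\alpha}e^{-\lambda b/(2y\lambda+1)}$ for $\BESQ[2\alpha]$), against the $\ExpDist[\rho]$ (resp.\ $\GammaDist[\alpha,\rho]$) law of $Z(0)$ reproduces exactly the Laplace transform of $\widetilde Z$ --- a short computation, essentially the matching already carried out in Steps~2--3 of the proof of Proposition~\ref{prop:pseudostat:exp}. Hence for every bounded continuous $f\colon\IPspace\to\BR$ and every $\rho>0$,
\[
 \int_0^\infty \bP^1_{b\ol\beta}\big[f(\nbeta^y)\big]\,\rho e^{-\rho b}\,db = \int_0^\infty \EV\big[f\big(Z(y)\ol\beta\big)\,\big|\,Z(0)=b\big]\,\rho e^{-\rho b}\,db,
\]
and analogously in the type-0 case with weight $\rho^\alpha b^{\alpha-1}e^{-\rho b}/\Gamma(\alpha)$.

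\emph{De-mixing.} For fixed bounded continuous $f$, the two sides of the display show that the bounded measurable maps $b\mapsto\bP^1_{b\ol\beta}[f(\nbeta^y)]$ and $b\mapsto\EV[f(Z(y)\ol\beta)\mid Z(0)=b]$ on $(0,\infty)$ have the same Laplace transform at every $\rho>0$ (in the type-0 case after multiplying both by $b^{\alpha-1}>0$), hence agree for Lebesgue-a.e.\ $b$ by uniqueness of Laplace transforms. Since $(\IPspace,\dI)$ is Lusin (Theorem~\ref{thm:Lusin}), hence separable metric, there is a countable measure-determining family of bounded continuous functions; taking the union of the corresponding null sets gives a Lebesgue-null set off which $\bP^1_{b\ol\beta}(\nbeta^y\in\,\cdot\,)$ coincides with the law of $Z(y)\ol\beta$ given $Z(0)=b$, and likewise for type-0.

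\emph{From almost-every $b$ to every $b$.} Both sides of this last identity are weakly continuous in $b\in(0,\infty)$: on the left because $b\mapsto b\ol\beta$ is $\dI$-continuous for each realisation of $\ol\beta$ while $\kappa_y^{(\alpha)}$ (resp.\ $\widetilde\kappa_y^{(\alpha)}$) is weakly continuous in its argument (Theorems~\ref{thmtype1Hunt} and~\ref{thmtype0Hunt}, Proposition~\ref{prop:type-1:cts_in_init_state}), so dominated convergence over the law of $\ol\beta$ applies; on the right because $\BESQ[0]$ and $\BESQ[2\alpha]$ are Feller (see \cite[Ch.\ XI]{RevuzYor}), so their time-$y$ marginals depend weakly continuously on the starting mass, and scaling is continuous. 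Approximating an arbitrary $b>0$ by a sequence avoiding the exceptional null set extends the identity to all $b>0$. For $b=0$ it is immediate in the type-1 case (both sides are the constant path $\emptyset$), and in the type-0 case it follows either by letting $b\downarrow0$ (using $b\ol\beta\to\emptyset$ and the same continuity) or directly from the proof of Proposition~\ref{prop:type-0:transn}, which identifies $\cev\nbeta^y$ with a $\GammaDist[\alpha,1/2y]$-scaled $\PDIP[\alpha,\alpha]$, i.e.\ with $Z(y)\ol\beta$ for $Z\sim\BESQ[2\alpha]$ started from $0$.

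I expect the first two steps to be the main obstacle: one must be sure that Proposition~\ref{prop:pseudostat:exp} and \eqref{eq:p_s_E:survival} really do recover the full law of $\nbeta^y$ --- in particular that its atom at $\emptyset$ is correctly matched against the $\BESQ[0]$ atom at $0$ --- and that the resulting one-parameter family of laws, indexed by the Laplace rate $\rho$, genuinely de-mixes to the fixed-$b$ law via uniqueness of Laplace transforms applied to the measure-valued functions of $b$. Once this is in place, the extension to all $b$ by continuity in the initial state is routine.
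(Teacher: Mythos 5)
Your proposal is correct and follows essentially the same route as the paper: mix the initial mass over an $\ExpDist[\rho]$ (resp.\ $\GammaDist[\alpha,\rho]$) law, apply Proposition \ref{prop:pseudostat:exp}, de-mix by uniqueness of Laplace transforms in $b$, and pass from Lebesgue-a.e.\ $b$ to every $b$ via continuity in the initial state together with the Lusin property of $(\IPspace,\dI)$. The only difference is cosmetic: where you verify the key integral identity by matching the scalar law of the mixed $Z(y)$ against $\widetilde Z$ through Laplace transforms (legitimate, and in fact already guaranteed by Theorem \ref{thm:BESQ_total_mass} applied to the total mass), the paper checks the same identity by an explicit computation with the \BESQ[0] transition density and Bessel-function integrals, and in the type-0 case by projecting onto the total mass exactly as you do.
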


\begin{proof}
 \emph{Type-1 case}. The case $b=0$ is trivial. The transition density of \BESQ[0] can be read from \cite[equation (51)]{GoinYor03}. For $Z(0) = b > 0$ we get $\Pr\{Z(y)=0\} = e^{-b/2y}$ and on $(0,\infty)$
 \begin{equation}\label{eq:BESQ_0_transn}
  \Pr\{Z(y)\in dc\} = \frac{1}{2y}\sqrt{\frac{b}{c}}\exp\left(-\frac{b+c}{2y}\right)I_1\left(\frac{\sqrt{bc}}{y}\right)db,
 \end{equation}
 where $I_1$ is the Bessel function. Let $(\nbeta^y_1,\,y\geq 0)$ denote a type-1 evolution with initial state $\ol\beta\sim\PDIP[\alpha,0]$. For $c>0$ and $y\geq 0$ let $\nbeta^y_{c} := c \scaleI \nbeta^{y/c}_1$; by Lemma \ref{lem:type-1:scaling} this is a type-1 evolution. For $\rho>0$, let $Z_{\rho}\sim \ExpDist[\rho]$ be independent of $(\nbeta^y_1)$. By Proposition \ref{prop:pseudostat:exp}, for all $\rho>0$ and all bounded continuous $f\colon\IPspace\rightarrow[0,\infty)$ with $f(\emptyset)=0$ we have
 \begin{align}
  \int_0^\infty e^{-\rho b}\EV[f(\nbeta^y_b)]db &= \frac{1}{\rho}\EV\left[f\left(\nbeta^y_{Z_\rho}\right)\right]\notag\\
     & = \frac{1}{2y\rho+1}\int_0^\infty e^{-\rho b}\EV[f((2y\rho+1)b\scaleI\ol\beta)]db\notag\\
     &= \frac{1}{(2y\rho+1)^2}\int_0^\infty e^{-\rho c/(2y\rho+1)}\EV[f(c\scaleI\ol\beta)]dc.\label{lt1}
 \end{align}
 We want to identify this Laplace transform as the claimed
 \begin{align}
 &\int_0^\infty e^{-\rho b}\int_0^\infty\frac{1}{2y}\sqrt{\frac{b}{c}}\exp\left(-\frac{b+c}{2y}\right)I_1\left(\frac{\sqrt{bc}}{y}\right)\EV[f(c\scaleI\ol\beta)]dcdb\nonumber\\
    &= \int_0^\infty\frac{1}{2y}\frac{1}{\sqrt{c}}e^{-c/2y}\EV[f(c\scaleI\ol\beta)]\int_0^\infty \sqrt{b}e^{-(\rho+(1/2y))b}I_1\left(\frac{\sqrt{bc}}{y}\right)dbdc\nonumber\\
    &= \!\!\int_0^\infty\!\!\!\frac{1}{2y}\frac{1}{\sqrt{c}}e^{-c/2y}\EV[f(c\scaleI\ol\beta)]\sqrt{\!\frac{c}{y^2}}\frac{1}{2(\rho\!+\!(1/2y))^2}\exp\!\left(\!\frac{c}{4y^2(\rho\!+\!(1/2y))}\!\right)\!dc,\label{lt2}
 \end{align}
 where we use well-known formulas for integrals involving the Bessel function $I_1$: specifically, the normalization of \eqref{eq:BESQ_0_transn} and differentiation $d/dx$ under the integral sign give rise to
 \begin{align*}&\int_0^\infty \frac{1}{\sqrt{u}}e^{-xu}I_1(\sqrt{uv})du = \frac{2}{\sqrt{v}}\left(e^{v/4x}-1\right)\\ \text{and}\quad&
 \int_0^\infty \sqrt{u}e^{-xu}I_1(\sqrt{uv})du = \frac{\sqrt{v}}{2x^2}e^{v/4x}
 \end{align*}
 for all $x,v\in(0,\infty)$. As desired, \eqref{lt1} and \eqref{lt2} can easily be seen to be equal. By continuity in the initial condition for
 type-1 evolutions (proved in \cite[Proposition 5.20]{IPPA}), the map $b\mapsto\EV[f(\nbeta^y_b)]$ is continuous, so for all $b,y\in(0,\infty)$, 
 we conclude that $\EV\left[f(\nbeta^y_b)\right]$ equals
  $$\int_0^\infty\frac{1}{2y}\sqrt{\frac{b}{c}}e^{-(b+c)/2y}I_1\left(\frac{\sqrt{bc}}{y}\right)\EV[f(c\scaleI\ol\beta)]dc = \EV\left[f(Z(y)\scaleI\ol\beta)\right].$$
 Equality in distribution follows since, as noted in Theorem \ref{thm:Lusin}, $(\IPspace,\dI)$ is Lusin, so bounded continuous functions separate points in $\IPspace$.
 
 \emph{Type-0 case}. We begin with a similar argument, making the obvious adjustments of letting $(\nbeta^y_b,\,y\geq 0)$ denote a type-0 evolution for $b>0$, taking $\ol\beta\sim\PDIP[\alpha,\alpha]$, and setting $Z_{\rho}\sim\GammaDist[\alpha,\rho]$. Then Proposition \ref{prop:pseudostat:exp} gives
  \begin{equation*}
  \begin{split}
  &\int_0^\infty\frac{1}{\Gamma(\alpha)}\rho^\alpha b^{\alpha-1}e^{-\rho b}\EV[f(\nbeta^y_b)]db\\ &= \EV\left[f(\nbeta^{y}_{Z_\rho})\right]
    = \int_0^\infty\frac{1}{\Gamma(\alpha)}\rho^\alpha b^{\alpha-1}e^{-\rho b}\EV[f((2\rho y+1)b\scaleI\ol\beta)]db\\
    &= \int_0^\infty\frac{1}{\Gamma(\alpha)}c^{\alpha-1}\left(\frac{\rho}{2\rho y+1}\right)^\alpha e^{-\rho c/(2\rho y+1)}\EV[f(c\scaleI\ol\beta)]dc.
  \end{split}
  \end{equation*}
  Since the total mass evolution is \BESQ[2\alpha], 
  considering $f$ of the form $g\left(\IPmag{\,\cdot\,}\right)$ gives
  $$\frac{1}{\Gamma(\alpha)}c^{\alpha-1}\left(\frac{\rho}{2\rho y+1}\right)^\alpha e^{-\rho c/(2\rho y+1)} = \int_0^\infty\frac{1}{\Gamma(\alpha)}\rho^\alpha b^{\alpha-1}e^{-\rho b}q_y^{(2\alpha)}(b,c)db,$$
  where $q^{(2\alpha)}_y$ is the time-$y$ transition density of \BESQ[2\alpha]. Hence, after the cancellation of $\rho^\alpha/\Gamma(\alpha)$,
  $$\int_0^\infty b^{\alpha-1}e^{-\rho b}\EV[f(\nbeta^y_b)]db = \int_0^\infty b^{\alpha-1}e^{-\rho b}\int_0^\infty q_y^{(2\alpha)}(b,c)\EV[f(c\scaleI\ol\beta)]dcdb.$$
  Since this holds for all $\rho>0$, we conclude by uniqueness of Laplace transforms that\vspace{-0.3cm}
  $$b^{\alpha-1}\EV[f(\nbeta^y_b)] = b^{\alpha-1}\int_0^\infty q_y^{(2\alpha)}(b,c)\EV[f(c\scaleI\ol\beta)]dc,\vspace{-0.1cm}$$
  first for Lebesgue-a.e.\ $b>0$, then for every $b>0$ by continuity. Again, this gives equality in distribution, since $(\IPspace,\dI)$ is Lusin.
\end{proof}

\begin{proof}[Proof of Theorem \ref{thm:pseudostat}]
 The arguments for types 0 and 1 are identical. We showed in the proof of Proposition \ref{prop:pseudostat:fixed} that $\nbeta^y_b$ has the same distribution as $Z(y)\scaleI\ol\beta$ for all $Z(0)= b\ge 0$. Now consider any random $Z(0)$ independent of $(\nbeta^y_1,\,y\ge 0)$.\vspace{-0.1cm}
 \begin{equation*}
 \begin{split}
  \EV\left[f\left(\nbeta^y_{Z(0)}\right)\right] &= \int_0^\infty\EV[f(\nbeta^y_b)]\Pr\{Z(0)\in db\}\\
  	&= \int_0^{\infty}\EV[f(Z(y)\scaleI\ol\beta)\;|\;Z(0)=b]\Pr\{Z(0)\in db\}\\ &= \EV[f(Z(y)\scaleI\ol\beta)].
 \end{split}
 \end{equation*}\vspace{-20pt}
 
\end{proof}

\subsection{$\!\!$Diffusions on $(\IPspace,\dI)$, stationary with \PDIP $\left(\alpha,0\right)$ and \PDIP $\left(\alpha,\alpha\right)$ laws}

\def\bsa{\boldsymbol{\nbeta}}
\def\Absa{\rho_{\bsa}}
\def\dPF{\ol\cF_{\IPspace}}

Throughout this section we write $\IPspace_1 := \{\gamma\in\IPspace\colon\IPmag{\gamma}=1\}$. Recall the de-Poissonization transformation of Theorem \ref{thm:stationary}. In this section, we prove that theorem. We slightly update our earlier notation.

\begin{definition}[De-Poissonization]\label{def:dePois}
 For $\bsa = (\nbeta^y,y\!\geq\! 0) \in \cC([0,\infty),\IPspace)$ with $\nbeta^0\neq\emptyset$, we set for all $u\ge 0$\vspace{-0.1cm}
 $$\ol{\nbeta}^u := \IPmag{\nbeta^{\Absa(u)}}^{-1}\scaleI\nbeta^{\Absa(u)}, 
  \text{ where }  \Absa(u) = \inf\left\{  y\!\ge\! 0\colon\!\! \int_0^y\! \IPmag{\nbeta^z}^{-1} dz \!>\! u  \right\}.\vspace{-0.1cm}$$
 We call the map $D$ sending $(\nbeta^y,\,y\geq 0) \mapsto (\ol\nbeta^u,\,u\geq 0)$ the \emph{de-Poissonization map} and we call $(\ol\nbeta^u,\,u\geq 0)$ the de-Poissonized process.
\end{definition}

\begin{proposition}\label{prop:dePois:time_change}
 For $\bsa = (\nbeta^y,\,y\geq 0)$ a type-0 or type-1 evolution with initial state $\beta\neq\emptyset$, the time-change $\Absa$ is continuous and strictly increasing, and $\lim_{u\upto\infty}\Absa(u) = \inf\{y > 0\colon \nbeta^y = \emptyset\}$.
\end{proposition}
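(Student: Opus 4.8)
The plan is to reduce the whole statement to a property of the total mass process and then settle one analytic fact about squared Bessel diffusions. Write $Z_y:=\IPmag{\nbeta^y}$; by Theorem~\ref{thm:BESQ_total_mass} the process $(Z_y,\,y\ge 0)$ is a \BESQ[0] diffusion (type-1 case) or a \BESQ[2\alpha] diffusion (type-0 case), started from $Z_0=\IPmag{\beta}>0$. Since the empty interval partition is the unique element of $\IPspace$ of total mass $0$, the time $\zeta:=\inf\{y>0\colon\nbeta^y=\emptyset\}$ equals $\inf\{y>0\colon Z_y=0\}$, which is a.s.\ finite because the dimension $\delta\in\{0,2\alpha\}$ lies in $[0,2)$ (classical; cf.\ \cite[Ch.~XI]{RevuzYor}, or the explicit laws recalled around Proposition~\ref{prop:type-1:transn} and in the proof of Theorem~\ref{thm:BESQ_total_mass}). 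Because $Z_0>0$, the path $z\mapsto Z_z$ is continuous and strictly positive on $[0,\zeta)$, hence bounded away from $0$ on every $[0,y]$ with $y<\zeta$; therefore $A(y):=\int_0^y Z_z^{-1}\,dz$ is finite, locally Lipschitz --- in particular continuous --- and strictly increasing on $[0,\zeta)$. From this alone its right-continuous inverse $\Absa$ is continuous and strictly increasing on $[0,A(\zeta-))$ with $\Absa(u)\upto\zeta$ as $u\upto A(\zeta-)$. So the proposition comes down to showing that
\[
 A(\zeta-)=\int_0^\zeta Z_z^{-1}\,dz=\infty\qquad\text{a.s.}
\]
(Were this integral finite, $\Absa$ would be eventually constant equal to $\zeta$, hence not strictly increasing.)

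For this divergence I would apply It\^o's formula to $\log Z$ on the stochastic interval $[0,\zeta)$. Writing $dZ_y=\delta\,dy+2\sqrt{Z_y}\,dB_y$ for a Brownian motion $B$, one obtains
\[
 \log Z_y=\log Z_0+(\delta-2)\int_0^y Z_z^{-1}\,dz+M_y,\qquad M_y:=2\int_0^y Z_z^{-1/2}\,dB_z,
\]
a continuous local martingale with $\langle M\rangle_y=4\int_0^y Z_z^{-1}\,dz=4A(y)$. Since $\alpha<1$ we have $\delta-2<0$. Arguing by contradiction: on the event $\{A(\zeta-)<\infty\}$ the bracket $\langle M\rangle$ converges to a finite limit as $y\upto\zeta$, so by the Dambis--Dubins--Schwarz representation $M_y$ converges a.s.\ to a finite limit, and the drift $(\delta-2)A(y)$ converges as well; hence $\log Z_y$ would converge to a finite limit, contradicting $Z_y\to Z_\zeta=0$. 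Therefore $A(\zeta-)=\infty$ a.s., $\Absa$ is defined on all of $[0,\infty)$, and $\lim_{u\upto\infty}\Absa(u)=\zeta=\inf\{y>0\colon\nbeta^y=\emptyset\}$.

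The only genuine obstacle is this a.s.\ divergence; the reduction and the inverse-function arguments are routine. I prefer the $\log$-transform route because it covers $\delta=0$ and $\delta=2\alpha$ uniformly, using nothing beyond $\delta<2$ and the defining SDE. (An alternative is Lamperti's time-change, which sends the \BESQ[0] total mass to a Brownian motion $W$ stopped on hitting $0$ and turns $\int_0^\zeta Z_z^{-1}\,dz$ into $\int_0^{T_0(W)}W_u^{-2}\,du$; this last integral diverges a.s.\ since, by the first Ray--Knight theorem, the occupation density of $W$ at level $a$ up to $T_0(W)$ is a \BESQ[2] in the variable $a$ started from $0$, hence of order $a$ near $0$ --- but this is heavier and does not directly handle $\delta=2\alpha$.) Two minor points to pin down: one should either invoke the standard representation of a \BESQ\ diffusion by an It\^o SDE driven by a Brownian motion on the same space (which is available on $[0,\zeta)$ since $Z>0$ there), or simply note that $\Absa$ and $\zeta$ are measurable functionals of $(Z_y)_{y\ge0}$ alone, so it suffices to prove the divergence for an abstract \BESQ\ process; and the a.s.\ finiteness of $\zeta$ may be quoted from \cite[Ch.~XI]{RevuzYor} or read off the explicit marginal densities already cited.
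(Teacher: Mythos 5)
Your reduction is exactly the one the paper makes: Proposition \ref{prop:dePois:time_change} is really a statement about $\int_0^y \IPmag{\nbeta^z}^{-1}dz$ for the \BESQ\ total mass process of Theorem \ref{thm:BESQ_total_mass}, and everything hinges on the a.s.\ divergence of $\int_0^\zeta Z_z^{-1}dz$ at the absorption/first-hitting time $\zeta$ of $0$. The difference is that the paper stops there and cites this divergence as ``common knowledge,'' pointing to G\"oing-Jaeschke and Yor \cite[p.~314-5]{GoinYor03}, whereas you supply a self-contained proof via It\^o's formula for $\log Z$ on $[0,\zeta)$ together with martingale convergence on the event $\{\langle M\rangle_{\zeta-}<\infty\}$. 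Your argument is correct: the identity $\log Z_y=\log Z_0+(\delta-2)A(y)+M_y$ with $\langle M\rangle_y=4A(y)$ is right, the contradiction with $\log Z_y\to-\infty$ is clean, and it treats $\delta=0$ and $\delta=2\alpha$ uniformly (indeed it never uses the sign of $\delta-2$, only that $\zeta<\infty$, which is where $\delta<2$ enters). You also correctly flag the two points that need care — that $\rho$ and $\zeta$ are functionals of the total mass path alone, so one may work with an abstract \BESQ\ solving the defining SDE, and that strict monotonicity of $\rho$ on all of $[0,\infty)$ is precisely equivalent to $A(\zeta-)=\infty$. What your route buys is independence from the cited reference at the cost of a page of standard stochastic calculus; what the paper's route buys is brevity. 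No gaps.
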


This is really an assertion about integrals of inverses of the \BESQ\ total mass processes of Theorem \ref{thm:BESQ_total_mass}, and in that setting it is common knowledge. It can be read, for example, from \cite[p.\ 314-5]{GoinYor03}. 
The a.s.\ path-continuity claimed in Theorem \ref{thm:stationary} follows from Proposition \ref{prop:dePois:time_change} and the path-continuity of the type-1 and type-0 evolutions. It remains to prove the claimed Markov property and stationary distributions.



Take $\bsa = (\nbeta^y,\,y\geq 0)\in\cCRI$ with $\nbeta^0\neq\emptyset$. By changes of variables we see that\vspace{-0.3cm}
$$D(\bsa) = D(c\scaleI\nbeta^{y/c},\,y\ge 0)\qquad\mbox{for all }c>0.$$
Consequently, a type-1 (respectively, type-0) evolution starting from $c\scaleI\beta$ has the same de-Poissonized process as a type-1 (resp.\ type-0) evolution starting from $\beta$. Thus, for laws $\mu$ on $\IPspace\setminus\{\emptyset\}$ we can denote by $\ol\BPr^1_{\ol{\mu}}$ (resp.\ $\ol\BPr^0_{\ol{\mu}}$) the distribution of a de-Poissonized type-1 (resp.\ type-0) evolution starting from the initial distribution $\ol{\mu}$ of $\IPmag{\beta}^{-1}\scaleI\beta$, where $\beta\sim\mu$.

Recall the natural filtration $(\cFI^y,\,y\ge0)$ on $\cC([0,\infty),\IPspace)$ used in the Markov properties of type-0 and type-1 evolutions, such 
as Propositions \ref{prop:type-0:simple_Markov} and \ref{prop:strong_Markov}. Since $(\Absa(u),\,u\ge 0)$ is an increasing family of $(\cFI^y)$-stopping times, we can introduce the time-changed filtration $\dPF^u = \cFI^{\Absa(u)}$, $u\ge 0$. 

\begin{proposition}[Strong Markov property of de-Poissonized evolutions]\label{prop:dePois:Markov}
 Let $\ol\mu$ be a probability distribution on $\IPspace_1$. Let $U$ be an a.s.\ finite $(\dPF^u)$-stopping time. Let $\ol\eta$ and $\ol f$ be non-negative, measurable functions on $\cCRIi$, with $\ol\eta$ being $\dPF^U$-measurable. Then
 \begin{equation*}
  \ol\BPr^1_{\ol\mu}\big[\ol\eta\,\ol f\circ\theta_U\big] = \ol\BPr^1_{\ol\mu}\left[\ol\eta\, \ol\BPr^1_{\ol\nbeta^U}\left[\ol f\,\right]\right]\!,
  \quad \text{and} \quad \ol\BPr^0_{\ol\mu}\big[\ol\eta\,\ol f\circ\theta_U\big] = \ol\BPr^0_{\ol\mu}\left[\ol\eta\, \ol\BPr^0_{\ol\nbeta^U}\left[\ol f\,\right]\right]\!.
 \end{equation*}
\end{proposition}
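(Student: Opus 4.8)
The plan is to present the de-Poissonized process as a \emph{random time-change} of the underlying type-1 (resp.\ type-0) evolution and then to invoke the classical principle that a strong Markov process, time-changed by the continuous strictly increasing inverse of an adapted additive functional, is again strong Markov. The only non-classical inputs are the scaling invariance $D(\bsa)=D(c\scaleI\nbeta^{y/c},\,y\ge 0)$ recorded just above the proposition and the strong Markov property of the unnormalized evolutions --- for type-1 from \cite[Theorem 1.4]{IPPA} (cf.\ Theorem \ref{thmtype1Hunt}), for type-0 from Proposition \ref{prop:strong_Markov}. I would carry out the type-1 case in detail; the type-0 case is word-for-word identical after the obvious substitutions.

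Concretely, fix $\ol\mu$ on $\IPspace_1$, take $\bsa=(\nbeta^y,\,y\ge 0)\sim\BPr^1_{\ol\mu}$, and write $A(y):=\int_0^y\IPmag{\nbeta^z}^{-1}dz$, so that $\Absa$ is the inverse of $A$ and $(\ol\nbeta^u,\,u\ge 0)=D(\bsa)$. By Proposition \ref{prop:dePois:time_change}, $A$ is continuous, strictly increasing, and maps $[0,\zeta)$ bijectively onto $[0,\infty)$, where $\zeta:=\inf\{y>0\colon\nbeta^y=\emptyset\}$; hence $\Absa\colon[0,\infty)\to[0,\zeta)$ is a genuine continuous strictly increasing inverse, and in particular $\nbeta^{\Absa(u)}\neq\emptyset$ for every finite $u$. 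Put $Y:=\Absa(U)$. The first step is to check that $Y$ is an a.s.\ finite $(\cFI^y)$-stopping time and that $\dPF^U=\cFI^Y$. This is the standard correspondence between a stopping time in a continuously time-changed filtration and the original filtration: since $(\Absa(u),\,u\ge 0)$ is a right-continuous increasing family of $(\cFI^y)$-stopping times with $\dPF^u=\cFI^{\Absa(u)}$, and $\Absa$ is a continuous bijection, one has $\{Y<y\}=\{U<A(y)\}$ for $y\in[0,\zeta)$, which unwinds into a countable union over rational $q\ge0$ of events $\{U\le q\}\cap\{\Absa(q)<y\}$, each lying in $\cFI^y$ because $\{U\le q\}\in\dPF^q=\cFI^{\Absa(q)}$ and $\Absa(q)$ is an $(\cFI^y)$-stopping time; the $\sigma$-algebra identity follows the same way. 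I expect this filtration bookkeeping to be the most delicate part of the argument, although it is entirely classical.

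Next I would record the cocycle identity $A(Y+v)=A(Y)+\int_0^v\IPmag{\nbeta^{Y+z}}^{-1}dz=U+A_{\theta_Y\bsa}(v)$, where $A_{\theta_Y\bsa}$ is the additive functional built from the shifted path $\theta_Y\bsa=(\nbeta^{Y+y},\,y\ge 0)$ and $A(Y)=U$ because $A=\Absa^{-1}$. Inverting this (legitimate since $A$ is a strictly increasing continuous bijection) gives $\Absa(U+u)=Y+\rho_{\theta_Y\bsa}(u)$ for all $u\ge 0$, and hence, reading off the definition of $D$, the commutation relation $\theta_U\big(D(\bsa)\big)=D(\theta_Y\bsa)$. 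Moreover $\Absa(u)\le\Absa(U)=Y$ for $u\le U$, so $(\ol\nbeta^u,\,u\le U)$ is a measurable functional of $(\nbeta^y,\,y\le Y)$; since $\ol\eta$ is $\dPF^U=\cFI^Y$-measurable, $\ol\eta\big(D(\bsa)\big)$ is therefore a functional of the type-1 path stopped at $Y$.

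Finally I would conclude. Using $\ol f\circ\theta_U\circ D=\ol f\circ D\circ\theta_Y$, the $\cFI^Y$-measurability of $\ol\eta(D(\bsa))$, the strong Markov property of the type-1 evolution at the stopping time $Y$, and then the scaling invariance of $D$,
$$\ol\BPr^1_{\ol\mu}\big[\ol\eta\,\ol f\circ\theta_U\big]=\EV\big[\ol\eta(D(\bsa))\,\ol f\big(D(\theta_Y\bsa)\big)\big]=\EV\big[\ol\eta(D(\bsa))\,\BPr^1_{\nbeta^Y}[\ol f\circ D]\big]=\EV\big[\ol\eta(D(\bsa))\,\ol\BPr^1_{\ol\nbeta^U}[\ol f]\big],$$
where the last equality uses $\BPr^1_{\nbeta^Y}[\ol f\circ D]=\ol\BPr^1_{\IPmag{\nbeta^Y}^{-1}\scaleI\nbeta^Y}[\ol f]=\ol\BPr^1_{\ol\nbeta^U}[\ol f]$, again by $D(\bsa)=D(c\scaleI\nbeta^{y/c},\,y\ge 0)$. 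The right-hand side is $\ol\BPr^1_{\ol\mu}\big[\ol\eta\,\ol\BPr^1_{\ol\nbeta^U}[\ol f]\big]$, which is the first claimed identity; replacing $\BPr^1$ by $\BPr^0$ throughout and invoking Proposition \ref{prop:strong_Markov} in place of the type-1 strong Markov property gives the second.
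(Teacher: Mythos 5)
Your proposal is correct and follows essentially the same route as the paper: set $Y=\Absa(U)$, identify $\dPF^U=\cFI^Y$ (the paper cites \cite[Proposition 7.9]{Kallenberg} where you verify the correspondence by hand), establish the shift commutation $\theta_U\circ D=D\circ\theta_Y$ via the time-change cocycle, and conclude from the strong Markov property of the unnormalized evolution together with the scaling invariance of $D$. The only cosmetic difference is that the paper writes out the type-0 case and you write out the type-1 case.
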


\begin{proof}
 We begin by proving the type-0 assertion. In fact, we prove a stronger statement. Consider the canonical process $\bsa = (\nbeta^y,\,y\ge0)$ under $\BPr^0_{\ol\mu}$, so $D(\bsa)$ is a de-Poissonized type-0 evolution with law $\ol\BPr^0_{\ol\mu}$. We show the strong Markov property of $D(\bsa)$ with respect to $(\dPF^u,u\!\ge\!0)$.
 
 Let $V$ be an a.s.\ finite $(\dPF^u)$-stopping time. Consider $\eta\colon\cCRI\to [0,\infty)$ measurable in $\dPF^{V}$ and set $f := \ol f\circ D$, where $\ol f$ is as in the statement above. 
 Let $Y := \Absa(V)$. Since $\Absa$ is $(\dPF^u)$-adapted, continuous and strictly increasing, \cite[Proposition 7.9]{Kallenberg} yields that $Y$ is an $(\cFI^y)$-stopping time and $\cFI^Y = \dPF^{V}$. Now, let $\theta$ denote the shift operator. For $u\geq 0$,
 $$\ol{\nbeta}^{V+u} = \IPmag{\nbeta^{\Absa(V+u)}}^{-1}\scaleI\nbeta^{\Absa(V+u)}
    = \IPmag{\nbeta^{Y+h(u)}}^{-1}\scaleI\nbeta^{Y+h(u)},$$
    where $h(u) := \rho_{\theta_Y\bsa}(u)$. 
 Thus, $\theta_{V}\circ D = D\circ\theta_Y$. 
 Then
 \begin{align*}
  \BPr^0_{\ol\mu}\left[\eta\,\ol{f}\circ\theta_V\circ D\right]
  &= \BPr^0_{\ol\mu}\left[\eta\,f\circ\theta_Y\right]\\
  &= \BPr^0_{\ol\mu}\left[\eta\,\BPr^0_{\nbeta^Y}[f]\right]
  = \BPr^0_{\ol\mu}\left[\eta\,\BPr^0_{\nbeta^Y}\left[\ol{f}\circ D\right]\right]
  = \BPr^0_{\ol\mu}\left[\eta\,\ol\BPr^0_{\ol{\bsa}^V}\left[\ol f\,\right]\right],
 \end{align*}
 by the strong Markov property of the type-0 evolution, Proposition \ref{prop:strong_Markov}. The same argument works for the de-Poissonized type-1 evolution and the laws $\BPr^1_{\ol\mu}$.
\end{proof}

\begin{proof}[Proof of the Hunt assertion of Theorem \ref{thm:stationary}]
 As in the proof of Theorem \ref{thmtype0Hunt}, we must check four properties.
 
 (i) By Theorem \ref{thm:Lusin}, $(\IPspace,\dI)$ is Lusin. Since the mass map $\IPmag{\,\cdot\,}$ is continuous, the set $\IPspace_1$ is a Borel subset of this space, and is thus Lusin as well.
  
 (ii) From Proposition \ref{prop:type-1:cts_in_init_state} (and Theorem \ref{thmtype1Hunt}), the semi-group for the type-0 (resp.\ type-1) evolution is continuous in the initial state. Helland \cite[Theorem 2.6]{Helland78} shows that time-change operations of the sort considered here are continuous maps from Skorokhod space to itself. Thus, the semi-group for the de-Poissonized type-0 (resp.\ type-1) evolution is also continuous.
 
 (iii) Sample paths are continuous, as noted after the statement of Proposition \ref{prop:dePois:time_change}.
 
 (iv) Proposition \ref{prop:dePois:Markov} gives the required strong Markov property.
\end{proof}

To prove stationarity, we progressively strengthen the pseudo-stationarity results of Theorem \ref{thm:pseudostat}. Denote by $(\cF_{\rm mass}^y,y\ge 0)$ the right-continuous filtration on $\cCRI$ generated by $(\IPmag{\nbeta^y},\,y\ge 0)$.\pagebreak


\begin{lemma}\label{strongstat}
 Let $\mu$ denote the law of $B\scaleI\ol\beta$, where $B$ is some non-negative random variable independent of $\ol\beta\sim\PDIP[\alpha,0]$. Then for all $y\ge 0$, all $\cF_{\rm mass}^y$-measurable $\eta\colon \cCRI\to [0,\infty)$, and all measurable $h\colon \IPspace_1\to [0,\infty)$, we have
 \begin{equation*}
  \BPr^1_{\mu}\left[\eta\cf\{\nbeta^y\neq\emptyset\} h\left(\IPmag{\nbeta^y}^{-1}\scaleI\nbeta^y\right)\right] = \BPr^1_{\mu}\left[\eta \cf\{\nbeta^y\neq\emptyset\}\right]\EV\left[h\left(\ol\beta\right)\right].
 \end{equation*}
 The same assertion holds if we replace superscript `1's with `0's and take $\ol\beta\sim\PDIP[\alpha,\alpha]$.
\end{lemma}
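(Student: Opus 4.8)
\textbf{Proof proposal for Lemma \ref{strongstat}.}

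The plan is to leverage the pseudo-stationarity of Theorem \ref{thm:pseudostat} together with the independence between the total mass process and a suitably ``purified'' description of the shape. The key point is that for a type-1 evolution, by the branching property the evolution can be decomposed block by block, and the statement of the lemma asks us to show that, conditioned on the whole mass path $(\IPmag{\nbeta^z},z\in[0,y])$ and on survival to level $y$, the scaled partition $\IPmag{\nbeta^y}^{-1}\scaleI\nbeta^y$ is still $\PDIP[\alpha,0]$-distributed and independent of that mass path. First I would note that by a monotone class argument it suffices to treat $\eta$ of product form $\eta = \prod_{i=1}^n g_i(\IPmag{\nbeta^{y_i}})$ for $0\le y_1<\cdots<y_n\le y$ and bounded continuous $g_i$; since $(\cF_{\rm mass}^y)$ is the right-continuous filtration generated by the mass process, such $\eta$ generate $\cF_{\rm mass}^y$.

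Next I would set up an induction on $n$, or alternatively reduce to the one-dimensional case $\eta = g(\IPmag{\nbeta^y})$ by conditioning at the last time $y_n$ and using the simple Markov property (Propositions \ref{prop:type-0:simple_Markov} and the type-1 analogue) to split off the increment $(\nbeta^z, z\in[y_n,y])$. The heart of the matter is the base case: showing that for $\ol\beta\sim\PDIP[\alpha,0]$, independent $B\ge 0$, and $\nbeta$ a type-1 evolution from $B\scaleI\ol\beta$, one has
\begin{equation*}
 \EV\left[g(\IPmag{\nbeta^y})\cf\{\nbeta^y\neq\emptyset\}\,h\!\left(\IPmag{\nbeta^y}^{-1}\scaleI\nbeta^y\right)\right] = \EV\left[g(\IPmag{\nbeta^y})\cf\{\nbeta^y\neq\emptyset\}\right]\EV\left[h(\ol\beta)\right].
\end{equation*}
To prove this, I would first establish it when $B\sim\ExpDist[\rho]$, where Proposition \ref{prop:pseudostat:exp} already gives that $\nbeta^y$ conditioned on $\{\nbeta^y\neq\emptyset\}$ equals $(2y\rho+1)Z(0)\scaleI\ol\beta$ in law, i.e.\ is a deterministic scaling of a $\PDIP[\alpha,0]$ with an \emph{independent} scaling factor; but one needs the joint statement with the whole mass path, not just the one-dimensional marginal. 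For this I would re-examine the three-step proof of Proposition \ref{prop:pseudostat:exp}: the decomposition there exhibits $\nbeta^y$ (given survival) as a leftmost block of a certain Gamma law concatenated with a \PDIP[\alpha,\alpha] scaled by an independent Gamma variable, and both the leftmost-block and remaining-mass subordinator descriptions make the conditional shape visibly a $\PDIP[\alpha,0]$ independent of $\IPmag{\nbeta^y}$; combining with the Markov property of the mass process (which is $\BESQ[0]$ by Theorem \ref{thm:BESQ_total_mass}, independent of the future given $\IPmag{\nbeta^y}$) upgrades independence from $\IPmag{\nbeta^y}$ to independence from the entire path $(\IPmag{\nbeta^z},z\le y)$. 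I would then remove the exponential-initial-mass assumption exactly as in Proposition \ref{prop:pseudostat:fixed}: write the identity as an equality of Laplace transforms in $\rho$ of $b\mapsto \EV[g(\IPmag{\nbeta^y_b})\cf\{\cdot\}h(\cdots)]$, invert using uniqueness of Laplace transforms to get it for Lebesgue-a.e.\ $b$, and extend to all $b$ by continuity in the initial state (\cite[Proposition 5.20]{IPPA}). Finally, a general independent random $B$ is handled by mixing over its law, as in the proof of Theorem \ref{thm:pseudostat}.

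For the type-0 case the argument is identical, substituting $\PDIP[\alpha,\alpha]$ for $\PDIP[\alpha,0]$, the $\BESQ[2\alpha]$ mass process for $\BESQ[0]$, and $\GammaDist[\alpha,\rho]$ initial mass for $\ExpDist[\rho]$; the relevant pseudo-stationarity inputs (Propositions \ref{prop:pseudostat:exp}, \ref{prop:pseudostat:fixed}) and Markov properties (Proposition \ref{prop:strong_Markov}) are already available in both cases. I expect the main obstacle to be the bookkeeping in upgrading from the fixed-level marginal independence in Proposition \ref{prop:pseudostat:exp} to \emph{joint} independence from the whole mass \emph{path}: one must be careful that the decomposition used there is compatible with conditioning on intermediate masses, which is where the Markov property of the $\BESQ$ total mass and the branching structure of the transition kernel must be invoked together. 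Everything else is a routine Laplace-transform inversion plus a monotone class / continuity-in-initial-state argument.
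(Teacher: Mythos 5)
Your overall strategy coincides with the paper's: reduce to product-form $\eta=\prod_j f_j(\IPmag{\nbeta^{y_j}})$ by a monotone class argument, establish a single-transition base case from pseudo-stationarity, and propagate along the time points $y_1<\cdots<y_n$ by the Markov property of the IP-evolution. Two remarks on where your write-up diverges from (or is weaker than) the paper's execution. First, the base case requires no re-examination of the three-step proof of Proposition \ref{prop:pseudostat:exp} and no fresh Laplace inversion: conditionally on $B=m$, the process $(m\scaleI\gamma^{y/m})$ with $\gamma^0=\ol\beta$ has law $\BPr^1_{\mu}(\,\cdot\mid B=m)$ by scaling invariance (Lemma \ref{lem:type-1:scaling}), and Theorem \ref{thm:pseudostat} with fixed unit initial mass already asserts $\gamma^{y/m}\stackrel{d}{=}Z(y/m)\scaleI\ol\beta$ with $Z$ \emph{independent} of $\ol\beta$; the factorization of mass, survival indicator and shape at a single time is thus immediate, jointly with any function of $\IPmag{\nbeta^0}$. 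Second, and more substantively: the upgrade from one time point to the whole mass path does \emph{not} follow from the Markov property of the total mass process, as you suggest. What is needed is that the conditional law of the \emph{state} $\nbeta^{y_1}$ given $\cF_{\rm mass}^{0}$ is again of the form ``independent scaling of a $\PDIP[\alpha,0]$,'' so that the induction hypothesis (with the conditional law of $\nbeta^{y_1}$ playing the role of $\mu$) applies to the shifted process; this uses the Markov property of the IP-evolution itself together with pseudo-stationarity from a fixed initial mass, not the Markovianity of $(\IPmag{\nbeta^z})$. Relatedly, your alternative of conditioning at the \emph{last} time $y_n$ is circular as stated, since pulling $\EV[h(\ol\beta)]$ out of $\BPr^1_{\nbeta^{y_n}}[\cdot]$ presupposes knowing the conditional law of $\nbeta^{y_n}$ given the past masses — which is the lemma at level $y_n$. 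The forward induction from $y_1$, as in the paper, avoids this.
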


\begin{proof}
 We begin with the type-1 assertion. Let $(\gamma^y,\,y\ge0)$ denote a type-1 evolution with $\gamma^0 = \ol\beta\sim\PDIP[\alpha,0]$, and suppose this is independent of $B$, with both defined on $(\Omega,\cA,\Pr)$. Then $(B\scaleI\gamma^{y/B},\,y\ge0)$ has law $\BPr^1_{\mu}$. By Theorem \ref{thm:pseudostat}, for $f_0$, $f_1\colon [0,\infty)\to [0,\infty)$ measurable,
  \begin{equation*}
  \begin{split}
   &\BPr^1_{\mu}\left[f_0(\IPmag{\nbeta^0})f_1\left(\IPmag{\nbeta^y}\right)\cf\{\nbeta^y\neq\emptyset\}h\left(\IPmag{\nbeta^y}^{-1}\scaleI\nbeta^y\right)\right]\\
   &= \EV\left[f_0(B)f_1\left(B\IPmag{\gamma^{y/B}}\right)\cf\{\gamma^{y/B}\neq\emptyset\}h\left(\IPmag{\gamma^{y/B}}^{-1}\scaleI\gamma^{y/B}\right)\right]\\
   &=\!\!\int_0^{\infty}\!\!\!f_0(m)\EV\!\left[\!f_1\!\left(\!m\IPmag{\gamma^{y/m}}\right)\!\cf\{\gamma^{y/m}\!\neq\!\emptyset\}h\!\left(\IPmag{\gamma^{y/m}}^{-1}\!\!\!\scaleI\!\gamma^{y/m}\!\right)\!\right]\!\Pr\{B\!\in\! dm\}\\
   &= \int_0^{\infty}f_0(m)\EV\left[f_1\left(m\IPmag{\gamma^{y/m}}\right)\cf\{\gamma^{y/m}\neq\emptyset\}\right]\EV[h(\ol\beta)]\Pr\{B\in dm\}\\
   &= \BPr^1_{\mu}\left[f_0\left(\IPmag{\nbeta^0}\right)f_1\left(\IPmag{\nbeta^y}\right)\cf\{\nbeta^y\neq\emptyset\}\right]\EV[h(\ol\beta)].
  \end{split}
  \end{equation*} 
  An inductive argument based on the Markov property of the type-1 evolution then says that 
  for $0<y_1<\cdots<y_{n}$ (writing $\ol{y}_j=y_j-y_1$, $j\in[n]$) and $f_0,\ldots,f_{n}\colon [0,\infty) \to [0,\infty)$ measurable,\vspace{-0.1cm} 
  \begin{equation*}
  \begin{split}
   &\BPr^1_\mu\Bigg[\prod_{j=0}^{n}f_j\left(\IPmag{\nbeta^{y_j}}\right)\cf\{\nbeta^{y_{n}}\neq\emptyset\}h\left(\IPmag{\nbeta^{y_{n}}}^{-1}\scaleI\nbeta^{y_n}\right)\Bigg]\\
    &=\BPr^1_\mu\Bigg[f_0\left(\IPmag{\nbeta^0}\right)\BPr^1_{\nbeta^{y_1}}\Bigg[\prod_{j=1}^{n}f_{j}\left(\IPmag{\nbeta^{\widebar{y}_{j}}}\right)\cf\{\nbeta^{\widebar{y}_{n}}\!\neq\!\emptyset\}h\left(\IPmag{\nbeta^{\widebar{y}_{n}}}^{-1}\!\scaleI\nbeta^{\widebar{y}_{n}}\right)\!\Bigg]\Bigg]\\
    &=\BPr^1_\mu\Bigg[f_0\left(\IPmag{\nbeta^0}\right)\BPr^1_{\nbeta^{y_1}}\Bigg[\prod_{j=1}^{n}f_{j}\left(\IPmag{\nbeta^{\widebar{y}_{j}}}\right)\cf\{\nbeta^{\widebar{y}_{n}}\neq\emptyset\}\Bigg]\EV[h(\ol\beta)]\Bigg]\\
    &= \BPr^1_\mu\Bigg[\prod_{j=0}^{n}f_j\left(\IPmag{\nbeta^{y_j}}\right)\cf\{\nbeta^{y_{n}}\neq\emptyset\}\Bigg]\EV[h(\ol\beta)].\vspace{-0.1cm}
  \end{split}
  \end{equation*}
  A monotone class theorem completes the proof. The same argument works for type 0.
\end{proof}

To do de-Poissonization, we will replace $y$ by a stopping time in the filtration $(\cF_{\rm mass}^y,\,y\ge 0)$, specifically the time-change stopping times $Y = \Absa(u)$.

\begin{theorem}[Strong pseudo-stationarity]\label{thm:pseudostat_strong}
 Let $\mu$ denote the law of $B\scaleI\ol\beta$, where $B$ is some non-negative random variable independent of $\ol\beta\sim\PDIP[\alpha,0]$. 
 Let $Y$ be an $(\cF_{\rm mass}^y,\,y\ge0)$-stopping time. Then for all $\cF_{\rm mass}^Y$-measurable $\eta\colon \cCRI\to [0,\infty)$ and all measurable $h\colon \IPspace_1\to [0,\infty)$, 
 \begin{equation*}
  \BPr^1_{\mu}\left[\eta\cf\{\nbeta^Y\neq\emptyset\} h\left(\IPmag{\nbeta^Y}^{-1}\scaleI\nbeta^Y\right)\right] = \BPr^1_{\mu}\left[\eta \cf\{\nbeta^Y\neq\emptyset\}\right]\EV\left[h\left(\ol\beta\right)\right].
 \end{equation*}
 The same assertion holds if we replace superscript `1's with `0's and take $\ol\beta\sim\PDIP[\alpha,\alpha]$.
\end{theorem}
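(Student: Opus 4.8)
The plan is to bootstrap from Lemma~\ref{strongstat}, which is exactly this statement for deterministic levels $y$, to an arbitrary $(\cF_{\rm mass}^y)$-stopping time $Y$, by first treating stopping times with countable range and then approximating a general $Y$ from above. I will argue the type-$1$ case; the type-$0$ case is word for word the same, since Lemma~\ref{strongstat} already covers both. Two harmless reductions come first. Because $\{\nbeta^Y=\emptyset\}=\{\IPmag{\nbeta^Y}=0\}\in\cF_{\rm mass}^Y$, replacing $\eta$ by $\eta\cf\{\nbeta^Y\neq\emptyset\}$ leaves both sides unchanged, so I may assume $\eta$ vanishes off $\{\nbeta^Y\neq\emptyset\}$. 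Second, for the limiting step I will temporarily need $\eta$ bounded and $h$ continuous, but these restrictions get removed at the end by a monotone class argument in $h$ and monotone convergence in $\eta$.

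\emph{Step 1 (countable range).} Suppose $Y$ takes values in a countable set $\{y_k\}_{k\ge 0}$. For each $k$ the function $\eta\cf\{Y=y_k\}$ is $\cF_{\rm mass}^{y_k}$-measurable (the standard decomposition of an $\cF_{\rm mass}^Y$-measurable function along the level sets of $Y$), so Lemma~\ref{strongstat} applied at level $y_k$ to this function gives
\[
 \BPr^1_{\mu}\!\left[\eta\cf\{Y=y_k\}\cf\{\nbeta^{y_k}\neq\emptyset\}\,h\big(\IPmag{\nbeta^{y_k}}^{-1}\scaleI\nbeta^{y_k}\big)\right]=\BPr^1_{\mu}\!\left[\eta\cf\{Y=y_k\}\cf\{\nbeta^{y_k}\neq\emptyset\}\right]\EV\!\left[h(\ol\beta)\right].
\]
Summing over $k$ yields the assertion of the theorem for this $Y$, with no restriction on $\eta\ge 0$ or measurable $h\ge 0$.

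\emph{Step 2 (general $Y$).} Put $Y_n:=2^{-n}\lceil 2^nY\rceil$. Each $Y_n$ is an a.s.\ finite $(\cF_{\rm mass}^y)$-stopping time with countable range, $Y_n\downarrow Y$, and $Y\le Y_n$ forces $\cF_{\rm mass}^Y\subseteq\cF_{\rm mass}^{Y_n}$, so $\eta$ is $\cF_{\rm mass}^{Y_n}$-measurable and Step~1 applies at $Y_n$:
\[
 \BPr^1_{\mu}\!\left[\eta\cf\{\nbeta^{Y_n}\neq\emptyset\}\,h\big(\IPmag{\nbeta^{Y_n}}^{-1}\scaleI\nbeta^{Y_n}\big)\right]=\BPr^1_{\mu}\!\left[\eta\cf\{\nbeta^{Y_n}\neq\emptyset\}\right]\EV\!\left[h(\ol\beta)\right].
\]
Now let $n\to\infty$, taking $h$ bounded continuous and $\eta$ bounded. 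By path-continuity of the type-$1$ evolution and continuity of $\IPmag{\,\cdot\,}$, on the event $\{\nbeta^Y\neq\emptyset\}$ we have $\IPmag{\nbeta^{Y_n}}\to\IPmag{\nbeta^Y}>0$, hence $\cf\{\nbeta^{Y_n}\neq\emptyset\}\to 1$ and $\IPmag{\nbeta^{Y_n}}^{-1}\scaleI\nbeta^{Y_n}\to\IPmag{\nbeta^Y}^{-1}\scaleI\nbeta^Y$ in $(\IPspace_1,\dI)$, using that $\gamma\mapsto\IPmag{\gamma}^{-1}\scaleI\gamma$ is $\dI$-continuous at every $\gamma$ with $\IPmag{\gamma}>0$. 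Since $\eta$ is supported on $\{\nbeta^Y\neq\emptyset\}$, bounded convergence on each side yields precisely the claimed identity for bounded continuous $h$; a monotone class argument then upgrades to arbitrary bounded measurable $h$, and monotone convergence to arbitrary measurable $h\ge 0$ and $\eta\ge 0$.

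\emph{Main obstacle.} The delicate point is the boundary state $\emptyset$. For type-$0$ evolutions $\emptyset$ is reflecting, so on $\{\nbeta^Y=\emptyset\}$ the approximants $\nbeta^{Y_n}$ are typically nonempty with masses tending to $0$, and the normalized partitions $\IPmag{\nbeta^{Y_n}}^{-1}\scaleI\nbeta^{Y_n}$ need not converge to anything; this is exactly why the first reduction, to $\eta$ supported on $\{\nbeta^Y\neq\emptyset\}$, must be performed \emph{before} approximating $Y$, so that in Step~2 every relevant path has $\nbeta^Y\neq\emptyset$ and the normalization map is continuous along it. (For type-$1$ evolutions $\emptyset$ is absorbing, making this automatic, but treating both types uniformly is cleaner, and the upgrade is needed precisely for stopping times like the de-Poissonization time changes.) Everything else — that the ceiling-dyadic truncations of a stopping time are again stopping times, the inclusion $\cF_{\rm mass}^Y\subseteq\cF_{\rm mass}^{Y_n}$ for $Y\le Y_n$, and $\dI$-continuity of scaling away from $\emptyset$ — is routine.
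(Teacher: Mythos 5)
Your proposal is correct and follows essentially the same route as the paper: dyadic approximation of $Y$ from above by countable-range stopping times, application of Lemma \ref{strongstat} on each level set $\{Y_n=k2^{-n}\}$ (where $\eta\cf\{Y_n=k2^{-n}\}$ is $\cF_{\rm mass}^{k2^{-n}}$-measurable), passage to the limit via path-continuity for continuous $h$, and a monotone class argument to finish. Your extra care about performing the reduction to $\eta$ supported on $\{\nbeta^Y\neq\emptyset\}$ before the approximation, and about continuity of the normalization map away from $\emptyset$, only makes explicit what the paper leaves implicit.
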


\begin{proof}
 We begin with the type-1 assertion. We use the standard dyadic approximation of $Y$ by $Y_n=  2^{-n}\lfloor 2^nY+1\rfloor \wedge 2^n$ which eventually tends to $Y$ from above. Since $Y$ and $Y_n$ are $(\cF_{\rm mass}^y)$-stopping times, the random variable $\eta_k = \eta\cf\{Y_n = k2^{-n}\}$ is $\cF_{\rm mass}^{k2^{-n}}$-measurable for $k\in[2^{2n}-1]$. By Lemma \ref{strongstat},
 \begin{equation*}
 \begin{split}
  &\BPr^1_{\mu}\left[\eta\cf\left\{\nbeta^{Y_n}\neq\emptyset;\,Y_n=k2^{-n}\right\} h\left(\IPmag{\nbeta^{Y_n}}^{-1}\scaleI\nbeta^{Y_n}\right)\right]\\
    &\quad= \BPr^1_{\mu}\left[\eta_k\cf\left\{\nbeta^{k2^{-n}}\neq\emptyset\right\} h\left(\IPmag{\nbeta^{k2^{-n}}}^{-1}\scaleI\nbeta^{k2^{-n}}\right)\right]\\
    &\quad= \BPr^1_{\mu}\left[\eta_k\cf\left\{\nbeta^{k2^{-n}}\neq\emptyset\right\}\right]\EV[h(\ol\beta)]\\
    &\quad= \BPr^1_{\mu}\left[\eta \cf\left\{\nbeta^{Y_n}\neq\emptyset;\,Y_n=k2^{-n}\right\}\right]\EV[h(\ol\beta)].
 \end{split}
 \end{equation*}
 Summing over $k\in[2^{2n}-1]$ and letting $n\rightarrow\infty$, the continuity of $(\nbeta^y)$ and the observation that
 $\bigcup_{k\in [2^{2n}-1]}\left\{\nbeta^{Y_n}\neq\emptyset;\,Y_n=k2^{-n}\right\}$ increases to $\left\{\nbeta^{Y_n}\neq\emptyset\right\}$ complete the proof for type 1, first for continuous $h$, but then for measurable $h$ via the monotone class theorem. The type-0 argument is identical.
\end{proof}

\begin{proof}[Proof of the stationarity assertions of Theorem \ref{thm:stationary}]$\;$\\
 We apply Theorem \ref{thm:pseudostat_strong} to $\eta=1$ and the stopping times $Y=\Absa(u)$, which satisfy $\nbeta^Y\neq\emptyset$ a.s.. In the notation of Proposition \ref{prop:dePois:Markov},\vspace{-0.1cm}
 $$\ol\BPr^1_{\ol\mu}[h(\ol{\nbeta}^u)] = \BPr^1_{\mu}\left[\cf\left\{\nbeta^{\Absa(u)}\neq\emptyset\right\} h\left(\IPmag{\nbeta^{\Absa(u)}}^{-1}\scaleI\nbeta^{\Absa(u)}\right)\right] = \EV[h(\ol\beta)]\vspace{-0.1cm}$$
 for each $u>0$, as required. The same argument applies to type 0.
\end{proof}

\appendix

\section{Statistics of clades and excursions}
\label{sec:clade_stats}

In this section we prove Proposition \ref{prop:clade:stats}. 
More results in the vein of Proposition \ref{prop:clade:stats2} may be derived from these in a similar manner. Several of the following may be construed as descriptions of the It\^o excursion measure $\mSxc$ associated with $\bX$. We use notation $J:=J^++J^-$.

\begin{proposition}\label{prop:clade:stats2}
 \begin{enumerate}[label=(\roman*), ref=(\roman*)]
  \item $\displaystyle\! \mClade\big\{\len\!>\! x\big\}\!=\! \frac{(1+\alpha)x^{-\alpha/(1+\alpha)}}{\!(2^{\alpha}\Gamma(1\!+\!\alpha))^{1/(1\!+\!\alpha)}\Gamma(1/(1\!+\!\alpha))\!}.$\label{item:CS2:len}\vspace{-.1cm}
  \item $\displaystyle \mClade\big\{ J > z\big\} = \frac{1+\alpha}{\Gamma(1+\alpha)\Gamma(1-\alpha)2^{\alpha}}z^{-\alpha}.$\vspace{-0.0cm}\label{item:CS2:crossing}
  \item $\displaystyle \mClade\big\{ J^+ \in dy\;\big|\;m^0 = b\big\} = \frac{(b/2)^{1+\alpha}}{\Gamma(1+\alpha)}y^{-\alpha-2}e^{-b/2y}dy.$\label{item:CS2:over:mass}
  \item $\displaystyle \mClade\big\{ m^0 > b \big\} = \frac{1}{\Gamma(1-\alpha)}b^{-\alpha}.$\label{item:CS2:mass}
  \item $\displaystyle \mClade\big\{ J^+ > y \big\} = \frac{1}{\Gamma(1+\alpha)\Gamma(1-\alpha)2^{\alpha}}y^{-\alpha}.$\label{item:CS2:over}
  \item $\displaystyle \mClade\{ m^0 \leq b\;|\;J^+ = y\} = 1 - e^{-b/2y}.$\label{item:CS2:mass:over}
  \item $\displaystyle \mClade\big\{ \life^+ \leq z\;\big|\;J^+ = y\big\} = \cf\{z\geq y\}\left(\frac{z-y}{z}\right)^\alpha.$\label{item:CS2:max:over}
  \item $\displaystyle \mClade\big\{ \life^+ \leq z\;\big|\;m^0 = b\big\} = e^{-b/2z}.$\label{item:CS2:max:mass}
  \item $\displaystyle \mClade\big\{ \life^+ > z \big\} = \frac{1}{2^\alpha}z^{-\alpha}.$\label{item:CS2:max}\vspace{-.1cm}
  \item $\displaystyle \mClade\{ m^0 \in db\;|\;\life^+ \geq z\} = \frac{\alpha(2z)^{\alpha}}{\Gamma(1-\alpha)}b^{-\alpha-1}(1-e^{-b/2z})db.$\vspace{0.1cm}\label{item:CS2:mass:max}
  %
\end{enumerate}
\noindent Each of these identities also holds if we replace all superscripts `$+$' with `$-$'.
\end{proposition}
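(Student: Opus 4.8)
The plan is to deduce every one of the ten identities from the explicit description of $\mClade^+(\,\cdot\mid m^0=b)$ provided by \eqref{eq:clade_split:distrib}, fed through Lemma \ref{lem:BESQ:length} (for the lifetime of a \BESQA), Proposition \ref{prop:agg_mass_subord} (for the law of $m^0$), and a handful of classical facts about the spectrally positive \StableA\ scaffolding $\bX$: its downward first-passage process $(T^{-a})_{a\ge 0}$ is a $\tfrac{1}{1+\alpha}$-stable subordinator with $\EV[e^{-\lambda T^{-a}}]=e^{-a(2^\alpha\Gamma(1+\alpha)\lambda)^{1/(1+\alpha)}}$; its two-sided exit probability $\Pr_0(\bX\text{ creeps below }-a\text{ before exceeding }c)=(c/(a+c))^\alpha$, read off the scale function $W(x)\propto x^\alpha$ of the dual spectrally negative stable process; and its inverse local time at a level is, with the occupation-density normalisation, a pure-jump subordinator of Laplace exponent $1/u^\lambda(0,0)=\lambda^{\alpha/(1+\alpha)}/\big(p_1(0)\Gamma(\tfrac{\alpha}{1+\alpha})\big)$, where $p_1(0)$ is the time-$1$ density of $\bX$ at the origin. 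The one bookkeeping fact used throughout is that for a bi-clade $N$ with clade part $N^+$ one has $m^0(N)=m^0(N^+)$, $J^+(N)=J^+(N^+)=\zeta(\bff)$ and $\life^+(N)=\life^+(N^+)=\zeta(\bff)+\sup\xi(N^+)$ in the notation of \eqref{eq:clade_split:distrib}, whereas $\len(N)=\len(N^-)+\len(N^+)$; this lets us compute a conditional quantity from \eqref{eq:clade_split:distrib} and then unfold it into the $\sigma$-finite statement by integrating against $\mClade\{m^0\in db\}$.

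The bulk of the argument is then a short chain of computations. First, Propositions \ref{prop:agg_mass_subord} and \ref{prop:bi-clade_PRM} show that the aggregate mass subordinator has L\'evy measure $\mClade\{m^0\in\cdot\,\}$; matching with the L\'evy measure $\tfrac{\alpha}{\Gamma(1-\alpha)}x^{-1-\alpha}dx$ of a \Stable[\alpha] subordinator of exponent $\lambda^\alpha$ gives (iv). Next, (iii) is immediate from \eqref{eq:clade_split:distrib} and Lemma \ref{lem:BESQ:length} after a change of variables, since $J^+=\zeta(\bff)$ with $\zeta(\bff)\sim\InvGammaDist[1+\alpha,b/2]$. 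Multiplying (iii) and (iv) yields the joint law of $(m^0,J^+)$; integrating out $m^0$ gives the marginal tail $\mClade\{J^+>y\}$ of (v), and dividing gives $\mClade\{m^0\le b\mid J^+=y\}=1-e^{-b/2y}$, which is (vi). For (vii), conditioning additionally on $J^+=y$ in \eqref{eq:clade_split:distrib} makes $\life^+=y+\sup_{[0,T^{-y}]}\bX$ a function of the \PRM\ part alone, hence independent of $m^0$, so $\mClade\{\life^+\le z\mid J^+=y\}=\Pr_0\big(\sup_{[0,T^{-y}]}\bX\le z-y\big)$, which by the two-sided exit identity equals $\cf\{z\ge y\}\big((z-y)/z\big)^\alpha$. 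Integrating (vii) against the conditional law of $J^+$ given $m^0=b$ from (iii) produces, after an elementary substitution $u=1/y$, the identity $\mClade\{\life^+\le z\mid m^0=b\}=e^{-b/2z}$ of (viii); integrating $1-e^{-b/2z}$ against $\mClade\{m^0\in db\}$ and using $\int_0^\infty(1-e^{-\mu b})b^{-1-\alpha}db=\mu^\alpha\Gamma(1-\alpha)/\alpha$ gives (ix); and normalising $(1-e^{-b/2z})\mClade\{m^0\in db\}$ by (ix) gives (x). The statements with superscript `$-$' follow at once from \eqref{eq:clade:invariance}: the time-reversal $\reverseH$ preserves $\mClade$ and interchanges $\life^+\leftrightarrow\life^-$ and $J^+\leftrightarrow J^-$ while fixing $m^0$ and $\len$.

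This leaves (i) and (ii), which I expect to be the main obstacle, because each hinges on an exact value of a ``universal constant'' that self-similarity determines only up to a scalar. For (i): since $\bX$ spends zero time at $y$, its inverse local time $\tau^y$ is a pure-jump subordinator whose jumps are the lengths of the excursions of $\bX$ about $y$, and by Proposition \ref{prop:bi-clade_PRM} these form a \PRM\ of intensity $\Leb\otimes\mClade$, so $\tau^y$ has L\'evy measure $\mClade\{\len\in\cdot\,\}$; writing $\mClade\{\len>x\}=Cx^{-\alpha/(1+\alpha)}$ by self-similarity and matching $\int(1-e^{-\lambda\ell})\mClade\{\len\in d\ell\}$ against $1/u^\lambda(0,0)$ pins $C$ down in terms of $p_1(0)$. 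It then remains to evaluate $p_1(0)$ by a standard (if slightly tedious) Fourier inversion from \eqref{eq:JCCP:Laplace}, giving $p_1(0)=\tfrac{\Gamma(1/(1+\alpha))(2^\alpha\Gamma(1+\alpha))^{1/(1+\alpha)}}{\pi(1+\alpha)}\sin\tfrac{\pi}{1+\alpha}$, after which the reflection formula delivers the constant stated in (i); alternatively this value may simply be quoted from \cite{Paper0}. For (ii): a bi-clade about level $y$ that rises above $y$ corresponds bijectively to a jump of $\bX$ straddling $y$, so $\mClade\{J\in\cdot\,\}$ is the intensity, per unit local time at $y$, of the sizes of such straddling jumps. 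Applying the compensation formula at the stopping time $\tau^y(1)$ rewrites this intensity through the L\'evy measure $\mBxc\{\zeta\in d\ell\}$ of \eqref{eq:JCCP:Laplace} and the mean occupation $\EV^y[\ell^w_{\bX}(\tau^y(1))]$ of levels $w<y$ per unit local time at $y$; a scaling argument — or, more cheaply, requiring the resulting marginal of $J^+$ to agree with (v) — shows this mean occupation is identically $1$, whence $\mClade\{J^-\in du,\,J^+\in dv\}=\mBxc\{\zeta\in d(u+v)\}\,du$ on $\{u,v>0\}$, and integrating over $\{u+v>z\}$ yields (ii). Everything else is routine manipulation of the explicit objects in \eqref{eq:clade_split:distrib}.
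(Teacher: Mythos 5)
Your proposal is correct, and the soft parts — (iii) from \eqref{eq:clade_split:distrib} and Lemma \ref{lem:BESQ:length}, (v)--(vi) by integrating/Bayes from (iii)--(iv), (vii) by the two-sided exit (scale-function) identity after conditioning on $J^+$, (viii) by integrating (vii) against (iii), (ix)--(x) from (iv) and (viii), and the `$-$' versions via $\reverseH$-invariance — coincide with the paper's proof. Where you genuinely diverge is in how the three normalizing constants are pinned down, and your chain runs in the opposite direction. The paper's order is (i) $\to$ (ii) $\to$ (iv): it fixes the inverse-local-time Laplace exponent not by Fourier inversion of $p_1(0)$ but by the identity $\Pr\{\bX(t)\le 0\}=1/(1+\alpha)$ combined with a Fubini/occupation-density computation (Proposition \ref{prop:inv_LT:subord}); it then gets (ii) by identifying $\mClade\{\len\in\cdot\mid J=z\}$ with the law of the first-passage time $T^{-z}$ and matching $\mClade[1-e^{-\theta\len}]$ against the subordinator exponent from (i); and it extracts the constant in (iv) by convolving two copies of (iii) to get $\mClade\{J\in dy\mid m^0=b\}$, integrating against $Cb^{-1-\alpha}db$, and equating with (ii). You instead read (iv) straight off Proposition \ref{prop:agg_mass_subord} (the paper itself remarks after that proposition that this is legitimate, since the L\'evy measure of the aggregate-mass subordinator is $\mClade(m^0\in\cdot)$), get (ii) by the compensation formula applied to jumps straddling the level together with the constancy of $u\mapsto\EV[\ell^{y-u}(\tau^y(1))]$ — your scaling argument for $h\equiv h(0)=1$ is sound, given joint continuity of local time and the linear scaling of $\ell$ in Table \ref{tbl:clade_scaling} — and only then need (i), which you obtain from $1/u^\lambda(0,0)$ plus an explicit $p_1(0)$; your stated value of $p_1(0)$ is consistent with Proposition \ref{prop:inv_LT:subord} via the reflection formula. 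The trade-off: your route for (ii) and (iv) is shorter and avoids the paper's convolution of two inverse-gamma densities, but it pushes the only genuinely delicate normalization into the evaluation of $p_1(0)$, which you assert rather than derive; the paper's $\Pr\{\bX(t)\le 0\}=1/(1+\alpha)$ device accomplishes the same thing without Fourier inversion, and since (i) feeds nothing else in your chain you could equally import Proposition \ref{prop:inv_LT:subord} wholesale and skip $p_1(0)$ altogether.
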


The equivalence when replacing `+'s with `$-$'s follows from the reversal property stated in (\ref{eq:clade_split:distrib}). Before proving these identities we note a pair of relevant properties of $\bX$. Recall that $(T^y,\,y\in\BR)$ denotes the first hitting times for $\bX$.

\begin{proposition}[Theorem VII.1 of \cite{BertoinLevy}]\label{prop:hitting_time:subord}
 The process $(T^{-y},\,y\geq 0)$ of hitting times is \Stable[1/(1+\alpha)] subordinator, and its Laplace exponent is the inverse $\psi^{-1}$ of the Laplace exponent of $\bX$:
 \begin{equation}
  \EV\left[ e^{-\theta T^{-y}} \right] = e^{-y\psi^{-1}(\theta)}, \quad \text{where} \quad \psi^{-1}(\theta) = \left(2^\alpha\Gamma(1+\alpha)\right)^{1/(1+\alpha}\theta^{1/(1+\alpha)}.\label{eq:hitting_time:Laplace}
 \end{equation}
\end{proposition}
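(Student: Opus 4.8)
The plan is to derive the statement from the general theory of spectrally one-sided L\'evy processes, specialized to our explicit \StableA\ scaffolding $\bX$; the result is quoted from \cite[Chapter VII]{BertoinLevy}, and here I outline the short self-contained argument that one would record.

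First I would check that $(T^{-y},\,y\ge0)$ is a subordinator. The key structural fact is that $\bX$ has only positive jumps, hence crosses every level downward continuously; in particular $\bX(T^{-y})=-y$ on $\{T^{-y}<\infty\}$, with no overshoot. Since $\bX$ is, by the compensated construction \eqref{eq:scaffolding}, a mean-zero martingale (a stable process of index $1+\alpha\in(1,2)$ with no drift), it oscillates, so $T^{-y}<\infty$ a.s.\ for all $y\ge0$. The map $y\mapsto T^{-y}$ is then nondecreasing and right-continuous, and applying the strong Markov property of $\bX$ at the stopping time $T^{-y}$, together with $\bX(T^{-y})=-y$, shows that for $0\le y\le y'$ the increment $T^{-y'}-T^{-y}$ is independent of the pre-$T^{-y}$ information and equal in law to $T^{-(y'-y)}$. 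Thus $(T^{-y},\,y\ge0)$ has stationary independent increments, and being nondecreasing it is a subordinator.

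Next I would compute the Laplace exponent using the exponential martingale. For $\lambda\ge 0$ set $M_t:=\exp(-\lambda\bX(t)-t\psi(\lambda))$, where $\psi(\lambda)=\lambda^{1+\alpha}/(2^\alpha\Gamma(1+\alpha))\ge 0$ is the Laplace exponent from \eqref{eq:JCCP:Laplace}; stationarity and independence of increments make $(M_t)$ a mean-one martingale. On $\{t\le T^{-y}\}$ we have $\bX(t)\ge -y$ and $\psi(\lambda)\ge 0$, so $M_{t\wedge T^{-y}}\le e^{\lambda y}$ is uniformly bounded; optional stopping together with $T^{-y}<\infty$ a.s.\ then gives $\EV\big[\exp(\lambda y-\psi(\lambda)T^{-y})\big]=1$, i.e.\ $\EV[e^{-\psi(\lambda)T^{-y}}]=e^{-\lambda y}$. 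Writing $\theta=\psi(\lambda)$, equivalently $\lambda=\psi^{-1}(\theta)=(2^\alpha\Gamma(1+\alpha))^{1/(1+\alpha)}\theta^{1/(1+\alpha)}$, yields \eqref{eq:hitting_time:Laplace}; and since $\psi^{-1}(\theta)$ is a constant multiple of $\theta^{1/(1+\alpha)}$, the subordinator is \Stable[1/(1+\alpha)].

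The only point requiring any care is the optional-stopping identity $\EV[M_{T^{-y}}]=1$, and it is clean here precisely because $\bX$ has no negative jumps (so $\bX(t)\ge -y$ for $t\le T^{-y}$, with no overshoot at $T^{-y}$) and $\psi(\lambda)\ge 0$ for $\lambda\ge 0$; together these give the uniform bound $M_{t\wedge T^{-y}}\le e^{\lambda y}$, and bounded convergence combined with $T^{-y}<\infty$ a.s.\ finishes the argument. Everything else is routine.
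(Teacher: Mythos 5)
Your argument is correct and is precisely the standard proof of the cited result (it is how Theorem VII.1 is proved in Bertoin): no overshoot on downward passage plus the strong Markov property give the subordinator structure, and optional stopping of the exponential martingale $\exp(-\lambda\bX(t)-t\psi(\lambda))$, which is bounded up to $T^{-y}$ exactly because $\bX$ is spectrally positive and $\psi\ge0$, yields $\EV[e^{-\psi(\lambda)T^{-y}}]=e^{-\lambda y}$ and hence \eqref{eq:hitting_time:Laplace}. The paper itself quotes this proposition from \cite{BertoinLevy} without proof, so there is nothing further to compare.
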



\begin{proposition}\label{prop:inv_LT:subord}
 For each $y\in\BR$, the shifted inverse local time process $(\tau^y(s)-\tau^y(0),\,s\ge 0)$ is a \Stable[\alpha/(1+\alpha)] subordinator with Laplace exponent $\Phi(\theta) = (1+\alpha)\left(2^\alpha\Gamma(1+\alpha)\right)^{-1/(1+\alpha)}\theta^{\alpha/(1+\alpha)}$.
\end{proposition}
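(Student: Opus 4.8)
The plan is to read the Laplace exponent of the shifted inverse local time $(\tau^y(s)-\tau^y(0),\,s\ge 0)$ directly off It\^o's excursion theory for $\bX$ about level $y$, and then to evaluate the resulting integral from the tail of $\mClade$ under the length map $\len$. First I would recall that, by It\^o's excursion theory \cite{Ito72}, the excursions of $\bX$ about level $y$, indexed by level-$y$ local time, form a $\PRM[\Leb\otimes\mSxc]$; moreover, since $\bX$ is strictly stable of index $1+\alpha>1$ its occupation measure is absolutely continuous, so the set of times at which $\bX=y$ is a.s.\ Lebesgue-null. Consequently $\tau^y(\cdot)-\tau^y(0)$ carries no drift and equals the cumulative sum of the durations of the excursions collected so far: it is a driftless subordinator whose L\'evy measure is the pushforward of $\mSxc$ under the excursion-duration map. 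Because the marking kernel $\lambda$ used to build $\mClade=\int\lambda(g,\,\cdot\,)d\mSxc(g)$ leaves excursion lengths unchanged (so $\len(N)=\zeta(\xi(N))$), this pushforward is exactly $\mClade(\len\in\,\cdot\,)$, whence $\Phi(\theta)=\int_0^\infty(1-e^{-\theta x})\,\mClade\{\len\in dx\}$ for $\theta\ge0$.

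Next I would evaluate this. By Proposition~\ref{prop:clade:stats2}\ref{item:CS2:len}, with $\gamma:=\alpha/(1+\alpha)\in(0,1)$ and $C:=(1+\alpha)\big(2^\alpha\Gamma(1+\alpha)\big)^{-1/(1+\alpha)}\Gamma(1/(1+\alpha))^{-1}$, one has $\mClade\{\len>x\}=Cx^{-\gamma}$, so $\mClade\{\len\in dx\}=C\gamma\,x^{-1-\gamma}dx$ on $(0,\infty)$. Using the elementary identity $\int_0^\infty(1-e^{-\theta x})x^{-1-\gamma}dx=\gamma^{-1}\Gamma(1-\gamma)\theta^{\gamma}$ (obtained by differentiating in $\theta$ and integrating), this gives $\Phi(\theta)=C\Gamma(1-\gamma)\theta^{\gamma}$; since $1-\gamma=1/(1+\alpha)$ the factor $\Gamma(1/(1+\alpha))$ cancels, leaving $\Phi(\theta)=(1+\alpha)\big(2^\alpha\Gamma(1+\alpha)\big)^{-1/(1+\alpha)}\theta^{\alpha/(1+\alpha)}$. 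In particular, $(\tau^y(s)-\tau^y(0),\,s\ge 0)$ is then a \Stable[\alpha/(1+\alpha)] subordinator with the asserted Laplace exponent, and since increments of $\bX$ are stationary the answer does not depend on $y$.

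The step I expect to be the main obstacle is the first one, namely the careful bookkeeping around the It\^o decomposition: one must be explicit that the local time in play is the occupation-density normalization $\int_0^t f(\bX(s))ds=\int f(z)\ell^z_\bX(t)dz$ (the same one defining $\tau^y$), and then verify that with this normalization the L\'evy measure of $\tau^y$ is precisely the duration-pushforward of $\mSxc$, with no spurious multiplicative constant and no drift component. If one wishes to keep the appendix non-circular and not invoke Proposition~\ref{prop:clade:stats2}\ref{item:CS2:len}, an alternative is to obtain the index $\alpha/(1+\alpha)$ from the $1/(1+\alpha)$-self-similarity of $\bX$: the rescaling $\bX(\cdot)\mapsto c^{-1/(1+\alpha)}\bX(c\,\cdot\,)$ sends $\ell^z_\bX(t)$ to $c^{-\alpha/(1+\alpha)}\ell^{c^{1/(1+\alpha)}z}_\bX(ct)$ and hence $\tau^0(s)$ to $c^{-1}\tau^0\big(c^{\alpha/(1+\alpha)}s\big)$, which forces stability of index $\alpha/(1+\alpha)$; the normalization constant can then be pinned down by matching with the hitting-time subordinator of Proposition~\ref{prop:hitting_time:subord} through the resolvent/occupation-density identities for $\bX$.
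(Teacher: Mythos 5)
Your main argument is circular within the logic of this paper. You compute the Laplace exponent as $\Phi(\theta)=\int_0^\infty(1-e^{-\theta x})\,\mClade\{\len\in dx\}$ and then plug in the explicit tail $\mClade\{\len>x\}=Cx^{-\alpha/(1+\alpha)}$ from Proposition \ref{prop:clade:stats2}\ref{item:CS2:len}. But in Appendix \ref{sec:clade_stats} that constant $C$ is itself \emph{derived from} Proposition \ref{prop:inv_LT:subord}: the paper's proof of \ref{item:CS2:len} first gets $\mClade\{\len>x\}=Cx^{-\alpha/(1+\alpha)}$ up to an unknown constant from the scaling relation \eqref{eq:clade:invariance}, and then solves for $C$ by equating $\int_0^\infty(1-e^{-\theta x})\,C\frac{\alpha}{1+\alpha}x^{-1-\alpha/(1+\alpha)}dx$ with the Laplace exponent asserted in the present proposition. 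The normalization of $\mClade$ (equivalently of $\mSxc$) is tied to the occupation-density normalization of local time, so there is no independent source for $C$ available at this point; your first route therefore cannot serve as the proof.

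The alternative you relegate to your final sentence is, in substance, the paper's actual proof, and it is the part that needs to be carried out rather than gestured at. Concretely: the occupation-density identity under the self-similarity $\bX(s)\mapsto c^{-1/(1+\alpha)}\bX(cs)$ gives $(c^{\alpha/(1+\alpha)}\ell^{c^{-1/(1+\alpha)}y}(t/c))\stackrel{d}{=}(\ell^y(t))$, hence $\tau^0(s)\stackrel{d}{=}c\,\tau^0(s/c^{\alpha/(1+\alpha)})$, so $\EV[e^{-\theta\tau^0(s)}]=e^{-sb\theta^{\alpha/(1+\alpha)}}$ for some $b>0$. The constant is then pinned down not by a vague "resolvent identity" but by a specific two-way computation of $K_\theta:=\EV[\ell^0(S_\theta)]$ for an independent $S_\theta\sim\ExpDist[\theta]$: on one hand $K_\theta=\int_0^\infty\EV[e^{-\theta\tau^0(s)}]ds=b^{-1}\theta^{-\alpha/(1+\alpha)}$; on the other hand, the strong Markov property at $T^y$ together with Proposition \ref{prop:hitting_time:subord} gives $\EV[\ell^y(S_\theta)]=e^{y\psi^{-1}(\theta)}K_\theta$ for $y\le0$, and integrating this over $y\in(-\infty,0]$ against the occupation-density formula yields $\Pr\{\bX(S_\theta)\le0\}=\theta K_\theta\,(2^\alpha\Gamma(1+\alpha))^{-1/(1+\alpha)}\theta^{-1/(1+\alpha)}$, which equals $1/(1+\alpha)$ by the known positivity parameter of the spectrally positive \StableA\ process. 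Solving these two expressions for $b$ gives the claimed coefficient. Without this (or an equivalent independent normalization of the excursion measure), your proposal does not establish the stated value of $\Phi$.
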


\begin{proof}
 It is straightforward to check that this is a \Stable[\alpha/(1+\alpha)] subordinator. For $f\colon \BR\to\BR$ bounded and measurable,
 \begin{equation*}
 \begin{split}
  &\int_{-\infty}^\infty f(y)\ell^y(t)dy = \int_0^t f(\bX(s))ds \\
    &\stackrel{d}{=} \int_0^t f\left( c^{1/(1+\alpha)}\bX\left(\frac{s}{c}\right) \right)ds = \int_0^{t/c} f\left(c^{1/(1+\alpha)}\bX (r) \right)cdr\\
  	&=\! \int_{-\infty}^\infty \!cf\left(c^{1/(1+\alpha)}y\right)\ell^y\left(\frac{t}{c}\right)dy =\! \int_{-\infty}^\infty\! f(z)c^{\alpha/(1+\alpha)}\ell^{c^{-1/(1+\alpha)}z}\left(\frac{t}{c}\right)dz.
 \end{split}
 \end{equation*}
 Hence $(c^{\alpha/(1+\alpha)}\ell^{c^{-1/(1+\alpha)}y}(t/c);\ t\ge 0,y\in\BR) \stackrel{d}{=}(\ell^y(t);\ t\ge 0,y\in\BR)$ and so
 \begin{equation*}
  \tau^0(s) \stackrel{d}{=} \inf\left\{t\ge 0\ :\ c^{\alpha/(1+\alpha)}\ell^0(t/c) > s\right\} = c\tau^0(s/c^{\alpha/(1+\alpha)})
 \end{equation*}
 satisfies \Stable[\alpha/(1+\alpha)] self-similarity. Thus, $\EV[e^{-\theta\tau^0(s)}] = e^{-sb\theta^{\alpha/(1+\alpha)}}$ for some $b\in(0,\infty)$. To identify $b$, we use the property that $\Pr\{\bX(t)\le 0\} = 1/(1+\alpha)$ for all $t>0$. This follows from an identity in \cite[p.\ 218]{BertoinLevy}. 
 Specifically, let $S_{\theta}$ be an $\ExpDist[\theta]$ random variable independent of $\bX$ and define $K_{\theta} := \EV[\ell^0(S_{\theta})]$. Then on the one hand,
 \begin{align*}
  K_{\theta} &= \int_0^\infty \Pr\{\ell^0(S_{\theta}) > s\}ds\\
    &   =   \int_0^\infty \Pr\{S_{\theta} > \tau^{0}(s)\}ds   =   \int_0^\infty \EV \left[e^{-\theta \tau^0(s)}\right]ds = \frac{1}{b}\theta^{-\alpha/(1+\alpha)}.
 \end{align*}
 On the other hand, by the strong Markov property of $\bX$ at the hitting time $T^y$, spatial homogeneity, and Proposition \ref{prop:hitting_time:subord},
 \begin{equation*}
  \EV[\ell^y(S_{\theta})] = \Pr\{T^y < S_{\theta}\}\EV[\ell^0(S_{\theta})] =e^{y\psi^{-1}(\theta)} K_{\theta} \qquad \text{for }y \leq 0.
 \end{equation*}
 By Fubini's theorem and the occupation density formula for local times
 \begin{equation*}
 \begin{split}
  \frac{1}{1+\alpha} &= \Pr\{\bX(S_{\theta}) \leq 0\} = \EV\left[ \int_0^\infty\theta e^{-\theta s}\cf\{\bX(s)\le 0\} ds \right]\\
  	&= \EV\left[ \int_0^\infty\theta^2e^{-\theta t}\int_0^t\cf\{\bX(s)\le 0\}dsdt \right]\\
 	&= \EV\left[ \int_0^\infty\theta^2e^{-\theta t}\int_{-\infty}^0\ell^y(t)dydt \right]
 	= \theta\int_{-\infty}^0\EV \left[ \ell^y(S_{\theta})\right]dy\\
    &= \theta K_\theta\int_{-\infty}^0 e^{y\psi^{-1}(\theta)}dy = \theta K_\theta\left(2^\alpha\Gamma(1+\alpha)\right)^{-1/(1+\alpha)}\theta^{-1/(1+\alpha)}.
  \end{split}
  \end{equation*}
  Substituting in for $K_{\theta}$, we get $1/(1+\alpha) = (1/b)\left(2^\alpha\Gamma(1+\alpha)\right)^{-1/(1+\alpha)}$; isolating $b$ gives the desired value.
\end{proof}

Recall that $\Pr\{\bX(t)\le 0\} = 1/(1+\alpha)$. On the other hand, the It\^o excursion measure $\mSxc$ of $\bX$, which we can obtain as push-forward of 
$\mBxc$ under the scaffolding construction \eqref{eq:scaffolding}, is invariant under increment reversal ($180^\circ$ rotation around the unique jump 
across 0), by (\ref{eq:clade:invariance}). This means that typically, the process has spent half its time positive up to the last zero but is 
likely to be found in the first half of a much longer excursion. 
%
%
We now derive the results in Proposition \ref{prop:clade:stats2}.

\begin{proof}[Proof of Proposition \ref{prop:clade:stats2}]
 \ref{item:CS2:len}. For convenience, we quote \eqref{eq:clade:invariance} here:
 \begin{equation}
  \mClade(\reverseH(N)\in\cdot\,) = \mClade \quad \text{and} \quad \mClade(c\scaleH N\in\cdot\,) = c^{-\alpha}\mClade.
   \label{clade:invariance}
 \end{equation}
 The latter of these formulas entails that $\mClade\{\len > x\} = Cx^{-\alpha/(1+\alpha)}$, for some constant $C$. As noted in Proposition \ref{prop:inv_LT:subord}, the inverse local time process $(\tau^0(s),\,s\ge 0)$ is a subordinator. Its L\'evy measure $\Pi$ equals $\mClade\{\len\in\cdot\,\}$. Then, recalling the identity $\Phi(\theta) = \int_0^\infty (1-e^{-\theta x})d\Pi(x)$, which may be read from \cite[Chapter 3]{BertoinLevy}, we obtain \ref{item:CS2:len} by solving for $C$ in
 $$(1\!+\!\alpha)\left(2^\alpha\Gamma(1\!+\!\alpha)\right)^{-1/(1+\alpha)}\theta^{\alpha/(1+\alpha)}
   =\!\int_0^\infty\!(1\!-\!e^{-\theta x})C\frac{\alpha}{1\!+\!\alpha}x^{-1-\alpha/(1+\alpha)}dx.$$
 
 
 \ref{item:CS2:crossing}. The length of a bi-clade $N$ equals the time until the first crossing of zero, plus the subsequent time until its scaffolding $X$ hits zero. Suppose $N\sim\mClade\{\,\cdot\;|\;J^+ = y\}$. Decomposing $N=N^-\concat N^+$ with the convention of a split central spindle, the scaffolding $X^+$ of the positive part is a \StableA\ first-passage path from $y$ down to zero independent of the negative part $X^-$, by the strong Markov property under $\mSxc$ at the crossing time $T_0^+$. Thus, by (\ref{clade:invariance}), if $N\sim\mClade\{\,\cdot\;|\;J^- = x\}$ then $X^-$ is the increment reversal of a \StableA\ first-passage path from $x$ down to zero, again independent of $X^+$. 
Appealing to the subordinator property noted in Proposition \ref{prop:inv_LT:subord}, under $\mClade\{\,\cdot\;|\;(J^-,J^+) = (x,y)\}$, the length $\len$ is distributed as the hitting time $T^{-x-y}$. Thus, $\mClade\{\len\in\cdot\;|\;J = z\}$ equals the law of $T^{-z}$. 
 
 
 It follows from the right-continuity of $\bX$ that $\mClade\{ J > z\}$ is finite for all $z>0$. By the scaling property \eqref{clade:invariance}, this equals $Cz^{-\alpha}$ for some constant $C$. It remains to determine the value of $C$. By Proposition \ref{prop:inv_LT:subord}, our argument for \ref{item:CS2:len} above, and Proposition \ref{prop:hitting_time:subord},
 \begin{align*}
  &\frac{1+\alpha}{2^{\alpha/(1+\alpha)}(\Gamma(1+\alpha))^{1/(1+\alpha)}}\theta^{\alpha/(1+\alpha)} = \mClade\left[1-e^{-\theta\len(N)}\right]\\ &\!=\!\!\int_0^\infty\!\!\left(\!1\!-\!\EV\left[e^{-\theta T^{-z}}\right]\!\right)\!\alpha Cz^{-1-\alpha}dz
   \!=\! C\Gamma(1\!-\!\alpha)\left(2^\alpha\Gamma(1\!+\!\alpha)\right)^{\alpha/(1+\alpha)}\theta^{\alpha/(1+\alpha)}.
 \end{align*}
 Solving for $C$ gives the desired result.
 
 \ref{item:CS2:over:mass}. Let $N$ have law $\mClade\{\,\cdot\;|\;m^0 = b\}$. Let $\hat f$ denote the leftmost spindle in $N^+$, i.e.\ the top part of the middle spindle of $N$. By (\ref{eq:clade_split:distrib}), $\hat f$ is a \BESQA\ started from $b$ and killed at zero. Then $J^+(N) = \life(\hat f)$; the law of the latter is specified in Lemma \ref{lem:BESQ:length}, which quotes \cite{GoinYor03}. In particular, this has distribution \InvGammaDist[1+\alpha,b/2].
 
 \ref{item:CS2:mass}. We know this formula up to a constant from \eqref{clade:invariance} and the $m^0$ entry in Table \ref{tbl:clade_scaling} on page \pageref{tbl:clade_scaling}. To obtain the constant, we appeal to \ref{item:CS2:crossing} and \ref{item:CS2:over:mass}. In particular, it follows from (\ref{eq:clade_split:indep})--(\ref{eq:clade_split:distrib}) that for $N$ with law $\mClade\{\,\cdot\;|\;m^0=b\}$, the over- and undershoot are i.i.d.\ with law \InvGammaDist[1+\alpha,\frac{b}{2}], as in \ref{item:CS2:over:mass} above. This allows us to express $\mClade\{ J\in dy\;|\;m^0 = b\}$ as
 \begin{equation*}
  \int_0^y \frac{b^{2+2\alpha}}{2^{2+2\alpha}(\Gamma(1+\alpha)^2)}\frac{1}{(zy-z^2)^{2+\alpha}}\exp\left( -\frac{b}{2z}-\frac{b}{2(y-z)} \right)dz.
 \end{equation*}
 Integrating this against the law $\mClade\{m^0\in db\} = Cb^{-1-\alpha}db$, we get
 \begin{equation*}
 \begin{split}
  &\mClade\{ J\in dy\}\\
  &= dy\!\int_0^\infty\!\! Cb^{-1-\alpha}\!\int_0^y \!\frac{b^{2+2\alpha}(zy\!-\!z^2)^{-2-\alpha}}{2^{2+2\alpha}(\Gamma(1\!+\!\alpha))^2}\exp\left( -\frac{b}{2z}-\frac{b}{2(y\!-\!z)} \right)\!dzdb\\
  	&= \frac{Cdy}{2^{2+2\alpha}(\Gamma(1+\alpha))^2} \int_0^y (zy-z^2)^{-2-\alpha} \Gamma\left(2+\alpha\right)\left(\frac{y}{2(zy-z^2)}\right)^{-2-\alpha}dz\\
    & = \frac{(1+\alpha)C}{2^\alpha\Gamma(1+\alpha)}y^{-1-\alpha}dy.
 \end{split}
 \end{equation*}
 Setting this equal to \ref{item:CS2:crossing} gives $C = \alpha/\Gamma(1-\alpha)$, as desired.
 
 \ref{item:CS2:over} and \ref{item:CS2:mass:over}. The former arises from integrating the product of formula \ref{item:CS2:over:mass} with the derivative of \ref{item:CS2:mass}. The latter is then computed by Bayes' rule.

 
 \ref{item:CS2:max:over}. 
 By the strong Markov property under $\mSxc$ at the crossing time $T_0^+$, this equals the probability that a \StableA\ process started from $y$ exits the interval $[0,z]$ out of the lower boundary first. 
 This is a standard calculation via scale functions \cite[Theorem VII.8]{BertoinLevy}, carried out for a spectrally negative stable process in 
\cite{Bertoin96}, from which the claimed result can be obtained by a sign change.
 
 \ref{item:CS2:max:mass}. This is computed by integrating the product of formulas \ref{item:CS2:over:mass} and \ref{item:CS2:max:over}, which can be reduced to a Gamma integral.
 
 \ref{item:CS2:max} and \ref{item:CS2:mass:max}. The former is computed by integrating the product of the derivative of formula \ref{item:CS2:mass} with \ref{item:CS2:max:mass}. The latter follows via Bayes' Rule.
\end{proof}

The remaining results in this section go towards proving Lemma \ref{lem:LMB} and thereby completing the proof of Proposition \ref{prop:type-1:transn}.
\begin{figure}
 \centering
 \input{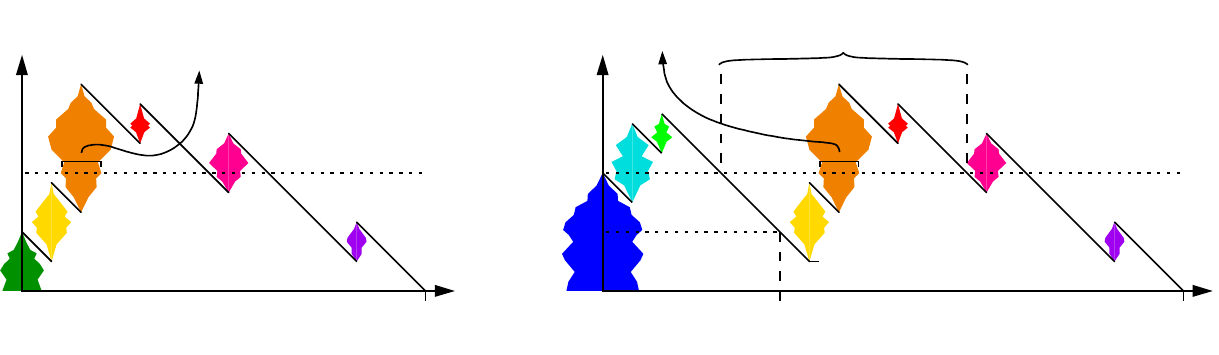_t}
 \caption{Illustration of the coupling in the proof of Lemma \ref{lem:LMB_reversal}.\label{fig:LMB_reversal}}
\end{figure}
\begin{lemma}\label{lem:LMB_reversal}
 Recall \eqref{eq:LMB_def} defining $m^y(N)$ as the mass of the leftmost spindle at level $y$. Then for all $0<z<y$
 \begin{equation}
  \mClade^+\{m^y\!\in\! dc\,|\,J^+ \!= \!z,\,\life^+\! >\! y\} \!=\! \frac{\alpha 2^\alpha c^{-1-\alpha}}{\Gamma(1-\alpha)}\,\frac{e^{-c/2y}\! -\! e^{-c/2(y-z)}}{(y\!-\!z)^{-\alpha} \!- y^{-\alpha}} db.\label{eq:LMB_reversal}
 \end{equation}
\end{lemma}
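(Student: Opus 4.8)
The plan is to exploit the level-reversal symmetry of clades, which swaps the roles of the death-level statistic $J^+$ and the birth-time structure, and thereby express the leftmost-spindle mass $m^y$ at an intermediate level in terms of the mass statistic $m^0$ of a \emph{reversed} clade. Concretely, under $\mClade^+\{\,\cdot\,|\,J^+=z,\,\life^+>y\}$, one has a clade $\whN$ whose scaffolding first reaches level $y$ at time $\wh T^y$. By the strong Markov property under $\mSxc$ at $\wh T^y$ together with the clade decomposition in \eqref{eq:clade_split:distrib}, the leftmost spindle at level $y$ has mass $m^y(\whN) = J^+(N^*)$, where $N^*$ is (essentially) the bi-clade of $\whX$ straddling level $y$ — more precisely, the part of $\whN$ recording the first excursion of $\whX$ above level $y$, cut off at level $y$. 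Reversing increments (as in \eqref{eq:clade:invariance}) turns the ``downward'' information — the constraint that the clade first returns to $0$ before descending to $z$, recorded before level $y$ — into the ``upward'' lifetime constraint on a clade above level $y-z$; this is illustrated in the coupling of Figure \ref{fig:LMB_reversal}.

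First I would fix $0<z<y$ and realize $\whN\sim\mClade^+\{\,\cdot\,|\,J^+=z\}$ via Corollary \ref{cor:clade_law_given_over} as $\delta(0,\bff)+\restrict{\bN}{[0,T^{-z}]}$, with $\bff$ a \BESQA\ first-passage bridge from $A\sim\ExpDist[1/2y]$ to $0$ in time $y$ (here using $J^+=z$ rather than $y$ in the statement of the corollary) and $\bN$ an independent \PRM. The event $\{\life^+>y\}$ is the event that $\whX$ reaches level $y$, which happens iff some excursion of $\whX$ about level $y$, before the clade terminates, jumps across $y$. Conditioning on $\{\life^+>y\}$ and applying the strong Markov property at $\wh T^y$: the post-$\wh T^y$ piece, shifted, is the portion of a fresh clade below level $0$ that terminates the excursion, and $m^y(\whN)$ equals the mass cross-section at level $y$ of the middle spindle of the first level-$y$ excursion, i.e. $m^0$ of the associated bi-clade $N^*$ cut at level $y$. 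The key identity to nail down is that, conditionally on $\life^+>y$, the bi-clade $N^*$ straddling $y$ has the law $\mClade\{\,\cdot\,|\,\life^-\geq z\}$ suitably interpreted (its downward excursion must go down past level $y-z$ in the original coordinates because the clade's return to $0$ happens below $z$), so that $m^y(\whN)\ed m^0(N^*)$ under $\mClade\{\,\cdot\,|\,\life^- > y-z\}$ — which, after the increment-reversal symmetry \eqref{eq:clade:invariance}, is the same as $m^0$ under $\mClade\{\,\cdot\,|\,\life^+ > y-z\}$. Then Proposition \ref{prop:clade:stats}\ref{item:CS:mass:max} (with $z$ replaced by $y-z$) gives $\mClade\{m^0\in db\,|\,\life^+\geq y-z\} = \frac{\alpha 2^\alpha (y-z)^\alpha}{\Gamma(1-\alpha)}b^{-1-\alpha}(1-e^{-b/2(y-z)})db$.

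However this is not yet \eqref{eq:LMB_reversal}: the extra factor $e^{-c/2y}$ and the normalization $(y-z)^{-\alpha}-y^{-\alpha}$ in the denominator show that there is a further \emph{tilting} coming from the constraint that $\whN$ itself survives to level $y$ with overshoot exactly $z$ (equivalently, from Bayes' rule relating the conditioning $\{J^+=z,\life^+>y\}$ on $\whN$ to the unconditional clade measure). So the cleaner route is to compute directly: write
\begin{equation*}
 \mClade^+\{m^y\in dc\,|\,J^+=z,\,\life^+>y\} = \frac{\mClade^+\{m^y\in dc,\,J^+=z,\,\life^+>y\}}{\mClade^+\{J^+=z,\,\life^+>y\}},
\end{equation*}
and for the numerator decompose the clade at level $y$: the part at and below level $y$ contributes the middle spindle of mass $c$ whose continuation above $y$ must die without returning to $0$ through level $0$, contributing a factor proportional to $\mClade\{m^0\in dc,\ \life^- > y-z\}$-type weight, while the ``tail'' piece above $y$ is a fresh clade of mass $c$ that must itself reach level $z$ (to make $J^+=z$), which by Proposition \ref{prop:clade:stats}\ref{item:CS:max:mass} contributes the survival factor $1-e^{-c/2(y-z)}$ on one side and by \ref{item:CS:max:mass}/\ref{item:CS:mass} the factor controlling the precise overshoot on the other — after combining, the masses and the $z$-dependence collapse to $e^{-c/2y}-e^{-c/2(y-z)}$ times $c^{-1-\alpha}$. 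The denominator is obtained from Proposition \ref{prop:clade:stats}\ref{item:CS:max:over} and the law of $J^+$ (Proposition \ref{prop:clade:stats2}\ref{item:CS2:over}), giving the $(y-z)^{-\alpha}-y^{-\alpha}$ factor. Matching constants against $\alpha 2^\alpha/\Gamma(1-\alpha)$ via the computations already done in Proposition \ref{prop:clade:stats2} finishes the proof.

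The main obstacle I anticipate is bookkeeping the \emph{joint} law of $(m^y,J^+,\life^+)$ under $\mClade$ cleanly: one must correctly identify, after conditioning on $\life^+>y$ and applying the strong Markov property at $\wh T^y$, exactly which clade statistic of the straddling bi-clade $N^*$ corresponds to $m^y$ of the original clade, and how the surviving ``tail'' clade above level $y$ inherits the constraint $J^+=z$. The increment-reversal symmetry \eqref{eq:clade:invariance} is what makes the downward constraint tractable, but one has to be careful that reversal is applied to the right sub-object (the below-$y$ anti-clade-like piece), not the whole clade, since the whole clade is not reversal-invariant once we condition on $J^+$. Once the right decomposition is set up, everything reduces to the scale-function and Bessel computations already recorded in Propositions \ref{prop:clade:stats} and \ref{prop:clade:stats2}, and the identification of the normalizing constant is routine.
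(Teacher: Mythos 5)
Your overall strategy is the paper's: identify $m^y$ of the conditioned clade with $m^0$ of a bi-clade $N^*$ straddling the relevant level, use the increment-reversal invariance of $\mClade$ to convert a downward lifetime constraint into an upward one, and then read off the answer from the recorded statistics. But there is a genuine gap at the key step, and your fallback does not repair it. The correct conditioning on $N^*$ is the \emph{two-sided} window $\life^-\in(y-z,\,y)$, not the one-sided event $\{\life^->y-z\}$ that you use. The lower bound $y-z$ encodes that $N^*$ is the first excursion below the reference level deep enough to reach the starting level $z$ of the original scaffolding; the upper bound $y$ encodes precisely the event you are conditioning on, namely $\{\life^+>y\}$: if $\life^-(N^*)\ge y$ the scaffolding hits $0$ during the downward excursion and the clade dies before returning to level $y$. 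With the window conditioning, reversal invariance gives $\mClade\{m^0\in dc\,|\,\life^+\in(y-z,y)\}$, and differencing Proposition \ref{prop:clade:stats2}\ref{item:CS2:max} and \ref{item:CS2:mass:max} at $y-z$ and at $y$ yields exactly $\frac{\alpha 2^\alpha}{\Gamma(1-\alpha)}c^{-1-\alpha}\frac{e^{-c/2y}-e^{-c/2(y-z)}}{(y-z)^{-\alpha}-y^{-\alpha}}\,dc$ — no extra ``tilting'' is needed. You correctly noticed that your one-sided answer disagrees with \eqref{eq:LMB_reversal}, but you misattributed the discrepancy to a Bayes correction for $\{J^+=z\}$; the $J^+=z$ conditioning is already fully absorbed in the identification of $N^*$, and your proposed direct computation (in which a ``tail clade above level $y$ must reach level $z$ to make $J^+=z$'') is not a correct decomposition and would not produce the factor $e^{-c/2y}-e^{-c/2(y-z)}$.

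Two further points. First, the identity $m^y(\whN)=J^+(N^*)$ in your opening paragraph is a type error — $J^+$ is a scaffolding overshoot, not a spindle mass; you mean $m^0(N^*)$, as you write later. Second, you cannot directly run excursion theory about level $y$ for the scaffolding of $\whN\sim\mClade^+\{\cdot\,|\,J^+=z\}$, since that scaffolding starts at $z<y$ and the relevant excursion is incomplete at time $0$. The paper resolves this by coupling: it realizes the post-$\wh T^z$ portion of a clade with law $\mClade^+\{\cdot\,|\,J^+=y\}$ as a copy of $\whN$ minus its initial spindle, so that $N^*$ becomes a genuine complete bi-clade of a \PRM\ about a fixed level, to which Proposition \ref{prop:bi-clade_PRM} and the reversal invariance \eqref{eq:clade:invariance} legitimately apply. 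Your sketch needs this (or an equivalent) device to make the ``straddling bi-clade'' rigorous.
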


\begin{proof}
 We prove this by showing that $\mClade^+\{m^y\in db\;|\;J^+ = z,\,\life^+ > y\}$ equals
 \begin{equation*}
 \begin{split}
  &\mClade^+\{m^0\in db\;|\;\life^+ \in (y-z,y)\}\\
  &\!\!=\!\frac{\mClade^+\{m^0\!\in\! db|\life^+\! >\! y\!-\!z\}\mClade^+\{\life^+\!>\!y\!-\!z\} \!-\! \mClade^+\{m^0\!\in\! db|\life^+ \!>\! y\}\mClade^+\{\life^+\!>\!y\}}{\mClade^+\{\life^+\!>\!y\!-\!z\}\!-\!\mClade^+\{\life^+\!>\!y\}}.
 \end{split}
 \end{equation*}
 The latter equals the right hand side of \eqref{eq:LMB_reversal} by Proposition \ref{prop:clade:stats2} \ref{item:CS2:max} and \ref{item:CS2:mass:max}. We prove this by a coupling construction, illustrated in Figure \ref{fig:LMB_reversal}.
 
 Fix $y>z>0$. Let $\whN_1\sim\mClade^+\{\,\cdot\;|\;J^+ = z\}$. As in Corollary \ref{cor:clade_law_given_over}, this may be expressed as $\whN_1 = \Dirac{0,\bff_1} + \Restrict{\bN_1}{[0,T^{-z}_1]}$, where $\zeta(\bff_1) = z$, $\bN_1$ is a \PRM[\Leb\otimes\mBxc], and $T^{-z}_1 = \wh T^0_1$ is the hitting time of $-z$ by the scaffolding $\bX_1$ associated with $\bN_1$ as in \eqref{eq:scaffolding}, or that of 0 by $\whX_1$. Correspondingly, let $\whN_2 = \Dirac{0,\bff_2} + \Restrict{\bN_2}{[0,T^{-y}_2]}$ have distribution $\mClade^+(\,\cdot\;|\;J^+ = y)$. Let $\wh T^z_2$ denote the time at which $\whX_2$ first hits $z$ and $T^{-y}_2 = \wh T^0_2$ the time at which it hits zero. Then 
$(\Restrict{\whN_2}{(\wh T^z_2,\wh T^0_2)},\Restrict{\whX_2}{[\wh T^z_2,\wh T^0_2]})$ shifted to turn into measure/scaffolding 
on $[0,\wh T^0_2-\wh T^z_2]$, denoted by $(\ShiftRestrict{\whN_2}{(\wh T^z_2,\wh T^0_2)},\ShiftRestrict{\whX_2}{[\wh T^z_2,\wh T^0_2]})$, will 
satisfy 
 \begin{equation}\label{eq:LMB_reversal_dist_eq}
  \left(\ShiftRestrict{\whN_2}{(\wh T^z_2,\wh T^0_2)},\ShiftRestrict{\whX_2}{[\wh T^z_2,\wh T^0_2]}\right)  \stackrel{d}{=} \left(\Restrict{\whN_1}{(0,\wh T^0_1)},\Restrict{\whX_1}{(0,\wh T^0_1)}\right), 
 \end{equation}
 which is a \StableA\ first passage from $z$ down to zero.
 
 The time $\wh T^z_2 = T^{z-y}_2$ occurs during the first bi-clade $N^*$ of $\bN_2$ about level $0$ that has $\life^-(N^*) \geq y-z$. Now, consider the event $A_2$ that $\whX_2$ returns up to level $y$ during the time interval $[\wh T^z_2,\wh T^0_2]$. Then $A_2 = \{\life^-(N^*) < y\}$. Thus, conditionally given $A_2$, the mass $m^0(N^*)$ is distributed according to $\mClade^+\{m^0\in \cdot\;|\;\life^- \in (y-z,y)\}$. 
 This is equal, via the time-reversal invariance of \eqref{eq:clade_split:distrib}, to the distribution $\mClade^+\{m^0\in \cdot\;|\;\life^+ \in (y-z,y)\}$.
 
 The quantity $m^0(N^*)$ and the event $A_2$ correspond, via \eqref{eq:LMB_reversal_dist_eq}, to the quantity $m^y(\whN_1)$ and the event $A_1$ that $\whX_1$ reaches level $y$ before reaching zero. Conditionally given $A_1$, the mass $m^y(\whN_1)$ is distributed according to $\mClade^+\{m^y\in \cdot\;|\;J^+ = z,\,\life^+ > y\}$. Thus, the two laws are equal, as desired.
\end{proof}

\begin{proof}[Proof of Lemma \ref{lem:LMB}]
 Fix $y>0$. We decompose the event $\{\life^+ > y\}$ into two components, based on whether $J^+ > y$:
 \begin{equation}
  \mClade^+\{m^y \!\in\! dc\,|\,m^0 \!=\! b,\life^+ \!>\! y\}
  	= \dfrac{ \left[\! \begin{array}{l}
  			\mClade\{m^y \!\in\! dc,y \!\in\! [J^+,\life^+)\,|\,m^0 \!=\! b\}\\
  			\ +\ \mClade\{m^y \!\in\! dc,y \!<\! J^+\,|\,m^0 \!=\! b\}
  		\end{array} \!\right] }
  		{ \mClade\{\life^+ > y\;|\;m^0 = b\} }.\label{eq:LMB_2_cases}
 \end{equation}
 The second summand in the above numerator describes the case in which the initial leftmost spindle of the clade survives to level $y$. Thus, this summand equals the density of the time $y$ distribution of a \BESQA\ started from $b$. We denote this by $q^{(-2\alpha)}_y(b,c)dc$. From \cite[Proposition 3; Equation (49)]{GoinYor03},
 \begin{equation*}
  q^{(-2\alpha)}_y(b,c) = q^{(4+2\alpha)}_y(c,b) = \frac{1}{2y}\left(\frac bc\right)^{(1+\alpha)/2}\!\!\!e^{-(b+c)/2y}I_{1+\alpha}\left(\frac{\sqrt{bc}}{y}\right) \ \text{for }c\!>\!0.
 \end{equation*}
 Hence, 
 \begin{equation}\label{eq:BESQ_transn_2}
  \mClade\{m^y \!\in\! dc,y \!<\! J^+|m^0 \!=\! b\} = \frac{1}{2y}\left(\frac bc\right)^{(1+\alpha)/2}\!\!\!e^{-(b+c)/2y}I_{1+\alpha}\left(\!\frac{\sqrt{bc}}{y}\right).
 \end{equation}
 
 It remains to evaluate the first summand in the numerator in \eqref{eq:LMB_2_cases}. Via Corollary \ref{cor:clade_law_given_over}, under the law $\mClade^+\{\,\cdot\;|\;J^+=z\}$, the variables $\life^+$ and $m^y$ for $y>z$ are independent of $m^0$. Thus,
 \begin{equation*}
 \begin{split}
  &\mClade^+\{m^y \in db,\,y \in [J^+,\life^+)\;|\;m^0 = a\}\\
  	&= \int_{z=0}^y \!\mClade^+\{m^y\!\in\! db|J^+ \!=\! z,\life^+ \!>\! y\} \mClade^+\{\life^+ \!>\! y|J^+ \!=\! z\} \mClade^+\{J^+\!\in\! dz|m^0 \!=\! a\}.\notag
 \end{split}
 \end{equation*}
 We have formulas for these three conditional laws in Lemma \ref{lem:LMB_reversal} and Proposition \ref{prop:clade:stats2} \ref{item:CS2:over:mass} and  \ref{item:CS2:max:over}. Plugging in, the above expression equals
 $$\left[\!\int_{z=0}^y\!\!\!\frac{\alpha 2^\alpha c^{-1-\alpha}}{\Gamma(1\!-\!\alpha)}\,\frac{e^{-c/2y} \!-\! e^{-c/2(y-z)}}{(y-z)^{-\alpha} - y^{-\alpha}} \left(\! 1 \!-\! \left(\frac{y\!-\!z}{y}\right)^{\!\alpha}\right) \frac{b^{1+\alpha}e^{-b/2z}}{2^{1+\alpha}\Gamma(1\!+\!\alpha)z^{2+\alpha}}dz\right]\! dc.$$
 Set $u= z/y$ and then $v = (1-u)/u$. Note that $1/(1-u) = 1+(1/v)$. Our integral becomes
 \begin{equation*}
  \frac{\alpha(b/c)^{1+\alpha}}{2y\Gamma(1\!-\!\alpha)\Gamma(1\!+\!\alpha)} \exp\!\left(\!-\frac{b\!+\!c}{2y}\right)\! \int_{0}^{\infty}\! \left(\!1\! - \!\exp\left(\!-\frac{c}{2y}\frac{1}{v}\right)\!\right)\exp\left(\!-\frac{b}{2y}v\right)v^\alpha dv.
 \end{equation*}
 We distribute the difference and compute the two resulting integrals separately:
 \begin{equation*}
  \int_{0}^{\infty} \exp\left(-\frac{b}{2y}v\right)v^\alpha dv = \Gamma(1+\alpha)\left( \frac{2y}{b} \right) ^{1+\alpha} 
 \end{equation*}
 and, via \cite[Exercise 34.13]{Sato}, 
 \begin{align*}
  &\int_{0}^{\infty} \exp\left(-\frac{c}{2y}\frac{1}{v} - \frac{b}{2y}v \right)v^\alpha dv\\ 
  &= \left(\frac{c}{b}\right)^{(1+\alpha)/2}\frac{\Gamma(1-\alpha)\Gamma(1+\alpha)}{\alpha}
		\left(I_{1+\alpha}\left(\frac{\sqrt{bc}}{y}\right)-I_{-1-\alpha}\left(\frac{\sqrt{bc}}{y}\right)\right).  
 \end{align*}
  Subtracting the second component from the first and multiplying in all constants, we find that $\mClade\{m^y \!\in\! dc,y\! \in\! [J^+,\life^+)|m^0 \!=\! b\}$ equals
 \begin{equation}\label{eq:mass_given_clade_but_not_spindle}
  \frac{1}{2y}\!\left(\frac{b}{c}\right)^{\!\!(1+\alpha)/2}\!\!\!\!\!e^{-(b+c)/2y}\!
	\left(\!\frac{\alpha}{\Gamma(1\!-\!\alpha)}\!\left(\!\frac{4y^2}{bc}\!\right)^{\!(1+\alpha)/2}\!\!\!\!\!-\!I_{1+\alpha}\!\left(\!\!\frac{\sqrt{bc}}{y}\right)\!-\!I_{-1-\alpha}\!\left(\!\!\frac{\sqrt{bc}}{y}\right)\!\right).
 \end{equation}
 

 Via Proposition \ref{prop:clade:stats2} \ref{item:CS2:max:mass}, the denominator in \eqref{eq:LMB_2_cases} is $1-e^{-b/2y}$. Adding \eqref{eq:BESQ_transn_2} to \eqref{eq:mass_given_clade_but_not_spindle} and dividing by $1-e^{-b/2y}$, the expression in \eqref{eq:LMB_2_cases} equals
 $$\frac{1}{2y}\left(\frac{b}{c}\right)^{(1+\alpha)/2}e^{-(b+c)/2y}\left(\frac{bc}{4y^2}\right)^{-(1+\alpha)/2}
		\sum_{n=1}^\infty\frac{1}{n!\Gamma(n-\alpha)}\left(\frac{bc}{4y^2}\right)^n,$$
 since the $n=0$ term in the $I_{-1-\alpha}$-series is 
 $$\left(\frac{bc}{4y^2}\right)^{-(1+\alpha)/2}\frac{1}{\Gamma(-\alpha)}=\left(\frac{bc}{4y^2}\right)^{-(1+\alpha)/2}\frac{-\alpha}{\Gamma(1-\alpha)}.\vspace{-0.8cm}$$
\end{proof}

This lemma completes the proof of Proposition \ref{prop:type-1:transn}.

\section{Markov property of type-0 evolutions}\label{appxtype0}

Let us recall some more terminology and results from \cite{IPPA}, as needed in the proof. It will be useful to concatenate the anti-clades in the 
point measure $\bF^{\le y}$ defined just before Proposition \ref{prop:bi-clade_PRM} into a point measure of spindles. Recall that an anti-clade includes a broken spindle, which has been cut off at the level corresponding to the upward passage of level $y\in\BR$ by the 
associated scaffolding. Let us use notation $f^{\le y}(z)=f(z)\cf\{z\in[0,y]\}$ and $f^{\ge y}(z)=f(y+z)\cf\{z\in[0,\infty)\}$ for the lower and
upper parts of a spindle $f$ broken at level $y$. More precisely, for $y>0$, the point measure $\bF^{\le y}$ excludes the incomplete anti-clade 
before the scaffolding $\bX$ first exceeds level $y$ at time $T^{\ge y}=\inf\{t\ge 0\colon\bX(t)\ge y\}$, via a jump marked by a spindle 
$\ff_{T^{\ge y}}$. We add this incomplete anti-clade as an additional point, for $y>0$,
$$\bF_0^{\le y}:=\delta\left(0,\fN|_{[0,T^{\ge y})}+\delta\left(T^{\ge y},\ff_{T^{\ge y}}^{\le y-\bX(T^{\ge y}-)}\right)\right)+\bF^{\le y}.$$ 
Also set $\bF_0^{\le y}:=\bF^{\le y}$ for $y\le 0$. Then we define the concatenation
$$\cutoffL{y}{\bN} := \Concat_{\text{points }(s,N^-_s)\text{ of }\bF_0^{\leq y}}N^-_s.$$
Let $y>0$. For a clade $\bN_U=\delta(0,\ff)+\bN|_{[0,T^{-\life(\ff)}]}$ and general $\bN_\beta=\ConcatIL_{U\in\beta}\bN_U$, $\beta\in\IPspace$,  
obtained by concatenating independent clades $\bN_U$, $U\in\beta$, we generalize this definition by restriction and concatenation: 
$$\cutoffL{y}{\bN_U}:=\delta\left(0,\ff^{\le y}\right)
					 +\Concat_{\text{points }(s,N^-_s)\text{ of }\bF_0^{\leq y-\life(\ff)}\colon s<\ell^{y-\life(\ff)}(T^{-\life(\ff)})}N_s^-$$
and $\cutoffL{y}{\bN_\beta}:=\Concat_{U\in\beta}\cutoffL{y}{\bN_U}$. See \cite[Lemma 3.41]{IPPA}. We similarly define $\cutoffG{y}{\bN_\beta}$. 
Let $\Pr^1_\beta$ denote the distribution of $\bN_\beta$; this was denoted by $\Pr^{(\alpha)}_\beta$ in \cite[Definition 5.2]{IPPA}, but here we prefer the superscript `1' as a reference to type-1 evolutions and we suppress the $\alpha$. 
Denote by $\cF^y_\fN$ the $\sigma$-algebra generated by $\cutoffL{y}{\bN}$ and by $\cF^y_{\fN_\beta}$ the $\sigma$-algebra generated by 
$\cutoffL{y}{\bN_\beta}$. These $\sigma$-algebras form filtrations as $y$ varies, and the Markov property of type-1 evolutions can be expressed in 
terms of cut-off processes and these filtrations:

\begin{proposition}[Proposition 4.24 of \cite{IPPA}]\label{prop:PRM:Fy-_Fy+} Let $\bN\sim\PRM(\Leb\otimes\mBxc)$. Let $T$ be a stopping time in the
  natural time filtration $(\cF_\bN(t),t\ge 0)$, with $\cF_\bN(t)$ generated by $\bN|_{[0,t]}$ such that $S^0 := \ell^0(T)$ is measurable in 
  the level-0 $\sigma$-algeba $\cF^0_\bN$, and such that $\bX < 0$ on the time interval $(\tau^0(S^0-),T)$. Let $\td\bN:=\bN|_{[0,T)}$ and $\td\nbeta^y=\skewer(y,\td\bN,\td\bX)$,
  $y\ge 0$. Then for each $y\geq 0$, the point measure $\cutoffG{y}{\bN|_{[0,T)}}$ is 
  conditionally independent of $\cF^{y}_{\bN}$ given $\td\nbeta^y$, with the regular conditional distribution (r.c.d.) $\Pr^1_{\td\nbeta^y}$.
\end{proposition}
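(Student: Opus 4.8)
The plan is to pass through the decomposition of $\bN$ about level $y$ into bi-clades and to exploit that, conditionally on the masses of the spindles crossing level $y$, the parts of the bi-clades above level $y$ are independent of the parts below it. First I would rewrite the two objects in the statement in terms of the level-$y$ bi-clades. By Proposition~\ref{prop:bi-clade_PRM}, the bi-clades of $\bN$ about level $y$, enumerated by their level-$y$ local times, form $\bF^y=\sum_s\delta(s,N_s)\sim\PRM[\Leb\otimes\mClade]$; each of these contains an initial escape down from $y$, one middle jump across $y$ contributing a single block of mass $m^0(N_s)$ to the level-$y$ skewer, and a final descent back to $y$. From the definitions recalled in this appendix, $\cutoffL{y}{\bN}$ is assembled from the anti-clade parts $N_s^-$ together with the incomplete initial anti-clade $\bF_0^{\le y}$, so $\cF^y_\bN=\sigma\big(\bF_0^{\le y},(s,N_s^-)_s\big)$; and since $N_s^-$ ends with a broken spindle terminating at mass $m^0(N_s)$, the marks $(s,m^0(N_s))_s$ are $\cF^y_\bN$-measurable. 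Correspondingly $\td\nbeta^y$ is the concatenation, in increasing order of $s$, of the blocks $(0,m^0(N_s))$ over those $s$ whose bi-clade lies entirely in $[0,T)$, and $\cutoffG{y}{\bN|_{[0,T)}}$ is the concatenation of the matching clade parts $N_s^+$ in the same order.

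Next comes the key conditional independence. By \eqref{eq:clade_split:indep} and \eqref{eq:clade_split:distrib}, under $\mClade(\,\cdot\mid m^0=b)$ the anti-clade part and the clade part are independent, and $N^+\sim\mClade^+(\,\cdot\mid m^0=b)$, which by \eqref{eq:clade_split:distrib} is exactly the law of the clade $\bN_U$ of Definition~\ref{constr:type-1} for $U=(0,b)$. Marking the $\PRM[\Leb\otimes\mClade]$ $\bF^y$ by the $m^0$-disintegration of $\mClade$ (recalled before \eqref{eq:clade_mass_ker}) then yields: conditionally on the marks $(s,m^0(N_s))_s$, the clade parts $(N_s^+)_s$ are mutually independent, independent of $\big(\bF_0^{\le y},(N_s^-)_s\big)$, with $N_s^+\sim\mClade^+(\,\cdot\mid m^0=m^0(N_s))$ for each $s$.

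Finally I would locate the truncation. Because $S^0=\ell^0(T)$ is $\cF^0_\bN$-measurable, $\cF^0_\bN\subseteq\cF^y_\bN$, and $\bX<0$ on $(\tau^0(S^0-),T)$, the cut time $T$ is a function of the below-level-$y$ data: it lies inside an excursion of $\bX$ below level $0$ --- a fortiori below level $y$ --- chosen measurably from the below-level-$0$ structure. Hence the index set of bi-clades completed before $T$, and therefore the ordered mass sequence determining $\td\nbeta^y$, is $\cF^y_\bN$-measurable, and (by the convention defining $\cutoffG{y}{\cdot}$ together with this placement of $T$) $\cutoffG{y}{\bN|_{[0,T)}}$ equals the concatenation of exactly the clade parts $N_s^+$ over this index set. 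Combining with the previous paragraph, conditionally on $\td\nbeta^y$ --- equivalently on its $\cF^y_\bN$-measurable ordered block masses $(\Leb(U))_{U\in\td\nbeta^y}$ --- those clade parts are mutually independent and independent of $\cF^y_\bN$, with laws $\mClade^+(\,\cdot\mid m^0=\Leb(U))$; thus their concatenation in the block order of $\td\nbeta^y$ has conditional law $\ConcatIL_{U\in\td\nbeta^y}\bN_U$ with $\bN_U\sim\mClade^+(\,\cdot\mid m^0=\Leb(U))$, i.e.\ $\Pr^1_{\td\nbeta^y}$ by Definition~\ref{constr:type-1}. That is the assertion.

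The main obstacle is the bookkeeping in the last step: matching the level-$0$ stopping hypothesis on $T$ to the level-$y$ bi-clade decomposition, checking that $T$ meets the level-$y$ excursion structure compatibly with the cutoff conventions, and correctly accounting for the two incomplete pieces --- the initial anti-clade/clade before $\bX$ first reaches $y$ and the bi-clade straddling $T$. The disintegration in the second step also needs the care already developed for the $\sigma$-finite measure $\mClade$ around \eqref{eq:clade_mass_ker}--\eqref{eq:clade_split:distrib}. An alternative route would be to prove the case $y=0$ first (where $T$ is tailored) and bootstrap to $y>0$ using the level-$y$ Markov property of $\Pr^1$ itself, but that appears to trade one bookkeeping task for another.
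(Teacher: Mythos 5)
This proposition is stated in the paper as an imported result (Proposition 4.24 of \cite{IPPA}) and is not proved here, so there is no in-paper proof to compare against. Your argument follows exactly the route that the surrounding machinery is built for — the level-$y$ bi-clade decomposition $\bF^y\sim\PRM(\Leb\otimes\mClade)$, the Poisson marking argument using the $m^0$-disintegration and the clade/anti-clade independence of \eqref{eq:clade_split:indep}--\eqref{eq:clade_split:distrib}, and the observation that the hypotheses on $T$ make $\ell^y(T)$ (hence the set of retained clades) measurable with respect to the below-level-$y$ data — and this is, as far as can be judged from the recalled material, the same strategy as in \cite{IPPA}. The outline is correct; the only places requiring genuine care are the ones you already flag, namely the separate treatment of the incomplete initial anti-clade/clade before $T^{\geq y}$ (which is not a point of $\bF^y$ and needs the strong Markov property at $T^{\geq y}$ together with the spindle-splitting independence at the crossing level) and the verification that $\cF^0_\bN\subseteq\cF^y_\bN$ so that $S^0$, and hence the truncation, is determined by $\cutoffL{y}{\bN}$.
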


\begin{proposition}[Proposition 5.6 of \cite{IPPA}]\label{prop:type-1:Fy-_Fy+} 
 Let $\bN_\beta\sim\Pr^1_\beta$ and $\nbeta^y=\skewer(y,\bN_\beta,\fX_\beta)$, $y\ge 0$. For $y>0$, the point process $\cutoffG{y}{\bN_\beta}$ is conditionally independent of $\cF^y_{\bN_\beta}$ given $\nbeta^y$, with r.c.d.\ $\Pr^1_{\nbeta^y}$.
\end{proposition}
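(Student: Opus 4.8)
The plan is to deduce this from the corresponding statement for a single clade, Proposition \ref{prop:PRM:Fy-_Fy+}, together with the product/concatenation structure of $\bN_\beta = \ConcatIL_{U\in\beta}\bN_U$. First I would recall that, by Definition \ref{constr:type-1}, each $\bN_U$ is built from an independent copy $\bN^{(U)}\sim\PRM(\Leb\otimes\mBxc)$ and an independent \BESQA\ $\bff_U$ from $\Leb(U)$: namely $\bN_U = \delta(0,\bff_U) + \restrict{\bN^{(U)}}{[0,T_U)}$, where $T_U = T^{-\life(\bff_U)}$ for the scaffolding $\bX^{(U)}=\xi(\bN^{(U)})$. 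I would first establish the single-clade version: that for each $U$ and each $y>0$, $\cutoffG{y}{\bN_U}$ is conditionally independent of $\cF^y_{\bN_U}$ given $\skewer(y,\bN_U,\bX_U)$, with r.c.d.\ $\Pr^1_{\skewer(y,\bN_U,\bX_U)}$. For $y\le\life(\bff_U)$ this is almost immediate from the definition of $\cutoffG{y}{\bN_U}$ and $\cutoffL{y}{\bN_U}$ above, since the skewer at level $y$ is then a single block of mass $\bff_U(y)$ and one applies the strong Markov property of the \BESQA\ $\bff_U$ (with an independent fresh clade grafted on, which is exactly $\Pr^1_{\{(0,\bff_U(y))\}}$). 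For $y>\life(\bff_U)$, the level-$(y-\life(\bff_U))$ decomposition of $\bN^{(U)}$ puts us in the setting of Proposition \ref{prop:PRM:Fy-_Fy+} with $T = T_U$: one checks that $S^0=\ell^0(T_U)$ is $\cF^0_{\bN^{(U)}}$-measurable and that $\bX^{(U)}<0$ on $(\tau^0(S^0-),T_U)$, precisely because $T_U$ is the first hitting time of $0$ after the first excursion of $\bX^{(U)}$ about level $0$ that descends by $\life(\bff_U)$ — cf.\ the argument around \eqref{eq:clade_split:distrib} and \eqref{eq:type-1:no_revival}. Shifting the level index by $\life(\bff_U)$ then gives the claim for all $y>0$.

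Next I would assemble the general case by concatenation. Fix $y>0$. The key structural fact, from \cite[Lemma 3.41]{IPPA} and the definitions of $\cutoffL{y}{\cdot}$ and $\cutoffG{y}{\cdot}$ via concatenation over $U\in\beta$, is that $\cutoffL{y}{\bN_\beta} = \ConcatIL_{U\in\beta}\cutoffL{y}{\bN_U}$ and likewise for $\cutoffG{y}{\cdot}$; consequently $\cF^y_{\bN_\beta} = \bigvee_{U\in\beta}\cF^y_{\bN_U}$, and $\skewer(y,\bN_\beta,\bX_\beta) = \ConcatIL_{U\in\beta}\skewer(y,\bN_U,\bX_U)$. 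Since the $\bN_U$, $U\in\beta$, are independent, and since by Lemma \ref{lem:finite_survivors} only finitely many of the $\skewer(y,\bN_U,\bX_U)$ are nonempty, I can combine the single-clade conditional-independence statements: conditionally on the family $(\skewer(y,\bN_U,\bX_U))_{U\in\beta}$, the $\cutoffG{y}{\bN_U}$ are independent across $U$ with r.c.d.\ $\Pr^1_{\skewer(y,\bN_U,\bX_U)}$, and they are jointly independent of $\cF^y_{\bN_\beta}$. Concatenating, $\cutoffG{y}{\bN_\beta}$ is conditionally independent of $\cF^y_{\bN_\beta}$ given $(\skewer(y,\bN_U,\bX_U))_{U\in\beta}$, with conditional law $\ConcatIL_{U\in\beta}\Pr^1_{\skewer(y,\bN_U,\bX_U)}$. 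Finally, by the definition of $\Pr^1_\gamma$ for general $\gamma$ as the concatenation of the independent clade laws $\Pr^1_{\{V\}}$, $V\in\gamma$ (Definition \ref{constr:type-1}), this concatenated law equals $\Pr^1_{\nbeta^y}$ where $\nbeta^y = \ConcatIL_U\skewer(y,\bN_U,\bX_U)$; and since $\nbeta^y$ is a deterministic function of the family $(\skewer(y,\bN_U,\bX_U))_{U\in\beta}$, conditioning on that family can be downgraded to conditioning on $\nbeta^y$ alone, using that $\cutoffG{y}{\bN_\beta}$'s conditional law depends on the family only through $\nbeta^y$. This yields exactly the stated conditional independence with r.c.d.\ $\Pr^1_{\nbeta^y}$.

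The main obstacle I anticipate is the bookkeeping at the junctions between consecutive clades, specifically the broken middle spindles: one must verify that the way $\cutoffL{y}{\cdot}$ and $\cutoffG{y}{\cdot}$ split a spindle that straddles level $y$ is compatible with concatenation, so that $\cutoffL{y}{\bN_\beta}$ and $\cutoffG{y}{\bN_\beta}$ genuinely decompose as concatenations of the per-clade cut-offs with no leftover or double-counted mass at the seams — this is essentially the content of \cite[Lemma 3.41]{IPPA}, which I would cite rather than reprove. A secondary subtlety is justifying the passage from conditioning on the whole family $(\skewer(y,\bN_U,\bX_U))_{U\in\beta}$ to conditioning on its concatenation $\nbeta^y$; this requires that the decomposition $\nbeta^y \mapsto (\skewer(y,\bN_U,\bX_U))_U$ be measurable in a way that recovers the individual blocks' clade-origin, which it is on the event $\{\nbeta^y\ne\emptyset\}$ because only finitely many clades survive and $\Pr^1_\gamma$ is, by construction, exchangeable-compatible under such regroupings — so the concatenated r.c.d.\ only depends on the unordered-with-multiplicity, then ordered, collection, i.e.\ on $\nbeta^y$ itself. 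Both points are technical but routine given the machinery already imported from \cite{IPPA}.
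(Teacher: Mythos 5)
This statement is not proved in the paper at all: it is quoted verbatim as Proposition 5.6 of \cite{IPPA}, so there is no in-paper argument to compare against. Your overall architecture --- establish the statement for a single clade $\bN_U$ via Proposition \ref{prop:PRM:Fy-_Fy+}, then concatenate using the independence of the clades, Lemma \ref{lem:finite_survivors}, and the fact that the concatenated r.c.d.\ depends on the family of per-clade skewers only through $\nbeta^y$ --- is the natural route and your concatenation step is sound.

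However, your single-clade reduction has a genuine gap in the case $0<y<\life(\bff_U)$, which you dismiss as ``almost immediate.'' The premise there is false: $\skewer(y,\bN_U,\fX_U)$ is \emph{not} a single block of mass $\bff_U(y)$. The scaffolding $\fX_U$ starts at $\life(\bff_U)>y$ and, before absorption at $0$, typically crosses level $y$ infinitely often; every upward jump across $y$ contributes a block, so the level-$y$ skewer is the leftmost block $(0,\bff_U(y))$ followed by infinitely many further blocks, and $\cutoffG{y}{\bN_U}$ contains one clade for each of them. In particular the conditional law cannot be $\Pr^1_{\{(0,\bff_U(y))\}}$; it must be $\Pr^1_{\skewer(y,\bN_U,\fX_U)}$, and proving this is the substantive content of the proposition. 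Concretely, one must decompose $\bN^{(U)}$ at the \emph{negative} level $y-\life(\bff_U)$, where Proposition \ref{prop:PRM:Fy-_Fy+} as stated (for levels $\ge 0$ relative to the stopped process) does not apply directly; moreover the clade's terminal time $T^{-\life(\bff_U)}$ is determined by $\life(\bff_U)$, which at such levels is ``above-$y$'' information (it is a function of $\bff_U^{\ge y}$, not of $\bff_U^{\le y}$), so the rule selecting which anti-clades belong to $\cutoffL{y}{\bN_U}$ is entangled with the part you are trying to show is conditionally independent of it. Untangling this --- via the Markov property of the spindle at level $y$ together with a level-$(y-\life(\bff_U))$ bi-clade decomposition conditionally on $\bff_U$ --- is exactly what \cite[Proposition 5.6]{IPPA} supplies and what your sketch omits. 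The complementary case $y>\life(\bff_U)$ is handled correctly in outline, though you should note that the level $y-\life(\bff_U)$ is random, so Proposition \ref{prop:PRM:Fy-_Fy+} must be applied conditionally on $\bff_U$ and the two cases merged into a single statement at fixed $y$ using that $\{\life(\bff_U)>y\}$ is $\cF^y_{\bN_U}$-measurable.
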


We also note a natural property of the skewer map to be unaffected by the cutoffs.

\begin{lemma}[Lemma 4.23(ii) of \cite{IPPA}]\label{lem:cutoff_skewer}
For $\bN$ a \PRM[\Leb\otimes\mBxc], it is a.s.\ the case that for every $t\ge 0$,
  	\begin{equation*}
  	 \skewer\left(y,\Restrict{\bN}{[0,t]}\right) = \left\{\begin{array}{ll}
  		\skewer\Big(y,\cutoffL{z}{\restrict{\bN}{[0,t]}}\Big)		& \text{if }y \leq z,\\[6pt]
  		\skewer\left(y-z,\cutoffG{z}{\restrict{\bN}{[0,t]}}\right)	& \text{if }y \geq z.
  	 \end{array}\right.
  	\end{equation*}
  	The same holds for $\bN_{\beta}$, for any $\beta\in\HIPspace$, with $z>0$.
\end{lemma}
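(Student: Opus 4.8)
The plan is to prove the identity pathwise, for a fixed realisation of $\bN$ (or $\bN_\beta$) and its scaffolding, since the assertion is almost sure. The first step is to record the elementary description of the skewer contained in \eqref{eq:agg_mass_from_spindles}--\eqref{eq:skewer_def}: a spindle $(t_i,f_i)$ of $\bN$ contributes a positive amount $f_i(y-\bX(t_i-))$ to the aggregate mass $M^y$ exactly when its jump straddles $y$, i.e.\ $\bX(t_i-)<y<\bX(t_i)$; consequently $\skewer\big(y,\restrict{\bN}{[0,t]}\big)$ is precisely the interval partition whose left-to-right block masses are the numbers $f_i(y-\bX(t_i-))$, ranging over those $i$ with $t_i\le t$ and $\bX(t_i-)<y<\bX(t_i)$, listed in increasing order of $t_i$. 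With this in hand it suffices to show that passing to $\cutoffL{z}{\,\cdot\,}$ or $\cutoffG{z}{\,\cdot\,}$ reproduces the same time-ordered list of relevant cross-sections.

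For $y\le z$ I would argue as follows. By construction $\cutoffL{z}{\restrict{\bN}{[0,t]}}$ is obtained by splitting each excursion of $\bX$ about level $z$ into its anti-clade and clade parts (together with the incomplete initial anti-clade before $T^{\ge z}$, and, for the restriction to $[0,t]$, a possibly incomplete final fragment), discarding the clade parts, and concatenating the anti-clades in their original order while retaining absolute levels; the unique spindle of each excursion crossing level $z$ is thereby replaced by its truncation to levels $\le z$, a broken spindle agreeing with the original on $[0,\,z-(\text{pre-jump level})]$. Now any spindle of $\restrict{\bN}{[0,t]}$ whose jump straddles $y$ either has its jump entirely at levels $\le z$, in which case it survives unchanged inside the corresponding anti-clade, or it crosses level $z$ and is the crossing spindle of some excursion (or the initial crossing), replaced by the truncated broken spindle; since $y-(\text{pre-jump level})\le z-(\text{pre-jump level})$, its value at level $y$ is unchanged. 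Discarding the clade parts does not disturb the time-order of anything at levels $\le z$, so the two aggregate mass processes have the same ordered sequence of positive jumps, whence $\skewer\big(y,\restrict{\bN}{[0,t]}\big)=\skewer\big(y,\cutoffL{z}{\restrict{\bN}{[0,t]}}\big)$. The case $y\ge z$ is symmetric, using that $\cutoffG{z}{\,\cdot\,}$ keeps the clade parts (and the incomplete pieces), shifts all levels down by $z$, and replaces each crossing spindle by its part at levels $\ge z$ shifted down by $z$: a spindle of $\restrict{\bN}{[0,t]}$ straddling $y\ge z$ contributes to level $y$ of $\bN$ exactly what its surviving piece contributes to level $y-z$ of the cutoff, in the same order, giving $\skewer\big(y,\restrict{\bN}{[0,t]}\big)=\skewer\big(y-z,\cutoffG{z}{\restrict{\bN}{[0,t]}}\big)$. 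The common boundary $y=z$ is the same argument, the relevant spindles then being broken exactly at their death (resp.\ birth) level, which the skewer formalism already records.

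For general $\beta\in\HIPspace$ and $z>0$ I would reduce to the previous case: writing $\bN_\beta=\ConcatIL_{U\in\beta}\bN_U$ as in Definition \ref{constr:type-1}, one has $\cutoffL{z}{\bN_\beta}=\ConcatIL_{U\in\beta}\cutoffL{z}{\bN_U}$ and likewise for $\cutoffG{z}$ (the definition of the cutoffs on concatenations, \cite[Lemma 3.41]{IPPA}), while the skewer map sends a concatenation of point measures to the concatenation of the corresponding skewers. It then remains to treat a single clade $\bN_U=\delta(0,\ff)+\Restrict{\bN}{[0,T^{-\life(\ff)}]}$, a Poisson point measure of spindles carrying one extra initial spindle $\ff$ occupying absolute levels $[0,\life(\ff)]$; the argument above goes through verbatim, with $\ff$ split like any other spindle into $\ff^{\le z}$ (value $\ff(y)$ at any level $y\le z$) and $\ff^{\ge z}$ (value $\ff(y)$ at level $y-z$).

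The step I expect to be the main obstacle is purely bookkeeping: making precise, and checking the consistency of, the level-and-time relabelling built into $\cutoffL{z}{\,\cdot\,}$ and $\cutoffG{z}{\,\cdot\,}$ --- in particular that absolute levels (respectively levels shifted by $z$) and the time-order of the surviving spindles are genuinely preserved across the concatenation of anti-clades/clades, that the incomplete initial anti-clade before $T^{\ge z}$ and the incomplete final fragment created by the restriction to $[0,t]$ are correctly accounted for, and that the broken spindles produced by the split at level $z$ carry exactly the right cross-section. Once the relevant spindles and their order are shown to be transported faithfully, equality of the two interval partitions is immediate from the reformulation in the first step.
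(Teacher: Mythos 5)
Your argument is correct, and it is the intended one: this lemma is imported from \cite[Lemma 4.23(ii)]{IPPA} and not reproved in the present paper, but the proof there is exactly the pathwise bookkeeping you describe --- a spindle contributes to the level-$y$ skewer if and only if its scaffolding jump straddles $y$, with block mass equal to its cross-section there, and the operations $\cutoffL{z}{\cdot}$ and $\cutoffG{z}{\cdot}$ transport precisely those spindles (a crossing spindle being replaced by a broken spindle with the identical cross-section at level $y$) in their original time order, with absolute levels retained below $z$ and levels shifted by $z$ above. Your reduction of the $\bN_\beta$ case to single clades via the compatibility of cutoff and skewer with concatenation is likewise how \cite{IPPA} handles it.
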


\begin{proof}[Proof of Proposition \ref{prop:type-0:simple_Markov}]
 Take $\beta\in\IPspace$, $0\leq u_1 < \cdots < u_n \leq y$, and $0\leq v_1 < \cdots < v_m$. Suppose $\eta(\nbeta^z,\,z\ge0) = \eta'(\nbeta^{u_j},\,j\in[n])$ and $f(\nbeta^z,\,z\ge0) = f'(\nbeta^{v_j},\,j\in[m])$. We will show that in this case,
 \begin{align}
  &\BPr^0_{\beta}\big[\eta'(\nbeta^{u_j},\,j\in[n])f'(\nbeta^{y+v_j},\,j\in[m])\big]\nonumber\\ 
  &= \BPr^0_{\beta}\big[\eta'(\nbeta^{u_j},\,j\in[n])\BPr^0_{\nbeta^y}\big[f'(\nbeta^{v_j},\,j\in[m])\big]\big].
  \label{eq:type-0:simple_Markov_1}
 \end{align}
 Indeed, this will suffice to prove the proposition: we can extend to general $\eta$ and $f$ by a monotone class theorem, and we generalize the equation from $\BPr^0_{\beta}$ to $\BPr^0_{\mu}$ by mixing.
 
 For $x>0$, set $\cev\bN_x := \restrict{\bN}{[0,T^{-x})}$, similar to the point processes discussed in \eqref{eq:type-0:consistency}. Let $\bN_{\beta}\sim\Pr^1_{\beta}$, independent of $\cev\bN_x$. For the purpose of this argument, we define $\Pr^0_{x,\beta}$ to be the distribution of $\bN_{x,\beta} := \cev\bN_x\concat\bN_{\beta}$. We work towards a type-0 version of Proposition \ref{prop:type-1:Fy-_Fy+}.
 
 Take $z>u_n$. Set $\vecc\nbeta^y := \skewer(y,\bN_\beta)$, $\cev\nbeta^y_{z+y} := \skewer\big(-z,\cev\bN_{z+y}\big)$, and $\nbeta^y_{z+y} := \cev\nbeta^y_{z+y}\concat\vecc\nbeta^y$. Let $\cev\bN_{z} := \restrict{\cev\bN_{z+y}}{[0,T^{-z})}$ and $\cev\bN^z_y := \restrict{\cev\bN_{z+y}}{[T^{-z},T^{-z-y})}$, where $T^{-z}$ is the hitting time of $-z$ in $\cev\bX_{z+y}$. By the strong Markov property of $\bN$, these components are independent. Recall the cutoff processes above. In our setting,
 \begin{equation}\label{eq:type-0:Fy-+_decomp}
 \begin{array}{r@{\ =\ }r@{\;\concat\;}l}
  \cutoffG{-z}{\bN_{z+y,\beta}} &\displaystyle\cev\bN_{z} \concat \cutoffGB{0}{\cev\bN^z_y} & \cutoffG{y}{\bN_{\beta}},\\[.2cm]
  \text{and} \quad
  \cutoffL{-z}{\bN_{z+y,\beta}} &\displaystyle					  \cutoffLB{0}{\cev\bN^z_y} & \cutoffL{y}{\bN_{\beta}}.
 \end{array}
 \end{equation}
 By Proposition \ref{prop:PRM:Fy-_Fy+}, $\cutoffG{0}{\cev\bN^z_y}$ is conditionally independent of $\cutoffLB{0}{\cev\bN^z_y}$ given $\cev\nbeta^y_{z+y}$, with r.c.d.\ $\Pr^1_{\cev\nbeta^y_{z+y}}$. 
 Analogously, substituting Proposition \ref{prop:type-1:Fy-_Fy+} 
 for Proposition \ref{prop:PRM:Fy-_Fy+}
, we see that $\cutoffG{y}{\bN_{\beta}}$ is conditionally independent of $\cutoffL{y}{\bN_{\beta}}$ given $\vecc\nbeta^y$, with r.c.d.\ $\Pr^1_{\vecc\nbeta^y}$. Thus, since $\big(\cev\bN_{z},\cev\bN^z_y,\bN_{\beta}\big)$ is an independent triple, $\cutoffG{-z}{\bN_{z+y,\beta}}$ is therefore conditionally independent of $\cutoffL{-z}{\bN_{{z+y},\beta}}$ given $\nbeta^y_{z+y}$, with r.c.d.\ $\Pr^0_{z,\nbeta^y_{z+y}}$. Now, \eqref{eq:type-0:simple_Markov_1} follows by Lemma \ref{lem:cutoff_skewer}.
\end{proof}


\bibliographystyle{abbrv}
\bibliography{AldousDiffusion}
\end{document}



The present paper builds on \cite{IPPA}. We study a stable L\'evy process of index $1+\alpha$, $\alpha\in(0,1)$ and block excursions of a squared Bessel process of dimension $-2\alpha$. We denote by $\mBxc$ the Pitman--Yor excursion law \cite{PitmYor82} 
that they demonstrated to be meaningful even when $0$ is not an entrance boundary for the process. This choice satisfies the assumptions of Theorem 
\ref{thm:diffusion_0}. In the present paper, we study stationarity properties of this IP evolution, which we call \em type-1 evolution\em, and we 
introduce a second related diffusion, which we call type-0 evolution, extending the scope of the construction of \cite{
}. 

Specifically, these evolutions are not themselves stationary, indeed they are absorbed in the empty interval partition in finite 
time. However, we can scale the interval partitions to interval partitions of the unit interval $[0,1]$ and suitably time-change the evolution 
to obtain stationary interval partition evolutions, whose stationary distributions are members of the two-parameter family of Poisson--Dirichlet
interval partitions \cite{GnedPitm05,PitmWink09}. 

On partitions with blocks ordered by decreasing mass, related diffusions 
have been introduced by Ethier and Kurtz in \cite{EthiKurt81} and, more recently, by Petrov in \cite{Petrov09}. In \cite{IPPPetrov}, we show that 
the processes on the space of decreasing sequences obtained from our interval partition diffusions by ordering blocks, are indeed instances of 
Petrov's diffusions.

Let us now define \emph{type-0 evolutions}. Informally, the difference between the type-1 and type-0 evolutions is the following. While type-1 evolutions arise from scaffolding processes 
$(X(t),\,t\in [0,T])$, for type-0 evolutions we consider scaffoldings that ``come down from $\infty$'' from time $-\infty$. This is described in detail at the start of Section \ref{sec:type-0}. The effect is that type-0 evolutions have no leftmost block (just an accumulation of small blocks) at all levels, while type-1 evolutions have a designated leftmost block. In \cite{Paper3}, we study type-2 evolutions, which have two leftmost blocks hence motivating our terminology.  In \cite{Paper4} we use type-0, type-1 and type-2 evolutions as building blocks of a projective system of $k$-tree evolutions
that induces an evolution of continuum trees known conjecturally as the Aldous diffusion. See also Section \ref{sec:intro:AD}.

\begin{theorem}\label{thm:diffusion}
 Type-0 evolutions exist as path-continuous Hunt processes in $(\IPspace,\dI)$.
\end{theorem}

Type-0 and type-1 evolutions are self-similar. Their explicit transition kernels are stated in Corollary \ref{cor:type-1:gen_transn} and Proposition \ref{prop:type-0:transn}. Among their many remarkable properties, the following are worth stating here. 

Theorems \ref{thm:diffusion} and \ref{thm:BESQ_total_mass} together can be viewed as a Ray--Knight theorem for a discontinuous L\'evy process. 
The local time of the stopped 
L\'evy process is not Markov in level \cite{EiseKasp93}, but our marking of jumps and skewer map fill in the missing information about jumps to construct a larger Markov process.  Moreover, the local time of the L\'evy process can be measurably recovered from the skewer process; see \cite[Theorem 37]{Paper0}. The appearance of \BESQ[0] total mass is an additional connection to the second Brownian Ray--Knight theorem \cite[Theorem XI.(2.3)]{RevuzYor}, in which local time evolves as \BESQ[0].

It is well-known (see \cite{PitmYor82,ShigaWata73}) that the family of laws of $\BESQ$ processes of nonnegative dimensions running on a common time axis satisfies an additivity property. This additivity property does not extend to negative dimensions (see \cite{PitmWink18}, however). Theorem \ref{thm:BESQ_total_mass} states that the sum of countably many squared $\BESQ$ excursions of dimension $-1$ anchored at suitably random positions on the time axis gives a $\BESQ[0]$ process. This can be interpreted as an extension of the additivity of $\BESQ$ processes to negative dimensions.